\documentclass{amsart}
\usepackage[all]{xy}
\usepackage{graphicx}
\usepackage{amsmath}
\usepackage{amsfonts}
\usepackage{amssymb}

\newtheorem{theorem}{Theorem}[section]

\newtheorem{corollary}[theorem]{Corollary}

\newtheorem{proposition}[theorem]{Proposition}

\theoremstyle{definition}
\newtheorem{definition}[theorem]{Definition}

\theoremstyle{remark}
\theoremstyle{remark}
\newtheorem{remark}[theorem]{Remark}

\numberwithin{equation}{section}

\newcommand{\Rr}{\mathbb R}
\newcommand{\Qq}{\mathbb Q}
\newcommand{\Z}{\mathbb{Z}}
\newcommand{\T}{\mathbb{T}}

\newcommand{\g}{\ensuremath{\mathfrak{g}}}                

\newcommand{\pb}[1]{\left\{#1\right\}}                   
\renewcommand{\d}{\mathrm d}
\newcommand{\grad}{\mathrm{grad}\,}
\DeclareMathOperator{\Ad}{Ad}                             
\DeclareMathOperator{\rank}{rank}                             
\newcommand{\X}{\ensuremath{\mathfrak{X}}}                
\newcommand{\Ha}{\ensuremath{X}}                

\textwidth 16 cm
\setlength{\oddsidemargin}{0mm}
\setlength{\evensidemargin}{0mm}

\begin{document}
\title[A survey on cosymplectic geometry]{A survey on cosymplectic geometry}

\author[B. Cappelletti-Montano]{Beniamino Cappelletti-Montano}
 \address{Dipartimento di Matematica e Informatica, Universit\`a degli Studi di
 Cagliari, Via Ospedale 72, 09124 Cagliari}
 \email{b.cappellettimontano@gmail.com}

\author[A. De Nicola]{Antonio De Nicola}
 \address{CMUC, Department of Mathematics, University of Coimbra, Apartado 3008, 3001-454 Coimbra, Portugal}
 \email{antondenicola@gmail.com}

\author[I. Yudin]{Ivan Yudin}
 \address{CMUC, Department of Mathematics, University of Coimbra, Apartado 3008, 3001-454 Coimbra, Portugal}
 \email{yudin@mat.uc.pt}

\subjclass[2000]{Primary 53D15, Secondary 53C15, 53D17}

\keywords{cosymplectic; coK\"{a}hler; 3-structure; reduction;
Poisson; time-dependent; Hamiltonian; almost contact; cosymplectic
space-forms; nullity distribution; mapping torus; coeffective
cohomology; formality; harmonic map; locally conformal;
para-cosymplectic; para-coK\"{a}hler; k-cosymplectic; generalized
almost contact; nearly cosymplectic; nearly coK\"{a}hler;
Einstein-Weyl; conformally flat}

\thanks{Research partially supported by CMUC, funded by the European program
COMPETE/FEDER, by FCT (Portugal) grants PEst-C/MAT/UI0324/2011, PTDC/MAT/099880/2008 (A.D.N.), SFRH/BPD/31788/2006 (I.Y.), by MICINN (Spain) grant
MTM2009-13383 (A.D.N.), and by Prin 2010/11 -- Variet\`{a} reali e complesse: geometria, topologia e analisi armonica –- Italy (B.C.M.).}

\maketitle

\begin{abstract}
We give an up-to-date overview of geometric and topological properties of
cosymplectic and coK\"{a}hler manifolds. We also mention some of their applications
to time-dependent mechanics.
\end{abstract}

\setcounter{tocdepth}{2} \tableofcontents

\section{Introduction}

In recent years there has been an increasing interest towards almost
contact geometry and related topics, both from the pure geometrical
point of view and due to its applications in wide areas of Physics.
In particular, the odd dimensional manifolds proved to be very
important for some physical theories like supergravity and M-theory.

An important class of almost contact manifolds is given by
cosymplectic manifolds. By a \emph{cosymplectic manifold} in this paper
we mean  a smooth manifold of dimension $2n+1$ endowed with a closed
$1$-form $\eta$ and a closed $2$-form $\omega$ such that
$\eta\wedge\omega^{n}$ is a volume form. This is the original
definition of cosymplectic manifold, due to Libermann and dating back
to 1958.
Later Blair used the term ``cosymplectic''
with a different meaning,  namely for denoting
cosymplectic manifolds endowed with a compatible almost contact
metric structure satisfying a normality condition.
Despite the wide use  of Blair's terminology, we will call the latter
manifolds \emph{coK\"ahler}, which is  becoming a common  practice now.

Also, there is a weaker notion of almost coK\"{a}hler manifold
(called sometimes almost cosymplectic!) which is just a cosymplectic
manifold with a compatible almost contact metric structure, without
the normality condition. Note that every cosymplectic manifold
admits an almost coK\"{a}hler structure.

Since the foundational papers of Libermann and Blair, there have been
many advances in the subject and its related topics. Several papers
dealing with cosymplectic or coK\"{a}hler manifolds appeared in the
last 40 years,  even though in a rather sporadic way. Unfortunately,
there is neither a monograph nor a paper which gives an exposition of the state of art
of the theory. In fact the recent very well-written monographs on
contact geometry -- the Blair's and Boyer-Galicki's books
\cite{blairbook2010}, \cite{galickibook} -- treat, from
different points of view, just contact metric manifolds (including
Sasakian manifolds), leaving for cosymplectic geometry not much
space.

We hope that the present survey can help to fill this gap and to
give a unified view of the subject.

The importance of cosymplectic manifolds for the geometric
description of time-dependent mechanics is nowadays widely
recognized. In most of the numerous formulations of time-dependent
mechanics cosymplectic manifolds do play a major role: either in the
classical descriptions
of regular Lagrangian systems (see e.g. \cite{deleon-m96}) and
of Hamiltonian systems \cite{deleon-m96,lacirasella12}, or in the
more elegant descriptions \'a la Tulczyjew
(\cite{deleon-m93,guzman10}) in terms of Lagrangian submanifolds. It
is far outside the scope of our presentation to make a detailed
review of these approaches or even a complete list of papers. The
1991 paper \cite{echeverria91} contains a comparative review at that
time of various equivalent approaches to the geometric treatment of
regular time-dependent Hamiltonian systems. For further information
about more recent literature on the geometric formulations of
time-dependent mechanics see \cite{barbero08, deleon-m96,
lacirasella12} and references therein. Here we just briefly
summarize, as an example, how cosymplectic manifolds appear in the
geometric setup of time-dependent Lagrangian mechanics
\cite{barbero08,deleon-m96}. In such a model the configuration
manifold of autonomous Lagrangian mechanics is replaced by a
fibration $\pi:M\to \Rr$ and the jet bundle $J^1\pi$ of the local
sections of $\pi$ is the velocity phase space of the system. In the
simplest case the fibration is trivial, i.e. $M\cong Q\times \Rr$
where $Q$ is a manifold, and one has $J^1\pi\cong TM\times \Rr$. In
general the dynamics is controlled by a Lagrangian density
${\mathcal L}\in \Omega^1(J^1\pi)$ which is a semibasic 1-form. The
Poincar{\'e}-Cartan forms associated with the Lagrangian density
${\mathcal L}$ are defined using the ``vertical endomorphism'' $S$
of the bundle $J^1\pi$ (see \cite{echeverria91,saunders89}),
\begin{equation*}
    \Theta_{\mathcal L}=i_S\d{\mathcal L}+{\mathcal L},\qquad \Omega_{\mathcal L}=-\d\Theta_{\mathcal L}.
\end{equation*}
Geometrically, the Lagrangian density is regular if and only if the
pair $(\d t,\Omega_{\mathcal L})$ is a cosymplectic structure on the
manifold $J^1\pi$, where by $\d t$ we denote (with a slight abuse
of notation) the pull-back on $J^1\pi$  of the canonical volume
1-form of $\Rr$. Then, in the regular case, the dynamics of the
system is controlled by the integral curves of the Reeb vector field
$\xi_{\mathcal L}$. In the singular case, the structure $(\d
t,\Omega_{\mathcal L})$ is not cosymplectic and the dynamics is
controlled by an implicit differential equation, not necessarily
integrable.

We devote the first part of the survey to the basic properties of
cosymplectic structures. An important role in the theory is played
by the  reduction theory, which was developed by Albert in
\cite{albert89} in order to be able to consider the reduction of
time-dependent Hamiltonian systems with symmetry groups. The
reduction procedure was extended to the singular case by de Le\'{o}n
and Tuynman \cite{deleon96}. Then, in \cite{cantrijn02} the Albert
reduction theorem was adapted to deal with the case of a
time-dependent mechanical system with constraints.

As we pointed out before, any cosymplectic manifold can be endowed
with a Riemannian metric which makes it an almost coK\"{a}hler
manifold. We thus study, in the second part of the paper, the
geometry of almost coK\"{a}hler manifolds. We collect the most
important results of the theory and the various ways for
constructing explicit examples (by suspensions, by using Lie groups, as
hypersurfaces of K\"{a}hler manifolds, etc). Due to the
importance of the subject, a large part is dedicated to the study of the curvature
properties of almost coK\"{a}hler manifolds and to the topology of
cosymplectic and coK\"{a}hler manifolds. This last topic has become
of great interest in last years, after the very remarkable work
\cite{chinea-deleon-marrero93} of Chinea, de Le\'{o}n, Marrero  on
the topology of compact coK\"{a}hler manifolds. Recently Li
(\cite{li08}) and then Bazzoni and Oprea \cite{bazzoni-oprea}
discovered new methods for studying the topology of coK\"{a}hler
manifolds, which allow us to re-obtain many of the Chinea - de Le\'{o}n
- Marrero's results.

We also try to explain the several interplays  with K\"{a}hler geometry, which make coK\"{a}hler manifolds the most natural odd-dimensional counterpart of
K\"{a}hler manifolds. This becomes even more evident when one passes to the setting of $3$-structures. Indeed, while both coK\"{a}hler and Sasakian
manifolds admit a transversal K\"{a}hler structure, at the level of $3$-structures only the former  admit a transversal hyper-K\"{a}hler  structure.

There are many further topics related to cosymplectic geometry, that
have found almost no room in this survey due to the lack of space.
We mention only few of them: the study of submanifolds of
cosymplectic / coK\"{a}hler manifolds, the theory of harmonic maps,
Einstein-Weyl structures, and several remarkable generalizations
such as locally conformal cosymplectic manifolds, $k$-cosymplectic
manifolds or generalized almost contact structures. All these
subjects, which in turn often have relations with each other and
with Physics, show how rich and wide is the field which we call
``cosymplectic geometry''.
\\
\\
\textbf{Acknowledgement.} \
The authors thank the anonymous referees for the careful reading and for having improved the paper.

\section{Cosymplectic structures}

\subsection{Basic definitions and properties}
Cosymplectic manifolds were introduced by Libermann (see
\cite{libermann59,libermann62}).
\begin{definition}
An \emph{almost cosymplectic structure} on a manifold $M$ of odd
dimension $2n+1$ is a pair $(\eta,\omega)$, where $\eta$ is a
$1$-form and $\omega$ is a  $2$-form such that $\eta\wedge\omega^n$
is a volume form on $M$. The structure is said to be
\emph{cosymplectic} if $\eta$ and $\omega$ are closed.
\end{definition}
 If $(\eta,\omega)$ is
an almost cosymplectic structure on $M$, the triple
$(M,\eta,\omega)$ is said to be an  \emph{almost cosymplectic
manifold}. Similarly, a \emph{cosymplectic manifold} is a manifold
$M$ endowed with a cosymplectic structure $(\eta,\omega)$. In the
language of $G$-structures, an almost cosymplectic structure can be
defined equivalently as an $1\times Sp(n, \Rr)$-structure.

Among the early studies of cosymplectic manifolds, we mention those
of Lichnerowicz \cite{lichnerowicz61,lichnerowicz63}, Bruschi
\cite{bruschi63}, Takizawa \cite{takizawa63} and Okumura
\cite{okumura65}. Lichnerowicz studied the  Lie algebra of
infinitesimal automorphisms of a cosymplectic manifold, in analogy
with the symplectic case. Bruschi investigated homogeneous
cosymplectic spaces. Takizawa reviewed the Lagrange brackets and the
infinitesimal transformations. The work of Okumura will be described
in the next section since it actually belongs to the coK\"{a}hler
domain.

Every almost cosymplectic structure $(\eta,\omega)$ on $M$ induces
an isomorphism of $C^{\infty}(M)$-modules
$\flat_{(\eta,\omega)}:\X(M)\to \Omega^1(M)$ defined by
\begin{align}\label{bemolle}
 \flat_{(\eta,\omega)}(X)=i_X\omega+\eta(X)\eta,
\end{align}
for every vector field $X\in \X(M)$. A vector bundle isomorphism
(denoted with the same symbol) $\flat_{(\eta,\omega)}:TM\to T^*M$ is
also induced. Then the vector field
$\xi=\flat^{-1}_{(\eta,\omega)}(\eta)$ on $M$ is called the
\emph{Reeb vector field \label{reeb}} of
the  almost cosymplectic manifold
$(M,\eta,\omega)$ and  is
characterized by the following conditions
\[
 i_\xi\omega=0,\qquad \eta(\xi)=1.
\]
Conversely, we have the following characterization of almost
cosymplectic manifolds that follows from \cite[Proposition
2]{albert89}.

\begin{proposition}
Let $M$ be a manifold endowed with a $1$-form $\eta$  and a $2$-form
$\omega$ such that the map $\flat_{(\eta,\omega)}:\X(M)\to
\Omega^1(M)$ defined by \eqref{bemolle} is an isomorphism. Assume
also that there exists a vector field $\xi$ such that $i_\xi\omega=0$
and $\eta(\xi)=1$. Then, $M$ has odd dimension and $(M,\eta,\omega)$
is an almost cosymplectic manifold with Reeb vector field $\xi$.
\end{proposition}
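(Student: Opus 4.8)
The plan is to verify the three assertions --- that $\xi$ is the Reeb field, that $\dim M$ is odd, and that $\eta\wedge\omega^n$ is a volume form --- in that order, reducing the last two to pointwise linear algebra. The claim about the Reeb vector field is immediate: applying the definition \eqref{bemolle} to $\xi$ and using the hypotheses $i_\xi\omega=0$ and $\eta(\xi)=1$ gives $\flat_{(\eta,\omega)}(\xi)=i_\xi\omega+\eta(\xi)\eta=\eta$, so that $\xi=\flat^{-1}_{(\eta,\omega)}(\eta)$ is precisely the Reeb vector field, once the pair $(\eta,\omega)$ has been shown to be almost cosymplectic.

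For the remaining two claims I would fix a point $p\in M$ and work in the tangent space $T_pM$, of dimension $m=\dim M$. Since $\flat_{(\eta,\omega)}$ is an isomorphism of $C^\infty(M)$-modules it induces, as noted before the statement, a vector bundle isomorphism, hence a linear isomorphism $T_pM\to T_p^*M$ at every point. The first key step is to identify the radical of $\omega_p$, i.e.\ the kernel of the skew map $v\mapsto i_v\omega_p$. If $i_v\omega_p=0$, then \eqref{bemolle} reduces to $\flat_{(\eta,\omega)}(v)=\eta(v)\,\eta_p$; injectivity of $\flat_{(\eta,\omega)}$ then shows that the linear map $v\mapsto\eta(v)$ is injective on the radical, which is therefore at most one-dimensional. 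As $\xi_p$ lies in the radical and satisfies $\eta(\xi_p)=1\neq0$, it is nonzero, and so the radical is exactly the line $\Rr\,\xi_p$.

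The parity of the dimension follows at once: a skew-symmetric bilinear form has even rank, so $\rank\omega_p=m-\dim(\Rr\,\xi_p)=m-1$ is even, whence $m=2n+1$ is odd and $\omega_p$ has rank $2n$. To see that $\eta\wedge\omega^n$ is a volume form I would choose a basis $e_0=\xi_p,e_1,\dots,e_{2n}$ of $T_pM$ adapted to the normal form of a skew form of rank $2n$ with radical $\Rr\,e_0$, so that, writing $e^0,\dots,e^{2n}$ for the dual basis, $\omega_p=\sum_{i=1}^n e^{2i-1}\wedge e^{2i}$ and hence $\omega_p^{\,n}=n!\,e^1\wedge\cdots\wedge e^{2n}$. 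Since $\eta(\xi_p)=\eta(e_0)=1$, the $e^0$-component of $\eta_p$ equals $1$, and every other term of $\eta_p$ is annihilated upon wedging with $e^1\wedge\cdots\wedge e^{2n}$; thus $(\eta\wedge\omega^n)_p=n!\,e^0\wedge e^1\wedge\cdots\wedge e^{2n}\neq0$. As $p$ was arbitrary, $\eta\wedge\omega^n$ is nowhere vanishing, i.e.\ a volume form, so $(M,\eta,\omega)$ is almost cosymplectic.

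The computations are elementary; the only point requiring care, and the crux of the argument, is the one-dimensionality of the radical of $\omega_p$, where the hypothesis that $\flat_{(\eta,\omega)}$ be an isomorphism is used in an essential way --- it is precisely this that bounds the radical from above and, together with the existence of $\xi$, pins it down to the Reeb line, forcing the even rank that yields the odd dimension and the nonvanishing of $\eta\wedge\omega^n$.
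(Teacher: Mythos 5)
Your proof is correct. Note that the paper gives no proof of this proposition at all --- it is simply quoted from Albert's paper \cite[Proposition~2]{albert89} --- so there is no argument to compare against; your pointwise linear-algebra treatment is the natural one and every step checks out: the radical of $\omega_p$ is pinned down to $\Rr\,\xi_p$ (bounded above by injectivity of $\flat_p$ together with $\eta_p\neq 0$, and below by the existence of $\xi$), the even rank of a skew form gives the odd dimension, and the normal-form computation gives $(\eta\wedge\omega^n)_p=n!\,e^0\wedge e^1\wedge\cdots\wedge e^{2n}\neq 0$. The one step you assert rather than prove --- that the $C^\infty(M)$-module isomorphism induces a linear isomorphism $T_pM\to T_p^*M$ at every point --- is stated in the paper just before the proposition and is standard; if you wanted it self-contained, observe that $\flat_{(\eta,\omega)}$ is tensorial, hence a bundle map, and if $\flat_p$ failed to be surjective at some $p$ then any $1$-form extending a covector outside its image would fail to lie in the image of the module map, contradicting surjectivity; equality of dimensions then gives bijectivity at $p$.
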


By means of the isomorphism $\flat_{(\eta,\omega)}:\X(M)\to
\Omega^1(M)$ one can associate with every function $f\in
C^\infty(M)$ the vector field $\grad f\in\X(M)$, called \emph{gradient
vector field}, which is defined by
\[
 \grad f=\flat_{(\eta,\omega)}^{-1}(\d f).
\]
Equivalently one has (cf. \cite{albert89})
\begin{equation}\label{gradient}
 i_{\grad f}\,\omega=\d f-\xi(f)\eta,\qquad  \eta({\grad f})=\xi(f).
\end{equation}
If the manifold $M$ is cosymplectic, then it is easy to check that the
bracket defined on $C^\infty(M)$ by
\begin{equation}\label{poisson}
 \pb{f,g}=\omega(\grad f,\grad g),
\end{equation}
is a Poisson bracket. Thus, every cosymplectic manifold
$(M,\eta,\omega)$ has an induced Poisson structure. Its Poisson
bivector field is defined by
\begin{equation}\label{bivector}
 \pi_{(\eta,\omega)}(\alpha,\beta)=\omega(\flat_{(\eta,\omega)}^{-1}\alpha,\flat_{(\eta,\omega)}^{-1}\beta),
\end{equation}
for each $\alpha,\beta\in\Omega^1(M)$. Its symplectic leaves are
precisely the leaves of the integrable distribution $\ker \eta$. For
further details and properties of the gradient vector fields we
refer to \cite{cantrijn92}.

With every function $f\in C^\infty(M)$ one can also associate a
Hamiltonian vector field $\Ha_{f}$ according to
\[
\Ha_{f}=\flat_{(\eta,\omega)}^{-1}(\d
f-\xi(f)\eta)
\]
or equivalently
\begin{equation}\label{hamiltonian}
 i_{\Ha_{f}}\omega=\d f-\xi(f)\eta,\qquad  \eta({\Ha_{f}})=0.
\end{equation}
One can show (see \cite{albert89}) that the map $C^\infty(M)\to
\X(M)$ defined by $f\mapsto \Ha_{f}$ is a Lie algebra
anti-homomorphism with respect to the Poisson bracket
\eqref{poisson} and the commutator of vector fields, i.e.
\begin{equation}\label{antihomom}
 \Ha_{\pb{f,g}}=-[\Ha_{f},\Ha_{g}].
\end{equation}
From \eqref{gradient} and \eqref{hamiltonian} we get easily that
\begin{equation}\label{uguali}
 i_{\Ha_{f}}\omega=i_{\grad f}\,\omega
\end{equation}
and $\Ha_{f}=\grad f$ if and only if $\xi(f)=0$. Thus, the Poisson
structure of a cosymplectic manifold $(M,\eta,\omega)$ can be
equivalently written as
\begin{equation*}
 \pb{f,g}=\omega(\Ha_{f},\Ha_{g}).
\end{equation*}

 Any cosymplectic manifold $(M,\eta,\omega)$
of dimension $2n+1$ admits around any point local coordinates
$(t,q^i,p_i)$, $i=1,\ldots,n$, such that
\begin{equation}\label{darboux}
\omega=\sum_{i=1}^n \d q^i\wedge \d p_i,\qquad \eta=\d t,\qquad \xi=\frac{\partial}{\partial t}.
\end{equation}
This fact is usually referred to as \emph{Darboux theorem}.

Cosymplectic manifolds can be thought of as an odd-dimensional
counterpart of symplectic manifolds. In fact, on any cosymplectic
manifold $(M,\eta,\omega)$ the so-called horizontal distribution
$\ker \eta$ is integrable to a symplectic foliation of codimension
1. On the other hand, one has the following result on the product
$M\times\Rr$.
\begin{proposition}[\cite{deleon93}]\label{MxR}
Let $M$ be a manifold and $\eta,\omega$ two differential forms on
$M$ with degrees $1$ and $2$ respectively. Consider on $\tilde
M=M\times\Rr$ the differential $2$-form $\tilde\omega=p^*\omega +
p^*\eta\wedge \d t$, where $t$ is the coordinate function on $\Rr$
and $p:\tilde M\to M$ is the canonical projection. Then
\begin{itemize}
  \item[(a)] $(M,\eta,\omega)$ is a cosymplectic manifold if and
  only if $(\tilde M,\tilde\omega)$ is a symplectic manifold.
  \item[(b)] In such a case, $p$ is a Poisson morphism.
\end{itemize}
\end{proposition}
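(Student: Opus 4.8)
The plan is to prove both directions of (a) by direct computation with the given $2$-form $\tilde\omega = p^*\omega + p^*\eta \wedge \d t$, and then verify (b) once the symplectic structure is in hand. Recall that $(\tilde M, \tilde\omega)$ is symplectic precisely when $\tilde\omega$ is closed and nondegenerate. Since $p^*$ commutes with $\d$ and $\d t$ is closed, I compute $\d\tilde\omega = p^*\d\omega + p^*\d\eta \wedge \d t$. Because the two summands live in complementary pieces with respect to the $\d t$ factor (one has no $\d t$, the other is a wedge with $\d t$), the vanishing of $\d\tilde\omega$ is equivalent to the simultaneous vanishing of $p^*\d\omega$ and $p^*\d\eta$, hence to $\d\omega = 0$ and $\d\eta = 0$ since $p$ is a submersion. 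This already pins down the closedness half of the equivalence.

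For nondegeneracy, I would relate the top exterior power of $\tilde\omega$ to the cosymplectic volume form. If $\dim M = 2n+1$, then $\dim \tilde M = 2n+2$, and the relevant power is $\tilde\omega^{n+1}$. Expanding by the binomial theorem and using that $(p^*\omega)^{n+1} = 0$ for degree reasons (the horizontal distribution has rank $2n$) together with $(p^*\eta \wedge \d t)^2 = 0$, only the cross term survives, giving $\tilde\omega^{n+1} = (n+1)\, p^*(\eta \wedge \omega^n) \wedge \d t$ up to a sign I would track carefully. Thus $\tilde\omega^{n+1}$ is a volume form on $\tilde M$ exactly when $\eta \wedge \omega^n$ is a volume form on $M$, which is the nondegeneracy condition for $(\eta,\omega)$. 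Combining this with the closedness computation establishes (a).

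For part (b), assuming now that $(M,\eta,\omega)$ is cosymplectic, I must show $p:(\tilde M, \tilde\omega) \to (M,\eta,\omega)$ is a Poisson morphism, i.e. that $p$ intertwines the symplectic Poisson bracket on $\tilde M$ with the induced Poisson bracket \eqref{poisson} on $M$. The cleanest route is to check the defining bivector relation: the Poisson bivector of the symplectic manifold is $\tilde\omega^{-1}$ (the inverse of the musical isomorphism), and I would verify that its pushforward under $p$ equals $\pi_{(\eta,\omega)}$ from \eqref{bivector}. Concretely, for $f,g \in C^\infty(M)$, I compute $\{p^*f, p^*g\}_{\tilde M}$ using the symplectic bracket and show it equals $p^*\{f,g\}_M$; in Darboux coordinates \eqref{darboux} on $M$, where $\omega = \sum \d q^i \wedge \d p_i$ and $\eta = \d t$, the form $\tilde\omega$ becomes the standard symplectic form $\sum \d q^i \wedge \d p_i + \d t \wedge \d s$ on $\tilde M$ (with $s$ the $\Rr$-coordinate), and the computation reduces to the elementary fact that the $t$-dependence drops out because $p^*f$ and $p^*g$ are independent of $s$.

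The main obstacle is the nondegeneracy computation in the proof of (a): one must argue carefully that $\tilde\omega$ is nondegenerate if and only if $\tilde\omega^{n+1} \neq 0$, keep track of the combinatorial coefficient and sign in the expansion, and confirm that the surviving cross term is genuinely $p^*(\eta\wedge\omega^n)\wedge \d t$ rather than some degenerate combination. Everything else is bookkeeping: the closedness equivalence is immediate from naturality of $\d$, and part (b) is transparent once Darboux coordinates are invoked. I would therefore devote the bulk of the written proof to the volume-form identity and treat the remaining steps briskly.
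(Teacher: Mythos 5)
Your argument is correct, and the survey itself states Proposition~\ref{MxR} without proof (it only cites \cite{deleon93}), so there is no in-paper argument to compare against; what you propose is the standard and expected proof. Two small points to tighten: the vanishing $(p^*\omega)^{n+1}=p^*\bigl(\omega^{n+1}\bigr)=0$ is most cleanly justified by noting that $\omega^{n+1}$ is a $(2n+2)$-form on the $(2n+1)$-dimensional $M$ (and, for the converse direction, that symplecticity of $\tilde M$ forces $\dim M$ to be odd before you may write $\dim M=2n+1$); the cross term $(n+1)\,p^*(\eta\wedge\omega^n)\wedge\d t$, the closedness equivalence via the splitting of forms on $M\times\Rr$ into parts with and without $\d t$, and the reduction of (b) to the Darboux computation in which the $s$-dependence drops out are all sound.
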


\begin{remark}
As in symplectic geometry, at any given point $p$ of a cosymplectic
manifold $M$ we can define, following \cite{deleon96}, a
cosymplectic orthogonal complement of a vector subspace $E\subset T_p M$ by
\begin{equation*}
E^\perp:=\{X\in T_p M\;|\; \eta(X)=0,\; i_X\omega\in E^o\},
\end{equation*}
where $E^o\subset T^*_p M$ is the annihilator of $E$. Then, we say
that a submanifold $C\subset M$  is a \emph{coisotropic submanifold}
of the cosymplectic manifold $(M,\eta,\omega)$ if
\begin{itemize}
  \item[(i)] the Reeb vector field $\xi$ is tangent to $C$;
  \item[(ii)] $(TC)^\perp\subset TC$.
\end{itemize}
The notion of a coisotropic submanifold $C$ of a cosymplectic
manifold $M$ can be related to the usual notion of a coisotropic
submanifold of a symplectic manifold in the following way. If
$(M,\eta,\omega)$ is cosymplectic, then the manifold $\tilde
M=M\times\Rr$ has a symplectic structure defined by Proposition
\ref{MxR}. One then can prove that $C\subset M$ is coisotropic if
and only if one has $(T(C\times\Rr))^\perp\subset TC\oplus \{0\}$,
where $(T(C\times\Rr))^\perp$ denotes the symplectic orthogonal complement. It
follows that $C\times\Rr$ is a special kind of coisotropic
submanifold of $M\times\Rr$ in the symplectic sense.
\end{remark}

\begin{definition}
A smooth map $\psi:M\to N$ between two cosymplectic manifolds
$(M,\eta,\omega)$ and $(N,\eta',\omega')$ is said to be
\emph{cosymplectic} if $\psi^*\eta'=\eta$ and
$\psi^*\omega'=\omega$. In such a case, the Reeb vector field $\xi$
of $M$ is $\psi$-projectable and its projection is the Reeb vector
field $\xi^\prime$ of $N$, that is,
\[
 T\psi\circ\xi=\xi'\circ\psi.
\]
\end{definition}

As in the symplectic case, a cosymplectic map is not, in general, a
Poisson map. In a cosymplectic manifold $(M,\eta,\omega)$ one can
use the vector bundle isomorphism $\flat_{(\eta,\omega)}:TM\to T^*M$
to pull back the canonical symplectic 2-form $\Omega_M$ of the
cotangent bundle $T^*M$ to the tangent bundle $TM$ by setting
\[
 \Omega_o:=\flat_{(\eta,\omega)}^*\Omega_M.
\]
Therefore the tangent bundle of a cosymplectic manifold is a
symplectic manifold. Actually, the existence of the vector bundle
isomorphism $\flat_{(\eta,\omega)}:TM\to T^*M$ means that the tangent
bundle of a cosymplectic manifold has a Liouville structure (also
known as special symplectic structure) in the sense of Tulczyjew
\cite{tulczyjew09}. In \cite{cantrijn92} the authors obtain an
explicit expression for the  symplectic structure $\Omega_o$ on the
tangent bundle in terms of the cosymplectic structure
$(\eta,\omega)$, by using the notion of tangent derivation, again
due to Tulczyjew (see e.g. \cite{tulczyjew76}).

\subsection{Reduction of cosymplectic structures}

An action $\phi:G\times M\to M$ of a Lie group $G$ on a cosymplectic
manifold $(M,\eta,\omega)$ is said to be \emph{cosymplectic} if
$\phi_g:M\to M$ is a cosymplectic map for any $g\in G$.  In such a
case, a smooth map $J:M\to\g^*$ from $M$ to the dual space $\g^*$ of
the Lie algebra $\g$ of $G$ is said to be a \emph{momentum map} if
the infinitesimal generator $a_M\in\X(M)$ of the action associated
with any $a\in\g$ is the Hamiltonian vector field of the function
$J_a:M\to\Rr$ defined by the natural pointwise pairing. Moreover,
$J$ is said to be \emph{$\Ad^*$-equivariant} if it is equivariant
with respect to the action $\phi$ and to the coadjoint action
$\Ad^*:G\times\g^*\to\g^*$, that is
\[
 J(\phi_g(x))=\Ad^*_{g^{-1}}(J(x)),\qquad\mbox{for any }x\in M.
\]

An element $\nu\in\g^*$ is said to be a \emph{weakly regular value}
of $J$ if $J^{-1}(\nu)$ is a closed submanifold of $M$ and for each
$x\in J^{-1}(\nu)$ the tangent space $T_x(J^{-1}(\nu))$ coincides
with the kernel of $T_xJ$. For example, if $\nu\in\g^*$ is a regular
value of $J$ (i.e. if $T_xJ$ has full rank for every $x\in
J^{-1}(\nu)$), then it is a weakly regular value. Note that, if
$\phi:G\times M\to M$ is a free and proper action, and $J:M\to\g^*$
is a momentum map for $\phi$, then one can prove (cf. \cite{arms81})
that $J$ is a submersion, i.e. all elements $\nu\in\g^*$ are regular
values.

Let $\nu\in\g^*$ be a weakly regular value of an $\Ad^*$-equivariant
momentum map $J:M\to\g^*$ for an action $\phi:G\times M\to M$. If
$G_\nu$ denotes the isotropy group of $\nu$ with respect to the
coadjoint action, i.e.
\[
 G_\nu=\{g\in G:\,\Ad^*_{g}\nu=\nu\},
\]
then $\phi$ induces an action
\[
 \phi:G_\nu\times J^{-1}(\nu)\to J^{-1}(\nu)
\]
of $G_\nu$ on the submanifold $J^{-1}(\nu)$. Following Libermann and
Marle \cite{libermann87} we will say that this action is
\emph{simple} (Albert calls such an action ``quotientant'') if the
orbit space $M_\nu=J^{-1}(\nu)/G_\nu$ admits a smooth manifold
structure and the canonical projection $\pi_\nu:J^{-1}(\nu)\to
M_\nu$ is a surjective submersion. A case in which this happens is
when the action is free and proper. If the action $\phi:G\times M\to
M$ is  free and proper, then the induced action of $G_\nu$ on
$J^{-1}(\nu)$ is again free and proper, for each $\nu\in\g^*$.

One has the following reduction theorem that yields an analogue of
the well-known reduction theorem for symplectic manifolds
\cite{marsden74}.

\begin{theorem}[\bf{Cosymplectic reduction Theorem}, \cite{albert89}]\label{CosymplecticReduction}
Let $\phi:G\times M\to M$ be a  cosymplectic action of a Lie group
$G$ on a cosymplectic manifold $(M,\eta,\omega)$. Suppose that
$J:M\to\g^*$ is an $\Ad^*$-equivariant momentum map associated with
$\phi$ such that
\begin{equation}\label{ReebHypothesis}
 \xi(J_a)=0,\qquad\mbox{for any }a\in\g,
\end{equation}
where $\xi$ is the Reeb vector field of $M$. Let $\nu\in\g^*$ be a
weakly regular value of $J$ such that the induced action of $G_\nu$
on $J^{-1}(\nu)$ is simple. Then, $M_\nu=J^{-1}(\nu)/G_\nu$ is a
cosymplectic manifold with cosymplectic structure
$(\eta_\nu,\omega_\nu)$ characterized by
\begin{equation}\label{ReducedCosymplecticStruct}
 \pi_\nu^*\eta_\nu=i_\nu^*\eta, \qquad
 \pi_\nu^*\omega_\nu=i_\nu^*\omega,
\end{equation}
where $\pi_\nu:J^{-1}(\nu)\to M_\nu$ is the canonical projection and
$i_\nu:J^{-1}(\nu)\hookrightarrow M$ is the canonical inclusion.

Moreover, the restriction $\xi_{|J^{-1}(\nu)}$ of $\xi$ is tangent
to $J^{-1}(\nu)$ and $\pi_\nu$-projectable onto the Reeb vector field
$\xi_\nu$ of $M_\nu$.
\end{theorem}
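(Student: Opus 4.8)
The plan is to follow the template of Marsden--Weinstein symplectic reduction, using the symplectization $\tilde M=M\times\Rr$ of Proposition \ref{MxR} to import nondegeneracy essentially for free. First I would produce the forms $\eta_\nu,\omega_\nu$ by descent. Write $V=J^{-1}(\nu)$, so that the fibers of $\pi_\nu\colon V\to M_\nu$ are the $G_\nu$-orbits and $T_xV=\ker T_xJ$. To descend, $i_\nu^*\eta$ and $i_\nu^*\omega$ must be $G_\nu$-basic. Invariance is immediate: since $\phi$ is cosymplectic, $\eta$ and $\omega$ are $G$-invariant, hence their restrictions are $G_\nu$-invariant. For horizontality I contract with the vertical fields, i.e. the generators $a_M=\Ha_{J_a}$ with $a\in\g_\nu$, which are tangent to $V$ by equivariance. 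From \eqref{hamiltonian} one has $\eta(a_M)=\eta(\Ha_{J_a})=0$, and $i_{a_M}\omega=\d J_a-\xi(J_a)\eta=\d J_a$ by the Reeb hypothesis \eqref{ReebHypothesis}; restricting to $V$ gives $i_\nu^*\,\d J_a=\d(J_a\circ i_\nu)=0$ because $J_a$ is constant on $V$. Thus both forms are basic and descend to $\eta_\nu,\omega_\nu$ satisfying \eqref{ReducedCosymplecticStruct}. Closedness is then automatic: $\pi_\nu^*\,\d\eta_\nu=\d\,i_\nu^*\eta=i_\nu^*\,\d\eta=0$, and $\pi_\nu^*$ is injective as $\pi_\nu$ is a surjective submersion; similarly for $\omega_\nu$.

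The crucial point is nondegeneracy of $(\eta_\nu,\omega_\nu)$, which I would obtain symplectically rather than by a direct computation with the degenerate form $\omega$. Let $G$ act on $\tilde M$ trivially on the $\Rr$-factor and set $\tilde J=J\circ p$. Using the Reeb hypothesis one checks that $\tilde J$ is an $\Ad^*$-equivariant momentum map for $\tilde\omega$: the generator of $a$ on $\tilde M$ is $(a_M,0)$, and $i_{(a_M,0)}\tilde\omega=p^*(i_{a_M}\omega)=p^*\,\d J_a=\d\tilde J_a$, the $p^*\eta\wedge\d t$ term dropping because $\eta(a_M)=0$ and $\d t(a_M,0)=0$. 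Since $\tilde J^{-1}(\nu)=V\times\Rr$ and $G_\nu$ fixes the $\Rr$-factor, the weakly-regular and simple hypotheses transfer to $\tilde J$, and the reduced symplectic space is $\tilde M_\nu=M_\nu\times\Rr$, carrying a symplectic form $\tilde\omega_\nu$ with $\tilde\pi_\nu^*\tilde\omega_\nu=\tilde i_\nu^*\tilde\omega$. Chasing the commuting squares relating $p,\tilde i_\nu,\tilde\pi_\nu$ to $\pi_\nu$ and the projection $P\colon M_\nu\times\Rr\to M_\nu$, one rewrites $\tilde i_\nu^*\tilde\omega=\tilde\pi_\nu^*\bigl(P^*\omega_\nu+P^*\eta_\nu\wedge\d t\bigr)$; injectivity of $\tilde\pi_\nu^*$ then forces $\tilde\omega_\nu=P^*\omega_\nu+P^*\eta_\nu\wedge\d t$. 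Proposition \ref{MxR}(a), read backwards, yields that $(M_\nu,\eta_\nu,\omega_\nu)$ is cosymplectic, which is precisely the required nondegeneracy. Alternatively, nondegeneracy can be verified pointwise from $\ker T_xJ=(\g\cdot x)^{\omega}$, the double-orthogonal identity, and $\g\cdot x\cap\ker T_xJ=\g_\nu\cdot x$.

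It remains to treat the Reeb field. Tangency of $\xi$ to $V$ is immediate from \eqref{ReebHypothesis}, since $T_xJ(\xi)$ pairs with every $a\in\g$ to give $\d J_a(\xi)=\xi(J_a)=0$. As each $\phi_g$ is a cosymplectomorphism it preserves $\xi$, so $\xi|_V$ is $G_\nu$-invariant and descends to a field $\xi_\nu$ on $M_\nu$; that $\xi_\nu$ is the Reeb field of $(\eta_\nu,\omega_\nu)$ follows by pulling back through $\pi_\nu$: one gets $\pi_\nu^*(i_{\xi_\nu}\omega_\nu)=i_\nu^*(i_\xi\omega)=0$ and $(\eta_\nu(\xi_\nu))\circ\pi_\nu=i_\nu^*(\eta(\xi))=1$, whence $i_{\xi_\nu}\omega_\nu=0$ and $\eta_\nu(\xi_\nu)=1$ by injectivity of $\pi_\nu^*$. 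I expect the main obstacle to be the nondegeneracy step: either keeping straight the four maps and their symplectic counterparts when matching $\tilde\omega_\nu$ to the product form, or, in the direct route, the linear-algebra identities for the $\omega$-orthogonal of the orbit inside the degenerate pairing --- this is also exactly where the Reeb hypothesis \eqref{ReebHypothesis} is indispensable.
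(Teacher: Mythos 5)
Your argument is correct, but note that the survey itself gives no proof of this theorem: it is quoted from Albert \cite{albert89}, whose original argument is carried out directly in the cosymplectic category (descent of $\eta$ and $\omega$ exactly as in your first paragraph, followed by a pointwise linear-algebra verification that $\eta_\nu\wedge\omega_\nu^m$ is a volume form, using $\ker T_xJ=\{v\mid \omega(a_M,v)=0\ \forall a\in\g\}$ together with the fact that $\ker\omega_x=\Rr\xi_x$ --- precisely the degenerate-pairing bookkeeping you flag as delicate). What you propose instead is, almost verbatim, the alternative proof of de Le\'on and Saralegi \cite{deleon93} that the survey describes in the paragraph immediately following the theorem: symplectize via Proposition \ref{MxR}, check that $\tilde J=J\circ p$ is an equivariant momentum map for the trivially extended action (your computation $i_{(a_M,0)}\tilde\omega=\d\tilde J_a$, which uses $\eta(a_M)=0$ and the hypothesis \eqref{ReebHypothesis}, is the key point and is right), apply Marsden--Weinstein at the weakly regular value, identify $\tilde\omega_\nu$ with $P^*\omega_\nu+P^*\eta_\nu\wedge\d t$ by injectivity of $\tilde\pi_\nu^*$, and read Proposition \ref{MxR}(a) backwards. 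Your treatment of the Reeb field (tangency from \eqref{ReebHypothesis} and weak regularity, descent from $G$-invariance, characterization by $i_{\xi_\nu}\omega_\nu=0$ and $\eta_\nu(\xi_\nu)=1$ via injectivity of $\pi_\nu^*$) is also complete. The trade-off between the two routes is as you suspect: Albert's direct proof stays intrinsic and supplies the ``supplementary information'' that the reduced Poisson structure is cosymplectic, while the symplectization route buys nondegeneracy for free from the symplectic theorem and, as the survey notes, extends immediately to singular reduction via Sjamaar--Lerman. The only points worth making explicit in a written version are that the Marsden--Weinstein theorem is being invoked at a weakly regular (not necessarily regular) value, which the original \cite{marsden74} permits, and that basicness of $i_\nu^*\eta$, $i_\nu^*\omega$ requires invariance under the full (possibly disconnected) group $G_\nu$, which you do have from the cosymplectic action.
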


Since  each cosymplectic manifold is a Poisson manifold, the above
reduction problem can be enclosed in the more general case of the
reduction of Poisson structures, studied by Marsden-Ratiu
\cite{marsden86}. The supplementary information added by Albert
consists in the cosymplectic nature of the reduced manifold.
Furthermore, as it has been remarked in \cite{lacirasella12}, one
can prove that the Poisson structure induced by
$(\eta_\nu,\omega_\nu)$ coincides with the reduced Poisson structure
deduced by the Marsden-Ratiu reduction theorem for Poisson
manifolds.

In \cite{deleon93} de Le{\'o}n  and Saralegi showed that the Albert
reduction theorem can be obtained in a different way, by first
extending the phase space $M$ according to Proposition \ref{MxR} and
then applying the  Marsden-Weinstein reduction procedure for
symplectic manifolds \cite{marsden74}. This allows them to study
cosymplectic singular reduction, that is the reduction for singular
values of the momentum map, by applying the results of Sjamaar and
Lerman \cite{sjamaar91} for singular symplectic reduction. In this
way, the authors of \cite{deleon93} are able to obtain a stratified
cosymplectic space as a result of the reduction of a cosymplectic
manifold with a cosymplectic action of a compact group and a
momentum map.

\subsection{Universal model for cosymplectic manifolds}

The following more general kind of reduction of cosymplectic
manifolds was considered by de Le{\'o}n and Tuynman \cite{deleon96}.

\begin{definition}
Let $(M,\eta,\omega)$ be a cosymplectic manifold and let $C$ be a
submanifold of  $M$. Suppose furthermore that the following three
conditions are satisfied:
\begin{itemize}
  \item [(i)]  the Reeb vector field $\xi$ is tangent to the submanifold $C$;
  \item [(ii)]  the characteristic distribution ${\mathcal F}:=\ker \eta|_{C}\cap\ker \omega|_{C}$ has constant rank and hence it is a regular foliation on $C$;
  \item [(iii)] the foliation ${\mathcal F}$ is simple, i.e. the space of leaves $M_{r}:=C/{\mathcal F}$ has a structure of a manifold and the canonical projection $\pi:C\to M_{r}$ is a submersion.
\end{itemize}
\end{definition}
Under the above hypotheses, it can be shown that  there exist unique
closed forms $\eta_{r}\in\Omega^{1}(M_{r})$ and
$\omega_{r}\in\Omega^{2}(M_{r})$  such that
$(M_{r},\eta_{r},\omega_{r})$ is a cosymplectic manifold, the
pullbacks of $\pi^{*}\eta_{r}$ and $\pi^{*}\omega_{r}$  coincide
with the restrictions $\eta|_{C}$ and $\omega|_{C}$, respectively,
and the Reeb vector field $\xi$ of $M$ projects onto the Reeb vector
field $\xi_{r}$ of the cosymplectic manifold $M_{r}$.

In these circumstances we will say that $M_{r}$ is the
\emph{reduction} of $M$ by $C$.

Some properties of the cosymplectic reduction are collected in the
following proposition.
\begin{proposition}[\cite{deleon96}]\label{reductionC}
Let $(M_{r},\eta_{r},\omega_{r})$  be the reduction of a
cosymplectic manifold $(M,\eta,\omega)$ by a submanifold $C\subset
M$. Then, $\eta$ is exact implies that $\eta_r$ is also exact. If
the Reeb vector field $\xi$ is complete and $C$ is closed in $M$,
then $\xi_r$ is also complete.
\end{proposition}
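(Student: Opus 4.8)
The plan is to treat the two assertions separately, in both cases exploiting the defining relations $\pi^*\eta_r=\eta|_C$ and $\pi^*\omega_r=\omega|_C$ together with the fact that the fibres of the submersion $\pi\colon C\to M_r$ are exactly the (connected) leaves of the characteristic foliation $\mathcal F=\ker\eta|_C\cap\ker\omega|_C$. Note that $\pi$ is a surjective submersion, so $\pi^*$ is injective on forms; this will let me recover statements on $M_r$ from their pullbacks to $C$.

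For the exactness claim I would start from $\eta=\d f$ with $f\in C^\infty(M)$ and set $h:=f|_C\in C^\infty(C)$, so that $\d h=\eta|_C=\pi^*\eta_r$. The key observation is that $h$ is constant along the leaves of $\mathcal F$: if $X$ is tangent to a leaf then $X\in\ker\eta|_C$, whence $X(h)=\d h(X)=\eta(X)=0$. Since the leaves are precisely the connected fibres of $\pi$, the function $h$ descends to a unique $f_r\in C^\infty(M_r)$ with $\pi^*f_r=h$. Then $\pi^*(\d f_r)=\d h=\pi^*\eta_r$, and injectivity of $\pi^*$ gives $\eta_r=\d f_r$, so $\eta_r$ is exact.

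For the completeness claim I would first promote completeness of $\xi$ on $M$ to completeness of $\xi|_C$ on $C$. Given $q\in C$, let $\gamma\colon\Rr\to M$ be the globally defined integral curve of $\xi$ with $\gamma(0)=q$; I claim $\gamma(\Rr)\subset C$. Consider $A=\{t:\gamma(t)\in C\}=\gamma^{-1}(C)$. It is closed because $C$ is closed and $\gamma$ continuous, and it is open because, $\xi$ being tangent to $C$, through any $\gamma(t_0)\in C$ there is a local integral curve of the vector field $\xi|_C$ lying in $C$, which by uniqueness of integral curves in $M$ coincides with $\gamma$ near $t_0$. As $A$ is nonempty, open and closed in the connected $\Rr$, we get $A=\Rr$, so $\gamma$ is an integral curve of $\xi|_C$ and $\xi|_C$ is complete; denote its flow by $\Phi_t\colon C\to C$. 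Since the construction of $M_r$ guarantees that $\xi|_C$ is $\pi$-related to $\xi_r$, for any $p\in M_r$ and any $q\in\pi^{-1}(p)$ the curve $t\mapsto\pi(\Phi_t(q))$ satisfies $\tfrac{\d}{\d t}\pi(\Phi_t(q))=T\pi\bigl(\xi(\Phi_t(q))\bigr)=\xi_r(\pi(\Phi_t(q)))$, hence is an integral curve of $\xi_r$ through $p$ defined for all $t\in\Rr$. By uniqueness it is the maximal one, so $\xi_r$ is complete.

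I expect the main obstacle to be precisely the completeness transfer, namely the verification that the flow of $\xi$ cannot escape $C$ in finite time: this is exactly where both hypotheses enter, the closedness of $C$ to make $A$ closed and the tangency of $\xi$ to make $A$ open. Once $\xi|_C$ is known to be complete, the descent of its flow through $\pi$ is routine, and the exactness statement is essentially immediate after the ``constant along the leaves'' observation.
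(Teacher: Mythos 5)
Your proof is correct. Note that the survey itself does not prove this proposition; it is quoted from de Le\'on--Tuynman \cite{deleon96} without argument, and your two steps --- descending $f|_C$ to $M_r$ via constancy along the leaves of $\mathcal F\subset\ker\eta|_C$, and the open--closed argument showing the flow of $\xi$ cannot leave the closed invariant submanifold $C$ before projecting the flow through $\pi$ --- are exactly the standard way to establish it.
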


One could wonder if  the local models for cosymplectic manifolds,
which are given by the Darboux coordinates \eqref{darboux} on
$\Rr^{2n+1}$, could be general universal models, in the sense that
all cosymplectic manifolds can be obtained from an $\Rr^{2N+1}$ by
reduction with respect to a submanifold. Indeed in the Darboux local
model the 1-form is exact and the Reeb vector field is complete.
Since for a general cosymplectic manifold the 1-form $\eta$ need not be
exact, Proposition \ref{reductionC} shows that local models cannot be
universal.  Moreover, since the Reeb vector field of a general
cosymplectic manifold need not be complete, it also follows from
Proposition~\ref{reductionC} that we cannot always take the submanifold $C$ to
be closed. This is in sharp contrast with the symplectic case,
studied in \cite{gotay89}, in which the local model $\Rr^{2N}$ is at
the same time the universal model, and in which the reduction can
always be done by a closed submanifold. The universal model for
cosymplectic manifolds found by de Le{\'o}n and Tuynman turns out to
be slightly more complicated.

Let us say that a manifold is of \emph{cohomologically finite type}
if the de Rham cohomology $H^k(M)$ is a finitely generated vector
space for all $k$. For any 1-form $\eta$ on a manifold $M$ we define
$\rank_{\Qq}(\eta)$ by
\[
\rank_{\Qq}(\eta)=\dim_{\Qq}(\Qq \cdot \mbox{Periods}(\eta)),
\]
where
\[\mbox{Periods}(\eta)=\left\{\int_z\eta\;|\; z \mbox{ a chain in
}M\right\}\subset\Rr.
\]
Note that for a  manifold $M$ of cohomologically finite type the
$\Qq$-dimension of $\Qq \cdot \mbox{Periods}(\eta)$ is always
finite. We can now state the following theorem which gives the
universal model for cosymplectic manifolds.

\begin{theorem}[\cite{deleon96}]\label{universal}
Let $(M,\eta,\omega)$ be a cosymplectic manifold of cohomologically
finite type. Then there exist integers $N$ and $k$ and real numbers
$\mu_1,\ldots,\mu_k$ that are independent over $\Qq$ such that $M$
is the reduction of the cosymplectic manifold
$(M_u,\eta_u,\omega_u)$ by some submanifold $C\subset M_u$, where
\begin{equation*}
M_u=\Rr\times T^*(\T^k\times \Rr^N), \qquad \eta_u=\d \theta_{\T^k\times \Rr^N},\qquad \omega_u=\d t+\sum_{i=1}^k\mu_i\d \varphi_i,
\end{equation*}
where $\theta_{\T^k\times \Rr^N}$ is the Liouville 1-form,
$\varphi_i$ are the angle coordinates on the torus, $t$ is the
canonical coordinate on $\Rr$, and $k=\rank_{\Qq}(\eta)$.
\end{theorem}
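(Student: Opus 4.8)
The plan is to realise $(M,\eta,\omega)$ as a reduction by constructing the reducing submanifold $C\subset M_u$ by hand, encoding the closed $1$-form $\eta$ through the toroidal and $\Rr$-directions of $M_u$ and the closed $2$-form $\omega$ through the canonical Liouville primitive of the cotangent bundle. The first task is to pin down the constants. Since $M$ is of cohomologically finite type, $\Qq\cdot\mathrm{Periods}(\eta)$ is a finite-dimensional $\Qq$-vector space, so I may fix a basis $\mu_1,\dots,\mu_k$ of it consisting of reals that are linearly independent over $\Qq$, with $k=\rank_{\Qq}(\eta)$; these are the $\mu_i$ of the statement, and $N$ will be produced below by an embedding.

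Next I would construct $C$ together with a surjective submersion $\pi\colon C\to M$ exhibiting $M$ as the leaf space. The two targets to hit are the pullback identities $\eta_u|_C=\pi^*\eta$ and $\omega_u|_C=\pi^*\omega$, together with tangency of the Reeb field $\xi_u=\partial/\partial t$ to $C$. To encode $\eta$ I would let the torus angles $\varphi_i$ and the coordinate $t$ vary along the fibres of $\pi$ in a \emph{helical} fashion, so that the vector fields $\partial/\partial\varphi_i-\mu_i\,\partial/\partial t$ are tangent to the leaves: these lie in $\ker\eta_u$, and quotienting such a helix on the cylinder $\Rr\times S^1$ yields a circle on which the reduced $1$-form has a non-zero multiple of $\mu_i$ as period, which is precisely how the prescribed $\Qq$-independent periods reappear downstairs. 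To encode $\omega$ I would fix the $\Rr^N$-base coordinates along $C$ by a Whitney embedding $\iota\colon M\hookrightarrow\Rr^N$ and then use that the tautological $1$-form $\theta_{\T^k\times\Rr^N}$ restricts, along a suitably chosen covector locus, to a primitive of $\pi^*\omega$ on $C$; since $\omega_u=\d\theta_{\T^k\times\Rr^N}$ is exact this gives $\omega_u|_C=\d\bigl(\theta_{\T^k\times\Rr^N}|_C\bigr)=\pi^*\omega$, with no need for $\omega$ itself to be exact on $M$.

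Granting such a $C$, conditions (i)--(iii) follow formally. Tangency of $\xi_u=\partial/\partial t$ is (i). For (ii) and (iii) the decisive observation is that, once the two pullback identities hold, the characteristic distribution equals $\ker T\pi$: a vector $v\in TC$ lies in $\ker\eta_u|_C\cap\ker\omega_u|_C$ iff $\eta(T\pi\,v)=0$ and $\omega(T\pi\,v,\cdot)=0$ on $TM$, and because $(M,\eta,\omega)$ is cosymplectic the map $\flat_{(\eta,\omega)}$ is an isomorphism, so the only such image vector is $0$ and thus $v\in\ker T\pi$; the reverse inclusion is immediate. Hence $\mathcal F=\ker T\pi$ has constant rank, its leaves are the fibres of $\pi$, the leaf space is $M$, and $\pi$ being a surjective submersion makes $\mathcal F$ simple. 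The uniqueness in the reduction then identifies $(\eta_r,\omega_r)$ with $(\eta,\omega)$ and shows that $\xi_u$ projects onto $\xi$.

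The heart of the matter is therefore the existence of $C$ with all these features simultaneously, and two points require care. First, the finite-type hypothesis is indispensable: it lets one write $[\eta]=\sum_{i=1}^k\mu_i c_i$ with integral classes $c_i\in H^1(M;\Z)$ and $\Qq$-independent $\mu_i$, which is what allows the helical leaves to be matched to $\eta$ with only finitely many torus factors and bounds the dimension of the model; without it the period group could need infinitely many generators. Second, and most delicately, one must verify that $\mathcal F$ is genuinely regular and \emph{simple}, i.e.\ that the leaf space is a Hausdorff manifold diffeomorphic to $M$ rather than a merely set-theoretic quotient; fitting the helical wrapping and the covector locus together into an honest fibre bundle over $M$ is the real technical obstacle. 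An alternative organisation, closer to \cite{deleon93}, is to symplectise via Proposition \ref{MxR}, apply the symplectic universal model of \cite{gotay89} to $\tilde M=M\times\Rr$, and then descend; I prefer the direct construction above because it keeps the role of the periods of $\eta$ transparent.
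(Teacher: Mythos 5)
First, a point of reference: the survey itself gives no proof of Theorem~\ref{universal} --- it is quoted from \cite{deleon96} and followed only by a remark about the equivariant version --- so your proposal cannot be measured against an argument in the paper and has to stand on its own. (You have also, sensibly and silently, corrected the transposition in the printed statement, where $\eta_u$ and $\omega_u$ are interchanged: the $1$-form is $\d t+\sum_i\mu_i\,\d\varphi_i$ and the $2$-form is $\d\theta_{\T^k\times\Rr^N}$.) The formal part of your plan is sound: granting a submersion $\pi\colon C\to M$ with $\eta_u|_C=\pi^*\eta$ and $\omega_u|_C=\pi^*\omega$, your identification of the characteristic distribution with $\ker T\pi$ via injectivity of $\flat_{(\eta,\omega)}$ is correct (modulo connectedness of the fibres), and the helical picture for reproducing the $\Qq$-independent periods of $\eta$ from the $(t,\varphi_i)$ directions is the right mechanism.

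The genuine gap is the one you flag yourself, and it is not a technicality to be fitted in afterwards --- it is the theorem. Your mechanism for the $2$-form does not work as described: since $\omega_u=\d\theta_{\T^k\times\Rr^N}$ is exact, the required identity $\omega_u|_C=\pi^*\omega$ forces $\pi^*\omega$ to be exact on $C$, i.e.\ $[\omega]$ must lie in the kernel of $\pi^*\colon H^2(M)\to H^2(C)$. A submanifold $C$ obtained by fixing the $\Rr^N$-base coordinates along a Whitney embedding and selecting a ``covector locus'' is a graph-type bundle over $M$ with contractible (or line) fibres, for which $\pi^*$ is injective on $H^2$; your construction would therefore only reach cosymplectic manifolds with exact $\omega$. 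Killing $[\omega]$ requires the fibres of $C\to M$ to contain circle directions whose transgression in the Serre spectral sequence hits $[\omega]$, and nothing in your sketch provides these: the only compact directions in $M_u$ are the torus angles $\varphi_i$, which you have already committed entirely to encoding the periods of $\eta$, and along the leaves they are coupled to $t\in\Rr$, which unwinds them into lines. Arranging the leaves so that they simultaneously realize the periods of $\eta$ \emph{and} render $\pi^*\omega$ exact with primitive $\theta_{\T^k\times\Rr^N}|_C$, while keeping the leaf space a Hausdorff manifold diffeomorphic to $M$, is precisely the content of the de Le\'on--Tuynman construction, and your proposal stops where that work begins. The alternative you mention in closing --- symplectize via Proposition~\ref{MxR}, apply the Gotay--Tuynman universal model to $M\times\Rr$, and descend --- is in fact the more promising route, because the symplectic case already contains the machinery for handling the periods of the $2$-form; if you want to complete the argument, I would develop that version rather than the direct one.
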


De Le{\'o}n and Tuynman also proved  a corresponding equivariant
version of the above theorem, for which we refer to \cite{deleon96}.

\subsection{Canonical homology}
The canonical homology was introduced by Koszul in~\cite{koszul85} for any
Poisson manifold. As
every cosymplectic manifold can be considered as a Poisson manifold as well,
it is interesting to study the properties of the canonical homology for them.

Let $M$ be a Poisson manifold and $\pi$ the Poisson bivector. Then
we can define the map $\delta\colon \Omega^p\left( M \right)\to
\Omega^{p-1}\left( M \right)$ by $\delta = \left[ i_\pi, d \right]$,
where $i_\pi$ is the operator of contraction with the bivector
$\pi$. It is easy to check that $\delta^2 = 0$. Therefore $\left(
\Omega^p\left( M \right), \delta \right)$ is a complex, whose
homology $H_p^{can}\left( M \right)$ is called \emph{canonical
homology} of $M$.

In the case $M^{2n}$ is a compact symplectic manifold it was shown
in~\cite{brylinski88} that $H_p^{can}\left( M \right)\cong
H^{2n-p}\left( M \right)$ for $0\le p\le n$.

It was discovered in~\cite{fernandez-ibanez98}  that a similar
result for a compact cosymplectic manifold $M^{2n+1}$ is false by
constructing an explicit counterexample of dimension $5$.
Nevertheless, in~\cite{fernandez-ibanez98} Fern{\'a}ndez et al. were able to construct a spectral
sequence whose first page is given by $H_p^{can}\left( M \right)$
and that converges  to $H^{2n+1-p}\left( M \right)$. In particular,
for any compact cosymplectic manifold $M$
$$
\dim H_p^{can}\left( M \right) \ge b_{2n+1-p}\left( M \right), \ \ 0\le p\le
2n+1.
$$
Further topological properties of cosymplectic manifolds will be
reviewed in the almost coK\"{a}hler context in Sections
\ref{topology} and \ref{coeffective}.


\section{CoK\"{a}hler structures} \label{cokahler}

Historically  coK\"{a}hler manifolds were defined  by Blair in
\cite{blair67} in the context of quasi-Sasakian manifolds and,
independently, by Ogiue in \cite{ogiue68} as normal almost contact
metric manifolds such that the fundamental $2$-form $\Phi$ and the
$1$-form $\eta$ are closed.  Actually the notion was used, even if
implicitly, for the first time in an article of Okumura
(\cite{okumura65}), where he studied the hypersurfaces  of a
K\"{a}hler manifold of constant holomorphic sectional curvature. Okumura
studied the conditions for which such a hypersurface carries a
cosymplectic structure, but he did not realize that in fact he was
dealing with a general geometric structure richer than the
cosymplectic one.

It should be remarked that Ogiue used the word ``cocomplex'',
whereas Blair the word  ``cosymplectic''. The latter has been used
in many subsequent papers and it is now widely spread across the
literature. However, since it  could be confusing, as the same term
was used by Libermann for a different (although related) geometric
structure, here we prefer the word ``coK\"{a}hler''. This name was
firstly used by Bejancu and Smaranda (\cite{bejancu83}) and, later
on, by Janssens, Vanecke (\cite{janssens-vanhecke81}) and  Marrero
(\cite{marrero90}). More recently the paper of Li (\cite{li08}) and
some other subsequent papers have adopted this terminology.

\subsection{Basic definitions and properties}
An \emph{almost contact structure} on a manifold $M$ is given by a
triple $(\phi,\xi,\eta)$, where $\xi$  is a  vector field, $\eta$ is
a $1$-form and $\phi$ is
 a $(1,1)$-tensor field related by the following conditions
\begin{equation*}
\phi^2 = -I + \eta\otimes\xi, \ \ \ \eta(\xi)=1,
\end{equation*}
where $I$ denotes the identity mapping. From the definition it
follows that $M$ has necessarily odd dimension, say $2n+1$. Further
consequences are that $\phi\xi=0$ and $\eta\circ\phi =0$ (see
\cite{blairbook2010} for the proof and any detail). This notion was
introduced in 1961 by Sasaki (\cite{sasaki61}). In the same year
Sasaki and Hakateyama (\cite{sasaki-hatakeyama61}) introduced the
odd-dimensional counterpart of the notion of complex manifold. Their
idea was to consider the product $M\times \mathbb{R}$ of an almost
contact manifold with the real line and to define on it a tensor
field $J$ of type $(1,1)$ by
\begin{equation}\label{complex}
J\left(X,f\frac{\d}{\d t}\right)=\left(\phi X - f\xi, \eta(X)\frac{\d}{\d t}\right)
\end{equation}
for any $X\in\Gamma(TM)$ and $f\in C^{\infty}(M\times\mathbb{R})$. One easily checks that $J^{2}=-I$,
 giving an almost complex structure on $M\times \mathbb{R}$. Then one says that, by definition, the almost contact structure $(\phi,\xi,\eta)$
 is \emph{normal} if $J$ is a complex structure. Some long but straightforward computations of the Nijenhuis torsion of $J$ yield, for any
 $X,Y\in\Gamma(TM)$,
\begin{gather*}
[J,J]\left((X,0),(Y,0)\right)=\left(N^{(1)}(X,Y),N^{(2)}(X,Y)\frac{\d}{\d t}\right)\\
[J,J]\left((X,0),\left(0,\frac{\d}{\d
t}\right)\right)=\left(N^{(3)}(X),N^{(4)}(X)\frac{\d}{\d t}\right),
\end{gather*}
where we have put
\begin{gather}
N^{(1)}:=[\phi,\phi]+2\d\eta\otimes\xi \label{n1} \\
N^{(2)}(X,Y):=({\mathcal L}_{\phi X}\eta)(Y)-({\mathcal L}_{\phi Y}\eta)(X) \label{n2}  \\
N^{(3)}:={\mathcal L}_{\xi}\phi \label{n3}  \\
N^{(4)}:={\mathcal L}_{\xi}\eta.  \label{n4}
\end{gather}
Therefore $(\phi,\xi,\eta)$ is normal if and only if all the above fundamental tensors $N^{(1)}$, $N^{(2)}$, $N^{(3)}$, $N^{(4)}$ vanish identically.
However, it was proved that the vanishing of $N^{(1)}$ implies the vanishing of the remaining tensors. Thus one has the following theorem.

\begin{theorem}[\cite{sasaki-hatakeyama61}]
An almost contact manifold $(M,\phi,\xi,\eta)$ is normal if and only if the tensor field $N^{(1)}$ vanishes identically.
\end{theorem}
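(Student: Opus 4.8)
The plan is to invoke the Newlander--Nirenberg theorem, which identifies integrability of the almost complex structure $J$ on $M\times\mathbb{R}$ with the vanishing of its Nijenhuis torsion $[J,J]$. By the decomposition of $[J,J]$ displayed just before the statement, $[J,J]\equiv 0$ if and only if the four tensors $N^{(1)}$, $N^{(2)}$, $N^{(3)}$, $N^{(4)}$ vanish simultaneously. Hence one implication is essentially free: if $(\phi,\xi,\eta)$ is normal then $J$ is a complex structure, so $[J,J]\equiv 0$ and in particular $N^{(1)}\equiv 0$. All the content lies in the converse, namely that the single condition $N^{(1)}\equiv 0$ forces $N^{(2)}$, $N^{(3)}$ and $N^{(4)}$ to vanish as well; once this is in hand, $[J,J]\equiv 0$ and normality follows again from Newlander--Nirenberg.

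For the converse I would extract the lower tensors from $N^{(1)}$ by judicious substitutions, using only the pointwise identities $\phi\xi=0$, $\eta\circ\phi=0$, $\eta(\xi)=1$ and $\phi^2=-I+\eta\otimes\xi$. First I feed $\xi$ into the second slot: expanding the Nijenhuis bracket $[\phi,\phi](X,\xi)$, the two terms containing $\phi\xi$ drop out and the remainder should collapse to $\phi\,N^{(3)}(X)$, while the $2\,d\eta(X,\xi)$ part reproduces $-N^{(4)}(X)$, giving an identity of the shape $N^{(1)}(X,\xi)=\phi\,N^{(3)}(X)-N^{(4)}(X)\,\xi$. Contracting this with $\eta$ and using $\eta\circ\phi=0$ isolates $N^{(4)}(X)=-\eta(N^{(1)}(X,\xi))$, so $N^{(1)}\equiv 0$ immediately yields $N^{(4)}\equiv 0$ and, at the same time, $\phi\,N^{(3)}\equiv 0$. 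Since $\ker\phi=\mathrm{span}\{\xi\}$, the latter means $N^{(3)}(X)=\eta(N^{(3)}(X))\,\xi$; a short computation rewrites $\eta(N^{(3)}(X))$ as $-N^{(4)}(\phi X)$, which has just been shown to vanish, so $N^{(3)}\equiv 0$ as well.

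The last and most delicate step is $N^{(2)}$, which I expect to be the main obstacle, since it is a genuine two-vector identity rather than a contraction against $\xi$. Here I would substitute $\phi X$ into the first slot and contract with $\eta$. Using the preliminary observation that $\eta([\phi,\phi](A,B))=\eta([\phi A,\phi B])$ (the other three torsion terms lie in the image of $\phi$ and are killed by $\eta$), together with $\phi^2 X=-X+\eta(X)\xi$, the expansion should reorganize into $N^{(2)}(X,Y)=\eta(N^{(1)}(\phi X,Y))+\eta(X)\,N^{(4)}(\phi Y)$. Given $N^{(1)}\equiv 0$ and the already established $N^{(4)}\equiv 0$, this forces $N^{(2)}\equiv 0$, completing the chain $N^{(1)}\equiv 0\Rightarrow N^{(2)}=N^{(3)}=N^{(4)}\equiv 0$ and hence the theorem. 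The principal hazards are bookkeeping ones: keeping consistent sign conventions for $d\eta$ and $\mathcal{L}$, and carefully tracking the $\eta\otimes\xi$ correction coming from $\phi^2$, which is precisely what produces the stray $N^{(4)}(\phi Y)$ term that must be cancelled at the end.
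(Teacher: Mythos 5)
Your proposal is correct and follows exactly the route the paper indicates: the survey itself gives no proof, only the decomposition of $[J,J]$ into $N^{(1)},\dots,N^{(4)}$ and the remark (with a citation to Sasaki--Hatakeyama) that $N^{(1)}\equiv 0$ forces the other three to vanish, which is precisely the step you supply. Your three identities $N^{(1)}(X,\xi)=\phi N^{(3)}(X)-N^{(4)}(X)\xi$, \ $\eta(N^{(3)}(X))=-N^{(4)}(\phi X)$, and $N^{(2)}(X,Y)=\eta(N^{(1)}(\phi X,Y))+\eta(X)N^{(4)}(\phi Y)$ all check out with the paper's conventions (where $N^{(4)}(X)=2\,\d\eta(\xi,X)$), and the appeal to Newlander--Nirenberg for the reverse implication is the standard and correct way to pass from $[J,J]\equiv 0$ back to normality.
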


A Riemannian metric $g$ on $M$ is said to be \emph{compatible} with the almost contact structure if it satisfies
\begin{equation}\label{compatible1}
g(\phi X, \phi Y) = g(X,Y) - \eta(X)\eta(Y)
\end{equation}
for any $X,Y\in\Gamma(TM)$. Such a metric always exists, provided that $M$ is paracompact. If we fix one, say $g$, we call the geometric structure $(\phi,\xi,\eta,g)$ an \emph{almost contact metric structure}.
Note that, taking $Y=\xi$ in \eqref{compatible1} we get
\begin{equation}\label{metric1}
\eta(X)=g(X,\xi).
\end{equation}
Moreover, \eqref{compatible1} implies that $g(\phi X, Y)=-g(X,\phi Y)$ for any $X,Y\in\Gamma(TM)$, so that the bilinear form $\Phi$ defined by
\begin{equation*}
\Phi(X,Y):=g(X,\phi Y)
\end{equation*}
is in fact a $2$-form, called  \emph{fundamental $2$-form} (or \emph{Sasaki form}) of the almost contact metric manifold $(M,\phi,\xi,\eta,g)$. Since $\phi\xi=0$ one easily gets
\begin{equation}\label{sasakiform1}
i_{\xi}\Phi=0
\end{equation}
Due to \eqref{metric1}, we can decompose the tangent bundle of an almost contact metric manifold as the orthogonal sum of the codimension $1$ distribution
${\mathcal D} = \ker(\eta)$ and the $1$-dimensional distribution $\mathcal V$
defined by the vector field $\xi$.

There are many remarkable classes of almost contact metric manifolds
(for a full classification see \cite{chinea-gonzalez90}). One of
them is given by \emph{quasi-Sasakian manifolds}, that is normal
almost contact metric manifolds whose fundamental $2$-form is
closed. This class of almost contact metric manifolds, which
resembles the class of K\"{a}hler manifolds in even dimension, was
defined by Blair in his Ph.D. thesis (see \cite{blair67}) and then
studied by several authors. Among Blair's results there is the fact that
there are no quasi-Sasakian manifolds of even rank. Recall that  an
almost contact manifold $(M,\phi,\xi,\eta)$ of dimension $2n+1$ is
said to be of rank $2p+1$ if, for some $p\leq n$,  one has that
$\eta\wedge(\d\eta)^p \neq 0$ at any point of $M$ and
$(\d\eta)^{p+1}\equiv 0$, whereas one says that $M$ has rank $2p$ if
$(\d\eta)^p \neq 0$ and $\eta\wedge (\d\eta)^{p} \equiv 0$ on $M$. If the
rank is maximal,  $\eta$ becomes a contact form. We now
examine the minimal rank case.

\begin{definition}
An \emph{almost coK\"{a}hler manifold} is an almost contact metric manifold $(M,\phi,\xi,\eta,g)$ such that the fundamental $2$-form $\Phi$ and the
$1$-form $\eta$ are closed. If in addition the almost contact structure is normal, we say that $M$ is a \emph{coK\"{a}hler manifold}.
\end{definition}

Any almost coK\"{a}hler manifold is canonically a cosymplectic
manifold, with cosymplectic structure determined by the $1$-form
$\eta$, the $2$-form $\Phi$, and $\xi$ as Reeb vector field. This can be proved by considering a
\emph{$\phi$-basis}, that is a local orthonormal basis
$\left\{X_{1},\ldots,X_{n},Y_{1},\ldots,Y_{n},\xi\right\}$, where
for each $i\in\left\{1,\ldots,n\right\}$, $Y_i=\phi X_i$. Such a
basis exists in every almost contact metric manifold (see e.g. page
44 of \cite{blairbook2010}). Then
\begin{equation*}
\eta\wedge\Phi^n (\xi,X_{1},\ldots,X_{n},Y_{1},\ldots,Y_{n}) \neq 0.
\end{equation*}
Conversely, any cosymplectic manifold carries an associated almost
coK\"{a}hler structure, as shown in the following theorem.

\begin{theorem}[cf. \cite{blair76,deleon89}]\label{metrizability}
Let $(M,\eta,\omega)$  be an almost cosymplectic manifold of
dimension $2n+1$ with Reeb vector field $\xi$. Then, there exists an almost contact metric
structure $(\phi,\xi,\eta,g)$ on $M$ (with the same $\eta$ and
$\xi$), whose fundamental $2$-form $\Phi$ coincides with $\omega$.
\end{theorem}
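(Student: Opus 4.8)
The plan is to carry out, fiberwise and smoothly, the standard ``compatible triple'' construction from symplectic linear algebra, applied to the distribution $\mathcal{D}=\ker\eta$. First I would record that $TM=\mathcal{D}\oplus\langle\xi\rangle$ since $\eta(\xi)=1$, and that the restriction $\omega|_{\mathcal{D}}$ is a smoothly varying nondegenerate $2$-form on the rank-$2n$ bundle $\mathcal{D}$. Nondegeneracy follows from the hypotheses: if some $v\in\mathcal{D}$ satisfied $i_v(\omega|_{\mathcal{D}})=0$, then since also $\omega(v,\xi)=-(i_\xi\omega)(v)=0$ we would obtain $i_v\omega=0$ on all of $TM$, whence $i_v(\eta\wedge\omega^n)=\eta(v)\,\omega^n-\eta\wedge(n\,i_v\omega\wedge\omega^{n-1})=0$ with $v\neq 0$, contradicting that $\eta\wedge\omega^n$ is a volume form. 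Thus $(\mathcal{D},\omega|_{\mathcal{D}})$ is a symplectic vector bundle.

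Next, I would fix an auxiliary Riemannian metric $h$ on $M$ (which exists by paracompactness) and let $A$ be the bundle endomorphism of $\mathcal{D}$ defined by $\omega(X,Y)=h(AX,Y)$ for $X,Y\in\mathcal{D}$. Then $A$ is $h$-skew-adjoint and invertible, so $-A^2=A^\top A$ is $h$-symmetric and positive definite; let $P=(-A^2)^{1/2}$ be its positive square root. The key smoothness point, which I expect to be the only genuine (non-algebraic) obstacle, is that $P$ depends smoothly on $A$: this holds because the positive square root is a smooth map on the open cone of positive-definite symmetric operators. Since $P$ is a function of $A^2$, it commutes with $A$, and I would set $J:=P^{-1}A$ on $\mathcal{D}$. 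A short computation then gives $J^2=P^{-2}A^2=-\mathrm{id}_{\mathcal{D}}$ and, using $A^\top=-A$ together with $P^{-2}=-A^{-2}$, also $J^\top J=-AP^{-2}A=\mathrm{id}_{\mathcal{D}}$, so $J$ is an $h$-orthogonal complex structure on $\mathcal{D}$.

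Finally I would assemble the structure. Define $\phi$ by $\phi|_{\mathcal{D}}=-J$ and $\phi\xi=0$, and define $g$ by declaring $\xi$ to be a unit vector orthogonal to $\mathcal{D}$ and setting $g|_{\mathcal{D}}(X,Y)=h(PX,Y)$; since $P$ is positive definite, $g$ is a genuine Riemannian metric. The identity $\phi^2=-I+\eta\otimes\xi$ then holds by checking it separately on $\mathcal{D}$ (where $\phi^2=J^2=-\mathrm{id}$ and $\eta=0$) and on $\xi$ (where both sides vanish). Because $J$ commutes with $P$ and is $h$-orthogonal, one has $g(JX,JY)=h(PJX,JY)=h(JPX,JY)=h(PX,Y)=g(X,Y)$ on $\mathcal{D}$, which yields the compatibility $g(\phi X,\phi Y)=g(X,Y)-\eta(X)\eta(Y)$ after decomposing $X,Y$ along $\mathcal{D}\oplus\langle\xi\rangle$.

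For the fundamental form, I would compute, for $X,Y\in\mathcal{D}$,
\[
g(X,\phi Y)=-g(X,JY)=-h(PX,P^{-1}AY)=-h(X,AY)=\omega(X,Y),
\]
while both $\Phi$ and $\omega$ vanish as soon as one argument is $\xi$ (indeed $\Phi(X,\xi)=g(X,\phi\xi)=0$ and $\omega(X,\xi)=-(i_\xi\omega)(X)=0$). Hence $\Phi=\omega$ on all of $TM$ by bilinearity and the decomposition above. This produces the desired almost contact metric structure $(\phi,\xi,\eta,g)$, retaining the given $\eta$ and $\xi$ and satisfying $\Phi=\omega$, completing the proof.
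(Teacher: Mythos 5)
Your proof is correct and follows essentially the same route as the paper: decompose $TM=\ker\eta\oplus\Rr\xi$, observe that $\omega$ restricts to a symplectic structure on the rank-$2n$ bundle $\ker\eta$, equip that bundle with a compatible complex structure and metric, and extend by $\phi\xi=0$, $g(\xi,\xi)=1$. The only difference is that where the paper simply cites a reference (Hummel) for the existence of the compatible pair $(\phi_{\mathcal D},g_{\mathcal D})$, you carry out the polar-decomposition construction explicitly, and you derive the nondegeneracy of $\omega|_{\ker\eta}$ directly from the volume-form condition rather than from the isomorphism $\flat_{(\eta,\omega)}$; both steps check out.
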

\begin{proof}
    Let $\xi$ be the Reeb vector field of $\left( M,\eta, \omega
    \right)$ (see page~\pageref{reeb}). It satisfies the equalities
 $$\eta(\xi)=1, \qquad i_{\xi}\omega=0.$$
 For every non-zero $X\in \Gamma(\ker (\eta))$, we have $i_X \omega = i_X \omega + \eta\left( X
 \right)\eta = \flat_{\left( \eta, \omega \right)}\left( X \right)$, which is
 non-zero since $\flat_{\left( \eta, \omega \right)}$ is an isomorphism.
 This shows that the restriction of $\omega$ to $ {\mathcal D}:= \ker (\eta) $ is a
 non-degenerate quadratic form. Denote by $\Rr\xi$ the distribution generated by
 $\xi$.
Then there exists a vector bundle  endomorphism $\phi_{{\mathcal
D}}:{\mathcal D}\to {\mathcal D}$ and a metric $g_{{\mathcal D}}$ on
${\mathcal D}$ such that for any vector fields $X,Y\in
\Gamma({\mathcal D})$,
$$g_{{\mathcal D}}(X,\phi_{{\mathcal D}} Y)=\omega(X,Y),$$
and $\phi_{{\mathcal D}}^{2}=-I_{{\mathcal D}}$, where $I_{{\mathcal
D}}$ denotes the identity map on ${\mathcal D}$  (see for
example~\cite{hummel97}). Now, we define a metric $g$ on $M$ by
$$g(X, Y)=g_{{\mathcal D}}(X, Y), \quad g(X, \xi)=0, \quad g(\xi, \xi)=1,$$
for any $X,Y\in \Gamma({\mathcal D})$. Next, we define a vector
bundle endomorphism  $\phi:TM\to TM$ by
$$ \phi(X)=\phi_{{\mathcal D}}(X),\quad \phi(\xi)=0,$$
for any  $X\in \Gamma({\mathcal D})$. Then, it is easy to check that
$(M,\phi,\xi,\eta,g)$ is an almost contact metric structure and
$$g(X,\phi Y)=\omega(X,Y).$$
\end{proof}

\begin{corollary}
Any cosymplectic manifold $(M,\eta,\omega)$ admits an almost
coK\"{a}hler structure $(\phi,\xi,\eta,g)$ with fundamental 2-form
$\Phi=\omega$.
\end{corollary}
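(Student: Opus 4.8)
The plan is to read this off directly from Theorem~\ref{metrizability}, which has just been proved, together with the definitions of cosymplectic and of almost coK\"ahler. The only point to check is that the metric structure produced by the theorem automatically satisfies the two closedness conditions that upgrade an almost contact metric structure to an almost coK\"ahler one.

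First I would observe that every cosymplectic manifold is in particular an \emph{almost} cosymplectic manifold: the hypothesis that $\eta\wedge\omega^n$ be a volume form is exactly the almost cosymplectic condition, and the cosymplectic assumption merely adds that $\eta$ and $\omega$ are closed. Hence Theorem~\ref{metrizability} applies to $(M,\eta,\omega)$ and yields an almost contact metric structure $(\phi,\xi,\eta,g)$, with the same $\eta$ and the same Reeb field $\xi$, whose fundamental $2$-form $\Phi$ coincides with $\omega$.

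Next I would invoke the cosymplectic hypothesis itself. By definition of a cosymplectic manifold, $\d\eta=0$ and $\d\omega=0$. Since the structure produced above satisfies $\Phi=\omega$, closedness of $\omega$ immediately gives $\d\Phi=0$, while $\d\eta=0$ holds by assumption. Thus the almost contact metric manifold $(M,\phi,\xi,\eta,g)$ has both its fundamental $2$-form $\Phi$ and its $1$-form $\eta$ closed, which is precisely the definition of an almost coK\"ahler manifold. This establishes the claim, with fundamental $2$-form $\Phi=\omega$ as asserted.

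I do not expect any genuine obstacle here: the substance of the argument is entirely contained in Theorem~\ref{metrizability}, and the corollary only records that when the ambient forms are already closed, the metrization automatically lands in the almost coK\"ahler class rather than merely the almost contact metric one. The sole thing to be careful about is not to overclaim normality: the resulting structure is \emph{almost} coK\"ahler, and one should resist asserting that it is coK\"ahler, since the construction in Theorem~\ref{metrizability} gives no control over the normality tensor $N^{(1)}$.
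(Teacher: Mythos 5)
Your argument is correct and is exactly the intended deduction: the paper states this corollary immediately after Theorem~\ref{metrizability} with no separate proof, precisely because a cosymplectic manifold is an almost cosymplectic one with $\d\eta=0$ and $\d\omega=0$, so the theorem's output with $\Phi=\omega$ is automatically almost coK\"ahler. Your closing caution about not overclaiming normality is also well placed.
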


CoK\"{a}hler manifolds were studied for the first time in \cite{blair67} and \cite{blair-goldberg67} in the context of quasi-Sasakian manifolds, since they are quasi-Sasakian manifolds of rank $1$.

We collect some basic properties of almost coK\"{a}hler
manifolds. The first is that, due to the closedness of $\eta$,  one
has that $\mathcal D$ is integrable and, moreover, the Reeb vector
field is an infinitesimal automorphism with respect to $\mathcal D$,
that is $[\xi,X]\in\Gamma(\mathcal D)$ for any $X\in\Gamma(\mathcal
D)$.

Notice that, by \eqref{sasakiform1} and the Cartan formula for the
Lie derivative, one obtains
\begin{equation}\label{sasakiform2}
{\mathcal L}_{\xi}\Phi=0.
\end{equation}

The next theorem shows some properties which almost coK\"{a}hler manifolds share with contact metric manifolds (cf. \cite[Theorem 6.2]{blairbook2010}):

\begin{theorem}\label{almostk1}
For an almost coK\"{a}hler structure $(\phi,\xi,\eta,g)$ the  tensor fields $N^{(2)}$ and $N^{(4)}$ vanish. Moreover, $N^{(1)}$ coincides with the
Nijenhuis tensor of $\phi$ and $N^{(3)}$ vanishes if and only if $\xi$ is a Killing vector field.
\end{theorem}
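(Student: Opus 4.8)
The plan is to compute the four fundamental tensors $N^{(1)}$, $N^{(2)}$, $N^{(3)}$, $N^{(4)}$ defined in \eqref{n1}--\eqref{n4} under the standing hypotheses that $\eta$ and $\Phi$ are closed, and to extract the claimed vanishing and equivalence statements. The easy parts come first. Since $N^{(4)}=\mathcal{L}_\xi\eta=i_\xi\d\eta+\d(i_\xi\eta)$ by Cartan's formula, and since $\d\eta=0$ while $\eta(\xi)=1$ gives $\d(i_\xi\eta)=\d(1)=0$, we get $N^{(4)}=0$ immediately. For $N^{(1)}=[\phi,\phi]+2\,\d\eta\otimes\xi$, the closedness $\d\eta=0$ kills the second term, so $N^{(1)}=[\phi,\phi]$ is exactly the Nijenhuis tensor of $\phi$; this is essentially a definitional observation once $\d\eta=0$ is used.

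\medskip

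The next step is $N^{(2)}$. I would expand each term $(\mathcal{L}_{\phi X}\eta)(Y)$ via Cartan's formula as $(\mathcal{L}_{\phi X}\eta)(Y)=(i_{\phi X}\d\eta+\d i_{\phi X}\eta)(Y)$. The $\d\eta$ contribution vanishes by closedness, leaving $N^{(2)}(X,Y)=(\d\eta(\phi X))(Y)-(\d\eta(\phi Y))(X)+\bigl((\phi X)(\eta(Y))-(\phi Y)(\eta(X))\bigr)$ after recombining; more cleanly, using $\d\eta=0$ one reduces $N^{(2)}$ to an expression involving only $\eta([\phi X,\phi Y])$-type terms, and since $\eta\circ\phi=0$ together with $\d\eta(\,\cdot\,,\,\cdot\,)=\tfrac12\bigl(X\eta(Y)-Y\eta(X)-\eta([X,Y])\bigr)=0$ forces $\eta([X,Y])=X\eta(Y)-Y\eta(X)$, the antisymmetric combination collapses to zero. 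The bookkeeping here is the first place where sign conventions and the normalization of $\d$ must be handled with care.

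\medskip

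The genuinely substantive step is $N^{(3)}=\mathcal{L}_\xi\phi$ and its relation to $\xi$ being Killing. Here I would bring in the closedness of $\Phi$ together with the compatibility identity \eqref{compatible1} and $\Phi(X,Y)=g(X,\phi Y)$. The strategy is to relate $\mathcal{L}_\xi\phi$ to $\mathcal{L}_\xi g$. Using $\mathcal{L}_\xi\Phi=0$ from \eqref{sasakiform2}, which already follows from $i_\xi\Phi=0$ and $\d\Phi=0$ via Cartan's formula, I would differentiate the identity $\Phi(X,Y)=g(X,\phi Y)$ along $\xi$ to obtain a relation of the form $0=(\mathcal{L}_\xi\Phi)(X,Y)=(\mathcal{L}_\xi g)(X,\phi Y)+g(X,(\mathcal{L}_\xi\phi)Y)$. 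This ties $\mathcal{L}_\xi\phi$ to the symmetric tensor $\mathcal{L}_\xi g$. The plan is then to show that $\mathcal{L}_\xi\phi=0$ forces $\mathcal{L}_\xi g=0$ (i.e.\ $\xi$ Killing) and conversely, by checking the components on $\mathcal{D}=\ker\eta$ and on $\mathbb{R}\xi$ separately. On the $\xi$-direction, $\mathcal{L}_\xi g(\xi,\cdot)=0$ follows from $N^{(4)}=0$ and $g(\cdot,\xi)=\eta$; on $\mathcal{D}$, one uses that $\phi$ restricts to an isomorphism so that $g(X,(\mathcal{L}_\xi\phi)Y)=0$ for all $X$ is equivalent to $(\mathcal{L}_\xi\phi)Y=0$, and the displayed identity converts this into the vanishing of the $\mathcal{D}$-part of $\mathcal{L}_\xi g$. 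The main obstacle I anticipate is exactly this last equivalence: one must verify that $\mathcal{L}_\xi g$ has no component mixing $\mathcal{D}$ and $\mathbb{R}\xi$ independently of the Killing hypothesis, so that the symmetric bilinear form $\mathcal{L}_\xi g$ is determined entirely by its restriction to $\mathcal{D}$, and there the nondegeneracy of $g|_{\mathcal{D}}$ and the isomorphism property of $\phi|_{\mathcal{D}}$ let one pass back and forth between $\mathcal{L}_\xi\phi=0$ and $\mathcal{L}_\xi g=0$. This step requires keeping track of which identities ($\d\eta=0$, $\d\Phi=0$, $\eta\circ\phi=0$, $\phi\xi=0$, $\phi^2=-I+\eta\otimes\xi$) are invoked at each point and is where the bulk of the careful tensor computation lives.
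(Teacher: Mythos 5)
Your proposal is correct and takes essentially the same route as the paper: both arguments reduce $N^{(2)}$ and $N^{(4)}$ to expressions in $\d\eta$ (the paper records $N^{(2)}(X,Y)=2\d\eta(\phi X,Y)-2\d\eta(X,\phi Y)$ and $N^{(4)}(X)=2\d\eta(\xi,X)$, which vanish by closedness of $\eta$), and both derive the $N^{(3)}$/Killing equivalence from the identity $0=(\mathcal{L}_{\xi}\Phi)(X,Y)=(\mathcal{L}_{\xi}g)(X,\phi Y)+g(X,(\mathcal{L}_{\xi}\phi)Y)$ combined with $N^{(4)}=0$, which is precisely your device for controlling the $\mathbb{R}\xi$-components of $\mathcal{L}_{\xi}g$. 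The only blemish is the spurious terms $(\phi X)(\eta(Y))-(\phi Y)(\eta(X))$ in your intermediate display for $N^{(2)}$ --- the Cartan term is $\d(i_{\phi X}\eta)=\d(\eta(\phi X))=0$ because $\eta\circ\phi=0$ --- but your subsequent cleaner reduction disposes of this and the conclusion is unaffected.
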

\begin{proof}
The claim straightforwardly follows once one observes that $N^{(2)}$ and $N^{(4)}$ can be written as
\begin{gather*}
N^{(2)}(X,Y)=2\d\eta(\phi X,Y)-2\d\eta(X,\phi  Y), \ \ \
N^{(4)}(X)=2\d\eta(\xi,X).
\end{gather*}
For the last part of the theorem, notice that by \eqref{sasakiform2} one has
\begin{equation*}
0=({\mathcal L}_{\xi}\Phi)(X,Y)=({\mathcal L}_{\xi}g)(X,\phi Y)+g(X,({\mathcal L}_{\xi}\phi) Y),
\end{equation*}
from which, by using the vanishing of $N^{(4)}$, the assertion follows.
\end{proof}

\begin{corollary}
In any almost coK\"{a}hler manifold the integral curves of the Reeb vector field are geodesics.
\end{corollary}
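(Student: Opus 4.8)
The plan is to show that the integral curves of the Reeb field $\xi$ are geodesics by verifying directly that $\nabla_\xi\xi=0$, where $\nabla$ denotes the Levi-Civita connection of the compatible metric $g$. I would exploit the $g$-orthogonal splitting $TM=\mathcal D\oplus\Rr\xi$ recorded above, and establish separately that the component of $\nabla_\xi\xi$ along $\xi$ and its component along $\mathcal D$ both vanish.

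For the $\xi$-component, I would use that $\xi$ has unit length: taking $X=\xi$ in \eqref{metric1} gives $g(\xi,\xi)=\eta(\xi)=1$, so differentiating along $\xi$ and invoking metric compatibility of $\nabla$ yields $2g(\nabla_\xi\xi,\xi)=\xi\big(g(\xi,\xi)\big)=0$. Hence $\nabla_\xi\xi$ is $g$-orthogonal to $\xi$, i.e. $\nabla_\xi\xi\in\Gamma(\mathcal D)$, and it remains only to test it against $\mathcal D$.

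For the $\mathcal D$-component, I would take an arbitrary $X\in\Gamma(\mathcal D)$ and expand, using that $\nabla$ is metric, $g(\nabla_\xi\xi,X)=\xi\big(g(\xi,X)\big)-g(\xi,\nabla_\xi X)$. Substituting the torsion-free identity $\nabla_\xi X=\nabla_X\xi+[\xi,X]$, the first term vanishes because $g(\xi,X)=\eta(X)=0$ for $X\in\Gamma(\mathcal D)$, while $g(\xi,\nabla_X\xi)=\tfrac12 X\big(g(\xi,\xi)\big)=0$ again by unit length. This leaves $g(\nabla_\xi\xi,X)=-g(\xi,[\xi,X])=-\eta([\xi,X])$.

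The only genuine input is then $\eta([\xi,X])=0$, which is precisely the fact, already recorded just before \eqref{sasakiform2}, that the Reeb field is an infinitesimal automorphism of $\mathcal D$, i.e. $[\xi,X]\in\Gamma(\mathcal D)$; equivalently it follows from $\d\eta=0$ or from the vanishing of $N^{(4)}$ in Theorem \ref{almostk1}. Combining the two computations gives $\nabla_\xi\xi=0$, so the integral curves of $\xi$ are geodesics. I expect no serious obstacle here: the content is carried entirely by the closedness of $\eta$, and the rest is the routine bookkeeping of a Koszul-type expansion together with the unit-length normalization of $\xi$.
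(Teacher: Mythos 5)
Your argument is correct and coincides in substance with the paper's own proof: both reduce $g(\nabla_\xi\xi,X)=0$ to the identity $\xi(\eta(X))=\eta([\xi,X])$ (the vanishing of $N^{(4)}={\mathcal L}_\xi\eta$, which follows from $\d\eta=0$) combined with metric compatibility of $\nabla$ and the normalization $g(\xi,\xi)=1$. The only difference is organizational --- you treat the $\xi$- and $\mathcal D$-components of $\nabla_\xi\xi$ separately, whereas the paper runs the same Koszul-type computation for an arbitrary vector field $X$ in a single stroke.
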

\begin{proof}
The vanishing of $N^{(4)}$ yields, for any $X\in\Gamma(TM)$,
\begin{align*}
0&=\xi(\eta(X))-\eta([\xi,X])\\
&=\xi(g(X,\xi))-g(\nabla_{\xi}X,\xi)+g(\nabla_{X}\xi,\xi)\\
&=g(X,\nabla_{\xi}\xi),
\end{align*}
since $g(\nabla_{X}\xi,\xi)=X(g(\xi,\xi))-g(\xi,\nabla_{X}\xi)$, from which it follows that $2g(\nabla_{X}\xi,\xi)=0$.
\end{proof}

Another general property of almost coK\"{a}hler manifolds concerns the harmonicity of the forms $\eta$ and $\Phi$:

\begin{theorem}[\cite{goldberg-yano69}]\label{harmonicity}
The $1$-form $\eta$ and the $2$-form $\Phi$ of any almost coK\"{a}hler manifold are harmonic.
\end{theorem}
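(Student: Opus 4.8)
The plan is to show that both $\eta$ and $\Phi$ are closed and co-closed, so that harmonicity follows directly from the Hodge--de Rham characterization $\Delta\alpha=0 \iff \d\alpha=0$ and $\delta\alpha=0$. Closedness is free: by the very definition of an almost coK\"{a}hler manifold, $\d\eta=0$ and $\d\Phi=0$. So the entire content reduces to proving that the codifferentials vanish, i.e. $\delta\eta=0$ and $\delta\Phi=0$. Here I assume the manifold is oriented (the volume form $\eta\wedge\Phi^n$ provides the orientation) and, for the cleanest statement, either compact or with the understanding that ``harmonic'' means annihilated by $\Delta=\d\delta+\delta\d$; the pointwise computation of $\delta$ below is purely local and does not need compactness.

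First I would treat $\eta$. Recall $\delta=-*\,\d\,*$ up to the usual sign depending on degree and dimension, but rather than compute with the Hodge star I would use the divergence formulation: $\delta\eta=-\operatorname{div}\xi$, since $\eta$ is metrically dual to $\xi$ by \eqref{metric1}. The key input is the corollary just proved, that $\nabla_\xi\xi=0$, together with the fact that $\xi$ is an infinitesimal automorphism of $\mathcal D=\ker\eta$. Concretely, I would expand $\operatorname{div}\xi=\sum_i g(\nabla_{X_i}\xi,X_i)+g(\nabla_{Y_i}\xi,Y_i)+g(\nabla_\xi\xi,\xi)$ in a $\phi$-basis. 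The last term vanishes by the geodesic corollary. For the remaining terms I would use the vanishing of $N^{(4)}={\mathcal L}_\xi\eta$ from Theorem \ref{almostk1}, which combined with $\d\eta=0$ forces $\nabla\xi$ to be skew-symmetric when restricted appropriately, making the trace vanish. This gives $\delta\eta=0$.

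Next I would handle $\Phi$. The natural route is to relate $\delta\Phi$ to the covariant derivative of the structure tensors. Using $\Phi(X,Y)=g(X,\phi Y)$, one has $\delta\Phi=-\sum_i(\nabla_{E_i}\Phi)(E_i,\cdot)$ over an orthonormal frame $\{E_i\}$, and $(\nabla_X\Phi)(Y,Z)=g(Y,(\nabla_X\phi)Z)$. The fundamental identity I would invoke is the standard consequence of $\d\Phi=0$ and $\d\eta=0$ in the almost coK\"{a}hler setting: these closedness conditions translate, via the general formula expressing $\d\Phi$ in terms of $(\nabla\phi)$ and $N^{(1)}$, into a symmetry/skew-symmetry relation on $\nabla\phi$ whose trace over an orthonormal frame vanishes. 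I would pick the frame adapted to the splitting $TM=\mathcal D\oplus\mathbb R\xi$ and use $\phi\xi=0$, ${\mathcal L}_\xi\Phi=0$ from \eqref{sasakiform2}, and the vanishing of $N^{(2)}$ to kill the cross terms involving $\xi$.

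The main obstacle is the $\delta\Phi=0$ computation: unlike $\delta\eta$, it does not reduce to a single clean vector-field identity but requires the precise relationship between $\d\Phi=0$ and the contraction $\sum_i(\nabla_{E_i}\phi)E_i$. The cleanest way to finish is to invoke the coderivative formula $\delta\Phi=-\sum_i i_{E_i}\nabla_{E_i}\Phi$ and then show the contracted $\nabla\phi$ term vanishes using the ``first canonical connection'' or, more elementarily, by differentiating the compatibility identity \eqref{compatible1} and the closedness of $\Phi$ together; the skew part of $\nabla\phi$ is governed by $\d\Phi$ and its symmetric part by $N^{(1)}$, and only the antisymmetric contraction survives in $\delta\Phi$, which $\d\Phi=0$ annihilates. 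I expect that carefully organizing these contractions in the $\phi$-basis, rather than any single conceptual leap, is where the real work lies.
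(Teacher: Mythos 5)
The paper itself gives no proof of this theorem --- it only cites Goldberg--Yano --- so there is nothing in the text to compare against; judged on its own, your proposal has the right skeleton (closed plus coclosed implies harmonic) but leaves the substantive half unproven and contains one incorrect step. For $\delta\eta=0$ you assert that $N^{(4)}=0$ together with $\d\eta=0$ ``forces $\nabla\xi$ to be skew-symmetric''; in fact $\d\eta(X,Y)=g(\nabla_X\xi,Y)-g(\nabla_Y\xi,X)$, so closedness of $\eta$ makes the bilinear form $g(\nabla_{\cdot}\xi,\cdot)$ \emph{symmetric}, which by itself says nothing about its trace. The trace does vanish, but for a different reason: by the later Proposition on the operator $h$ one has $\nabla\xi=-\phi h$ with $h$ symmetric and $h\phi=-\phi h$, whence $\operatorname{tr}(\phi h)=\operatorname{tr}(h\phi)=-\operatorname{tr}(\phi h)=0$. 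For $\delta\Phi=0$ you describe a strategy (contract $\nabla\phi$ in a $\phi$-basis and separate the part governed by $\d\Phi$ from the part governed by $N^{(1)}$) but explicitly defer ``the real work''; after substituting $g((\nabla_X\phi)Y,Z)=\tfrac12 g([\phi,\phi](Y,Z),\phi X)$ one still has to prove the cancellation $\sum_i g([\phi,\phi](Z,E_i),\phi E_i)=0$, which is exactly the nontrivial content of the theorem and is not established anywhere in your outline.

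Both difficulties evaporate if you compute with the Hodge star instead of with $\nabla$. On an almost coK\"ahler manifold one has, up to universal nonzero constants, $\star\eta=\frac{1}{n!}\Phi^{n}$ and $\star\Phi=\frac{1}{(n-1)!}\,\eta\wedge\Phi^{n-1}$; both identities are immediate in a $\phi$-basis, using $i_\xi\Phi=0$ and the fact that $\frac{1}{n!}\eta\wedge\Phi^{n}$ is the Riemannian volume form. Since $\d\eta=0$ and $\d\Phi=0$, both right-hand sides are closed, so $\delta\eta=\pm\star\,\d\,\star\eta=0$ and $\delta\Phi=\pm\star\,\d\,\star\Phi=0$ with no curvature or $\nabla\phi$ computation at all; this is the standard Goldberg--Yano argument. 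If you prefer the divergence route for $\eta$, either use $\operatorname{tr}(\phi h)=0$ as above, or observe that ${\mathcal L}_{\xi}(\eta\wedge\Phi^{n})=\d(i_\xi(\eta\wedge\Phi^{n}))=\d(\Phi^{n})=0$, so $\xi$ preserves the volume form and $\operatorname{div}\xi=0$.
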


Now recall the following general formula, which holds for any almost contact metric manifold (\cite[Lemma 6.1]{blairbook2010}),
\begin{align*}
2g((\nabla_{X}\phi)Y,Z)&=3\d\Phi(X,\phi Y,\phi Z) - 3\d\Phi(X, Y, Z) + g(N^{(1)}(Y,Z),\phi X)\\
&\quad + N^{(2)}(Y,Z)\eta(X) + 2\d\eta(\phi Y,X)\eta(Z) - 2\d\eta(\phi Z,X)\eta(Y).
\end{align*}

Therefore we have the following result for almost coK\"{a}hler manifolds.

\begin{corollary}
In any almost coK\"{a}hler manifold one has
\begin{equation}\label{nablaphi1}
g((\nabla_{X}\phi)Y,Z)=  \frac{1}{2}g([\phi,\phi](Y,Z),\phi X).
\end{equation}
\end{corollary}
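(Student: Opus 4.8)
The plan is to specialize the general almost-contact-metric formula recalled immediately before the corollary to the almost coK\"{a}hler setting, where several of its terms vanish. First I would record the two defining closedness conditions: since the manifold is almost coK\"{a}hler, both $\d\Phi=0$ and $\d\eta=0$. This at once annihilates four of the six terms on the right-hand side of the general identity, namely the two terms containing a factor $\d\Phi$ and the two terms containing a factor $\d\eta$, leaving only
\begin{equation*}
2g((\nabla_{X}\phi)Y,Z)=g(N^{(1)}(Y,Z),\phi X)+N^{(2)}(Y,Z)\eta(X).
\end{equation*}

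Next I would invoke Theorem~\ref{almostk1}, which asserts that on an almost coK\"{a}hler manifold the tensor $N^{(2)}$ vanishes identically and that $N^{(1)}$ coincides with the Nijenhuis tensor $[\phi,\phi]$ of $\phi$. Substituting these two facts eliminates the $N^{(2)}$ term altogether and rewrites the surviving term as $g([\phi,\phi](Y,Z),\phi X)$, yielding
\begin{equation*}
2g((\nabla_{X}\phi)Y,Z)=g([\phi,\phi](Y,Z),\phi X).
\end{equation*}
Dividing through by $2$ produces exactly the asserted formula \eqref{nablaphi1}.

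There is essentially no genuine obstacle here: the corollary is a direct substitution once the general almost-contact-metric formula and Theorem~\ref{almostk1} are available. The only point demanding a moment's care is the bookkeeping, namely verifying that each of the four discarded terms really does carry either a $\d\Phi$ or a $\d\eta$ factor, and that the replacement $N^{(1)}=[\phi,\phi]$ is performed before the factor $\frac{1}{2}$ is extracted, so that the numerical coefficient in \eqref{nablaphi1} comes out correctly.
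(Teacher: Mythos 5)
Your proposal is correct and is exactly the argument the paper intends: the corollary is presented as an immediate consequence of the general almost-contact-metric formula quoted just before it, with $\d\Phi=0$, $\d\eta=0$ killing four terms and Theorem~\ref{almostk1} disposing of $N^{(2)}$ and identifying $N^{(1)}$ with $[\phi,\phi]$. The bookkeeping you describe, including the final division by $2$, matches the paper's (implicit) derivation.
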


From \eqref{nablaphi1} it follows in particular that
\begin{equation}\label{nablaphixi}
\nabla_{\xi}\phi=0,
\end{equation}
which was already proved in another way by Olszak (\cite{olszak81}). Another easy consequence of \eqref{nablaphi1} is that if $M$ is coK\"{a}hler then
$\phi$ is parallel. However also the converse holds. In fact we have the following more general result.

\begin{theorem}[\cite{blair67}]\label{cokahlercondition1}
An almost contact metric manifold $(M,\phi,\xi,\eta,g)$ is coK\"{a}hler if and only if $\nabla\phi=0$ or, equivalently, $\nabla\Phi=0$.
\end{theorem}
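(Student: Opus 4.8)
The plan is to treat the three equivalent statements in turn, exploiting the structural results already established. I would first dispose of the equivalence $\nabla\phi=0\iff\nabla\Phi=0$, which is the easy part. Since $\Phi(X,Y)=g(X,\phi Y)$ and the Levi-Civita connection is metric, a one-line computation gives $(\nabla_Z\Phi)(X,Y)=g(X,(\nabla_Z\phi)Y)$ for all $X,Y,Z\in\Gamma(TM)$. Because $g$ is non-degenerate, the left-hand side vanishes identically precisely when $(\nabla_Z\phi)Y=0$ for all $Z,Y$. Thus it suffices to relate one of these parallelism conditions to the coK\"ahler condition.

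For the forward implication (coK\"ahler $\Rightarrow\nabla\phi=0$), a coK\"ahler manifold is in particular almost coK\"ahler, so the Corollary formula \eqref{nablaphi1}, namely $g((\nabla_X\phi)Y,Z)=\frac{1}{2}g([\phi,\phi](Y,Z),\phi X)$, is available. Normality means $N^{(1)}=0$; and since $\eta$ is closed, Theorem \ref{almostk1} identifies $N^{(1)}$ with the Nijenhuis tensor $[\phi,\phi]$. Hence $[\phi,\phi]=0$, the right-hand side of \eqref{nablaphi1} vanishes, and the non-degeneracy of $g$ forces $\nabla\phi=0$.

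For the reverse implication ($\nabla\phi=0\Rightarrow$ coK\"ahler) I must recover all three defining features: $\eta$ closed, $\Phi$ closed, and normality. I would first show that $\xi$ is parallel, by differentiating $\phi^2=-I+\eta\otimes\xi$ and using $\nabla\phi=0$ together with $g(\nabla_X\xi,\xi)=\frac{1}{2}X(g(\xi,\xi))=0$ to conclude $\nabla_X\xi=0$. Then $\nabla\eta=0$ follows from $\eta(\cdot)=g(\cdot,\xi)$, and $\nabla\Phi=0$ from the identity in the first paragraph; since parallel forms are closed, both $\eta$ and $\Phi$ are closed and the structure is almost coK\"ahler. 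The remaining point is normality, and I expect this to be the main obstacle. Here I would expand the Nijenhuis tensor $[\phi,\phi](X,Y)=\phi^2[X,Y]+[\phi X,\phi Y]-\phi[\phi X,Y]-\phi[X,\phi Y]$, replace each Lie bracket by covariant derivatives via the torsion-free identity $[X,Y]=\nabla_X Y-\nabla_Y X$, and push $\phi$ through the derivatives using $\nabla\phi=0$. All the resulting terms cancel in pairs, yielding $[\phi,\phi]=0$, hence $N^{(1)}=0$ and normality.

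The genuine content is confined to this last Nijenhuis computation; everything else follows formally from the parallelism of $\phi$, $g$, $\xi$, and $\eta$. The cancellation is purely algebraic once the brackets are rewritten, so the only real care needed is the bookkeeping of the four terms, using nothing beyond torsion-freeness of $\nabla$ and $\nabla\phi=0$.
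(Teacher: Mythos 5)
Your proposal is correct. The forward direction is exactly the paper's own argument: it observes immediately before the theorem that $\nabla\phi=0$ is "another easy consequence of \eqref{nablaphi1}" once normality and $\d\eta=0$ force $[\phi,\phi]=N^{(1)}=0$. The converse is not proved in the survey at all (it is delegated to the citation of Blair), and your argument for it is a correct and complete way to fill that gap: differentiating $\phi^2=-I+\eta\otimes\xi$ and evaluating on $\xi$ does give $\nabla\xi=0$ (using $g(\nabla_X\xi,\xi)=\tfrac12 X(g(\xi,\xi))=0$), hence $\nabla\eta=0$ and $\nabla\Phi=0$, so both forms are closed because $\d$ is the alternation of $\nabla$ for a torsion-free connection; and the four-term cancellation in $[\phi,\phi](X,Y)=\phi^2[X,Y]+[\phi X,\phi Y]-\phi[\phi X,Y]-\phi[X,\phi Y]$ after substituting $[U,V]=\nabla_UV-\nabla_VU$ and commuting $\phi$ through $\nabla$ does check out, giving $N^{(1)}=[\phi,\phi]=0$ since $\d\eta=0$.
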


On an almost coK\"{a}hler manifold it is often convenient to use the operator
\begin{equation*}
h:=\frac{1}{2}{\mathcal L}_{\xi}\phi=\frac{1}{2}N^{(3)}.
\end{equation*}
The main properties of this tensor field are described in the following proposition, which can be proved straightforwardly (see also \cite{endo1994-1}).

\begin{proposition}
Let $(M,\phi,\xi,\eta,g)$ be an almost coK\"{a}hler manifold. Then the operator $h$ satisfies the following properties:
\begin{enumerate}
\item[(i)] $h$ is a symmetric operator with respect to $g$;
\item[(ii)] $h$ anti-commutes with $\phi$;
\item[(iii)] $\nabla\xi = - \phi h $.
\end{enumerate}
\end{proposition}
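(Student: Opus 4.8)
The plan is to reduce all three statements to algebraic identities relating $h$, $\phi$, and the $(1,1)$-tensor $A:=\nabla\xi$, i.e. $AX=\nabla_X\xi$. First I would record the elementary facts about $A$. Since $g(\xi,\xi)=1$ and $\nabla$ is metric, $g(AX,\xi)=0$, so $\eta\circ A=0$ and $A\xi=\nabla_\xi\xi=0$ (the latter being the geodesic property established above). Since $\eta$ is closed, $\d\eta(X,Y)=g(AX,Y)-g(AY,X)=0$ shows that $A$ is $g$-symmetric. Writing $\mathcal{L}_\xi\phi$ through the Levi-Civita connection and using $\nabla_\xi\phi=0$ from \eqref{nablaphixi}, I obtain the first key identity $2h=\phi A-A\phi$.

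Property (ii) I would prove directly and independently, by applying $\mathcal{L}_\xi$ to the defining relation $\phi^2=-\id+\eta\otimes\xi$. The right-hand side is $\mathcal{L}_\xi$-invariant because $\mathcal{L}_\xi\xi=0$ and $\mathcal{L}_\xi\eta=N^{(4)}=0$ by Theorem \ref{almostk1}, whereas the left-hand side equals $(\mathcal{L}_\xi\phi)\circ\phi+\phi\circ(\mathcal{L}_\xi\phi)=2(h\phi+\phi h)$; hence $h\phi+\phi h=0$.

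For (i) and (iii) the decisive input is the coK\"ahler-type closedness of $\Phi$, encoded in \eqref{nablaphi1}. I would set $Y=\xi$ there. On one side, differentiating $\phi\xi=0$ gives $(\nabla_X\phi)\xi=-\phi AX$; on the other, a short Nijenhuis computation yields $[\phi,\phi](\xi,Z)=-2\phi hZ$, so that the right-hand side of \eqref{nablaphi1} becomes $-g(\phi hZ,\phi X)$. Expanding this by the compatibility of $g$ and using $\eta\circ h=0$ (itself a consequence of $\mathcal{L}_\xi\eta=0$ and $\eta\circ\phi=0$) collapses the equation to the second key identity $h=-A\phi$. Combined with $2h=\phi A-A\phi$, this forces $h=\phi A$ and the anticommutation $\phi A+A\phi=0$.

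With these in hand the remaining conclusions are immediate. Applying $\phi$ to $h=\phi A$ and using $\eta\circ A=0$ gives $\phi h=\phi^2A=-A$, which is exactly (iii). For (i), since $A$ is $g$-symmetric and $\phi$ is $g$-antisymmetric, the $g$-adjoint of $h=\phi A$ is $A^{*}\phi^{*}=-A\phi$, which equals $h$ by the second key identity, so $h$ is symmetric. The main obstacle is precisely this third step: everything hinges on showing that $\nabla\xi$ anticommutes with $\phi$, and I expect this cannot be extracted from the almost contact metric axioms together with $\d\eta=0$ alone — indeed $2h=\phi A-A\phi$ is automatically compatible with (ii) but leaves $A$ underdetermined — so the closedness of $\Phi$, through \eqref{nablaphi1}, has to be used in an essential way.
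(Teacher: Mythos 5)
Your proof is correct. The paper itself gives no argument here (it simply says the proposition ``can be proved straightforwardly'' and defers to Endo), and your derivation supplies exactly the standard one, using only facts already established in the survey: $\d\eta=0$ makes $A=\nabla\xi$ symmetric with $\eta\circ A=0$, $\nabla_\xi\phi=0$ from \eqref{nablaphixi} gives $2h=\phi A-A\phi$, the Lie derivative of $\phi^2=-I+\eta\otimes\xi$ together with $N^{(4)}=0$ gives (ii), and setting $Y=\xi$ in \eqref{nablaphi1} (where $\d\Phi=0$ enters essentially, as you correctly observe) yields $h=-A\phi=\phi A$, from which (i) and (iii) follow. All the intermediate identities check out, including $[\phi,\phi](\xi,Z)=-2\phi hZ$ and $\eta\circ h=0$.
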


Then we can state the following important result.

\begin{theorem}\label{killing}
Let $(M,\phi,\xi,\eta,g)$ be an almost coK\"{a}hler manifold. The following statements are equivalent:
\begin{enumerate}
\item[(a)] the operator $h$ vanishes identically;
\item[(b)] the Reeb vector field is parallel;
\item[(c)] the Reeb vector field is Killing;
\item[(d)] the foliation $\mathcal D$ defined by the kernel of $\eta$ is totally geodesic;
\item[(e)] $M$ is locally the Riemannian product of an almost K\"{a}hler manifold with the real line;
\item[(f)] $\nabla \eta = 0$.
\end{enumerate}
\end{theorem}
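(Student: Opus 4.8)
The plan is to take condition (a), the vanishing of $h$, as the hub and to show each of the remaining conditions is equivalent to it, relying throughout on the three listed properties of $h$ together with two preliminary observations: that $h\xi=0$ and that $h$ preserves $\mathcal D=\ker\eta$. The first follows from $h=\tfrac12\mathcal L_\xi\phi$ and $\phi\xi=0$, since $(\mathcal L_\xi\phi)\xi=\mathcal L_\xi(\phi\xi)-\phi[\xi,\xi]=0$; the second then follows from property (i), because for $X\in\Gamma(\mathcal D)$ one has $\eta(hX)=g(hX,\xi)=g(X,h\xi)=0$. I will also use that $\phi$ maps $\mathcal D$ into itself (as $\eta\circ\phi=0$) and that $\phi^2=-I$ on $\mathcal D$.

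The equivalence (a)$\iff$(c) is immediate from Theorem \ref{almostk1}, since $h=\tfrac12 N^{(3)}$. For (a)$\iff$(b) I would read property (iii), $\nabla\xi=-\phi h$: if $h=0$ then $\nabla\xi=0$, while if $\nabla\xi=0$ then $\phi h=0$, so for $X\in\Gamma(\mathcal D)$ applying $\phi$ to $\phi hX=0$ and using $hX\in\Gamma(\mathcal D)$ together with $\phi^2=-I$ on $\mathcal D$ gives $hX=0$; combined with $h\xi=0$ this yields $h=0$. The equivalence (b)$\iff$(f) is the standard fact that $\nabla\eta=0\iff\nabla\xi=0$, read off from $\eta(\cdot)=g(\cdot,\xi)$ in \eqref{metric1}, since $(\nabla_X\eta)(Y)=g(Y,\nabla_X\xi)$.

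For (a)$\iff$(d) I would compute the second fundamental form of the foliation $\mathcal D$, which is integrable since $\eta$ is closed. For $X,Y\in\Gamma(\mathcal D)$ the normal component of $\nabla_XY$ is measured by $g(\nabla_XY,\xi)=-g(Y,\nabla_X\xi)=g(Y,\phi hX)$, using property (iii). Since $\phi hX\in\Gamma(\mathcal D)$ and $g$ is nondegenerate on $\mathcal D$, this vanishes for all such $X,Y$ exactly when $\phi h=0$ on $\mathcal D$, which by the argument already used is equivalent to $h=0$. Hence the leaves are totally geodesic if and only if (a) holds.

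The main work, and the expected obstacle, is (b)$\iff$(e). Assuming (b), I note that $\nabla\xi=0$ makes the line field $\Rr\xi$ parallel, and differentiating $g(Y,\xi)=0$ shows $g(\nabla_XY,\xi)=0$, so $\mathcal D$ is parallel as well; the local de Rham decomposition theorem then splits $M$ locally as a Riemannian product $N\times\Rr$, with $\Rr$ the integral curve of $\xi$ and $N$ a leaf of $\mathcal D$. It remains to check that $N$ is almost K\"ahler: $J:=\phi|_{\mathcal D}$ satisfies $J^2=-I$, the restriction of $g$ is $J$-Hermitian by \eqref{compatible1}, and the induced fundamental form equals $\Phi|_{N}$, which is closed because $\Phi$ is closed on $M$. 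Conversely, in a Riemannian product $N\times\Rr$ the unit field tangent to the $\Rr$-factor is parallel, and under the identification it is $\xi$, giving (b). The delicate points here are the correct invocation of the local de Rham theorem, which yields only a local splitting, and the verification that the product almost contact metric structure coincides with the given one; the remaining equivalences above are routine once the two preliminary facts about $h$ are in place.
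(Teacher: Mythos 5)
The paper itself states Theorem \ref{killing} without proof (it is a survey result, traceable to Blair, Goldberg--Yano and Olszak), so there is no in-paper argument to compare against; judged on its own, your proof is correct and is essentially the standard one. The preliminary facts ($h\xi=0$, $h$ and $\phi$ preserve $\mathcal D$, $\phi$ injective on $\mathcal D$), the use of the Proposition's identity $\nabla\xi=-\phi h$ for (a)$\iff$(b) and (a)$\iff$(d), the reduction of (a)$\iff$(c) to Theorem \ref{almostk1}, and the computation $(\nabla_X\eta)(Y)=g(Y,\nabla_X\xi)$ for (b)$\iff$(f) are all sound. For (b)$\iff$(e) the local de Rham splitting applied to the parallel orthogonal distributions $\mathbb{R}\xi$ and $\mathcal D$ is the right tool, and the leafwise structure $(\phi|_{\mathcal D},g|_{\mathcal D},\Phi|_{\mathcal D})$ is almost K\"ahler exactly as you say; the one point worth making explicit is that the product identification carries the \emph{whole} almost contact metric structure, which follows because ${\mathcal L}_\xi\phi=2h=0$, ${\mathcal L}_\xi\eta=0$ and ${\mathcal L}_\xi g=0$ under (b), and conversely that (e) is to be read as a statement about the almost contact metric structure (so that $\xi$ is identified with the parallel unit field $\frac{\d}{\d t}$), which you correctly flag; with the purely Riemannian reading of (e) the implication (e)$\Rightarrow$(b) would not be available.
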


\begin{remark}\label{3D-Killing}
Since in any coK\"{a}hler manifold, as $N^{(1)}=0$, also the tensor field $N^{(3)}$ vanishes, by Theorem \ref{killing} we have that $\xi$ is Killing. The
converse, in general, does not hold. However, it was proved that in dimension
$3$ any among the above six conditions is equivalent to requiring that the
manifold is coK\"{a}hler (\cite{goldberg-yano69}).
\end{remark}

Concerning the operator $h$, we state the following general identity, due to Olszak (\cite{olszak81}), which relates $h$ with the covariant derivative of
$\phi$ in any almost coK\"{a}hler manifold
\begin{equation}\label{identity1}
(\nabla_{\phi X}\phi)\phi Y + (\nabla_{X}\phi)Y -\eta(Y)hX =0.
\end{equation}
Notice that, from \eqref{identity1} it follows that $(\nabla_{\phi X}\eta)(\phi Y)=-(\nabla_{ X}\eta)(Y)$.

\subsection{Examples of coK\"{a}hler manifolds}
\ The standard example of (almost) coK\"{a}hler manifold is given by
the product of an (almost) K\"{a}hler manifold $(N,J,G)$ with
$\mathbb{R}$ (or $S^1$). If $t$ denotes the global coordinate on
$\mathbb R$, any vector field of $M$ can be written as $\left(X,f
\frac{\d}{\d t}\right)$ for some smooth function $f$ on the product.
Let us  define a tensor field $\phi$ of type $(1,1)$ by setting
$\phi\left(X,f \frac{\d}{\d t}\right):=\left(JX, 0\right)$, a vector
field $\xi:=\frac{\d}{\d t}$, a $1$-form $\eta:=\d t$ and a
Riemannian metric $g$ as the product metric $G+\d t^2$. Then one can
check that $(\phi,\xi,\eta,g)$ is an almost coK\"{a}hler structure
such that $\xi$ is Killing, and it is coK\"{a}hler  if and only if
$N$ is K\"{a}hler.


On the other hand by Theorem \ref{killing} every coK\"{a}hler  manifold is, at least locally, the Riemannian product of a K\"{a}hler manifold with the real
line.

Therefore  two natural problems rise:
\medskip

\textbf{Question 1} - \emph{Are there examples of almost coK\"{a}hler manifolds which are not, even locally,  the Riemannian product of an almost
K\"{a}hler manifold with the real line?}
\medskip

\textbf{Question 2} - \emph{Are there examples of coK\"{a}hler manifolds which are not the global product of a K\"{a}hler manifold with $\mathbb{R}$ or
$S^1$?}
\medskip

Clearly, in view of Theorem \ref{killing}, Question 1 is equivalent to the problem of finding examples of proper  almost coK\"{a}hler manifolds with
non-Killing Reeb vector field. The problem was solved by Olszak
(\cite{olszak81}), who found an example of an  almost coK\"{a}hler structure with non-parallel
Reeb vector field in any odd dimension. Namely, let $\mathfrak{g}$ be the $(2n+1)$-dimensional real solvable Lie algebra with basis
$\{E_{0},E_{1},\ldots,E_{2n}\}$ and non-zero Lie brackets
\begin{gather*}
[E_{0}, E_{i}] = -a_{i}E_{i} - a_{i+n}E_{i+n}, \ \ \ [E_{0}, E_{i+n}] = -a_{i+n}E_{i} + a_{i}E_{i+n},
\end{gather*}
where $a_{1},\ldots,a_{2n}$ are real numbers such that $a_{1}^2+\cdots+a_{2n}^2>0$. Let $G$ be a Lie group whose Lie algebra is $\mathfrak{g}$ and extend
$\{E_{0},E_{1},\ldots,E_{2n}\}$ to left invariant vector fields on $G$. We define on $G$ a Riemannian metric $g$ by imposing that
$\{E_{0},E_{1},\ldots,E_{2n}\}$ is orthonormal. Then we define a $1$-form $\eta$ and a tensor field $\phi$ by setting $\eta(E_j)=\delta_{0j}$ for each
$j\in\left\{0,1,\ldots,2n\right\}$ and $\phi E_{0}=0$, $\phi E_i = E_{i+n}$,  $\phi E_{i+n} = -E_{i}$  for each $i\in\left\{1,\ldots,n\right\}$. Then
putting $\xi:=E_0$, one can prove that $(\phi,\xi,\eta,g)$ is an almost coK\"{a}hler structure such that $\nabla\xi\neq 0$.

With regard to Question 2, it was positively solved in dimension $3$
by Chinea, de Le\'{o}n, Marrero in 1993
(\cite{chinea-deleon-marrero93}). We now describe such an example
explicitly.
Firstly let us consider the following general
construction, due to Fujimoto and Mut\={o} (\cite{fujimoto74}).
Let $(N,J,G)$ be a compact
K\"{a}hler manifold and $f:N\longrightarrow N$ be a Hermitian
isometry, that is an isometry of the Riemannian manifold $(N,g)$
such that $f_\ast \circ J = J \circ f_\ast$. Define an action
$\rho:\mathbb{Z}\times (N\times\mathbb{R})\longrightarrow
N\times\mathbb{R}$ of $\mathbb{Z}$ on $N\times\mathbb{R}$ by
$\rho(k,(x,t))=(f^{k}(x),t+k)$. This action is free and properly
discontinuous, so that the corresponding orbit space $M_f$ is a
compact smooth manifold. Then the coK\"{a}hler structure induced on
the product manifold $N\times\mathbb{R}$ descends to a coK\"{a}hler
structure on $M_{f}:=(N\times\mathbb{R})/\mathbb{Z}$. Now Chinea, de
Le\'{o}n, Marrero restrict to the case when $N$ is the
$2$-dimensional torus $\mathbb{T}^2$ with its standard K\"{a}hler
structure and $f$ is the Hermitian isometry induced by the isometry
of $\mathbb{R}^2$ $f(x,y)=(y,-x)$. They note that the manifold
$M_{f}:=(\mathbb{T}^2\times\mathbb{R})/\mathbb{Z}$ is nothing but the
manifold $\mathbb{T}_{A}^{3}$ considered in \cite{ghys-sergiescu80},
where
\begin{equation*}
A=\left(
    \begin{array}{cc}
      0 & 1 \\
      -1 & 0 \\
    \end{array}
  \right).
\end{equation*}
As, according to \cite{ghys-sergiescu80}, $A$ is neither conjugated with a matrix of the form
\begin{equation*}
A=\left(
    \begin{array}{cc}
      1 & n \\
      0 & 1 \\
    \end{array}
  \right)
\end{equation*}
nor
\begin{equation*}
A=\left(
    \begin{array}{cc}
      -1 & n \\
      0 & 1 \\
    \end{array}
  \right)
\end{equation*}
the first Betti number of $M_f$ is $1$. Therefore, $M_f$ cannot be
the global product of a compact surface $S$ with the unit circle
$S^1$. Indeed in such a case the first Betti number of $S$ would be
$0$ and hence $S$ would be the sphere $S^2$. Then from the homology
exact sequence
\begin{equation*}
0 \longrightarrow \mathbb{Z}^2\longrightarrow \Pi_{1}(\mathbb{T}_{A}^{3}) \longrightarrow \mathbb{Z} \longrightarrow 0
\end{equation*}
one would get an injective homomorphism from $\mathbb{Z}^2$ into $\Pi_{1}(\mathbb{T}_{A}^{3})=\mathbb{Z}$, which yields a contradiction. This is the argument used
in \cite{chinea-deleon-marrero93} for proving that $M_f$ is not the global product of a K\"{a}hler manifold with $S^1$.

In \cite{marrero-padron98}, Marrero and Padr{\'o}n generalized this
example to any odd dimension. We briefly illustrate their
construction. Let us consider the $2n$-dimensional torus
$\mathbb{T}^{2n}$ with the K\"{a}hler structures $(J,G)$ and $(J',G')$
defined as follows
\begin{gather*}
J X_{i}=-Y_{i}, \ \ J Y_{i}=X_{i}, \ \ \ G=\sum_{j=1}^{2n}\left(\alpha_{j}\otimes\alpha_{j} + \beta_{j}\otimes\beta_{j}\right)\\
J' X'_{i}=-Y'_{i}, \ \ J' Y'_{i}=X'_{i}, \ \ \ G'=\sum_{j=1}^{2n}\left(\alpha'_{j}\otimes\alpha'_{j} + \beta'_{j}\otimes\beta'_{j}\right)
\end{gather*}
for each $i\in\left\{1,\ldots,n\right\}$, where  $\{X_{1},\ldots,X_{n},Y_{1},\ldots,Y_{n}\}$ is the canonical global basis of vector fields on $\mathbb{T}^{2n}$, $\{\alpha_{1},\ldots,\alpha_{n},\beta_{1},\ldots,\beta_{n}\}$ the corresponding dual basis of $1$-forms, $\alpha'_i:=\alpha_i+\cos \frac{\pi}{3} \beta_i$, $\beta'_i:=-\sin\frac{\pi}{3}\beta_i$ and $\{X'_{1},\ldots,X'_{n},Y'_{1},\ldots,Y'_{n}\}$ the dual basis of vector fields of the basis of $1$-forms $\{\alpha'_{1},\ldots,\alpha'_{n},\beta'_{1},\ldots,\beta'_{n}\}$.

Then, as described before, we can consider the action of
$\mathbb{Z}$ on $\mathbb{T}^{2n}\times\mathbb{R}$ via an Hermitian
isometry $f$ and the corresponding quotient space
$M_{f}:=(\mathbb{T}^{2n}\times\mathbb{R})/\mathbb{Z}$. Let us
consider the case when $f$ is induced by an isometry $\tilde{f}$ of
$\mathbb{R}^{2n}$.  Marrero and Padr{\'o}n considered the following
four cases
\begin{gather*}
\tilde{f}_{1}(x_{1},\ldots,x_{n},y_{1},\ldots,y_{n})=(y_{1},\ldots,y_{n},-x_{1},\ldots,-x_{n})\\
\tilde{f}_{2}(x_{1},\ldots,x_{n},y_{1},\ldots,y_{n})=(-x_{1},\ldots,-x_{n},-y_{1},\ldots,-y_{n})\\
\tilde{f}_{3}(x_{1},\ldots,x_{n},y_{1},\ldots,y_{n})=(-y_{1},\ldots,-y_{n},x_{1}+y_{1},\ldots,x_{n}+y_{n})\\
\tilde{f}_{4}(x_{1},\ldots,x_{n},y_{1},\ldots,y_{n})=(-x_{1}-y_{1},\ldots,-x_{n}-y_{n},x_{1},\ldots,x_{n}),
\end{gather*}
so obtaining the corresponding compact coK\"{a}hler manifolds $M_{f_{i}}$, $i\in\left\{1,\ldots,4\right\}$. Notice that, for $n=1$, $M_{f_1}$ coincides
with the $3$-dimensional example of Chinea, de Le\'{o}n, Marrero illustrated before.   Then the authors proved the following remarkable result:

\begin{theorem}[\cite{marrero-padron98}]
The manifolds $M_{f_1}$, $M_{f_2}$, $M_{f_3}$, $M_{f_4}$ are $(2n+1)$-dimensional compact flat coK\"{a}hler solvmanifolds which are not topologically
equivalent to the compact coK\"{a}hler manifolds $\mathbb{T}^{2m+1}\times \mathbb{C}P^{r}$, with $m,r>0$, $m+r=n$.
\end{theorem}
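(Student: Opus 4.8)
The plan is to handle the four manifolds uniformly, writing $A_i:=\tilde f_i\in GL(2n,\mathbb{Z})$, and to reduce the coK\"ahler and compactness assertions to the Fujimoto--Mut\={o} construction recalled above. Each $A_i$ is an integral matrix, hence preserves the lattice $\mathbb{Z}^{2n}$ and descends to a diffeomorphism $f_i$ of $\mathbb{T}^{2n}$; the $\mathbb{Z}$-action $\rho(k,(x,t))=(f_i^{k}(x),t+k)$ is then free and properly discontinuous, and its quotient $M_{f_i}$ is the mapping torus of $f_i$, a compact $(2n+1)$-dimensional manifold since $\mathbb{T}^{2n}$ is compact. To apply the general construction I must check that each $f_i$ is a Hermitian isometry for the appropriate K\"ahler structure, namely $f_1,f_2$ for $(J,G)$ and $f_3,f_4$ for $(J',G')$; I would verify this directly on the generating vector fields, computing that $A_i$ commutes with the relevant almost complex structure and preserves the relevant metric. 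The role of the hexagonal-type structure $(J',G')$ built from $\alpha'_i,\beta'_i$ is exactly to turn the order-$6$ and order-$3$ rotations $f_3,f_4$ into holomorphic isometries, which they are \emph{not} for the standard $(J,G)$. Granting this, Fujimoto--Mut\={o} endows $M_{f_i}$ with a coK\"ahler structure $(\phi,\xi,\eta,g)$.

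Next I would establish flatness and the solvmanifold structure. The product metric $g=G+\d t^2$ (resp. $G'+\d t^2$) on $\mathbb{T}^{2n}\times\mathbb{R}$ is flat, and the generating deck transformation $f_i\times(t\mapsto t+1)$ acts by isometries, so the quotient metric on $M_{f_i}$ is flat. For the solvmanifold structure I realise $M_{f_i}$ as $\Gamma_i\backslash G_i$: since $A_i$ is semisimple with eigenvalues on the unit circle it admits a real logarithm, so $\psi_i(s):=\exp(s\log A_i)$ is a one-parameter subgroup of $SO(2n)$ with $\psi_i(1)=A_i$. Setting $G_i:=\mathbb{R}^{2n}\rtimes_{\psi_i}\mathbb{R}$ gives a simply connected solvable Lie group, since its commutator subgroup is contained in the abelian normal subgroup $\mathbb{R}^{2n}$; the subgroup $\Gamma_i$ generated by $\mathbb{Z}^{2n}$ and the element $(0,1)$, whose conjugation on $\mathbb{Z}^{2n}$ reproduces $A_i$, is a lattice, and one checks $\Gamma_i\backslash G_i\cong M_{f_i}$. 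Thus each $M_{f_i}$ is a compact flat coK\"ahler solvmanifold.

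The remaining and decisive point is topological inequivalence, which I would detect with the first Betti number. By the K\"unneth formula and $b_1(\mathbb{C}P^{r})=0$, the model (which is coK\"ahler of the same dimension $2n+1$ because $m+r=n$) satisfies $b_1(\mathbb{T}^{2m+1}\times\mathbb{C}P^{r})=2m+1\ge 3$ whenever $m>0$. For $M_{f_i}$, viewed as a fibre bundle $\mathbb{T}^{2n}\to M_{f_i}\to S^1$, the Wang exact sequence
\[
H^{0}(\mathbb{T}^{2n})\xrightarrow{\;f_i^{*}-\id\;}H^{0}(\mathbb{T}^{2n})\to H^{1}(M_{f_i})\to H^{1}(\mathbb{T}^{2n})\xrightarrow{\;f_i^{*}-\id\;}H^{1}(\mathbb{T}^{2n})
\]
gives $b_1(M_{f_i})=1+\dim\ker\bigl(f_i^{*}-\id \text{ on } H^{1}(\mathbb{T}^{2n})\bigr)$. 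Since the eigenvalues of $A_i$ are $\pm\sqrt{-1}$, $-1$, $e^{\pm\pi\sqrt{-1}/3}$, $e^{\pm 2\pi\sqrt{-1}/3}$ for $i=1,2,3,4$ respectively, none of them equals $1$, so the kernel is trivial and $b_1(M_{f_i})=1$ for all four. As $1\ne 2m+1$ for $m>0$, no $M_{f_i}$ is homotopy equivalent, and a fortiori not homeomorphic, to any $\mathbb{T}^{2m+1}\times\mathbb{C}P^{r}$ with $m,r>0$ and $m+r=n$. The only genuinely delicate step is the first one, namely choosing $(J',G')$ so that the finite-order maps $f_3,f_4$ become Hermitian isometries; the flatness, solvmanifold, and Betti-number arguments are then formal consequences of the mapping-torus description.
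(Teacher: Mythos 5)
Your proposal is correct and essentially complete in outline. The survey itself does not prove this theorem --- it only states it with a citation to Marrero--Padr\'on --- so there is no in-paper proof to compare against; but your decisive step, showing $b_1(M_{f_i})=1$ while $b_1(\mathbb{T}^{2m+1}\times\mathbb{C}P^{r})=2m+1\ge 3$ for $m>0$, is exactly the natural generalization of the argument the paper does sketch in the three-dimensional case $\mathbb{T}_{A}^{3}$, where $b_1=1$ is what excludes a global product with $S^1$. Your Wang-sequence computation is right: the eigenvalues you list for the four matrices are correct, none equals $1$, so $\ker(f_i^{*}-\mathrm{id})$ on $H^{1}(\mathbb{T}^{2n})$ is trivial and $b_1(M_{f_i})=1$; since $b_1$ is a homotopy invariant this settles topological inequivalence. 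Two points deserve tightening. First, the one computation you defer --- that $\tilde f_3,\tilde f_4$ are Hermitian isometries of $(J',G')$ --- is the whole reason Marrero and Padr\'on introduce the hexagonal structure, and it should actually be carried out; it does check out (for $n=1$ one finds $G'=\alpha\otimes\alpha+\tfrac12(\alpha\otimes\beta+\beta\otimes\alpha)+\beta\otimes\beta$ and $J'=\tfrac{1}{\sqrt3}\left(\begin{smallmatrix}-1&-2\\2&1\end{smallmatrix}\right)$ in the $(X,Y)$-frame, and both are preserved by $A_3$ and $A_4$; the general $n$ case is the block-diagonal repetition of this). Second, your justification for the real logarithm of $A_i$ (``semisimple with eigenvalues on the unit circle'') is not sufficient in general --- $\mathrm{diag}(-1,1)$ satisfies it and has no real logarithm --- but it does hold here because the non-real eigenvalues occur in conjugate pairs and the eigenvalue $-1$, which appears only for $\tilde f_2=-I$, has even multiplicity $2n$. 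With these two details filled in, your argument is a valid proof of the statement.
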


Concerning the above examples, Fino and Vezzoni (\cite{fino11}) showed that these solvmanifolds are finite quotients of a torus. In fact the authors prove
a  more general result, namely that a solvmanifold has a coK\"{a}hler structure
if and only if it is a finite quotient of  a torus which has a structure of a
torus bundle over a complex torus.
Recently, in \cite{bazzoni-oprea} Bazzoni and Oprea found non-trivial examples
of non-product compact coK\"ahler manifolds in every odd dimension.

\subsection{CoK\"{a}hler manifolds and K\"{a}hler geometry}
CoK\"{a}hler manifolds are the most natural analogues in odd
dimension of K\"{a}hler manifolds. This can be seen for instance by
looking at the $1$-dimensional foliation $\mathcal V$ defined by the
Reeb vector field, usually called \emph{Reeb foliation}. In fact
notice that the closedness of the fundamental $2$-form $\Phi$
together with \eqref{sasakiform1} implies that $\Phi$ is a basic
$2$-form. Moreover, by  Theorem \ref{killing} the vanishing of
$N^{(3)}_{\phi}$  also  implies that the tensor field  $\phi$ and
the Riemannian metric $g$ are  projectable tensors. Roughly
speaking, this means that $\Phi$, $\phi$ and $g$ are constant along
the leaves of the Reeb foliation, so that they locally project to a
$2$-form $\omega'$, a tensor field $J'$ and a Riemannian metric $G'$
on the leaf space. Then $(\omega',J',G')$ form a K\"{a}hler
structure. Thus any coK\"{a}hler manifold is foliated by a
transversely K\"{a}hler foliation. If in particular $\mathcal V$ is
simple, then there exists a Riemannian submersion $\pi:(M,g)
\longrightarrow (N',G')$ such that $\ker(\pi_\ast)=T{\mathcal V}$
and $\pi_{\ast}\circ\phi=J'\circ\pi_{\ast}$.

Therefore, at least locally, any coK\"{a}hler manifold projects onto a K\"{a}hler one. On the other hand, given a coK\"{a}hler manifold $M$ there exists a
K\"{a}hler manifold $N$ projecting onto $M$. It is enough to consider the product of $M$ with the real line (or the unit circle), endowed with the complex
structure $J$ defined by \eqref{complex} and the product metric $G=g + \d t^2$. An easy computation shows that $G$ is an Hermitian metric. Moreover, one
has
\begin{align*}
\omega\left(\left(X,f\frac{\d}{\d t}\right),\left(Y,h\frac{\d}{\d t}\right)\right)&=G\left(\left(X,f\frac{\d}{\d t}\right),J\left(Y,h\frac{\d}{\d t}\right)\right)\\
&=\Phi(X,Y)-h \eta(X) + f \eta(Y),
\end{align*}
from which it follows that
\begin{align*}
\d\omega\left(\left(X,f\frac{\d}{\d t}\right),\left(Y,h\frac{\d}{\d t}\right),\left(Z,k\frac{\d}{\d t}\right)\right)&=\d\Phi(X,Y,Z)-\frac{2}{3}\bigl(k \d\eta(X,Y) -
 h \d\eta(X,Z)+  f \d\eta(Y,Z)\bigr),
\end{align*}
for any $X,Y,Z\in\Gamma(TM)$ and $f,h,k\in C^{\infty}(M)$. Therefore, the almost Hermitian structure induced on $M\times\mathbb{R}$ is (almost) K\"{a}hler
if and only if $M$ is (almost) coK\"{a}hler.

This construction on the product of an almost contact manifold with the real line can be extended to a more general setting. Let
$(M_{1},\phi_{1},\xi_{1},\eta_{1})$ and $(M_{2},\phi_{2},\xi_{2},\eta_{2})$ be almost contact manifolds. We define on the product manifold $M_{1}\times
M_{2}$ a tensor field $J$ by setting
\begin{equation*}
J\left(X_{1},X_{2}\right)=\left(\phi_{1}X_{1}-\eta_{2}(X_{2})\xi_{1}, \phi_{2}X_{2}+\eta_{1}(X_{1})\xi_{2}\right).
\end{equation*}
One checks that $J^2 = -I$, so that an almost complex structure is defined on $M_{1}\times M_{2}$. This construction is due to Morimoto
(\cite{morimoto63}), who proved that $(M_{1}\times M_{2},J)$ is a complex manifold if and only if $(M_{1},\phi_{1},\xi_{1},\eta_{1})$ and
$(M_{2},\phi_{2},\xi_{2},\eta_{2})$ are normal almost contact manifolds. It is
worth mentioning that an interesting corollary of this  is the well-known
result, previously proved by Calabi and Eckmann, that the product
$S^{2p+1}\times S^{2q+1}$ of any two odd-dimensional spheres is a complex manifold.
Notice that, since the second Betti numbers of these compact manifolds are zero, they cannot carry any K\"{a}hler structure.

Now let us consider two almost contact metric manifolds $(M_{1},\phi_{1},\xi_{1},\eta_{1},g_{1})$ and $(M_{2},\phi_{2},\xi_{2},\eta_{2},g_{2})$. Then
Goldberg (\cite{goldberg68}) proved that the product $M_{1}\times M_{2}$, endowed with the above almost complex structure $J$ and with the product metric
$G=g_1 + g_2$ is K\"{a}hler, provided that the factors are coK\"{a}hler. Later on, Capursi (\cite{capursi84}) proved that also the converse holds. In fact
he proved the result in a more general setting, dealing with almost K\"{a}hler and nearly K\"{a}hler manifolds. Summing up, we can state the following
theorem.

\begin{theorem}
The almost Hermitian manifold $(M_{1}\times M_{2},J,G)$ is (almost) K\"{a}hler if and only if both $(M_{1},\phi_{1},\xi_{1},\eta_{1},g_{1})$ and
$(M_{2},\phi_{2},\xi_{2},\eta_{2},g_{2})$ are (almost) coK\"{a}hler.
\end{theorem}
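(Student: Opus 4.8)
The plan is to reduce the whole statement to a single computation of the fundamental $2$-form of $(M_{1}\times M_{2},J,G)$ followed by a bidegree analysis of its exterior derivative. Denote by $p_{i}\colon M_{1}\times M_{2}\to M_{i}$ the projections and by $\omega$ the fundamental $2$-form $\omega(U,V)=G(U,JV)$ of the almost Hermitian manifold. First I would evaluate $\omega$ on $U=(X_{1},X_{2})$ and $V=(Y_{1},Y_{2})$: expanding $G=g_{1}+g_{2}$, inserting the definition of $J$, and using $\Phi_{i}(X,Y)=g_{i}(X,\phi_{i}Y)$ together with $\eta_{i}(X)=g_{i}(X,\xi_{i})$, the four resulting scalar terms assemble into
\[
\omega=p_{1}^{*}\Phi_{1}+p_{2}^{*}\Phi_{2}-p_{1}^{*}\eta_{1}\wedge p_{2}^{*}\eta_{2}.
\]
(Specialising $M_{2}=\Rr$ recovers the formula $\omega=\Phi-h\eta(X)+f\eta(Y)$ displayed just before the theorem, which is a useful sign check.) The same kind of computation, now invoking \eqref{compatible1} on both factors, gives $G(JU,JV)=G(U,V)$, confirming that $(J,G)$ is indeed almost Hermitian, as the statement presupposes.

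Next I would differentiate. Since $\d$ commutes with pullback and $\d(\eta_{1}\wedge\eta_{2})=\d\eta_{1}\wedge\eta_{2}-\eta_{1}\wedge\d\eta_{2}$, one obtains
\[
\d\omega=p_{1}^{*}\d\Phi_{1}+p_{2}^{*}\d\Phi_{2}-p_{1}^{*}\d\eta_{1}\wedge p_{2}^{*}\eta_{2}+p_{1}^{*}\eta_{1}\wedge p_{2}^{*}\d\eta_{2}.
\]
The decisive point is that, with respect to the bidegree decomposition $\Omega^{3}(M_{1}\times M_{2})=\bigoplus_{a+b=3}p_{1}^{*}\Omega^{a}(M_{1})\wedge p_{2}^{*}\Omega^{b}(M_{2})$, the four summands sit in the pairwise distinct bidegrees $(3,0)$, $(0,3)$, $(2,1)$, $(1,2)$, so they cannot cancel one another. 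Hence $\d\omega=0$ if and only if each vanishes separately. The $(3,0)$ and $(0,3)$ pieces give $\d\Phi_{1}=0$ and $\d\Phi_{2}=0$; for the mixed pieces, the almost contact condition $\eta_{i}(\xi_{i})=1$ makes each $\eta_{i}$ nowhere zero, so wedging a form pulled back from one factor with the nonvanishing $1$-form $p_{j}^{*}\eta_{j}$ of the other is fibrewise injective, and $p_{1}^{*}\d\eta_{1}\wedge p_{2}^{*}\eta_{2}=0$ forces $\d\eta_{1}=0$, and symmetrically $\d\eta_{2}=0$. Since closedness of $\Phi_{i}$ and $\eta_{i}$ is precisely the definition of an almost coK\"ahler structure on the (already almost contact metric) factor $M_{i}$, this proves that $(M_{1}\times M_{2},J,G)$ is almost K\"ahler if and only if both factors are almost coK\"ahler.

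It remains to add the integrability refinement for the K\"ahler case. An almost Hermitian manifold is K\"ahler exactly when it is almost K\"ahler and its almost complex structure is integrable. By Morimoto's theorem quoted above, $J$ on $M_{1}\times M_{2}$ is integrable if and only if both $(M_{i},\phi_{i},\xi_{i},\eta_{i})$ are normal almost contact manifolds. Combining this with the previous paragraph, $(M_{1}\times M_{2},J,G)$ is K\"ahler if and only if both factors are almost coK\"ahler and normal, that is, if and only if both are coK\"ahler. The main obstacle is the bookkeeping in the second paragraph: one must verify that the four terms of $\d\omega$ genuinely occupy distinct bidegree summands and so vanish independently, and must justify deducing $\d\eta_{i}=0$ from the vanishing of the mixed wedge $p_{1}^{*}\d\eta_{i}\wedge p_{2}^{*}\eta_{j}$, which is where the nowhere-vanishing of $\eta_{j}$ enters. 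Everything else is routine tensorial algebra.
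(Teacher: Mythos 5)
Your proof is correct, and it follows essentially the same route the paper sketches: the survey itself does not prove this theorem (it cites Goldberg and Capursi), but the displayed formulas $\omega_{a,b}=\Phi_1+\Phi_2+2b\,\eta_1\wedge\eta_2$ and $\d\omega_{a,b}=\d\Phi_1+\d\Phi_2+2b(\d\eta_1\wedge\eta_2-\eta_1\wedge\d\eta_2)$, together with the quoted Morimoto integrability criterion, are exactly the ingredients you use. Your added justifications --- the pointwise bidegree splitting of $\Lambda^3T^*(M_1\times M_2)$ forcing the four terms to vanish separately, and the nowhere-vanishing of $\eta_j$ giving $\d\eta_i=0$ from the mixed terms --- correctly fill in what the paper leaves implicit.
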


In \cite{watson83} Watson further generalized Morimoto's construction by considering, for each real number $a$ and $b\neq 0$ the manifold $M=M_{1}\times
M_{2}$ endowed with the tensor field $J_{a,b}$ and the Riemannian metric $G_{a,b}$ defined by
\begin{equation*}
J_{a,b}\left(X_{1},X_{2}\right)=\left(\phi_{1}X_{1}-\left(\frac{a}{b}\eta_{1}(X_{1})+\frac{a^2+b^2}{b}\eta_{2}(X_{2})\right)\xi_{1},
\phi_{2}X_{2}+\left(\frac{1}{b}\eta_{1}(X_{1})+\frac{a}{b}\eta_{2}(X_{2})\right)\xi_{2}\right)
\end{equation*}
and
\begin{align*}
G_{a,b}\left((X_{1},X_{2}),(Y_{1},Y_{2})\right)&=g_{1}\left(X_{1},Y_{1}\right)+a\eta_{1}(X_{1})\eta_{2}(Y_{2}) + a\eta_{1}(Y_{1})\eta_{2}(X_{2}) \\
&\quad+ (a^2 + b^2 +1)\eta_{2}(X_{2})\eta_{2}(Y_{2}) + g_{2}\left(X_{2},Y_{2}\right).
\end{align*}
In the most general case, that is when
$(M_{1},\phi_{1},\xi_{1},\eta_{1},g_{1})$ and
$(M_{2},\phi_{2},\xi_{2},\eta_{2},g_{2})$ are almost contact metric
manifolds, one has that $(M,J_{a,b},G_{a,b})$ is an almost Hermitian
manifold. The fundamental $2$-form
$\omega_{a,b}:=G(\cdot,J_{a,b}\cdot)$ is then given by
\begin{equation*}
\omega_{a,b}=\Phi_1 + \Phi_2 + 2 b \eta_{1}\wedge \eta_{2},
\end{equation*}
from which
\begin{equation*}
\d\omega_{a,b}=\d\Phi_1 + \d\Phi_2 + 2 b\left(\d\eta_{1}\wedge \eta_{2}-\eta_{1}\wedge \d\eta_{2}\right).
\end{equation*}
Again, \ one \ proves \ that \ $(M,J_{a,b},G_{a,b})$ \ is \ a \
K\"{a}hler \ manifold \ if \ and only \ each \
$(M_{i},\phi_{i},\xi_{i},\eta_{i},g_{i})$ is coK\"{a}hler,
$i\in\left\{1,2\right\}$. Then Watson used the aforementioned
Olszak's  examples of strictly almost coK\"{a}hler non-compact
solvable Lie groups  for constructing new examples of almost
K\"{a}hler non-K\"{a}hler manifolds. It is enough to take the
product of any of such Lie groups and to apply the previous results
on the almost K\"{a}hler structure on the products. Namely, he
proved the following result.

\begin{theorem}
Given two odd integers, $2p + 1$ and $2q + 1$, $p, q > 0$, there exist noncompact, connected, solvable Lie groups $G_{1}^{2p+1}$ and $G_{2}^{2q+1}$, such
that $G=G_{1}^{2p+1} \times G_{2}^{2q+1}$ carries a two parameter family of almost K\"{a}hler structures which are not K\"{a}hler.
\end{theorem}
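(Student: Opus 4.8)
The plan is to obtain each factor from Olszak's construction and then to read off the (almost) K\"{a}hler type of the product from Watson's characterization. For the prescribed dimensions I would take $G_1^{2p+1}$ and $G_2^{2q+1}$ to be the simply connected solvable Lie groups carrying the strictly almost coK\"{a}hler structures $(\phi_i,\xi_i,\eta_i,g_i)$ described above, namely Olszak's example with $n=p$ and $n=q$ respectively. Since a simply connected solvable Lie group is diffeomorphic to a Euclidean space, each $G_i$ is connected and noncompact; solvability and connectedness pass to the product $G=G_1^{2p+1}\times G_2^{2q+1}$, so the underlying manifold already has all the required group-theoretic properties.

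Next I would equip $G$ with Watson's two-parameter family $(J_{a,b},G_{a,b})$, $a\in\Rr$, $b\neq 0$. For every such pair this is an almost Hermitian structure, so its fundamental $2$-form $\omega_{a,b}=\Phi_1+\Phi_2+2b\,\eta_1\wedge\eta_2$ is automatically non-degenerate, and since $G_{a,b}$ depends genuinely on $(a,b)$ the family is honestly two-parameter. To see that each member is almost K\"{a}hler it suffices to check $\d\omega_{a,b}=0$. Because both Olszak factors are almost coK\"{a}hler, $\d\Phi_i=0$ and $\d\eta_i=0$ for $i=1,2$; substituting these into
\[
\d\omega_{a,b}=\d\Phi_1+\d\Phi_2+2b\left(\d\eta_1\wedge\eta_2-\eta_1\wedge\d\eta_2\right)
\]
immediately gives $\d\omega_{a,b}=0$, so $(G,J_{a,b},G_{a,b})$ is almost K\"{a}hler for every admissible $(a,b)$.

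It remains to exclude the K\"{a}hler case, where the essential input is that the factors are \emph{not} coK\"{a}hler. By Watson's criterion $(G,J_{a,b},G_{a,b})$ is K\"{a}hler precisely when both factors are coK\"{a}hler, but Olszak's structures satisfy $\nabla\xi_i\neq 0$. Indeed, if a factor were coK\"{a}hler then, since $N^{(1)}=0$ forces $N^{(3)}=0$ (Remark \ref{3D-Killing}), the operator $h=\tfrac12 N^{(3)}$ would vanish, whence by the equivalence of (a) and (b) in Theorem \ref{killing} its Reeb field would be parallel, contradicting $\nabla\xi_i\neq 0$. Hence neither factor is coK\"{a}hler, so none of the structures $(J_{a,b},G_{a,b})$ is K\"{a}hler, and the two-parameter family of almost K\"{a}hler non-K\"{a}hler structures is as asserted.

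I expect no genuine obstacle here: once the factors are fixed as Olszak's examples, the argument is just the assembly of the product formula for $\d\omega_{a,b}$, the K\"{a}hler criterion for $J_{a,b}$, and Theorem \ref{killing}. The single point that must really be verified rather than cited is the interplay of the two coK\"{a}hler-type conditions on the factors --- that $\eta_i$ and $\Phi_i$ are closed (yielding almost K\"{a}hler) while $\xi_i$ fails to be parallel (obstructing K\"{a}hler) --- which is exactly what makes Olszak's strictly almost coK\"{a}hler examples the right building blocks.
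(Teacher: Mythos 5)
Your proposal is correct and follows essentially the same route as the paper, which likewise builds the factors from Olszak's strictly almost coK\"{a}hler solvable Lie groups and then invokes Watson's product construction $(J_{a,b},G_{a,b})$ together with its K\"{a}hler/coK\"{a}hler equivalence. The only difference is that you spell out the details (closedness of $\omega_{a,b}$, and the deduction that Olszak's examples are not coK\"{a}hler via $N^{(1)}=0\Rightarrow N^{(3)}=0\Rightarrow h=0\Rightarrow\nabla\xi=0$), which the paper leaves implicit.
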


Further, one can consider Thurston's torus bundle $W^{4}$ over
$\mathbb{T}^{2}$ with strictly almost K\"{a}hler structure $(J, g)$. Then one
puts on  $W^{4} \times S^{1}$ the standard almost coK\"{a}hler
structure, so that $W^{4} \times S^{1}$  cannot be coK\"{a}hler
because $W^4$ is not K\"{a}hler. Thus, the non-K\"{a}hler almost
K\"{a}hler structure described before is defined  on $M = (W^{4} \times S^{1})
\times (W^{4} \times S^{1})$. Note that, as shown in
\cite{watson83}, we could not have obtained the non-K\"{a}hler
property of $M$ from its cohomology. Moreover, if $N$ is any compact
K\"{a}hler manifold, then $M = (W^{4}\times S^1) \times (N \times S^1)$
can be made strictly almost K\"{a}hler in the same way. However, in
this case, $b_{1}(M) = b_{1}(N) + 5$ is odd and the non-K\"{a}hler
property emerges from cohomological considerations.

\medskip

Turning to the general case, we mentioned that the $2n$-dimensional distribution ${\mathcal D}:=\ker(\eta)$ of any almost coK\"{a}hler manifold $M$ of
dimension $2n+1$ is integrable and so defines a foliation orthogonal to the Reeb foliation. Since $\phi X\in\Gamma({\mathcal D})$ for any
$X\in\Gamma({\mathcal D})$, $\phi$ induces an almost complex structure $J$ on each leaf of $\mathcal D$, which is compatible with the metric $G$ induced by
$g$. One can easily see that the fundamental $2$-form of this almost Hermitian structure is closed, so that we have that the leaves of $\mathcal D$ are
almost K\"{a}hler manifolds. Olszak  proposed the following definition (\cite{olszak87}). An \emph{almost coK\"{a}hler manifold with K\"{a}hlerian leaves}
is an almost coK\"{a}hler manifold such that the leaves of the canonical foliation $\mathcal D$ are K\"{a}hler manifolds. Clearly if  $M$ is coK\"{a}hler
or if $\dim(M)=3$ then $M$ is an almost coK\"{a}hler manifold with K\"{a}hlerian leaves. Thus the question is whether in dimension greater than $3$ any
almost coK\"{a}hler manifold with K\"{a}hlerian leaves is necessarily coK\"{a}hler. Olszak gave a negative answer, by finding some explicit counterexamples (see
Example 3 and 4 in \cite{olszak87}). Moreover he proved the following characterization.

\begin{theorem}[\cite{olszak87}]
Let $(M,\phi,\xi,\eta,g)$ be an almost coK\"{a}hler manifold. Then $M$ is almost coK\"{a}hler with K\"{a}hlerian leaves if and only if the structure
$(\phi,\xi,\eta,g)$ fulfils the condition
\begin{equation*}
(\nabla_{X}\phi)Y=g(X,hY)\xi-\eta(Y)hX
\end{equation*}
for any $X,Y\in\Gamma(TM)$.
\end{theorem}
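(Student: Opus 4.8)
The plan is to compare the intrinsic Levi--Civita connection $\nabla^{L}$ of a leaf $L$ of the foliation $\mathcal{D}=\ker\eta$ with the ambient connection $\nabla$, and then to recognize the K\"ahler condition on $L$ as the restriction to $\mathcal{D}$ of the asserted identity, finally extending that identity to all of $TM$ by tensoriality.

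First, recall that each leaf $L$ carries the induced metric $G=g|_{\mathcal{D}}$ and the induced almost complex structure $J=\phi|_{\mathcal{D}}$, which preserves $\mathcal{D}$ because $\eta\circ\phi=0$; as noted in the text, $(L,J,G)$ is always almost K\"ahler. Hence $L$ is K\"ahler if and only if $\nabla^{L}J=0$, so the whole statement reduces to expressing $\nabla^{L}J$ through $\nabla\phi$ and $h$.

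Next, I would compute the second fundamental form of $L$. Since $\xi$ is a unit normal field to $L$, differentiating $g(Y,\xi)=0$ for $X,Y\in\Gamma(\mathcal{D})$ and invoking $\nabla\xi=-\phi h$ gives $g(\nabla_{X}Y,\xi)=g(Y,\phi hX)$, so the second fundamental form is $II(X,Y)=g(Y,\phi hX)\xi$. (Here I use $h\xi=0$ and $\eta\circ h=0$, both immediate from $h=\tfrac{1}{2}\mathcal{L}_{\xi}\phi$, $\phi\xi=0$ and the symmetry of $h$, so that $h$ preserves $\mathcal{D}$.) Feeding the Gauss formula $\nabla_{X}Y=\nabla^{L}_{X}Y+II(X,Y)$ into $(\nabla^{L}_{X}J)Y=\nabla^{L}_{X}(\phi Y)-\phi\,\nabla^{L}_{X}Y$ and using $\phi\xi=0$ should yield
\begin{equation*}
(\nabla^{L}_{X}J)Y=(\nabla_{X}\phi)Y-g(\phi Y,\phi hX)\,\xi,\qquad X,Y\in\Gamma(\mathcal{D}).
\end{equation*}
By compatibility of $g$ with $\phi$ and $\eta(Y)=0$ one has $g(\phi Y,\phi hX)=g(Y,hX)=g(X,hY)$, so the leaves are K\"ahler precisely when
\begin{equation*}
(\nabla_{X}\phi)Y=g(X,hY)\,\xi\qquad\text{for all }X,Y\in\Gamma(\mathcal{D}),
\end{equation*}
which is exactly the restriction of the claimed formula to $\mathcal{D}$ (the term $\eta(Y)hX$ vanishing there).

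Finally, I would extend the identity to all of $TM$. Both sides are tensorial in $X$ and $Y$, so by the splitting $TM=\mathcal{D}\oplus\Rr\xi$ it suffices to test the cases where each argument is in $\mathcal{D}$ or equals $\xi$. For $X=\xi$ both sides vanish, by \eqref{nablaphixi} together with $h\xi=0$; for $Y=\xi$ one finds $(\nabla_{X}\phi)\xi=-\phi(\nabla_{X}\xi)=\phi^{2}hX=-hX$ (using $\eta(hX)=0$), matching the right-hand side $g(X,h\xi)\xi-\eta(\xi)hX=-hX$. Thus, in \emph{any} almost coK\"ahler manifold, the identity holds automatically as soon as one argument is $\xi$, so the full identity is equivalent to its restriction to $\mathcal{D}$, i.e.\ to the K\"ahlerity of the leaves established above. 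The main obstacle is the middle step: correctly computing the second fundamental form and keeping track of the $\xi$-components, so as to isolate the intrinsic $\nabla^{L}J$ from the ambient $\nabla\phi$; the remainder is tensorial linear algebra combined with the structural identities $\nabla\xi=-\phi h$, $\nabla_{\xi}\phi=0$ and $\phi^{2}=-I+\eta\otimes\xi$.
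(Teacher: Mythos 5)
Your proof is correct. Note that the survey itself gives no proof of this theorem: it is quoted from \cite{olszak87} without argument, so there is nothing in the paper to compare against line by line. Your route is a clean and self-contained one: since the leaves of $\mathcal{D}$ are automatically almost K\"ahler, K\"ahlerity is equivalent to $\nabla^{L}J=0$, and your Gauss-formula computation correctly isolates $(\nabla^{L}_{X}J)Y=(\nabla_{X}\phi)Y-g(X,hY)\xi$ using $II(X,Y)=g(Y,\phi hX)\xi$, which follows from $\nabla\xi=-\phi h$; the verification that the identity holds automatically when either argument is $\xi$ (via $\nabla_{\xi}\phi=0$, $h\xi=0$, $\eta\circ h=0$) is exactly what is needed to pass from the leafwise statement to the full tensorial identity on $TM$. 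All the structural facts you invoke are stated in the survey (the proposition on $h$ and equation \eqref{nablaphixi}). An alternative, more in the spirit of Olszak's original computations, would be to characterize K\"ahlerity of the leaves by the vanishing of the Nijenhuis tensor of $J=\phi|_{\mathcal D}$ and then feed this into the identity \eqref{nablaphi1}, $g((\nabla_{X}\phi)Y,Z)=\tfrac12 g([\phi,\phi](Y,Z),\phi X)$; that route trades your extrinsic second-fundamental-form bookkeeping for an intrinsic integrability computation, but both arrive at the same formula.
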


\begin{corollary}\label{killing2}
An almost coK\"{a}hler manifold with K\"{a}hlerian leaves is coK\"{a}hler if and only if the Reeb vector field is Killing.
\end{corollary}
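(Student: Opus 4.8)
The plan is to prove both implications by translating the coK\"{a}hler condition into the vanishing of $\nabla\phi$ via Theorem~\ref{cokahlercondition1}, and the Killing condition on $\xi$ into the vanishing of the operator $h$ via Theorem~\ref{killing}; the characterization of the K\"{a}hlerian-leaves property stated immediately above then links the two. In other words, the whole argument reduces to bookkeeping over results already in hand, with no independent calculation required.

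For the ``only if'' direction I would observe that it in fact holds for \emph{any} coK\"{a}hler manifold, with no use of the K\"{a}hlerian-leaves hypothesis. Indeed, a coK\"{a}hler manifold is by definition normal, so $N^{(1)}=0$; as recorded in Remark~\ref{3D-Killing}, this forces $N^{(3)}=0$, that is $h=\frac{1}{2}N^{(3)}=0$, whence Theorem~\ref{killing} (equivalence of (a) and (c)) gives that $\xi$ is Killing.

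For the ``if'' direction I would assume $\xi$ Killing and invoke Theorem~\ref{killing} once more to conclude $h=0$. Substituting $h=0$ into the characterizing identity of the preceding theorem,
\[
(\nabla_{X}\phi)Y = g(X,hY)\xi - \eta(Y)hX,
\]
makes the right-hand side vanish for all $X,Y$, so $\nabla\phi=0$. By Theorem~\ref{cokahlercondition1} this is precisely the coK\"{a}hler condition, which finishes the proof.

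There is no genuine analytic obstacle here; the only point that needs care is to notice exactly where the K\"{a}hlerian-leaves hypothesis enters. It is used solely in the ``if'' direction, namely to have the identity $(\nabla_{X}\phi)Y = g(X,hY)\xi - \eta(Y)hX$ at one's disposal, an identity valid precisely for almost coK\"{a}hler manifolds with K\"{a}hlerian leaves. Without that hypothesis, the bare Killing condition (equivalently $h=0$) controls only $\nabla\xi=-\phi h$ and need not annihilate the full tensor $\nabla\phi$, so the implication would fail; thus the hypothesis is genuinely necessary for the nontrivial direction.
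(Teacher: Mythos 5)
Your proof is correct and follows exactly the route the paper intends: the paper states this as an immediate corollary of Olszak's characterization $(\nabla_{X}\phi)Y=g(X,hY)\xi-\eta(Y)hX$, combined with Theorem~\ref{killing} (Killing $\Leftrightarrow$ $h=0$) and Theorem~\ref{cokahlercondition1} ($\nabla\phi=0$ $\Leftrightarrow$ coK\"ahler), with the converse direction holding for any coK\"ahler manifold as noted in Remark~\ref{3D-Killing}. Your closing remark correctly identifies that the K\"ahlerian-leaves hypothesis is needed only for the ``if'' direction.
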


We remarked that in dimension $3$ any almost coK\"{a}hler manifold with $\xi$
Killing is necessarily coK\"{a}hler, but in higher dimensions this is not true.
Thus in view of Corollary \ref{killing2}, we can say that the condition of having K\"{a}hlerian leaves is what is missing for an almost coK\"{a}hler manifold with $\xi$ Killing to be coK\"{a}hler.

Furthermore, we mention that recently Dacko has developed a new approach towards almost coK\"{a}hler manifolds with K\"{a}hlerian leaves based on the techniques of CR-geometry (see \cite{dacko2012-2} for more details).

\subsection{CoK\"{a}hler structures on hypersurfaces of K\"{a}hler manifolds}
Other examples come from hypersurfaces of K\"{a}hler manifolds. The first author that studied the geometric structures induced on a hypersurface of an
almost Hermitian manifold was Tashiro in 1963 (\cite{tashiro1963-I}, \cite{tashiro1963-II}). A few years later Okumura (\cite{okumura65}) and Goldberg
\cite{goldberg68} found necessary and sufficient conditions for which a hypersurface of a K\"{a}hler manifold carries a coK\"{a}hler structure. Let
$(\tilde{M},\tilde{J},\tilde{g})$ be a $(2n+2)$-dimensional K\"{a}hler manifold and let $\iota:M \longrightarrow \tilde{M}$ be a $C^{\infty}$ orientable
hypersurface. Let $N$ be the unit normal of $M$ with orientation determined by that of $M$. Then for any vector field $X\in\Gamma(TM)$ let us decompose
$\tilde{J}\iota_{\ast}X$ into the components tangent and normal to $M$, so defining a $(1,1)$-tensor field $\phi$ and $1$-form $\eta$ on $M$ by
\begin{equation*}
\tilde{J}\iota_{\ast}X=\iota_{\ast}\phi X + \eta(X)N.
\end{equation*}
Next, one can prove that $\xi:=-\tilde{J}N$  is a vector field tangent to $M$ and that $(\phi,\xi,\eta,g)$ is an almost contact metric structure, where
$g:=\iota^{\ast}\tilde{g}$ is the induced metric. Now we have the following theorem:

\begin{theorem}[\cite{okumura65}, \cite{goldberg68}]
With the above notation, a necessary and sufficient condition for a
smooth orientable hypersurface $M$ of a K\"{a}hler manifold
$\tilde{M}$ to be almost coK\"{a}hler is that its Weingarten operator
$A$ anticommutes with $\phi$,
\begin{equation*}
A\phi+\phi A =0.
\end{equation*}
A necessary and sufficient condition for  $M$  to be coK\"{a}hler
is that its second fundamental form $h$ is proportional to
$\eta\otimes\eta$, that is
\begin{equation*}
h=f \eta\otimes\eta,
\end{equation*}
where $f=h(\xi,\xi)$, or equivalently
\begin{equation*}
A=f \eta\otimes\xi.
\end{equation*}
\end{theorem}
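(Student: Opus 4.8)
The plan is to exploit the Gauss and Weingarten formulas together with the parallelism of $\tilde{J}$, deriving two structure equations from which both characterizations drop out. Write $\tilde{\nabla}$ for the Levi-Civita connection of $\tilde{g}$, $\nabla$ for the induced connection on $M$, and recall $\tilde{\nabla}_X(\iota_\ast Y)=\iota_\ast(\nabla_X Y)+h(X,Y)N$ and $\tilde{\nabla}_X N=-\iota_\ast(AX)$, where $h(X,Y)=g(AX,Y)$ and $A$ is symmetric. First I would differentiate the defining relation $\tilde{J}\iota_\ast Y=\iota_\ast\phi Y+\eta(Y)N$ in the direction $X$, expanding the left-hand side with Gauss--Weingarten and the right-hand side using $\tilde{\nabla}\tilde{J}=0$ (as $\tilde{M}$ is K\"ahler) together with $\tilde{J}N=-\iota_\ast\xi$. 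Separating the components tangent and normal to $M$ yields the two identities
\begin{equation*}
(\nabla_X\phi)Y=\eta(Y)AX-h(X,Y)\xi,\qquad (\nabla_X\eta)(Y)=-h(X,\phi Y),
\end{equation*}
and, differentiating $\xi=-\tilde{J}N$, the auxiliary relation $\nabla_X\xi=\phi AX$ (not needed below).

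For the first characterization I would begin by observing that $\Phi$ is automatically closed. Indeed, from $\Phi(Y,Z)=g(Y,\phi Z)$ one gets $(\nabla_X\Phi)(Y,Z)=g(Y,(\nabla_X\phi)Z)=\eta(Z)h(X,Y)-\eta(Y)h(X,Z)$ by the first structure equation, and the cyclic sum $\d\Phi(X,Y,Z)=\sum_{\mathrm{cyc}}(\nabla_X\Phi)(Y,Z)$ vanishes by the symmetry of $h$. Hence $M$ is almost coK\"ahler if and only if $\d\eta=0$. Using the second structure equation, $\d\eta(X,Y)=(\nabla_X\eta)(Y)-(\nabla_Y\eta)(X)=-h(X,\phi Y)+h(Y,\phi X)$; rewriting each term with the symmetry of $A$ and the skew-symmetry of $\phi$ (i.e. $g(\phi\,\cdot,\cdot)=-g(\cdot,\phi\,\cdot)$) gives $\d\eta(X,Y)=-g(X,(A\phi+\phi A)Y)$. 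Since $g$ is nondegenerate, $\d\eta=0$ is equivalent to $A\phi+\phi A=0$, which proves the first assertion.

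For the coK\"ahler case I would invoke Theorem~\ref{cokahlercondition1}, reducing the problem to $\nabla\phi=0$. By the first structure equation this says $\eta(Y)AX=h(X,Y)\xi$ for all $X,Y$. Setting $Y=\xi$ gives $AX=\eta(AX)\xi$, so $AX$ is everywhere a multiple of $\xi$; putting $f:=h(\xi,\xi)=\eta(A\xi)$ and using $\eta(AX)=g(X,A\xi)=f\eta(X)$ yields $A=f\,\eta\otimes\xi$, equivalently $h=f\,\eta\otimes\eta$. Conversely, substituting $A=f\,\eta\otimes\xi$ back into the first structure equation makes $(\nabla_X\phi)Y$ vanish identically, so $M$ is coK\"ahler.

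The main obstacle is the careful sign bookkeeping in the first step and in the $\d\eta$ computation: one must correctly track the skew-symmetry of $\phi$, the symmetry of $A$, the sign in $\tilde{J}N=-\iota_\ast\xi$, and the conventions for $\d$ on $1$- and $2$-forms. Once the two structure equations are established with the correct signs, the (perhaps surprising) fact that $\Phi$ is always closed and the clean reformulation of $\d\eta$ through the anticommutator $A\phi+\phi A$ make both equivalences essentially immediate.
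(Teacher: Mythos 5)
Your proof is correct. The survey itself gives no proof of this theorem (it is quoted from Okumura and Goldberg), so there is nothing to compare against line by line; your argument is essentially the standard one from those sources. The two structure equations $(\nabla_X\phi)Y=\eta(Y)AX-h(X,Y)\xi$ and $(\nabla_X\eta)(Y)=-h(X,\phi Y)$ follow correctly from Gauss--Weingarten and $\tilde{\nabla}\tilde{J}=0$, the reduction of the first assertion to $\d\eta=0$ is legitimate because $\d\Phi=0$ automatically (your cyclic-sum computation is fine; even more directly, $\Phi=\iota^*\tilde{\Omega}$ where $\tilde{\Omega}$ is the K\"ahler form of $\tilde{M}$, so closedness is immediate), the identity $\d\eta(X,Y)=-g(X,(A\phi+\phi A)Y)$ is right, and the second assertion correctly reduces to $\nabla\phi=0$ via Theorem~\ref{cokahlercondition1}, with both implications checked.
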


In particular it follows that any totally geodesic $C^{\infty}$ orientable hypersurface of a K\"{a}hler manifold carries a coK\"{a}hler structure. Further,
Okumura studied the geometry of the hypersurfaces in a K\"{a}hler manifold of constant holomorphic sectional curvature. He proved that there is no
coK\"{a}hler hypersurface in a K\"{a}hler manifold of positive constant holomorphic sectional curvature. This result was improved by Olszak in \cite{olszak82},
where the following theorem was proved.

\begin{theorem}[\cite{olszak82}]
There are no almost coK\"{a}hler hypersurfaces in K\"{a}hler manifolds of non-zero constant holomorphic sectional curvature.
\end{theorem}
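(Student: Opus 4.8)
The plan is to extract a contradiction from the Gauss and Codazzi equations of the immersion, using the characterization in the preceding theorem that an almost coK\"ahler hypersurface is precisely one whose shape operator $A$ satisfies $A\phi+\phi A=0$. First I would record the two structure equations valid for \emph{any} real hypersurface $\iota\colon M\to\tilde M$ of a K\"ahler manifold: differentiating $\tilde J\iota_*X=\iota_*\phi X+\eta(X)N$ and $\xi=-\tilde J N$ and using $\tilde\nabla\tilde J=0$ gives
\[
(\nabla_X\phi)Y=\eta(Y)AX-g(AX,Y)\xi,\qquad \nabla_X\xi=\phi AX.
\]
Applying $A\phi+\phi A=0$ to $\xi$ and using $\phi\xi=0$ yields $A\xi=\lambda\xi$ for a function $\lambda$; moreover, since $\phi$ interchanges the $\mu$- and $(-\mu)$-eigenspaces of $A$ inside $\ker\eta$, the eigenvalues on $\ker\eta$ occur in pairs $\pm\mu$, so $A$ is trace-free on $\ker\eta$.

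The heart of the argument is the Codazzi equation. For a complex space form of constant holomorphic sectional curvature $c$ the tensor $\tilde R$ has its standard explicit form, and contracting its $(X,Y,Z)$-slots against the unit normal $N$ gives
\[
(\nabla_X A)Y-(\nabla_Y A)X=\tfrac{c}{4}\bigl(\eta(X)\phi Y-\eta(Y)\phi X+2\Phi(X,Y)\xi\bigr).
\]
I would now feed in principal vectors. If $X\in\ker\eta$ is a unit principal vector, $AX=\mu X$, then $A\phi X=-\phi AX=-\mu\phi X$, so $\phi X$ is principal with eigenvalue $-\mu$. Taking the pair $(X,\phi X)$ in the Codazzi equation and pairing with $\xi$, the left-hand side can be evaluated using $\nabla_X\xi=\phi AX$ and $A\xi=\lambda\xi$; every $\lambda$-term cancels and one is left with $2\mu^2$, while the right-hand side contributes $-\tfrac{c}{2}$ through the term $2\Phi(X,\phi X)\xi$. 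Hence every principal curvature on $\ker\eta$ must satisfy
\[
\mu^2=-\tfrac{c}{4}.
\]
For $c>0$ this has no real solution, so no almost coK\"ahler hypersurface can exist; this disposes of the positive case immediately.

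The remaining, and genuinely harder, case is $c<0$, where $\mu^2=-c/4$ is solved by $\mu=\pm a$ with $a=\tfrac12\sqrt{-c}$, so $\ker\eta$ splits into two $n$-dimensional principal subbundles $T_{\pm}$ interchanged by $\phi$. The first-order Gauss--Codazzi data are self-consistent for this configuration, so the contradiction must be forced out of the remaining, $\ker\eta$-valued, component of the Codazzi equation: evaluating it on $T_+$ yields integrability relations such as $[T_+,T_+]\subset T_+\oplus\Rr\xi$, and combining these with the expression for $R(X,\xi)\xi$ coming from Gauss (equivalently, with the intrinsic almost coK\"ahler identity $\mathrm{Ric}(\xi,\xi)=-\lVert h\rVert^2$, which read against Gauss becomes $\tfrac{nc}{2}=-\lVert h\rVert^2$) should pin down the covariant behaviour of $T_{\pm}$ tightly enough to reach a contradiction. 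I expect two things to be the main obstacles. First, fixing all sign conventions — the induced triple $(\phi,\xi,\eta)$, the Weingarten map, and the Gauss/Codazzi signs — so that the cancellations producing $\mu^2=-c/4$ are trustworthy; a single stray sign there turns the whole identity into a tautology. Second, the analysis of the case $c<0$, since there the purely algebraic relation $\mu^2=-c/4$ is satisfiable and one must exploit the finer differential structure of the principal distributions $T_{\pm}$ to close the argument.
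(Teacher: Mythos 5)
The survey states this theorem only with a citation to Olszak and gives no proof, so there is nothing internal to compare you against; your attempt has to stand on its own. The first half of it does: the structure equations $(\nabla_X\phi)Y=\eta(Y)AX-g(AX,Y)\xi$ and $\nabla_X\xi=\phi AX$ are correct, the identification of the almost coK\"ahler condition with $A\phi+\phi A=0$ matches the preceding theorem, $A\xi=\lambda\xi$ and the pairing of eigenvalues $\pm\mu$ on $\ker\eta$ are right, and I have checked that the $\xi$-component of Codazzi applied to $(X,\phi X)$ does give $2\mu^2=-c/2$ with the $\lambda$-terms cancelling exactly as you say. So one sign of $c$ is genuinely excluded by your argument.

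The gap is the other sign, and it is not a small one: you do not prove it, you only announce a strategy, and the specific strategy you name does not work. Feeding $\mathrm{Ric}(\xi,\xi)=-\lVert\nabla\xi\rVert^2$ into the Gauss equation gives $\mathrm{Ric}(\xi,\xi)=\tfrac{nc}{2}+\lambda\,\mathrm{tr}A-\lVert A\xi\rVert^2=\tfrac{nc}{2}$ (since $\mathrm{tr}A=\lambda$), while $-\lVert\nabla\xi\rVert^2=-\lVert A\rVert^2_{\ker\eta}=-2n\mu^2=\tfrac{nc}{2}$ by your own relation $\mu^2=-c/4$; the two sides agree identically, so this yields a tautology, not a contradiction. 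The same happens with Olszak's identity $R(\xi,X)\xi-\phi R(\xi,\phi X)\xi+2h^2X=0$: combined with Gauss it reproduces $\mu^2=-c/4$ and nothing more. Worse, the first-order picture you arrive at — $T_\pm$ totally geodesic, $h=-A$ on $\ker\eta$, $h^2=\kappa\phi^2$ with $\kappa=c/4$, and $R(X,Y)\xi$ of $(\kappa,\mu)$-type — is precisely the local structure of Dacko's almost coK\"ahler $(\kappa,\mu)$-models $G_\lambda$, which do exist as abstract almost coK\"ahler manifolds. So no contradiction can come from the intrinsic almost coK\"ahler identities alone; it must be extracted from a finer comparison with the ambient space form, e.g.\ the full tangential Gauss equation tested against the $\ast$-scalar curvature identity $s-s^{\ast}-\mathrm{Ric}(\xi,\xi)+\tfrac12\lVert\nabla\phi\rVert^2=0$, or from differentiating the Codazzi equation once more. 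None of this is supplied, so as written the proposal proves only half of the theorem (essentially Okumura's older positive-curvature case, extended to the almost coK\"ahler setting) and leaves the remaining case as an honest but unresolved conjecture about how the argument should end.
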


\subsection{Homogeneous coK\"{a}hler manifolds}
An almost contact metric manifold $(M,\phi,\xi,\eta,g)$ is said to be \emph{almost contact homogeneous} if there exists a connected Lie group $G$ of
isometries acting transitively on $M$ and leaving $\phi$ (and hence also $\eta$ and $\xi$, cf. \cite{chinea-gonzalez86}) invariant.

Studying almost contact homogeneous manifolds is a very difficult task in the full generality, so one restricts to some remarkable classes of almost
contact metric manifolds. In fact in the coK\"{a}hler setting there is a full classification in dimension $3$, due to a recent work of Perrone
(\cite{perrone12}). Namely,  a $3$-dimensional simply connected homogeneous almost coK\"{a}hler manifold is either a Riemannian product of the real line
with a K\"{a}hler surface of constant curvature or a Lie group $G$ equipped with a left invariant almost coK\"{a}hler structure. Moreover, in this last
case, we have the following classification, based on the invariant
\begin{equation*}
p:=\left\|{\mathcal L}_{\xi}h\right\| - 2\left\|h\right\|^2.
\end{equation*}
\emph{CoK\"{a}hler case}
\begin{enumerate}
\item[(1)] If $G$ is unimodular, then the metric is flat and $G$ is either the universal covering $\tilde{E}(2)$ of the group of rigid motions of Euclidean $2$-space, or the
abelian Lie group $\mathbb{R}^3$.
\item[(2)] If $G$ is non-unimodular, its Lie algebra $\mathfrak{g}:=\langle e_{1}, e_{2}, e_3 \rangle$ is given by
\begin{equation*}
[e_{1},e_{2}]=\alpha e_{2}, \ \ \ [e_{1},e_{3}]=[e_{2},e_{3}]=0, \ \ \ (\alpha\neq 0)
\end{equation*}
where $e_{1},e_{2}=\phi e_1 \in{\mathcal D}$ and $e_3 =\xi$.
\end{enumerate}
\emph{Non-coK\"{a}hler case}
\begin{enumerate}
\item[(3)] If $G$ is unimodular, then $G$ is one among the following:
\begin{enumerate}
\item[-] $\tilde{E}(2)$, if $p>0$
\item[-] the group $E(1,1)$ of rigid motions of the Minkowski $2$-space, if $p<0$
\item[-] the Heisenberg group $H^3$, if $p=0$.
\end{enumerate}
\item[(4)] If $G$ is non-unimodular, then $p = 0$ and its Lie algebra $\mathfrak{g}:=\langle e_{1}, e_{2}, e_3 \rangle$ is given by
\begin{equation*}
[e_{1},e_{2}]=\alpha e_{2}, \ \ \ [e_{1},e_{3}]=\gamma e_{2} \ \ \ [e_{2},e_{3}]=0, \ \ \ (\alpha,\gamma\neq 0)
\end{equation*}
where $e_{1},e_{2}=\phi e_1 \in{\mathcal D}$ and $e_3 =\xi$.
\end{enumerate}
Furthermore, Perrone proves that in all the aforementioned cases
except (4) the Reeb vector field $\xi$ is harmonic. Recall that a
unit vector field $U$ on a $m$-dimensional Riemannian manifold
$(M,g)$ can be regarded as a map $U:(M,g) \longrightarrow
(T_{1}M,g_S)$, where $g_S$ denotes the Sasaki metric on the unit
tangent sphere bundle $T_{1}M$. When $M$ is compact we can define
the \emph{energy} of $U$  as the energy of the corresponding map:
\begin{equation*}
E(U)=\frac{1}{2}\int_{M}\left\|dU\right\|^2\nu_{g}=\frac{m}{2}\textrm{vol}(M,g) + \frac{1}{2}\int_{M}\left\|\nabla U\right\|^2\nu_{g}.
\end{equation*}
A unit vector field $U$ is then called \emph{harmonic} if it is  critical for the energy functional $E$ defined on the set of all unit vector fields on $M$.

\medskip

A family of examples of homogeneous almost coK\"{a}hler structures in dimension greater than $3$ is proposed in \cite{chinea-gonzalez86}. Let us consider the
generalized  Heisenberg group $H(1,r)$, $r>1$, that is the Lie group of real matrices of the form
\begin{equation*}
\left(
  \begin{array}{cccccc}
    1 & 0 & \cdots & 0 & a_{1}^{r+1} & a_{1}^{r+2} \\
    0 & 1 & \cdots & 0 & a_{2}^{r+1} & a_{2}^{r+2} \\
    \vdots & \vdots & \ddots & \vdots & \vdots & \vdots \\
    0 & 0 & \cdots & 1 & a_{r}^{r+1} & a_{r}^{r+2} \\
    0 & 0 & \cdots & 0 & 1 & a_{r+1}^{r+2} \\
    0 & 0 & \cdots & 0 & 0 & 1 \\
  \end{array}
\right),
\end{equation*}
where $a_{i}^{j}\in\mathbb{R}$. This is a connected and simply connected Lie group of dimension $2r+1$.  Let $\left\{x_{i}, y_{i}, z\right\}$,
$i\in\left\{1,\ldots,r\right\}$, be the coordinate functions on $H(1,r)$ defined by $x_{i}(A)=a_{i}^{r+1}$, $y_{i}(A)=a_{i}^{r+2}$, $z(A)=a^{r+2}_{r+1}$,
for any $A\in H(1,r)$. One proves that the $1$-forms $\alpha_{i}:=\d x_i$, $\beta_{i}:=\d y_{i}-x_{i}\d z$ and $\eta:=\d z$ are linearly independent and invariant
under the action of $H(1,r)$. Moreover, by setting
\begin{equation*}
\Phi:=-\sum_{i=1}^{r}\alpha_{i}\wedge \beta_{i}
\end{equation*}
it turns out that $(\eta,\Phi)$ is a cosymplectic structure. A compatible almost coK\"{a}hler structure is defined in the following way. Let
$X_{i}=\frac{\partial}{\partial x_i}$, $Y_{i}:=\frac{\partial}{\partial y_i}$, $\xi:=\frac{\partial}{\partial
z}+\sum_{i=1}^{r}x_{i}\frac{\partial}{\partial z}$ be the dual vector fields of the left invariant forms $\alpha_i$, $\beta_i$, $\eta$, respectively. Then
one can easily prove that they form an orthonormal frame field with respect to the left invariant metric on $H(1,r)$ defined by
\begin{equation*}
g:=\sum_{i=1}^{r}\left(\alpha_{i}^{2}+\beta_{i}^{2}\right) + \eta\otimes\eta.
\end{equation*}
Further we define a tensor field $\phi$ by imposing that $g(\cdot,\phi\cdot)=\Phi$. One can check that $(\phi,\xi,\eta,g)$ is an almost coK\"{a}hler
structure. Since $\nabla_{X_i}\xi\neq 0$, the structure is not coK\"{a}hler (cf. \cite{chinea-gonzalez86}). Chinea and Gonzalez give a detailed study of
homogeneous structures on $H(1,r)$. In particular, they prove that $H(1,r)$ is a homogeneous almost coK\"{a}hler manifold of type ${\mathcal
T}_{2}\oplus{\mathcal T}_{3}$, according to the classification  of homogeneous structures of Tricerri and Vanhecke (\cite{tricerri83}).

Further, we also notice that one can consider the manifold
$M(1,r)=\Gamma(1,r) \backslash H(1,r)$, where $\Gamma(1, r)$ is the
discrete subgroup of $H(1,r)$ of matrices with integer entries.
$M(1, r)$ is a compact nilmanifold of dimension $2r + 1$. However, we
point out that $M(1,r)$ cannot admit any coK\"{a}hler
structure due to topological obstructions (\cite{cordero85}).
Therefore, as the almost coK\"{a}hler structure on $H(1,r)$ given above descends
to $M(1,r)$, we see that
 $M(1,r)$ is an example of a compact almost coK\"{a}hler manifold which cannot
 have any coK\"{a}hler structure. Another example of a compact almost
 coK\"{a}hler manifold that cannot admit any coK\"{a}hler structure is $M(1,1)\times \mathbb{T}^{2r}$ (see \cite{cordero85} for more details).

We notice that some almost contact metric structures on $M(1,r)$ are described in  \cite{chinea-deleon-marrero-93tris}.

Another class of examples, due to de Le\'{o}n (\cite{deleon89}),  is given by the compact solvmanifolds $M(k)=D_{1}\backslash S_{1}$, where $S_{1}$ is a
$3$-dimensional solvable non-nilpotent Lie group and $D_{1}$ a discrete subgroup of $S_{1}$. Namely, let $S_{1}$ be the set of matrices of the form
\begin{equation*}
\left(
  \begin{array}{cccc}
    e^{kz} & 0 & 0 & x \\
    0 & e^{-kz} & 0 & y \\
    0 & 0 & 1 & z \\
    0 & 0 & 0 & 1 \\
  \end{array}
\right)
\end{equation*}
where $x,y,z\in\mathbb{R}$ and $k$ is a real number such that $e^{k}+e^{-k}$ is
an integer different from $2$. One proves that the forms $\d x-k z
\d z$, $\d y + k y \d z$ and $\d z$ constitute a basis of right invariant $1$-forms on $S_{1}$. Next, let $D_1$ be the discrete subgroup of $S_{1}$ such that the
quotient space $M(k)=D_{1}\backslash S_{1}$ is compact (\cite{auslander63}). Then there exists a global basis $\left\{\alpha,\beta,\gamma\right\}$ of
$1$-forms on $M(k)$ such that $\pi^{\ast}\alpha=\d x - k z \d z$,
$\pi^{\ast}\beta=\d y + k y \d z$, $\pi^{\ast}\gamma=\d z$, where $\pi: S_{1}\longrightarrow M(k)$
is the canonical projection. Thus one has $\d\alpha=-k \alpha\wedge \gamma$,
$\d\beta=k \beta\wedge\gamma$ and $\d\gamma=0$. Moreover, if
$\left\{X,Y,Z\right\}$ is the dual basis of vector fields to $\left\{\alpha,\beta,\gamma\right\}$, we have
\begin{equation*}
[X,Y]=0, \ \ \ [X,Z]=k X, \ \ \ [Y,Z]=- k Y.
\end{equation*}
Now, as observed by de Le\'{o}n, $M(k)$ cannot admit any
coK\"{a}hler structure, since otherwise $M(k)\times S^{1}$ would be
a K\"{a}hler manifold, but it is known that it cannot admit any
complex structure (cf. \cite{fernandez-gray90}). Nevertheless an
explicit almost coK\"{a}hler structure can be defined by putting
\begin{equation*}
\phi:=\alpha\otimes Y - \beta\otimes X, \ \ \ \xi:=Z, \ \ \ \eta:=\gamma, \ \ \ g:=\alpha^2 + \beta^2 + \gamma^2.
\end{equation*}
Notice that the Reeb vector field is not Killing, otherwise the manifold would
be coK\"{a}hler in view of Remark \ref{3D-Killing}.

A generalization of this example in higher dimensions is considered in \cite{chinea-deleon-marrero-93tris}.

\medskip

We close the section by illustrating a general method for defining left invariant coK\"{a}hler structures on Lie groups. First recall the notion of
\emph{coK\"{a}hler Lie algebra}. Namely, a coK\"{a}hler Lie algebra is a  real Lie algebra $\mathfrak{g}$ of dimension $2n+1$ endowed with a $1$-form
$\eta$, an endomorphism $\phi$ and an inner product $g$  satisfying the conditions
\begin{gather*}
\eta(\xi)=1, \ \ \ \d\eta=0, \ \ \ \d\Phi=0\\
\phi^2 = -I + \eta\otimes\xi, \ \ \ N^{(1)}_{\phi}=0, \ \ \ g(\phi\cdot,\phi\cdot)=g - \eta\otimes\eta,
\end{gather*}
where $\xi$ is the dual vector field of $\eta$, $\Phi$ is the $2$-form defined by $\Phi:=g(\cdot,\phi\cdot)$ and $N^{(1)}_{\phi}$ is the tensor field formally
defined as in \eqref{n1}.

This notion is closely related to that of a \emph{K\"{a}hler Lie algebra}, a
real Lie algebra of dimension $2n$ endowed with a $2$-form $\omega$ and
 an endomorphism $J$ satisfying
\begin{gather*}
\d\omega=0, \ \ \ J^2 = -I, \ \ \ [J,J]=0, \ \ \ g(J\cdot,J\cdot)=g.
\end{gather*}

Concerning coK\"{a}hler Lie algebras, Fino and Vezzoni proved the following characterization.

\begin{theorem}[\cite{fino11}]
CoK\"{a}hler Lie algebras of dimension $2n + 1$ are in one-to-one correspondence
with $2n$-dimensional K\"{a}hler Lie algebras equipped with a skew-adjoint derivation $D$
commuting with the complex structure.
\end{theorem}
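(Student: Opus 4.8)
The plan is to make the correspondence explicit through the orthogonal splitting $\mathfrak{g}=\mathcal{D}\oplus\Rr\xi$, where $\mathcal{D}=\ker\eta$ and $\xi$ is the metric dual of $\eta$ (so $g(\xi,\xi)=1$ and $\mathcal{D}=\xi^{\perp}$). Starting from a coK\"ahler Lie algebra $(\mathfrak{g},\eta,\phi,g)$, the first observation is that on a Lie algebra the Chevalley--Eilenberg identity $d\eta(X,Y)=-\eta([X,Y])$ turns $d\eta=0$ into $\eta([X,Y])=0$ for all $X,Y$; hence $[\mathfrak{g},\mathfrak{g}]\subseteq\mathcal{D}$, so $\mathcal{D}$ is a codimension-one ideal, in particular a subalgebra. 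I then set $\mathfrak{h}:=\mathcal{D}$, $J:=\phi|_{\mathfrak{h}}$ and keep the restricted inner product $g|_{\mathfrak{h}}$. Because $\phi\xi=0$, $\eta\circ\phi=0$ and $\phi^{2}=-I+\eta\otimes\xi$, the map $J$ preserves $\mathfrak{h}$ and satisfies $J^{2}=-I$, while $g(\phi\cdot,\phi\cdot)=g-\eta\otimes\eta$ restricts to $g(J\cdot,J\cdot)=g$ on $\mathfrak{h}$. Since $\mathfrak{h}$ is a subalgebra, the differential of $\omega:=\Phi|_{\mathfrak{h}}$ is just the restriction of $d\Phi$, so $d\omega=0$; and as $d\eta=0$, the tensor $N^{(1)}_{\phi}$ of \eqref{n1} equals $[\phi,\phi]$, whose restriction to $\mathfrak{h}$ is exactly the Nijenhuis tensor of $J$. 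Thus $(\mathfrak{h},J,g|_{\mathfrak{h}})$ is a $2n$-dimensional K\"ahler Lie algebra.

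Next I produce the derivation. Put $D:=\operatorname{ad}_{\xi}|_{\mathfrak{h}}$; since $\mathcal{D}$ is an ideal, $\operatorname{ad}_{\xi}$ preserves $\mathfrak{h}$, and the Jacobi identity makes $D$ a derivation of $\mathfrak{h}$. The left-invariant extension of $(\phi,\xi,\eta,g)$ turns the simply connected group $G$ with Lie algebra $\mathfrak{g}$ into a coK\"ahler manifold, so I may apply the manifold results: normality gives $N^{(3)}=\mathcal{L}_{\xi}\phi=0$, and by Theorems \ref{almostk1} and \ref{killing} the Reeb field is Killing, $\mathcal{L}_{\xi}g=0$. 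For left-invariant tensors these read $g(DX,Y)+g(X,DY)=0$ and $D(\phi X)=\phi(DX)$ on $\mathfrak{h}$, i.e. $D$ is $g$-skew-adjoint and commutes with $J$. This yields the data $(\mathfrak{h},J,g,D)$ of the theorem.

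For the inverse construction, given a K\"ahler Lie algebra $(\mathfrak{h},J,g_{\mathfrak{h}})$ with a $J$-commuting skew-adjoint derivation $D$, I form $\mathfrak{g}:=\mathfrak{h}\oplus\Rr\xi$ with bracket extending that of $\mathfrak{h}$ by $[\xi,X]:=DX$; the derivation property of $D$ is precisely the only nontrivial Jacobi identity. I extend the structure by $\phi\xi=0$, $\eta|_{\mathfrak{h}}=0$, $\eta(\xi)=1$, and by declaring $\xi$ a unit vector orthogonal to $\mathfrak{h}$. The identities $\eta(\xi)=1$, $\phi^{2}=-I+\eta\otimes\xi$, $g(\phi\cdot,\phi\cdot)=g-\eta\otimes\eta$, and $d\eta=0$ (from $[\mathfrak{g},\mathfrak{g}]\subseteq\mathfrak{h}$) are immediate. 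The two substantive conditions, which I expect to be the main obstacle, are $N^{(1)}_{\phi}=0$ and $d\Phi=0$. For the mixed component of the Nijenhuis tensor one finds, for $X\in\mathfrak{h}$, that $[\phi,\phi](\xi,X)=-DX-JDJX$, which vanishes because $DJ=JD$ forces $JDJX=J^{2}DX=-DX$. For $d\Phi$, the only component not obviously zero is $d\Phi(\xi,X,Y)=-g(DX,JY)+g(DY,JX)$ for $X,Y\in\mathfrak{h}$; combining skew-adjointness of $D$, skew-adjointness of $J$ (which follows from $g(J\cdot,J\cdot)=g$) and $DJ=JD$ gives $g(DX,JY)=g(DY,JX)$, so this term vanishes as well. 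Hence $(\mathfrak{g},\eta,\phi,g)$ is a coK\"ahler Lie algebra.

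Finally, the two assignments are mutually inverse: the splitting $\mathfrak{g}=\ker\eta\oplus\Rr\xi$, the endomorphism $J=\phi|_{\ker\eta}$ and the derivation $D=\operatorname{ad}_{\xi}$ are all recovered canonically from the coK\"ahler data, while the extension rebuilds $\eta,\phi,g$ verbatim from $(\mathfrak{h},J,g,D)$. This bookkeeping is routine and establishes the claimed bijection.
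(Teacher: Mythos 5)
Your argument is correct. Note that the survey itself gives no proof of this statement (it is quoted from Fino--Vezzoni \cite{fino11}), and your construction is essentially the one used there: $d\eta=0$ makes $\ker\eta$ a codimension-one K\"ahler ideal, $D=\operatorname{ad}_\xi|_{\ker\eta}$ is the skew-adjoint, $J$-commuting derivation, and the semidirect extension $\mathfrak{h}\oplus\Rr\xi$ with $[\xi,X]=DX$ inverts the assignment. The only stylistic remark is that your detour through the simply connected group and Theorems \ref{almostk1} and \ref{killing} to get $\mathcal{L}_\xi g=0$ and $\mathcal{L}_\xi\phi=0$ can be replaced by the direct algebraic translations $(\mathcal{L}_\xi g)(X,Y)=-g(DX,Y)-g(X,DY)$ and $(\mathcal{L}_\xi\phi)X=D(JX)-J(DX)$ for left-invariant $X,Y$, which keeps the whole proof at the Lie-algebra level.
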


It follows in particular that any coK\"{a}hler unimodular Lie group is necessarily flat and solvable.

Actually, the above result was proved at the Lie group level by Dacko in \cite{dacko99} for almost coK\"{a}hler manifolds. His results can be summarized as
follows.

\begin{theorem}[\cite{dacko99}]
Let $(\phi,\xi,\eta,g)$ be a left-invariant almost coK\"{a}hler structure on a simply connected and connected Lie group $G$. Then $G$ is a semi-direct
product of the form $\mathbb{R}\times_{f} G_{1}$, where $G_{1}$ is a Lie group and $f$ is a $1$-parameter group of automorphisms of $G_{1}$, and
$(\phi,\xi,\eta,g)$ is an almost coK\"{a}hler extension of some almost K\"{a}hler structure on $G_1$.
\end{theorem}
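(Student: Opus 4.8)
The plan is to pass everything to the Lie algebra $\g$ of $G$ and to extract the semidirect decomposition from the single hypothesis that $\eta$ is closed. By left-invariance, $\eta$, $\phi$, $g$ and the Reeb field $\xi$ are determined by their values on $\g$, so it suffices to produce the decomposition at the infinitesimal level. First I would set $\g_1:=\ker\eta\subset\g$. For left-invariant $1$-forms one has $\d\eta(X,Y)=-\eta([X,Y])$, so the closedness of $\eta$ is equivalent to $\eta([X,Y])=0$ for all $X,Y\in\g$, that is $[\g,\g]\subseteq\g_1$. In particular $\g_1$ is an ideal of codimension $1$, and since $\eta(\xi)=1$ we get the vector-space splitting $\g=\Rr\xi\oplus\g_1$. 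Because $\g_1$ is an ideal, $\mathrm{ad}_\xi$ preserves $\g_1$, and by the Jacobi identity its restriction $D:=\mathrm{ad}_\xi|_{\g_1}$ is a derivation of $\g_1$; thus $\g=\Rr\xi\ltimes_D\g_1$ is a semidirect product.

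Next I would integrate this decomposition. Let $G_1$ be the connected, simply connected Lie group with Lie algebra $\g_1$. Since $G_1$ is simply connected, the one-parameter group $t\mapsto\exp(tD)$ of automorphisms of $\g_1$ integrates to a one-parameter group $f$ of automorphisms of $G_1$. The Lie algebra of $\Rr\times_f G_1$ is precisely $\Rr\xi\ltimes_D\g_1=\g$, and as both $G$ and $\Rr\times_f G_1$ are connected and simply connected, they are isomorphic, $G\cong\Rr\times_f G_1$, with $\xi$ corresponding to the infinitesimal generator of the $\Rr$-factor.

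It then remains to recognize the geometric structure as an extension of an almost K\"ahler structure on $G_1$. On $\g_1=\ker\eta$ the identity $\phi^2=-I+\eta\otimes\xi$ reduces to $\phi^2=-I$, so $J:=\phi|_{\g_1}$ is an almost complex structure; the induced inner product $g_1:=g|_{\g_1}$ is $J$-compatible and the restricted fundamental form $\omega_1:=\Phi|_{\g_1}$ satisfies $\omega_1(\cdot,\cdot)=g_1(\cdot,J\cdot)$. Because brackets of elements of $\g_1$ remain in $\g_1$, evaluating $\d\Phi$ on triples from $\g_1$ gives exactly $\d\omega_1$ computed inside $\g_1$; hence $\d\Phi=0$ forces $\d\omega_1=0$, so $(\g_1,J,g_1)$ is an almost K\"ahler Lie algebra. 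Evaluating the remaining component $\d\Phi(\xi,X,Y)$ for $X,Y\in\g_1$ and using $i_\xi\Phi=0$ together with $[\xi,X]=DX$ yields the compatibility relation $\Phi(DX,Y)+\Phi(X,DY)=0$, i.e.\ $D$ is an infinitesimal automorphism of $\omega_1$; this is precisely the condition making $(\phi,\xi,\eta,g)$ the almost coK\"ahler extension of $(J,g_1)$ by $D$.

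The genuinely structural input is the first step: the closedness of $\eta$ alone forces $\ker\eta$ to be a codimension-$1$ ideal, and everything else is bookkeeping. Accordingly, I expect the main obstacle to be formal rather than conceptual --- namely pinning down the precise definition of \emph{almost coK\"ahler extension} used in \cite{dacko99} and checking that the compatibility relation $\Phi(DX,Y)+\Phi(X,DY)=0$ extracted from $\d\Phi=0$ matches it verbatim, so that the restricted data $(J,g_1,D)$ reconstructs $(\phi,\xi,\eta,g)$ on the nose.
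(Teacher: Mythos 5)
Your argument is correct, but there is nothing in the paper to compare it against: the survey only states Dacko's theorem with a citation to \cite{dacko99} and gives no proof. Your reconstruction is the natural one and it works: closedness of the left-invariant form $\eta$ is equivalent to $[\g,\g]\subseteq\ker\eta$, so $\g_1=\ker\eta$ is a codimension-one ideal, $D=\mathrm{ad}_\xi|_{\g_1}$ is a derivation, and simple connectedness lets you integrate $\g=\Rr\xi\ltimes_D\g_1$ to $G\cong\Rr\times_f G_1$; the identities $\eta\circ\phi=0$, $\eta=g(\cdot,\xi)$ and $i_\xi\Phi=0$ guarantee that $\phi$, $g$, $\Phi$ restrict to an almost Hermitian structure on $\g_1$, and splitting $\d\Phi=0$ into its $\g_1$-component and its $\xi$-component gives respectively $\d\omega_1=0$ (so $(\g_1,J,g_1)$ is almost K\"ahler) and $\Phi(DX,Y)+\Phi(X,DY)=0$ (so $D$ is an infinitesimal automorphism of $\omega_1$). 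The one loose end you correctly flag yourself is the undefined term \emph{almost coK\"ahler extension}; the data you extract --- a derivation of $\g_1$ preserving $\omega_1$ --- is exactly what is needed to reconstruct $(\phi,\xi,\eta,g)$ from $(J,g_1,D)$, and it is consistent with the Fino--Vezzoni theorem quoted just above in the same subsection, where the normal (coK\"ahler) case forces the stronger conditions that $D$ be skew-adjoint and commute with $J$. So the proof is complete modulo matching that definition verbatim with \cite{dacko99}.
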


\subsection{Curvature of  coK\"{a}hler manifolds}
Curvature properties of  coK\"{a}hler manifolds are among the most studied topics in coK\"{a}hler geometry.  We start by collecting some basic
properties of almost coK\"{a}hler manifolds. We adopt the following notation. We denote by $R$ the curvature tensor defined by $R(X,Y,Z,W):=-g(R(X,Y)Z,W)$, by
$\textrm{Ric}$ the Ricci tensor, given
 by $\textrm{Ric}(X,Y):=\sum\limits_{i=0}^{2n}R(E_{i},X,Y,E_{i})$,  where  $\left\{E_{0}:=\xi,E_{1},\ldots,E_{2n}\right\}$  is  a  local  orthonormal  basis.   Furthermore we denote
by $\textrm{Ric}^{\ast}$ the Ricci $\ast$-tensor,  defined by  $\textrm{Ric}^{\ast}(X,Y):=\sum\limits_{i=0}^{2n}R(E_{i},X,\phi Y,\phi E_{i})$, and,
similarly,
 $s^{\ast}$ will be the scalar  $\ast-$curvature, given by the trace of $\textrm{Ric}^{\ast}$. Moreover we shall denote by $K$ the sectional curvature,
 defined by $K(\pi)=K(X,Y):=R(X,Y,X,Y)$, where $X$ and $Y$ are orthonormal vectors spanning the $2$-plane $\pi\subset T_{x}M$.

 The first systematic study of the curvature of an almost coK\"{a}hler manifold is due to Olszak
(\cite{olszak81}). He found useful formulas relating the curvature tensor of an
almost coK\"{a}hler manifold with other tensor fields, such as the covariant
derivative of $\phi$, etc.
\begin{theorem}[\cite{olszak81}]
In any almost coK\"{a}hler manifold  the following identities  hold
\begin{gather}
R(\xi,X)\xi - \phi R(\xi,\phi X)\xi + 2h^{2}X =0 \label{curvature1},\\
s - s^{\ast} - \emph{Ric}(\xi,\xi) + \frac{1}{2}\left\|\nabla\phi\right\|^2=0
\label{curvature2},\\
\emph{Ric}(\xi,\xi)+\left\|\nabla\xi\right\|^2 =0, \label{curvature3}
\end{gather}
where $h = \frac12 \mathcal{L}_\xi \phi$.
Furthermore,  any compact almost contact metric manifold $(M,\phi,\xi,\eta,g)$  satisfying \eqref{curvature2} and \eqref{curvature3} is necessarily almost
coK\"{a}hler.
\end{theorem}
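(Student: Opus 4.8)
The plan is to derive all three identities from a single object --- the symmetric operator $A:=\nabla\xi$ of the Reeb field, which on an almost coK\"ahler manifold equals $-\phi h$ --- and to obtain the converse by two applications of the Weitzenb\"ock (Bochner) technique on the compact manifold. Throughout I would use the facts collected above: $\nabla_\xi\xi=0$ (the Reeb flow is geodesic); $\nabla\xi=-\phi h$ with $h=\tfrac12\mathcal{L}_\xi\phi$ symmetric, $h\xi=0$, $\eta\circ h=0$ and $h\phi=-\phi h$; $\nabla_\xi\phi=0$ by \eqref{nablaphixi}; the pointwise formula \eqref{nablaphi1} for $\nabla\phi$, with $N^{(1)}=[\phi,\phi]$ by Theorem~\ref{almostk1}. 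From $A=-\phi h$ together with $\phi^2=-I+\eta\otimes\xi$ and $\eta\circ h=0$ I would first record the algebraic facts $A^2=h^2$ and $\phi h^2\phi=-h^2$, the symmetry of $A$ (equivalent to $\d\eta=0$), the vanishing of $\mathrm{div}\,\xi=\mathrm{tr}\,A=-\mathrm{tr}(\phi h)=0$, and --- by differentiating $h\phi+\phi h=0$ along $\xi$ with $\nabla_\xi\phi=0$ --- the fact that $\nabla_\xi h$ also anticommutes with $\phi$.

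For \eqref{curvature1} and \eqref{curvature3} the key step is a direct computation of the Jacobi operator $X\mapsto R(\xi,X)\xi$ from the definition of $R$, using $\nabla_\xi\xi=0$ and $[X,\xi]=AX-\nabla_\xi X$; the cross terms $A\nabla_\xi X$ cancel and one is left with the Riccati-type relation $R(\xi,X)\xi=\pm\bigl((\nabla_\xi A)X+A^2X\bigr)$, the global sign being that of the paper's curvature convention. Tracing this relation and invoking $\mathrm{div}\,\xi=0$ to kill $\mathrm{tr}(\nabla_\xi A)=\xi(\mathrm{tr}\,A)$ gives $\mathrm{Ric}(\xi,\xi)=-\|\nabla\xi\|^2$, using $\|\nabla\xi\|^2=\mathrm{tr}(A^2)$ (valid because $A$ is symmetric); this is \eqref{curvature3}. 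For \eqref{curvature1} I would not trace: substituting $\nabla_\xi A=-\phi\,\nabla_\xi h$ and $A^2=h^2$ into the Riccati relation, then computing $\phi R(\xi,\phi X)\xi$ with $\phi^2=-I+\eta\otimes\xi$, $\eta\circ(\nabla_\xi h)=0$ and $\phi h^2\phi=-h^2$, the two first-order terms cancel because $\nabla_\xi h$ anticommutes with $\phi$, and the two $h^2$-terms combine to the multiple of $h^2X$ recorded in \eqref{curvature1}.

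Identity \eqref{curvature2} is the genuinely computational one, since it involves second derivatives of $\phi$. I would start from the Ricci identity for the tensor $\phi$, namely $(\nabla^2_{X,Y}\phi-\nabla^2_{Y,X}\phi)Z=R(X,Y)\phi Z-\phi R(X,Y)Z$, and contract it over an orthonormal frame against $\phi$. On the curvature side the two contractions produce exactly $s$ and $s^{\ast}$, the $\xi$-direction contributing $\mathrm{Ric}(\xi,\xi)$ --- this term appears because $\phi\xi=0$ and $\phi^2=-I+\eta\otimes\xi$ reinstate the $\eta\otimes\xi$ part under the trace. On the left side I would use \eqref{nablaphi1} to rewrite the undifferentiated factors of $\nabla\phi$ algebraically through $N^{(1)}=[\phi,\phi]$, so that the surviving first-order data collect into $\tfrac12\|\nabla\phi\|^2$ and the second-order contribution reduces to a contraction of $\nabla N^{(1)}$; the closedness of $\eta$ and $\Phi$ is exactly what makes this last contraction and all leftover terms vanish, leaving \eqref{curvature2}. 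I expect this bookkeeping --- tracking which contractions give $s$, which give $s^{\ast}$, how the $\xi$-terms organize into $\mathrm{Ric}(\xi,\xi)$, and why the $\nabla N^{(1)}$-terms drop out --- to be the main obstacle of the forward direction.

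For the converse I would argue by integration, now on a \emph{general} compact almost contact metric manifold where $\nabla\xi=-\phi h$ is not available. The Weitzenb\"ock formula for the $1$-form $\eta$ reads $\int_M(\|\d\eta\|^2+(\mathrm{div}\,\xi)^2)=\int_M(\|\nabla\xi\|^2+\mathrm{Ric}(\xi,\xi))$ on any compact manifold; hypothesis \eqref{curvature3} makes the right-hand integrand vanish pointwise, so $\int_M(\|\d\eta\|^2+(\mathrm{div}\,\xi)^2)=0$ and hence $\d\eta=0$. With $\eta$ now closed I would feed \eqref{curvature2} into the analogous integral formula for the fundamental $2$-form $\Phi$: the universal almost contact metric identity for $s-s^{\ast}$ differs from the expression in \eqref{curvature2} by an exact divergence plus a non-negative multiple of $\|\d\Phi\|^2$, so integrating and using \eqref{curvature2} forces $\int_M\|\d\Phi\|^2=0$, i.e. $\d\Phi=0$. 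Thus $\eta$ and $\Phi$ are both closed and the structure is almost coK\"ahler. The delicate point here mirrors that of \eqref{curvature2}: one must display the integral formula for $\Phi$ with its non-negative remainder made explicit, whereas the argument for $\eta$ (hence for \eqref{curvature3}) is comparatively routine.
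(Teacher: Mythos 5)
First, a remark on context: the survey itself gives no proof of this theorem --- it is quoted directly from \cite{olszak81} --- so there is no in-paper argument to compare yours against, and I am judging your proposal on its own merits. Your route is the classical one, and a substantial part of it is complete. The derivation of \eqref{curvature1} and \eqref{curvature3} from the Riccati-type relation $R(\xi,X)\xi=\pm\bigl((\nabla_\xi A)X+A^2X\bigr)$ with $A=\nabla\xi=-\phi h$ is correct in full: the auxiliary identities $A^2=h^2$, $\phi h^2\phi=-h^2$, $\eta\circ(\nabla_\xi h)=0$, $\nabla_\xi A=-\phi\nabla_\xi h$ (via \eqref{nablaphixi}) and the anticommutation of $\nabla_\xi h$ with $\phi$ all check out, and tracing together with $\mathrm{tr}(\nabla_\xi A)=\xi(\mathrm{div}\,\xi)=0$ and the symmetry of $A$ yields \eqref{curvature3}. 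The first half of the converse is likewise sound: the integrated Weitzenb\"ock formula $\int_M\bigl(\|\d\eta\|^2+(\delta\eta)^2\bigr)=\int_M\bigl(\|\nabla\xi\|^2+\mathrm{Ric}(\xi,\xi)\bigr)$ has its right-hand integrand killed pointwise by \eqref{curvature3}, giving $\d\eta=0$ (and $\delta\eta=0$). The sign ambiguities you flag are genuinely just the survey's nonstandard convention $R(X,Y,Z,W)=-g(R(X,Y)Z,W)$ and are harmless.

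The two places where you have a plan rather than a proof are \eqref{curvature2} and the $\d\Phi=0$ half of the converse; in both, the decisive cancellation is asserted rather than verified. For \eqref{curvature2}: contracting the Ricci identity for $\phi$ against $\phi$ does produce $s$, $s^{\ast}$ and $\mathrm{Ric}(\xi,\xi)$ on the curvature side, but to collapse the derivative side \emph{pointwise} to $\tfrac12\|\nabla\phi\|^2$ you need not only $\d\Phi=0$ but also $\delta\Phi=0$, i.e.\ the full harmonicity of $\Phi$ (Theorem \ref{harmonicity}); closedness alone cannot annihilate the leftover divergence term (your ``contraction of $\nabla N^{(1)}$'') at a point. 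The clean repair is to apply the Weitzenb\"ock formula to the harmonic form $\Phi$, pair with $\Phi$, and use $|\Phi|^2=n=\mathrm{const}$ to convert $\langle\nabla^*\nabla\Phi,\Phi\rangle$ into $\|\nabla\Phi\|^2$; the curvature pairing must then be computed explicitly and identified with $2\bigl(s-s^{\ast}-\mathrm{Ric}(\xi,\xi)\bigr)$, which is exactly where the coefficient $\tfrac12$ in \eqref{curvature2} comes from. The same unproved constant reappears in the converse: the claim that the general integral formula for $\Phi$ ``differs from \eqref{curvature2} by an exact divergence plus a non-negative multiple of $\|\d\Phi\|^2$'' is precisely the statement that needs to be established before you may conclude $\int_M\bigl(\|\d\Phi\|^2+\|\delta\Phi\|^2\bigr)=0$. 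Until those two contractions are written out, identity \eqref{curvature2} and the second half of the converse remain open in your write-up.
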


A variation of the last sufficient condition is the following.

\begin{theorem}[\cite{olszak81}]
Let $(M,\phi,\xi,\eta,g)$ be an almost contact metric manifold such that the integral curves of the Reeb vector field are geodesics and the forms $\eta$
and $\Phi$ are coclosed. If $M$ fulfills the conditions \eqref{curvature2} and \eqref{curvature3}, then it is almost coK\"{a}hler.
\end{theorem}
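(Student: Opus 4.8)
The plan is to establish the two defining conditions $d\eta=0$ and $d\Phi=0$ separately (the metric structure $(\phi,\xi,\eta,g)$ is already given), treating each by a Bochner--Weitzenb\"ock argument in which one of the two curvature hypotheses, together with the coclosedness and geodesic assumptions, plays the role that compactness and integration by parts play in the preceding compact statement. For a closed manifold one integrates the Weitzenb\"ock identities and reads off from \eqref{curvature3} (resp.\ \eqref{curvature2}) that $\int(\|d\eta\|^2+\|\delta\eta\|^2)=0$ (resp.\ the analogue for $\Phi$); lacking compactness, I must extract the same conclusion pointwise, and this is exactly where $\delta\eta=\delta\Phi=0$ (codifferential) and $\nabla_\xi\xi=0$ enter, by forcing the would-be divergence terms to drop out.

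First I would prove $d\eta=0$. Contracting the Ricci identity $R(X,\xi)\xi=\nabla_X\nabla_\xi\xi-\nabla_\xi\nabla_X\xi-\nabla_{[X,\xi]}\xi$ over an orthonormal frame yields the pointwise formula
\begin{equation*}
\mathrm{Ric}(\xi,\xi)+\|\nabla\xi\|^2=\mathrm{div}(\nabla_\xi\xi)-\xi(\mathrm{div}\,\xi)+2\|\Omega\|^2,
\end{equation*}
where $\Omega$ is the skew-symmetric part of the endomorphism $\nabla\xi$, using the splitting $\mathrm{tr}((\nabla\xi)^2)=\|S\|^2-\|\Omega\|^2$ and $\|\nabla\xi\|^2=\|S\|^2+\|\Omega\|^2$ into symmetric and skew parts. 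Since $(\nabla_X\eta)(Y)=g(\nabla_X\xi,Y)$, one has $d\eta(X,Y)=2\Omega(X,Y)$, so $\Omega=0$ is equivalent to $d\eta=0$ (indeed $2\|\Omega\|^2$ is a positive multiple of $\|d\eta\|^2$). The geodesic hypothesis gives $\nabla_\xi\xi=0$, and coclosedness of $\eta$ is precisely $\mathrm{div}\,\xi=-\delta\eta=0$, so $\xi(\mathrm{div}\,\xi)=0$ as well; the displayed formula collapses to $\mathrm{Ric}(\xi,\xi)+\|\nabla\xi\|^2=2\|\Omega\|^2$, and \eqref{curvature3} forces $\|\Omega\|^2=0$. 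Hence $d\eta=0$, and together with the assumed $\delta\eta=0$ the form $\eta$ is harmonic.

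For $d\Phi=0$ I would run the parallel argument for the fundamental $2$-form, now using \eqref{curvature2} in place of \eqref{curvature3}. Pairing the Weitzenb\"ock formula $\Delta\Phi=\nabla^*\nabla\Phi+\mathcal{W}\Phi$ with $\Phi$, and using $\delta\Phi=0$ (so $\Delta\Phi=\delta d\Phi$) together with $\|\nabla\Phi\|^2=\|\nabla\phi\|^2$, produces a pointwise identity relating $\|d\Phi\|^2$, $\|\nabla\phi\|^2$ and the curvature term $\langle\mathcal{W}\Phi,\Phi\rangle$. The crucial computation is to evaluate $\langle\mathcal{W}\Phi,\Phi\rangle$ on a $\phi$-basis using the algebraic identities of the fundamental form and the now-established $d\eta=0$ (which, via \eqref{n1}--\eqref{n2}, gives $N^{(2)}=0$ and $N^{(1)}=[\phi,\phi]$ and simplifies the general expression for $\nabla\phi$ quoted before \eqref{nablaphi1}); this should identify the curvature contribution with the combination $s-s^*-\mathrm{Ric}(\xi,\xi)$ of \eqref{curvature2}. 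Condition \eqref{curvature2} then turns the identity into one of the form $\|d\Phi\|^2=\mathrm{div}(W)$, where $W$ is built from $\Phi$, $\nabla\Phi$ and $\xi$, and the geodesic and coclosedness hypotheses are what should guarantee that $\mathrm{div}(W)$ vanishes pointwise, giving $d\Phi=0$ and hence the almost coK\"ahler property.

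The main obstacle is precisely this last step. Unlike the $1$-form case, where the contracted Ricci identity produces a divergence term that is manifestly $\mathrm{div}(\nabla_\xi\xi)$ and $\xi(\mathrm{div}\,\xi)$ (both annihilated outright by the hypotheses), the $2$-form Weitzenb\"ock identity leaves a genuine transgression term whose \emph{pointwise} vanishing is not automatic: one must exploit the special structure of $\Phi$ as the fundamental form (rather than a generic $2$-form), the coclosedness $\delta\Phi=0$, and $d\eta=0$ to see that it is identically zero and not merely of integral zero. Getting the signs and numerical constants right in the evaluation of $\langle\mathcal{W}\Phi,\Phi\rangle$, so that \eqref{curvature2} yields $\|d\Phi\|^2=0$ with the correct orientation, is the delicate part of the argument.
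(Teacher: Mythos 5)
The survey states this theorem without proof (it is quoted from Olszak's paper), so your attempt can only be measured against the known argument, whose shape you have correctly identified: pointwise Bochner--Weitzenb\"ock identities in which the geodesic and coclosedness hypotheses replace the integration by parts of the compact statement. Your first half is correct and complete. The contracted Ricci identity
\begin{equation*}
\mathrm{Ric}(\xi,\xi)+\|\nabla\xi\|^2=\mathrm{div}(\nabla_\xi\xi)-\xi(\mathrm{div}\,\xi)+2\|\Omega\|^2
\end{equation*}
is right (it is the pointwise form of Yano's formula), the hypotheses $\nabla_\xi\xi=0$ and $\delta\eta=-\mathrm{div}\,\xi=0$ kill the two divergence terms outright, and \eqref{curvature3} then forces $\Omega=0$, which is equivalent to $d\eta=0$. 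Nothing is missing there.

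The second half, however, is a plan rather than a proof, and the gap sits exactly where the content of the theorem lives. You never derive the identity that plays, for the fundamental $2$-form, the role of the display above: a pointwise formula valid on an arbitrary almost contact metric manifold expressing $s-s^*-\mathrm{Ric}(\xi,\xi)+\frac{1}{2}\|\nabla\phi\|^2$ as a nonnegative combination of $\|d\Phi\|^2$, $\|\delta\Phi\|^2$ and $\|d\eta\|^2$ with explicit positive coefficients, plus an exact divergence built from $\nabla_\xi\xi$, $\delta\eta$ and $\delta\Phi$. Every step you need --- that $\langle\mathcal W\Phi,\Phi\rangle$ reduces to the combination $s-s^*-\mathrm{Ric}(\xi,\xi)$, that the residual transgression term is a divergence annihilated \emph{pointwise} by the three auxiliary hypotheses, and that $\|d\Phi\|^2$ enters with the sign that lets \eqref{curvature2} force it to vanish --- is asserted with ``should'' and flagged by you as the delicate part. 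The coefficient bookkeeping is genuinely at stake: \eqref{curvature2} carries $\frac{1}{2}\|\nabla\phi\|^2$ while the naive pairing of $\Delta\Phi=\nabla^*\nabla\Phi+\mathcal W\Phi$ with $\Phi$ produces $\|\nabla\Phi\|^2$ with coefficient $1$ (and the norm conventions for $2$-forms introduce further factors of $2$ that you never fix), so one cannot see from what is written that the identity closes up with $\|d\Phi\|^2$ appearing with a definite sign. That computation is precisely Olszak's; until it is carried out, the proposal establishes only $d\eta=0$ and not $d\Phi=0$, i.e.\ only half the theorem.
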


Now, one can prove (\cite{olszak81}) that in any $3$-dimensional
almost contact metric manifold one has $\left\|\nabla\phi\right\|^2
= 2\left\|\nabla\xi\right\|^2$, $3 \left\|\d\Phi\right\|^2 =
2\left(\delta\eta\right)^2$ and
$\left\|\delta\Phi\right\|^2=2\left\|\d\eta\right\|^2$. Thus we can
state the following theorems.

\begin{theorem}[\cite{olszak81}]
Any $3$-dimensional almost contact metric manifold is coK\"{a}hler if and only if the Reeb vector field is parallel.
\end{theorem}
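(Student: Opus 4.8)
The plan is to reduce both implications to a single pointwise identity recorded immediately above, namely that on any $3$-dimensional almost contact metric manifold one has $\|\nabla\phi\|^2 = 2\|\nabla\xi\|^2$, combined with the characterization of coK\"ahler manifolds through the parallelism of $\phi$. First I would recall that, by Theorem \ref{cokahlercondition1}, an almost contact metric manifold is coK\"ahler exactly when $\nabla\phi = 0$ (equivalently $\nabla\Phi = 0$), and that this equivalence is valid in every dimension. Hence the entire statement amounts to proving the equivalence $\nabla\phi = 0 \Leftrightarrow \nabla\xi = 0$, and in dimension $3$ this is precisely what the quoted identity delivers: both $\|\nabla\phi\|^2$ and $\|\nabla\xi\|^2$ are nonnegative functions on $M$ that are proportional with a strictly positive constant, so $\nabla\phi$ vanishes at a point if and only if $\nabla\xi$ does, and therefore $\nabla\phi \equiv 0$ if and only if $\nabla\xi \equiv 0$.

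Concretely, for the forward implication I would assume $M$ coK\"ahler; then $\nabla\phi = 0$ by Theorem \ref{cokahlercondition1}, so $\|\nabla\xi\|^2 = \tfrac{1}{2}\|\nabla\phi\|^2 = 0$ and $\xi$ is parallel. For the converse I would assume $\nabla\xi = 0$; then $\|\nabla\phi\|^2 = 2\|\nabla\xi\|^2 = 0$, so $\nabla\phi = 0$, and Theorem \ref{cokahlercondition1} gives that $M$ is coK\"ahler. No compactness or integration is needed, precisely because the identity is pointwise; this is what makes the statement genuinely a corollary of the material preceding it, in contrast with the $3$-dimensional result on Killing $\xi$ in Remark \ref{3D-Killing}, which one could alternatively invoke together with Theorem \ref{killing} for the forward direction.

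The only real content — and the step I would expect to be the main obstacle were it not already available — is the $3$-dimensional identity $\|\nabla\phi\|^2 = 2\|\nabla\xi\|^2$ itself. To establish it from scratch I would work in a local $\phi$-adapted orthonormal frame $\{e_1, \phi e_1, \xi\}$, expand $\nabla\phi$ using $\phi\xi = 0$, $\eta\circ\phi = 0$, the relation $\nabla\xi = -\phi h$, and the symmetry of $h$ together with its anticommutation with $\phi$, exploiting that here the distribution $\mathcal D = \ker\eta$ is only $2$-dimensional so that $\nabla\phi$ is tightly constrained; the factor $2$ would emerge from summing over the two independent directions of $\mathcal D$. Since this identity is stated just before the theorem, I would simply cite it, after which the proof collapses to the two one-line computations above.
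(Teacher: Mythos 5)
Your proof is correct and follows exactly the route the paper intends: the survey states the identity $\|\nabla\phi\|^2 = 2\|\nabla\xi\|^2$ for $3$-dimensional almost contact metric manifolds immediately before the theorem and derives the result from it together with Theorem \ref{cokahlercondition1}, which is precisely your argument. The only minor caveat is in your aside on reproving the identity from scratch, where you invoke $\nabla\xi = -\phi h$, a relation the paper establishes only for almost coK\"ahler manifolds rather than for general almost contact metric ones; since you cite the identity rather than rely on that sketch, this does not affect the proof.
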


\begin{theorem}[\cite{olszak81}]
Let $(M,\phi,\xi,\eta,g)$ be a $3$-dimensional almost contact metric manifold such that any among the following conditions is satisfied:
\begin{enumerate}
\item[(i)] At least one among $\eta$ or $\Phi$ is a harmonic form.
\item[(ii)] $M$ is compact and \eqref{curvature3} holds.
\end{enumerate}
Then $M$ is almost coK\"{a}hler.
\end{theorem}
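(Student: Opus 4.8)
The plan is to reduce everything to showing that $d\eta=0$ and $d\Phi=0$, which is exactly the almost coK\"ahler condition, and to exploit the two pointwise identities recorded just above the statement, namely $3\|d\Phi\|^2=2(\delta\eta)^2$ and $\|\delta\Phi\|^2=2\|d\eta\|^2$. Read the right way, these say that in dimension $3$ closedness of one structure form is equivalent to coclosedness of the other: $d\Phi=0$ if and only if $\delta\eta=0$, and $d\eta=0$ if and only if $\delta\Phi=0$. This dictionary is what makes the harmonicity hypotheses so effective.

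For case (i) I would split into the two subcases. If $\eta$ is harmonic then $d\eta=0$ and $\delta\eta=0$; feeding $\delta\eta=0$ into $3\|d\Phi\|^2=2(\delta\eta)^2$ gives $d\Phi=0$, so both structure forms are closed and $M$ is almost coK\"ahler. Symmetrically, if $\Phi$ is harmonic then $d\Phi=0$ and $\delta\Phi=0$; feeding $\delta\Phi=0$ into $\|\delta\Phi\|^2=2\|d\eta\|^2$ gives $d\eta=0$, and again $M$ is almost coK\"ahler. Note that only one of the two identities is used in each subcase.

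For case (ii) the strategy is to prove that compactness together with \eqref{curvature3} forces $\eta$ to be harmonic, after which case (i) finishes the argument. The tool is the Weitzenb\"ock--Bochner formula for the $1$-form $\eta$ on the compact manifold $M$,
\[
\int_M\bigl(\|d\eta\|^2+\|\delta\eta\|^2\bigr)\,\nu_g=\int_M\bigl(\|\nabla\eta\|^2+\mathrm{Ric}(\xi,\xi)\bigr)\,\nu_g,
\]
where I use $\eta=g(\cdot,\xi)$, so that $\eta^\sharp=\xi$ and $\|\nabla\eta\|=\|\nabla\xi\|$ since the metric is parallel. The integrand on the right-hand side is then precisely $\|\nabla\xi\|^2+\mathrm{Ric}(\xi,\xi)$, which vanishes identically by hypothesis \eqref{curvature3}. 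Hence $\int_M(\|d\eta\|^2+\|\delta\eta\|^2)\,\nu_g=0$, and since the integrand is nonnegative we conclude $d\eta=0$ and $\delta\eta=0$, that is, $\eta$ is harmonic. Invoking case (i) now yields that $M$ is almost coK\"ahler.

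The routine parts are the two subcases of (i), which are immediate from the quoted identities. The one step that needs care is the sign bookkeeping in case (ii): one must check that the curvature and Ricci conventions entering \eqref{curvature3} agree with those in the Weitzenb\"ock formula, so that the right-hand integrand is exactly the left-hand side of \eqref{curvature3} and therefore vanishes rather than, say, doubling. Once the conventions are aligned the integral formula does all the work, and compactness is used only to justify the integration by parts implicit in the Bochner identity and to pass from the vanishing of the integral to the pointwise vanishing of $d\eta$ and $\delta\eta$.
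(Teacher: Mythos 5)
Your proof is correct and follows the route the survey intends: the survey states this theorem without proof (citing Olszak), but the two pointwise identities $3\|\d\Phi\|^2=2(\delta\eta)^2$ and $\|\delta\Phi\|^2=2\|\d\eta\|^2$ recorded just before it make case~(i) immediate exactly as you argue, and the integrated Weitzenb\"ock formula together with $\|\nabla\eta\|=\|\nabla\xi\|$ reduces case~(ii) to case~(i). The only conventions to pin down are the ones you already flag: ``harmonic'' must be read as closed and coclosed (the reading forced by Theorem~\ref{harmonicity}, which carries no compactness hypothesis), and the Ricci tensor in \eqref{curvature3} is the standard one, for which the Bochner term enters with precisely the sign that cancels $\|\nabla\xi\|^2$.
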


Blair proved   that a coK\"{a}hler manifold of constant curvature is
locally flat (\cite{blair67}). Later on, Goldberg and Yano  proved
in \cite{goldberg-yano69} that an almost coK\"{a}hler manifold of
constant curvature is coK\"{a}hler if and only if it is locally
flat.  Finally, Olszak (\cite{olszak81}, \cite{olszak87}) showed
that almost coK\"{a}hler manifolds of non-zero constant curvature do
not exist in any dimension. Summing up, we can state the following
important theorem.

\begin{theorem}\label{flat}
If an almost coK\"{a}hler  manifold $M$ is of constant curvature, then the metric is flat and the structure is
coK\"{a}hler.
\end{theorem}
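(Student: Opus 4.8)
The plan is to split the statement into two parts — flatness ($c=0$, where $c$ denotes the constant sectional curvature) and the coK\"ahler property — and to observe at the outset that the second is an easy consequence of the first. For constant curvature the curvature operator is $R(X,Y)Z=c\bigl(g(Y,Z)X-g(X,Z)Y\bigr)$, so once $c=0$ is known the whole curvature tensor vanishes; then $s=s^{\ast}=\mathrm{Ric}(\xi,\xi)=0$, and \eqref{curvature2} collapses to $\tfrac12\|\nabla\phi\|^{2}=0$, forcing $\nabla\phi=0$. By Theorem~\ref{cokahlercondition1} the structure is then coK\"ahler. Thus the entire problem reduces to proving that an almost coK\"ahler manifold of constant curvature must have $c=0$.

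First I would extract the scalar consequences of the three Olszak identities. Feeding $R(X,Y)Z=c\bigl(g(Y,Z)X-g(X,Z)Y\bigr)$ into the Ricci tensor gives $\mathrm{Ric}=-2nc\,g$, in particular $\mathrm{Ric}(\xi,\xi)=-2nc$; then \eqref{curvature3} yields $\|\nabla\xi\|^{2}=2nc$, so that $c\ge 0$ already. Substituting the same expression into \eqref{curvature1}, and using $\eta\circ\phi=0$ together with $\phi^{2}=-I+\eta\otimes\xi$, I expect the identity to reduce to
\begin{equation*}
h^{2}X=c\bigl(X-\eta(X)\xi\bigr),
\end{equation*}
i.e. $h^{2}$ is $c$ times the projection onto ${\mathcal D}=\ker\eta$. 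Taking traces recovers $\|h\|^{2}=2nc$ consistently. The geometric content is that, if $c>0$, then $h$ is invertible on ${\mathcal D}$ with eigenvalues $\pm\sqrt{c}$ only; since $h$ anticommutes with $\phi$ and $h\xi=0$, its two eigendistributions ${\mathcal D}^{\pm}$ have rank $n$ and are interchanged by $\phi$.

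The hard part is to exclude $c>0$; this is precisely Olszak's nonexistence result, and it cannot follow from the scalar identities alone, since $\|\nabla\phi\|^{2}$ is governed by the normality tensor $N^{(1)}=[\phi,\phi]$ whereas $h=\tfrac12 N^{(3)}$, and these are independent invariants. The approach I would take is to use that a space of constant curvature is locally symmetric, so $\nabla R=0$, together with the structural relation $\nabla\xi=-\phi h$. Differentiating $\nabla\xi=-\phi h$ and computing $R(X,Y)\xi$ in two ways — once from the constant-curvature formula, which gives $c\bigl(\eta(Y)X-\eta(X)Y\bigr)$, and once from $\nabla_{X}\nabla_{Y}\xi-\nabla_{Y}\nabla_{X}\xi-\nabla_{[X,Y]}\xi$ with $\nabla\phi$ replaced by its expression \eqref{nablaphi1} and with \eqref{identity1} — should produce first-order constraints on $\nabla h$. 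Combining these with the covariant derivative of $h^{2}=c\bigl(\,\cdot\,-\eta(\cdot)\xi\bigr)$ and with the integrability behaviour of the eigendistributions ${\mathcal D}^{\pm}$ is expected to force $\nabla h$, and then $h$ itself, into an overdetermined system with no solution unless $c=0$. Carrying out this tensorial bookkeeping is where the real work lies; the scalar reductions above only set the stage.

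Finally, having established $c=0$, the manifold is flat, and the coK\"ahler property follows immediately from the opening remark, completing the argument.
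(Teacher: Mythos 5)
Your reduction is sound as far as it goes: granting flatness, the vanishing of $R$ kills $s$, $s^{\ast}$ and $\mathrm{Ric}(\xi,\xi)$, so \eqref{curvature2} gives $\nabla\phi=0$ and Theorem~\ref{cokahlercondition1} gives normality; and your scalar consequences of constant curvature are correct --- \eqref{curvature3} gives $\|\nabla\xi\|^{2}=2nc$, hence $c\ge 0$, and \eqref{curvature1} does collapse to $h^{2}X=c\bigl(X-\eta(X)\xi\bigr)$. But the entire content of the theorem is the exclusion of $c>0$, and at exactly that point your argument stops being a proof and becomes a research plan: ``should produce first-order constraints'', ``is expected to force \dots an overdetermined system with no solution''. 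Nothing in the proposal actually derives a contradiction from $c>0$. This is precisely the part of the statement that the survey attributes to Olszak's papers \cite{olszak81}, \cite{olszak87} (the survey itself assembles the theorem from Blair, Goldberg--Yano and Olszak and gives no proof); it is a genuinely nontrivial computation rather than routine bookkeeping, and in Olszak's treatment the three-dimensional case even required a separate argument from the case $\dim M\ge 5$. So the proposal proves the easy implication and leaves the hard one open.

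If you want to close the gap using only material quoted in this survey, observe that constant curvature forces $R(X,Y)\xi=c\bigl(\eta(Y)X-\eta(X)Y\bigr)$, i.e.\ the nullity condition \eqref{kcondition} with constant $\kappa$. If $\kappa\neq 0$, the formulas quoted from \cite{endo2002} give $K(X,\xi)=\kappa$ and $K(X,\phi X)=-\kappa$ for a unit $X\in{\mathcal D}_{h}(\lambda)$, which cannot both equal the same constant unless $\kappa=0$; and $\kappa=0$ forces $h=0$ and $R(X,Y)\xi=0$, hence $c=0$. Modulo keeping the curvature sign conventions straight (which this survey does not make easy), that is a short replacement for the step you left open, and it combines with your first paragraph to give a complete argument.
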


Turning to coK\"{a}hler manifolds, we state the following
properties.

\begin{theorem}[\cite{goldberg-yano69}]\label{curvature4}
In any coK\"{a}hler manifold $(M,\phi,\xi,\eta,g)$ the following properties hold
\begin{enumerate}
\item[(i)] $R(\phi X, \phi Y)=R(X,Y)$,
\item[(ii)] $K(\phi X, \phi Y) = K(X,Y)$,
\item[(iii)] $g(R(X,\phi X)Y,\phi Y)=-K(X,Y)-K(X,\phi Y)$, \ where $X$ and $Y$ are orthonormal vector fields.
\end{enumerate}
\end{theorem}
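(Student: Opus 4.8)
The plan is to reduce everything to two parallelism facts that hold on a coK\"{a}hler manifold. By Theorem \ref{cokahlercondition1} the structure tensor is parallel, $\nabla\phi=0$, and since normality ($N^{(1)}=0$) forces $N^{(3)}=0$, hence $h=0$, Theorem \ref{killing} (cf. Remark \ref{3D-Killing}) gives that the Reeb field is parallel, $\nabla\xi=0$. From these I would extract two algebraic identities for the curvature operator $R(X,Y)=\nabla_X\nabla_Y-\nabla_Y\nabla_X-\nabla_{[X,Y]}$. Writing $\mathcal R(A,B,C,D):=g(R(A,B)C,D)$, the Ricci identity applied to the parallel tensor $\phi$ yields the commutation rule $R(X,Y)\phi=\phi R(X,Y)$; combined with the skew-adjointness $g(\phi\cdot,\cdot)=-g(\cdot,\phi\cdot)$ this reads $\mathcal R(A,B,\phi C,D)=-\mathcal R(A,B,C,\phi D)$. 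Secondly, $\nabla\xi=0$ forces $R(X,Y)\xi=0$, so $g(R(X,Y)Z,\xi)=-g(Z,R(X,Y)\xi)=0$; that is, every curvature vector is $g$-orthogonal to $\xi$.

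For (i) I would compute $\mathcal R(A,B,\phi C,\phi D)=g(\phi R(A,B)C,\phi D)$ and apply the compatibility relation \eqref{compatible1}, getting $g(R(A,B)C,D)-\eta(R(A,B)C)\eta(D)$. Since $\eta(\cdot)=g(\cdot,\xi)$ by \eqref{metric1}, the correction term vanishes by the orthogonality just established, so $\mathcal R(A,B,\phi C,\phi D)=\mathcal R(A,B,C,D)$. The pair symmetry $\mathcal R(A,B,C,D)=\mathcal R(C,D,A,B)$ of the Riemann tensor then gives $\mathcal R(\phi X,\phi Y,Z,W)=\mathcal R(X,Y,Z,W)$ for all $Z,W$, i.e. $R(\phi X,\phi Y)=R(X,Y)$. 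Statement (ii) is then immediate: for orthonormal $X,Y\in\mathcal D$ (so that $\phi X,\phi Y$ are again orthonormal by \eqref{compatible1}), combining the operator identity with the commutation rule and the vanishing $\eta$-term gives $\mathcal R(\phi X,\phi Y,\phi X,\phi Y)=\mathcal R(X,Y,X,Y)$, which is $K(\phi X,\phi Y)=K(X,Y)$.

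For (iii) the argument is purely algebraic once these three tools are in place. I would apply the first Bianchi identity to obtain $\mathcal R(X,\phi X,Y,\phi Y)=-\mathcal R(\phi X,Y,X,\phi Y)-\mathcal R(Y,X,\phi X,\phi Y)$. The last term is handled by (i): $\mathcal R(Y,X,\phi X,\phi Y)=\mathcal R(Y,X,X,Y)=-\mathcal R(X,Y,X,Y)$. For the middle term I would use pair symmetry followed by the commutation rule, $\mathcal R(\phi X,Y,X,\phi Y)=\mathcal R(X,\phi Y,\phi X,Y)=-\mathcal R(X,\phi Y,X,\phi Y)$. Substituting both back gives $\mathcal R(X,\phi X,Y,\phi Y)=\mathcal R(X,\phi Y,X,\phi Y)+\mathcal R(X,Y,X,Y)$. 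Since $g(R(X,\phi X)Y,\phi Y)=\mathcal R(X,\phi X,Y,\phi Y)$ and, in the paper's convention, $K(X,Y)=R(X,Y,X,Y)=-\mathcal R(X,Y,X,Y)$, the right-hand side becomes $-K(X,Y)-K(X,\phi Y)$, as claimed.

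I expect the difficulties to be bookkeeping rather than conceptual. The key is to keep the orthogonality $g(R(X,Y)Z,\xi)=0$ on hand so as to kill the $\eta$-terms produced by \eqref{compatible1}, and to remain consistent with the sign convention $R(X,Y,Z,W)=-g(R(X,Y)Z,W)$ used in the paper, which is precisely what accounts for the minus signs appearing in (iii). A point worth stating explicitly is that in (ii) and (iii) the vectors should be taken orthonormal inside $\mathcal D=\ker\eta$, since $\phi$ preserves orthonormality only there ($\phi\xi=0$), and only then do $\phi X,\phi Y$ span an honest $2$-plane.
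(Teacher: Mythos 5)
Your argument is correct, and it is the standard derivation: everything follows from $\nabla\phi=0$ and $\nabla\xi=0$ via the Ricci identity (giving $R(X,Y)\phi=\phi R(X,Y)$, i.e.\ \eqref{cokahlercondition2}), the vanishing $R(X,Y)\xi=0$ of \eqref{cokahlercondition3} to kill the $\eta$-terms from \eqref{compatible1}, and the first Bianchi identity for (iii), with the signs handled consistently with the paper's convention $R(X,Y,Z,W)=-g(R(X,Y)Z,W)$. The survey itself states this theorem only with the citation to Goldberg--Yano and gives no proof, but your route is exactly the one implicit in the surrounding identities (it is how the paper later obtains \eqref{curvature5}), so there is nothing to add beyond your own sensible caveat about interpreting $K(X,\phi Y)$ as the unnormalized expression $R(X,\phi Y,X,\phi Y)$ when $X$ and $\phi Y$ fail to be orthogonal.
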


From Theorem \ref{curvature4} it follows that
\begin{equation}\label{curvature6}
\textrm{Ric}(\phi X,\phi Y)=\textrm{Ric}(X,Y)
\end{equation}
for any $X,Y\in\Gamma(TM)$. In particular we have that $\textrm{Ric}(X, \xi) = \textrm{Ric}(\phi X, \phi\xi) = 0$ and $\textrm{Ric}(X, \phi Y ) =
\textrm{Ric}(\phi X,-Y + \eta(Y)\xi) = -\textrm{Ric}(\phi X, Y )$. This last property implies that the tensor $\rho(X, Y ) = \textrm{Ric}(X, \phi Y )$ is
in fact a $2$-form.

By putting $Y=\xi$ in (i) and (ii) of Theorem \ref{curvature4}, we get, respectively,
\begin{gather*}
R(X,\xi)=0, \ \ \  K(X,\xi)=0
\end{gather*}
for any $X\in\Gamma(TM)$. Moreover,  as an immediate consequence of Theorem \ref{cokahlercondition1}, one has that
\begin{equation}\label{cokahlercondition2}
R(X,Y)\phi Z = \phi R(X,Y)Z
\end{equation}
for any vector fields $X,Y,Z$. An important result due to  Goldberg and Yano ensures that the converse also holds.

\begin{theorem}[\cite{goldberg-yano69}]
If the curvature transformation of the metric $g$ of the almost coK\"{a}hler manifold $(M,\phi,\xi,\eta,g)$ commutes with $\phi$, then $M$ is normal, that
is, it is a coK\"{a}hler manifold.
\end{theorem}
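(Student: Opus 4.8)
The plan is to turn the hypothesis into the vanishing of the two tensors $h=\frac12\mathcal{L}_\xi\phi$ and $\nabla\phi$, and then quote Theorem \ref{cokahlercondition1}. I write the hypothesis in the form $R(X,Y)\phi Z=\phi R(X,Y)Z$ of \eqref{cokahlercondition2}; pairing this with a fourth vector and using the skew-symmetry $g(\phi A,B)=-g(A,\phi B)$ coming from \eqref{compatible1}, it is equivalent to the pointwise identity
\begin{equation*}
R(X,Y,\phi Z,W)+R(X,Y,Z,\phi W)=0.
\end{equation*}
Since no compactness is assumed, every step must be purely algebraic/pointwise, and the curvature identities \eqref{curvature1}--\eqref{curvature3} will do all the work.

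First I would exploit \eqref{curvature1}. Applying the hypothesis to the middle term and using $\phi\xi=0$ gives $\phi R(\xi,\phi X)\xi=R(\xi,\phi X)\phi\xi=0$, so \eqref{curvature1} collapses to $R(\xi,X)\xi=-2h^2X$. Tracing this against a local orthonormal basis $\{\xi,E_1,\dots,E_{2n}\}$ yields $\textrm{Ric}(\xi,\xi)=-2\|h\|^2$, since $h$ is symmetric and hence $\textrm{tr}(h^2)=\|h\|^2$. On the other hand $\nabla\xi=-\phi h$ together with $h\xi=0$ (which follows at once from $\phi\xi=0$ and $\mathcal{L}_\xi\xi=0$) gives $\|\nabla\xi\|^2=\|\phi h\|^2=\|h\|^2$ via \eqref{compatible1}. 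Substituting both expressions into \eqref{curvature3} produces $-2\|h\|^2+\|h\|^2=0$, so $h=0$; in particular $\textrm{Ric}(\xi,\xi)=0$.

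Next I would compare the two scalar curvatures. Using the displayed consequence of the hypothesis to pull the two $\phi$'s out of the $\ast$-trace $s^{\ast}=\sum R(E_i,E_j,\phi E_j,\phi E_i)$ and invoking $\phi^2=-I+\eta\otimes\xi$ (the $\xi$-terms drop because $\eta(E_i)=0$ for $i\ge1$ and because $\phi\xi=0$), one finds $s^{\ast}=\sum_{i,j\ge1}R(E_i,E_j,E_j,E_i)$. A direct count of the full scalar curvature, separating off the two $\xi$-rows and using pair symmetry, gives $s=s^{\ast}+2\,\textrm{Ric}(\xi,\xi)$, which reduces to $s=s^{\ast}$ once $\textrm{Ric}(\xi,\xi)=0$. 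Feeding $s-s^{\ast}=0$ and $\textrm{Ric}(\xi,\xi)=0$ into \eqref{curvature2} leaves $\frac12\|\nabla\phi\|^2=0$, hence $\nabla\phi=0$, and Theorem \ref{cokahlercondition1} concludes that $M$ is coK\"ahler.

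The two trace identities are routine; the only genuinely delicate point will be the sign bookkeeping in $\textrm{Ric}(\xi,\xi)=-2\|h\|^2$ and in $s=s^{\ast}+2\,\textrm{Ric}(\xi,\xi)$, where one must keep the convention $R(X,Y,Z,W)=-g(R(X,Y)Z,W)$ consistent with the pair symmetry of $R$. The conceptual heart that makes the argument collapse is the single observation that the hypothesis kills the middle term of \eqref{curvature1} through $\phi\xi=0$; once $h=0$ is secured, the remaining identities \eqref{curvature2} and \eqref{curvature3} force $\nabla\phi=0$ with no further input.
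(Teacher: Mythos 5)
The survey states this theorem without proof, merely citing Goldberg--Yano, so there is no in-paper argument to compare against; judged on its own, your proof is correct. The key observation --- that commutation of $R(X,Y)$ with $\phi$ annihilates the middle term of \eqref{curvature1} via $\phi R(\xi,\phi X)\xi = R(\xi,\phi X)\phi\xi = 0$ --- is right, and the resulting trace $\mathrm{Ric}(\xi,\xi)=-2\|h\|^2$ played against the unconditional identity \eqref{curvature3}, which gives $\mathrm{Ric}(\xi,\xi)=-\|\nabla\xi\|^2=-\|h\|^2$, does force $h=0$. The second half also checks out: with an adapted $\phi$-basis the hypothesis in the form $R(X,Y,\phi Z,W)+R(X,Y,Z,\phi W)=0$ yields $s^{\ast}=\sum_{i,j\ge 1}R(E_i,E_j,E_j,E_i)$, the $\xi$-rows of $s$ contribute exactly $2\,\mathrm{Ric}(\xi,\xi)=0$, and \eqref{curvature2} then leaves $\|\nabla\phi\|^2=0$, so Theorem \ref{cokahlercondition1} applies. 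All the sign conventions you flag as delicate are in fact consistent with the paper's $R(X,Y,Z,W)=-g(R(X,Y)Z,W)$. A virtue of your route worth noting is that it is entirely pointwise, assembled from Olszak's identities \eqref{curvature1}--\eqref{curvature3} exactly as recorded in this section, so no compactness or integral formula is needed.
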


From \eqref{cokahlercondition2} it easily follows that
\begin{equation}\label{cokahlercondition3}
R(X,Y)\xi=0.
\end{equation}
Actually \eqref{cokahlercondition3} holds true for any almost coK\"{a}hler manifold with $\xi$ Killing, since $\nabla\xi=0$. We now prove that this is in
fact a characterization of this class of almost coK\"{a}hler manifolds.

\begin{theorem}
Let $(M,\phi,\xi,\eta,g)$ be an almost coK\"{a}hler manifold. \ Then the Reeb vector field $\xi$ is Killing if and only if $R(X,Y)\xi=0$ for any
$X,Y\in\Gamma(TM)$.
\end{theorem}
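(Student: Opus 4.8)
The plan is to reduce everything to two facts already available in the excerpt: the equivalence of the Killing condition with the vanishing of $h$ (Theorem \ref{killing}), and Olszak's curvature identity \eqref{curvature1}. The forward implication is essentially immediate. By the equivalence (b)$\Leftrightarrow$(c) of Theorem \ref{killing}, if $\xi$ is Killing then it is parallel, so that
\[
R(X,Y)\xi=\nabla_X\nabla_Y\xi-\nabla_Y\nabla_X\xi-\nabla_{[X,Y]}\xi=0
\]
for all $X,Y\in\Gamma(TM)$; this is exactly the remark already made in connection with \eqref{cokahlercondition3}, so nothing new is required in this direction.

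For the converse I would start from the hypothesis $R(X,Y)\xi=0$ for all $X,Y\in\Gamma(TM)$. The key observation is that this hypothesis makes the first two terms of \eqref{curvature1} vanish: choosing the appropriate slots gives $R(\xi,X)\xi=0$ and $R(\xi,\phi X)\xi=0$ for every $X$. Substituting these into
\[
R(\xi,X)\xi-\phi R(\xi,\phi X)\xi+2h^2X=0
\]
leaves $h^2X=0$ for all $X$, that is, $h^2=0$.

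The final step is to pass from $h^2=0$ to $h=0$, and here I would use that $h$ is symmetric with respect to $g$ (property (i) of the operator $h$): for any $X$ one has
\[
\|hX\|^2=g(hX,hX)=g(h^2X,X)=0,
\]
so $hX=0$ and therefore $h\equiv 0$. By the equivalence (a)$\Leftrightarrow$(c) of Theorem \ref{killing}, the vanishing of $h$ is precisely the statement that $\xi$ is Killing, which completes the argument. I do not expect a genuine obstacle: the whole proof turns on recognizing that the curvature hypothesis annihilates the first two summands of \eqref{curvature1}, and on the elementary fact that a symmetric operator whose square vanishes must itself vanish.
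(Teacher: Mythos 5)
Your proof is correct and follows essentially the same route as the paper: both directions rest on the parallelism of $\xi$ for the easy implication, and on specializing the hypothesis to kill the curvature terms in Olszak's identity \eqref{curvature1}, deducing $h^2=0$ and hence $h=0$ via symmetry/positive-definiteness, and concluding with Theorem \ref{killing}. The only cosmetic difference is that you note the vanishing of both curvature summands explicitly, whereas the paper only invokes $R(\xi,X)\xi=0$; this changes nothing of substance.
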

\begin{proof}
Clearly if $\xi$ is Killing then $R(X,Y)\xi=0$, as pointed out before. Conversely, let us assume that $R(X,Y)\xi=0$ for any $X,Y\in\Gamma(TM)$. Then in
particular one has $R(\xi,X)\xi =0$, where $X$ in any vector field orthogonal to $\xi$.  Then by applying \eqref{curvature1} we get $h^2 X =0$. This in turn
implies, since $g$ is positive definite, that $h X =0$. As $h\xi=0$, we conclude that the tensor field $h$ vanishes identically, so that, by Theorem
\ref{killing}, $\xi$ is a Killing vector field.
\end{proof}

\begin{corollary}\label{riccikilling}
In any almost coK\"{a}hler manifold with $\xi$ Killing one has $\emph{Ric}^{\sharp}\xi = 0$, where  $\emph{Ric}^{\sharp}$ denotes the Ricci operator.
\end{corollary}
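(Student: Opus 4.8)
The plan is to deduce the statement directly from the theorem just proved, which characterises the Killing property of $\xi$ by the vanishing $R(X,Y)\xi=0$ for all $X,Y\in\Gamma(TM)$. Since $g$ is nondegenerate, to establish $\mathrm{Ric}^{\sharp}\xi=0$ it suffices to show that the Ricci tensor satisfies $\mathrm{Ric}(\xi,Y)=0$ for every $Y$, because by definition of the Ricci operator $g(\mathrm{Ric}^{\sharp}\xi,Y)=\mathrm{Ric}(\xi,Y)$, and a covector that annihilates all $Y$ corresponds to the zero vector.

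First I would unwind $\mathrm{Ric}(\xi,Y)$ using the definition recalled in the text, namely $\mathrm{Ric}(X,Y)=\sum_{i=0}^{2n}R(E_i,X,Y,E_i)$ relative to a local orthonormal frame $\{E_0=\xi,E_1,\dots,E_{2n}\}$, together with the sign convention $R(X,Y,Z,W)=-g(R(X,Y)Z,W)$. Invoking the symmetry of the Ricci tensor (equivalently, the pair-interchange and antisymmetry properties of $R$), I would rewrite $\mathrm{Ric}(\xi,Y)=\mathrm{Ric}(Y,\xi)=\sum_{i=0}^{2n}R(E_i,Y,\xi,E_i)=-\sum_{i=0}^{2n}g(R(E_i,Y)\xi,E_i)$. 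The point of this manipulation is to move $\xi$ into the slot on which the curvature operator $R(\cdot,\cdot)$ acts.

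At this stage the preceding theorem applies termwise: since $\xi$ is Killing we have $R(E_i,Y)\xi=0$ for each $i$, so every summand vanishes and $\mathrm{Ric}(\xi,Y)=0$. As $Y\in\Gamma(TM)$ is arbitrary, we conclude $\mathrm{Ric}^{\sharp}\xi=0$, which is exactly the assertion of the corollary.

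There is no substantive obstacle here; the entire argument is an application of the prior characterisation of Killing $\xi$, and the only delicate point is bookkeeping with the nonstandard convention $R(X,Y,Z,W)=-g(R(X,Y)Z,W)$ and the curvature symmetries, so that the rearrangement genuinely exhibits $\xi$ as the vector acted upon by $R(\cdot,\cdot)$ rather than as a slot contracted against the frame. Once that is arranged the vanishing is immediate and no actual computation remains.
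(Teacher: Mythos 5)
Your proof is correct and follows exactly the route the paper intends: the corollary is stated without proof immediately after the theorem characterising Killing $\xi$ by $R(X,Y)\xi=0$, and the intended argument is precisely the contraction you carry out. Your care with the sign convention $R(X,Y,Z,W)=-g(R(X,Y)Z,W)$ and the use of the symmetry of $\mathrm{Ric}$ to place $\xi$ in the slot acted on by $R(\cdot,\cdot)$ is the only point requiring attention, and you handle it correctly.
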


Now, let $X$, $Y$, $Z$ any vector fields on a coK\"{a}hler manifold $(M,\phi,\xi,\eta,g)$ and decompose $Z$ as $Z=Z_{\mathcal D}+\eta(Z)\xi$, according to
the orthogonal decomposition $TM=\mathcal D \oplus \mathbb{R}\xi$. Then, since $\nabla{\mathcal D}\subset{\mathcal D}$,
$\eta(R(X,Y)Z)=\eta(R(X,Y)Z_{\mathcal D}) + \eta(Z)\eta(R(X,Y)\xi)=0$. Using this, \eqref{cokahlercondition2} and (i) of Theorem \ref{curvature4}, we
obtain
\begin{equation}\label{curvature5}
R(\phi X, \phi Y, \phi Z, \phi W) = R(X,Y,Z,W)
\end{equation}
for all $X,Y,Z,W\in\Gamma(TM)$. Using \eqref{curvature5} one  proves the following theorem (cf. \cite{endo1994-2}).

\begin{theorem}
On any coK\"{a}hler manifold the Ricci operator $\emph{Ric}^{\sharp}$ commutes with $\phi$.
\end{theorem}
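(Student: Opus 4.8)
The plan is to reduce the operator identity $\phi\circ\mathrm{Ric}^{\sharp}=\mathrm{Ric}^{\sharp}\circ\phi$ to an algebraic symmetry of the Ricci tensor that has essentially already been recorded. Recall that $\mathrm{Ric}^{\sharp}$ is the $g$-self-adjoint endomorphism defined by $g(\mathrm{Ric}^{\sharp}X,Y)=\mathrm{Ric}(X,Y)$, and that $\phi$ is $g$-skew-symmetric, i.e. $g(\phi X,Y)=-g(X,\phi Y)$, by the compatibility condition \eqref{compatible1}. First I would test the desired identity against the metric: for arbitrary vector fields $X,Y$ one has
\[
g(\mathrm{Ric}^{\sharp}\phi X,Y)=\mathrm{Ric}(\phi X,Y),\qquad g(\phi\,\mathrm{Ric}^{\sharp}X,Y)=-g(\mathrm{Ric}^{\sharp}X,\phi Y)=-\mathrm{Ric}(X,\phi Y).
\]
Hence $\phi$ commutes with $\mathrm{Ric}^{\sharp}$ if and only if $\mathrm{Ric}(\phi X,Y)=-\mathrm{Ric}(X,\phi Y)$ for all $X,Y$.

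The second step is to observe that this last relation is precisely the statement that $\rho(X,Y)=\mathrm{Ric}(X,\phi Y)$ is skew-symmetric, which is the $2$-form property already derived in the text from \eqref{curvature6}. Explicitly, replacing $Y$ by $\phi Y$ in the identity $\mathrm{Ric}(\phi X,\phi Y)=\mathrm{Ric}(X,Y)$ and using $\phi^{2}=-I+\eta\otimes\xi$ together with $\mathrm{Ric}(\cdot,\xi)=0$ yields $\mathrm{Ric}(X,\phi Y)=\mathrm{Ric}(\phi X,\phi^{2}Y)=-\mathrm{Ric}(\phi X,Y)$, which is exactly the required antisymmetry. Combining this with the first step closes the argument.

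There is really no serious obstacle here; the content is entirely contained in the curvature symmetries of Theorem \ref{curvature4}, which hold because $\nabla\phi=0$ on a coK\"{a}hler manifold (Theorem \ref{cokahlercondition1}). If I wanted a self-contained derivation not routed through \eqref{curvature6}, I would instead start from \eqref{curvature5}, namely $R(\phi X,\phi Y,\phi Z,\phi W)=R(X,Y,Z,W)$, and trace it over an orthonormal $\phi$-adapted frame $\{\xi,E_{1},\dots,E_{2n}\}$ to recover $\mathrm{Ric}(\phi X,\phi Y)=\mathrm{Ric}(X,Y)$, after which the same two lines conclude. The only point requiring a little care is the handling of the $\xi$-direction when applying $\phi^{2}=-I+\eta\otimes\xi$, but this is harmless because $\mathrm{Ric}(\cdot,\xi)=0$, so the correction term $\eta(Y)\,\mathrm{Ric}(\phi X,\xi)$ drops out.
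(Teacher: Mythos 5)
Your proof is correct and follows essentially the same route as the paper: both arguments rest on the identity $\mathrm{Ric}(\phi X,\phi Y)=\mathrm{Ric}(X,Y)$ (obtained by tracing \eqref{curvature5} over a $\phi$-basis), together with $\mathrm{Ric}(\cdot,\xi)=0$ and $\phi^{2}=-I+\eta\otimes\xi$. The only cosmetic difference is that you package the final step as the antisymmetry of $\rho(X,Y)=\mathrm{Ric}(X,\phi Y)$ combined with the $g$-skewness of $\phi$, whereas the paper phrases the same algebra as $\phi\,\mathrm{Ric}^{\sharp}\phi=-\mathrm{Ric}^{\sharp}$ and then applies $\phi$ once more.
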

\begin{proof}
Let us consider a $\phi$-basis $\{E_{1},\ldots,E_{2n},\xi\}$, where for each $i\in\left\{1,\ldots,n\right\}$ $E_{n+i}=\phi E_{i}$. Then, for any
$X,Y\in\Gamma(TM)$,
\begin{align*}
g(\textrm{Ric}^{\sharp} \phi X, \phi Y)&=\sum_{i=1}^{2n}R(E_{i},\phi X, \phi Y, E_{i}) + R(\xi,\phi X, \phi Y, \xi) \\
&=\sum_{i=1}^{2n}R(\phi E_{i},\phi X, \phi Y, \phi E_{i}) - g(R(\xi,\phi X)\phi Y,\xi)\\
&=\sum_{i=1}^{2n}R( E_{i},X, Y, E_{i})+R(\xi, X, Y, \xi) \\
&=g(\textrm{Ric}^{\sharp}X,Y).
\end{align*}
By Corollary \ref{riccikilling}, $\textrm{Ric}^{\sharp} \xi =0$, so that
$\eta(\textrm{Ric}^{\sharp}X)=g(\textrm{Ric}^{\sharp}X,\xi)=g(\textrm{Ric}^{\sharp}\xi,X)=0$. Hence we have
$-g(\phi \textrm{Ric}^{\sharp}\phi X, Y) =g(\textrm{Ric}^{\sharp}X,Y)$, from which it follows that
\begin{equation*}
\phi \textrm{Ric}^{\sharp}\phi X = -\textrm{Ric}^{\sharp}X.
\end{equation*}
Applying $\phi$ yields $\phi \textrm{Ric}^{\sharp}X=-\phi^2 \textrm{Ric}^{\sharp}\phi X = \textrm{Ric}^{\sharp}\phi X - \eta(\textrm{Ric}^{\sharp}\phi
X)\xi =\textrm{Ric}^{\sharp}\phi X$.
\end{proof}

Theorem \ref{flat} shows that  the requirement of constancy of the
sectional curvature is too strong for a coK\"{a}hler manifold. This
leads us to introduce the notion of $\phi$-sectional curvature (also
called $\phi$-holomorphic curvature). Given a manifold $M$ with
coK\"{a}hler structure $(\phi,\xi,\eta,g)$, we call a \emph{$\phi$-section}
any plane section of $T_{x}M$ spanned by $\left\{X,\phi X\right\}$,
where $X$ is a vector orthogonal to $\xi$. The sectional curvature
$H(X):=K(X,\phi X)$ corresponding to such a section is then called
\emph{$\phi$-sectional curvature}. Eum (\cite{eum72}, see also
\cite{ludden70}) proved the following important theorem

\begin{theorem}
If a coK\"{a}hler manifold has a constant $\phi$-sectional curvature at every point, then the curvature tensor $R$ of the manifold is given by
\begin{align} \label{phisectional1}
R(X,Y)Z& = \frac{c}{4}\bigl(g(X,Z)Y - g(Y,Z)X + g(\phi X,Z)\phi Y - g(\phi Y,Z)\phi X \\
&\quad + 2g(\phi X,Y)\phi Z - \eta(X)\eta(Z)Y + \eta(Y)\eta(Z)X   +
\left(\eta(X)g(Y,Z)-\eta(Y)g(X,Z)\right) \xi \bigr),\nonumber
\end{align}
where $c$ is constant.
\end{theorem}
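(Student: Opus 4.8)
The plan is to show that $R$ agrees with the tensor $P$ occurring on the right-hand side of \eqref{phisectional1} by proving that their difference is a coK\"{a}hler curvature-type tensor whose $\phi$-sectional curvature vanishes, and that any such tensor is zero. First I would let $P$ denote the $(1,3)$-tensor defined by setting $P(X,Y)Z$ equal to the right-hand side of \eqref{phisectional1}, and check that $P$ carries every algebraic symmetry possessed by the curvature of a coK\"{a}hler manifold: skew-symmetry in the first pair and in the last pair of arguments, symmetry under interchange of the two pairs, the first Bianchi identity, the relations $P(X,Y)\xi=0$ (as in \eqref{cokahlercondition3}) and $P(X,Y)\phi Z=\phi P(X,Y)Z$ (as in \eqref{cokahlercondition2}). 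A direct substitution, using $\phi^2=-I+\eta\otimes\xi$ and $\eta(X)=0$ for $X$ orthogonal to $\xi$, then shows that the $\phi$-sectional curvature of $P$ is exactly the constant $c$.

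Next I would set $T:=R-P$. Since $R$ itself satisfies \eqref{cokahlercondition2} and \eqref{cokahlercondition3} on any coK\"{a}hler manifold --- the latter because $\nabla\xi=0$ by Theorem \ref{killing} --- the tensor $T$ inherits the same list of symmetries, and its $\phi$-sectional curvature vanishes identically. The relation $T(X,Y)\xi=0$ means $T(X,Y,\xi,\cdot)=0$, and the pair- and skew-symmetries then annihilate every component of $T$ having at least one argument equal to $\xi$. Hence $T$ is determined by its restriction to the distribution $\mathcal{D}=\ker\eta$, on which $J:=\phi|_{\mathcal{D}}$ is a $g$-compatible complex structure. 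The problem therefore reduces to the purely algebraic statement that a K\"{a}hler curvature-type tensor on the Hermitian space $(\mathcal{D},J,g)$ with vanishing holomorphic sectional curvature is zero.

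This algebraic lemma is the heart of the matter and the step I expect to be the main obstacle. I would prove it by polarizing the identity $T(X,JX,X,JX)=0$: substituting $X\mapsto X+tY$ and collecting the powers of $t$ produces, after repeated use of the Bianchi identity, the pair symmetry, and the $J$-invariance of $T$, a chain of identities that successively force all components of $T$ to vanish. This is precisely the computation establishing that a K\"{a}hler manifold of constant holomorphic sectional curvature is a complex space form, so rather than grind through it I would either reproduce the standard polarization or cite it (e.g. Kobayashi--Nomizu). Applying the lemma to $T|_{\mathcal{D}}$ gives $T|_{\mathcal{D}}=0$, and with the reduction above this yields $T=0$, i.e.\ $R=P$, which is \eqref{phisectional1}.

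Finally, I note a more geometric alternative that sidesteps the polarization. By Theorem \ref{killing} the manifold $M$ is locally a Riemannian product $N\times\Rr$ with $N$ K\"{a}hler, and since the $\Rr$-factor is flat and $R(X,Y)\xi=0$, the curvature of $M$ coincides with that of $N$; moreover the $\phi$-sectional curvature of $M$ equals the holomorphic sectional curvature of $N$. Thus $N$ has constant holomorphic sectional curvature $c$, and transporting the classical K\"{a}hler space-form formula on $N$ back to $M$ --- the $\eta$-terms in \eqref{phisectional1} arising exactly from the $\xi$-components of the arguments --- gives the claim. Either way, the one nontrivial ingredient is that constant holomorphic sectional curvature determines the entire K\"{a}hler curvature tensor.
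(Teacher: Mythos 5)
The survey does not actually prove this theorem: it is quoted from Eum \cite{eum72} (see also \cite{ludden70}) with no argument given, so there is no in-paper proof to compare against. Judged on its own, your proposal is the standard and correct route. The reduction is sound: since $\nabla\xi=0$ and $\nabla\phi=0$ on a coK\"ahler manifold, $R$ satisfies \eqref{cokahlercondition2} and \eqref{cokahlercondition3}, the model tensor $P$ satisfies the same identities, and the pair/skew symmetries kill every component of $T=R-P$ with a $\xi$-slot; the $\phi$-invariance $T(X,Y,\phi Z,\phi W)=T(X,Y,Z,W)$ on $\mathcal D=\ker\eta$ follows from \eqref{cokahlercondition2} together with $\eta(T(X,Y)Z)=0$, so $T|_{\mathcal D}$ is a K\"ahler curvature tensor with vanishing holomorphic sectional curvature, hence zero by the classical polarization lemma (Kobayashi--Nomizu). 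Your second, geometric route via the local splitting $N\times\Rr$ of Theorem \ref{killing} is equally valid and arguably cleaner, since the product metric makes the curvature literally that of the K\"ahler factor.

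Two small points. First, the conclusion ``where $c$ is constant'' contains a Schur-type statement that your argument does not address: pointwise constancy of the $\phi$-sectional curvature gives \eqref{phisectional1} with $c=c(x)$, and one still needs the second Bianchi identity (equivalently, the K\"ahler Schur lemma on the transverse factor, valid for $2n+1\ge 5$, together with constancy along $\xi$, which is automatic in the local product) to conclude that $c$ is a genuine constant; in dimension $3$ this step fails and the hypothesis is vacuous. Second, when you verify that $P$ reproduces the $\phi$-sectional curvature, keep track of the survey's sign conventions $R(X,Y,Z,W)=-g(R(X,Y)Z,W)$ and $K(X,Y)=R(X,Y,X,Y)$; with these conventions the displayed formula actually yields $K(X,\phi X)=-c$, so a consistent choice of sign must be fixed once and for all before the comparison $T=R-P$ is made.
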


CoK\"{a}hler manifolds with constant $\phi$-sectional curvature are usually referred as \emph{coK\"{a}hler space forms} $M(c)$. There are several papers
dealing with coK\"{a}hler space forms, in many different contexts. We mention the result due to de Le\'{o}n and Marrero that given a $(2n+1)$-dimensional
compact coK\"{a}hler manifold $M$ with positive constant $\phi$-sectional curvature $c$, there exists a diffeomorphism  $f:M\longrightarrow
P_{n}(\mathbb{C})(c)\times S^{1}$, of $M$ onto the product of $S^{1}$ with the complex projective space of positive constant holomorphic sectional
curvature $c$, which is  transversally holomorphic isometric. Recall that a diffeomorphism $f:M \longrightarrow M'$ between two coK\"{a}hler manifolds is
said to be transversally holomorphic isometric if $f^{\ast}\eta'=\eta$ and for all $x\in M$ the mapping $f|_{L_x}:L_{x}\longrightarrow L'_{f(x)}$ is a
Hermitian isometry between the K\"{a}hler manifolds $L_{x}$ and $L'_{f(x)}$, where $L_{x}$ is the leaf of $\mathcal D$ passing through $x$ and where
$L'_{f(x)}$ is the leaf of ${\mathcal D}'$ over $f(x)$.

\begin{remark}
Notice that coK\"{a}hler space forms sit inside the more general framework of \emph{generalized Sasakian-space-forms} (see \cite{alegre-blair-carriazo2004} for more details).
\end{remark}

\medskip

By using \eqref{phisectional1}, one can prove that the Ricci tensor of a coK\"{a}hler space form $M(c)$ has the following expression
\begin{equation*}
\textrm{Ric}(X,Y) = \frac{n+1}{2}c\left(g(X,Y)-\eta(X)\eta(Y)\right).
\end{equation*}
More in general, if the Ricci tensor of an almost coK\"{a}hler manifold has the form
\begin{equation*}
\textrm{Ric}=a g + b \eta\otimes\eta,
\end{equation*}
where $a$ and $b$ are smooth functions, we say that $M$ is \emph{$\eta$-Einstein}. In particular if the Reeb vector field is Killing, by Corollary \ref{riccikilling} we get $a+b=0$. Notice that if $\xi$ is is Killing and the dimension $2n+1\geq 5$ then $a$ and $b$ are
necessarily constants. Thus we conclude that any almost coK\"{a}hler manifold such that the Reeb vector field is Killing is Einstein if and only if it is Ricci-flat. Regarding to this, we mention the following result, which can be considered as an odd-dimensional counterpart of the famous Goldberg conjecture.

\begin{theorem}[\cite{cappelletti-pastore10}]\label{einstein1}
Every compact Einstein almost coK\"{a}hler manifold with Killing Reeb vector field is necessarily coK\"{a}hler.
\end{theorem}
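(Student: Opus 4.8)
The plan is to realize this statement as an odd-dimensional, Ricci-flat incarnation of Sekigawa's theorem on the Goldberg conjecture, and to squeeze out all the consequences of the Killing hypothesis before invoking any hard analysis. First I would observe that, since $\xi$ is Killing, Theorem \ref{killing} immediately yields $h=0$ and $\nabla\xi=0$ and exhibits $M$ locally as a Riemannian product of an almost K\"{a}hler manifold with $\Rr$. Next I would upgrade ``Einstein'' to ``Ricci-flat'': writing $\mathrm{Ric}=\lambda g$ and applying Corollary \ref{riccikilling}, which gives $\mathrm{Ric}^{\sharp}\xi=0$, we get $\lambda\,\xi=\mathrm{Ric}^{\sharp}\xi=0$, hence $\lambda=0$. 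This is exactly the remark preceding the statement that, in the Killing case, Einstein and Ricci-flat coincide.

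By Theorem \ref{cokahlercondition1} it now suffices to prove $\nabla\phi=0$, and by \eqref{nablaphi1} this is equivalent to the vanishing of $N^{(1)}=[\phi,\phi]$, i.e.\ to integrability of the transverse almost complex structure on the leaves of $\mathcal D$; so the real content is that the almost K\"{a}hler factor of the local splitting be K\"{a}hler. The link to an integral argument is the pointwise identity \eqref{curvature2}: because $\nabla\xi=0$ forces $\mathrm{Ric}(\xi,\xi)=0$, it collapses to
\[
 s^{\ast}-s=\tfrac12\|\nabla\phi\|^{2}\ge 0 .
\]
Consequently $\nabla\phi=0$ will follow the moment I can show $\int_M (s^{\ast}-s)\,\nu_g\le 0$.

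The crux, and the main obstacle, is precisely this integral inequality, an almost coK\"{a}hler analogue of Sekigawa's estimate. I would derive it by a Bochner/Weitzenb\"{o}ck computation: starting from the second Bianchi identity together with the Ricci-flat condition, integrate by parts over the compact manifold to express $\int_M(s^{\ast}-s)\,\nu_g$ through manifestly signed quantities such as $\int_M\|\nabla\nabla\phi\|^{2}\,\nu_g$ and curvature terms whose sign is pinned down by $s=0$. The Killing hypothesis makes this transfer painless at the structural level: since $\xi$ is parallel, the normal direction $\Rr\xi$ decouples from all covariant derivatives, so each leafwise step of Sekigawa's even-dimensional argument carries over verbatim to the transverse structure, with $\xi$ contributing only a parallel global unit field. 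The genuinely delicate points are identical to those in the classical Goldberg problem: the correct regrouping of the fourth-order curvature terms and the verification that the leftover quadratic expression is non-negative thanks to $s=0$. Combining the resulting $\int_M(s^{\ast}-s)\,\nu_g\le 0$ with the pointwise lower bound forces $\int_M\|\nabla\phi\|^{2}\,\nu_g=0$, whence $\nabla\phi=0$ and $M$ is coK\"{a}hler.
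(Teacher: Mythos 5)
Your preliminary reductions are correct and are exactly what any proof must begin with: Theorem \ref{killing} gives $h=0$ and $\nabla\xi=0$, Corollary \ref{riccikilling} kills the Einstein constant so that $M$ is Ricci-flat, and identity \eqref{curvature2} together with $\mathrm{Ric}(\xi,\xi)=0$ yields the pointwise relation $s^{\ast}-s=\tfrac12\|\nabla\phi\|^{2}\ge 0$, so that everything hinges on the integral bound $\int_M(s^{\ast}-s)\,\nu_g\le 0$. But that bound \emph{is} the theorem, and your proposal does not prove it: the claim that ``each leafwise step of Sekigawa's even-dimensional argument carries over verbatim to the transverse structure'' is an assertion, not an argument. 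It also glosses over a genuine obstruction: Sekigawa's proof is a fourth-order integration-by-parts computation over a \emph{compact} manifold, and the leaves of $\mathcal D$ need not be compact (nor complete), so there is no leafwise integration available. One would have to rewrite the entire estimate intrinsically on $M$ and verify that every divergence term still integrates to zero there; until that is done the proof is missing precisely at its crux.

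The cited reference \cite{cappelletti-pastore10} sidesteps this by a reduction that uses only facts already recorded in this survey. Since $\xi$ is parallel and $M$ is Ricci-flat, the compact product $M\times S^{1}$, equipped with the almost complex structure \eqref{complex} and the product metric $g+\d t^{2}$, is a compact almost K\"ahler manifold (its fundamental $2$-form is closed because $\eta$ and $\Phi$ are), and it is again Ricci-flat, hence Einstein with non-negative scalar curvature. Sekigawa's theorem on the Goldberg conjecture then forces $M\times S^{1}$ to be K\"ahler, and the product criterion stated in Section \ref{cokahler} (the induced structure on $M\times\Rr$ or $M\times S^{1}$ is K\"ahler if and only if $M$ is coK\"ahler) finishes the proof. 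So either carry out the transverse Sekigawa estimate in full, or replace that step by the passage to $M\times S^{1}$ and an explicit appeal to Sekigawa's theorem.
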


\begin{corollary}
Any compact Ricci-flat almost coK\"{a}hler manifold is coK\"{a}hler.
\end{corollary}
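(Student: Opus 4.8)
The plan is to obtain this corollary as an immediate consequence of Theorem \ref{einstein1} once one observes that Ricci-flatness already forces the Reeb vector field to be Killing. A Ricci-flat manifold is in particular Einstein (with vanishing Einstein constant), so the only hypothesis of Theorem \ref{einstein1} that is not handed to us directly is the Killing condition on $\xi$, and recovering it is the whole point of the argument.

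First I would read off the Killing property from the curvature identity \eqref{curvature3}, which is valid on every almost coK\"ahler manifold and states that $\textrm{Ric}(\xi,\xi)+\left\|\nabla\xi\right\|^2=0$. Since $M$ is assumed Ricci-flat we have $\textrm{Ric}(\xi,\xi)=0$, whence $\left\|\nabla\xi\right\|^2=0$; as $g$ is positive definite this yields $\nabla\xi=0$, i.e.\ the Reeb vector field is parallel. By the equivalence of conditions (b) and (c) in Theorem \ref{killing}, a parallel Reeb vector field is in particular Killing. At this point $M$ is a compact Einstein almost coK\"ahler manifold with Killing Reeb vector field, so Theorem \ref{einstein1} applies and $M$ is coK\"ahler.

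There is no genuine obstacle in this deduction: everything reduces to the pointwise identity \eqref{curvature3} together with the positive-definiteness of the metric, and the passage from ``$\xi$ Killing'' to ``coK\"ahler'' is entirely delegated to Theorem \ref{einstein1}. The substantive difficulty is therefore hidden inside that theorem, whose proof is where compactness and the Sekigawa-type analysis underlying the odd-dimensional analogue of the Goldberg conjecture do the real work; once it is granted, the corollary is essentially a one-line observation.
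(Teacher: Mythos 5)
Your argument is correct and coincides with the paper's own reasoning: the paper likewise deduces the Killing (indeed parallel) property of $\xi$ from Ricci-flatness via the identity \eqref{curvature3} together with the positive definiteness of $g$, and then invokes Theorem \ref{einstein1} with Einstein constant zero. Nothing is missing.
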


It follows that if $(M,\phi,\xi,\eta,g)$ is a compact coK\"{a}hler-Einstein manifold, then any other almost coK\"{a}hler structure $(\phi',\xi',\eta',g)$,
compatible with the same metric $g$, is necessarily normal. Indeed since $(M,g)$ is coK\"{a}hler-Einstein, it is Ricci-flat. Then by \eqref{curvature3}
$\|\nabla\xi'\|=0$ and so $\xi'$ is Killing. The assertion then follows from Theorem \ref{einstein1} applied to $(\phi',\xi',\eta',g)$.

Concerning $\eta$-Einstein manifolds, de Le\'{o}n and Marrero proved
the following result.

\begin{theorem}[\cite{deleon-marrero97}]
Every compact $\eta$-Einstein almost coK\"{a}hler manifold $M$ such that the Einstein constant $a > 0$ and  $\xi$ is a Killing vector field is a
coK\"{a}hler manifold. Furthermore, $M$ is locally a product of a simply connected compact K\"{a}hler-Einstein manifold and $S^{1}$, and $\pi_{1}(M)$ is
isomorphic to $\mathbb{Z}$.
\end{theorem}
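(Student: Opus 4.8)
The plan is to combine the local product structure forced by the Killing condition with the resolution of the Goldberg conjecture in positive scalar curvature, and then to read off the topology from the de Rham decomposition of the universal cover.

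First I would exploit that $\xi$ is Killing. By Theorem~\ref{killing} this is equivalent to $h=0$, to $\nabla\xi=0$, and to $M$ being \emph{locally} the Riemannian product of an almost K\"{a}hler manifold with the real line. Next I would pin down the Ricci tensor: since $\xi$ is Killing, Corollary~\ref{riccikilling} gives $\mathrm{Ric}^{\sharp}\xi=0$, so the $\eta$-Einstein identity $\mathrm{Ric}=a\,g+b\,\eta\otimes\eta$ evaluated on $(\xi,\xi)$ forces $a+b=0$; hence $\mathrm{Ric}=a\,(g-\eta\otimes\eta)$, which vanishes on $\xi$ and equals $a\,g$ on $\mathcal D=\ker\eta$.

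Then I would pass to the universal Riemannian cover $\tilde M$. Because $\xi$ lifts to a \emph{global} parallel unit vector field $\tilde\xi$, the de Rham decomposition theorem splits $\tilde M$ isometrically as $\tilde N\times\Rr$, where $\tilde N$ is simply connected, the $\Rr$-factor is spanned by $\tilde\xi$, and $\tilde N$ inherits the almost K\"{a}hler structure carried by the leaves of $\mathcal D$. In a Riemannian product the Ricci tensor splits, so by the previous step $\tilde N$ is an almost K\"{a}hler \emph{Einstein} manifold with Einstein constant $a>0$; being complete (as a de Rham factor of the complete $\tilde M$) and of positive Ricci curvature, it is compact by the Bonnet--Myers theorem. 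At this point I would invoke Sekigawa's solution of the Goldberg conjecture for non-negative scalar curvature: a compact almost K\"{a}hler Einstein manifold of positive scalar curvature is K\"{a}hler. Hence $\tilde N$ is K\"{a}hler--Einstein and $\tilde M=\tilde N\times\Rr$ is genuinely coK\"{a}hler; since the structure on $M$ pulls back to this one, $M$ is coK\"{a}hler (equivalently $\nabla\phi=0$ by Theorem~\ref{cokahlercondition1}) and is locally the product of the compact simply connected K\"{a}hler--Einstein manifold $\tilde N$ with a circle.

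It remains to compute $\pi_1(M)$, and this is where the argument must be handled with care. The deck group $\Gamma=\pi_1(M)$ acts freely, properly discontinuously and isometrically on $\tilde N\times\Rr$, preserving the globally defined field $\tilde\xi$; hence every $\gamma\in\Gamma$ has the form $\gamma(x,t)=(\gamma_N(x),t+c_\gamma)$ with $\gamma_N$ a holomorphic isometry of $\tilde N$. The homomorphism $\gamma\mapsto c_\gamma$ has discrete nontrivial image in $\Rr$ (it cannot be trivial, else the descended $t$-coordinate would be an unbounded function on the compact $M$, and it cannot be dense, by proper discontinuity), so this image is isomorphic to $\Z$. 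The kernel $K$ acts only on the compact $\tilde N$ and so is finite, realizing $\tilde N/K$ as a compact K\"{a}hler--Einstein manifold of positive Einstein constant with $\pi_1(\tilde N/K)\cong K$; by Kobayashi's theorem such a Fano manifold is simply connected, forcing $K=1$. Therefore $\Gamma\cong\Z$. The main obstacle is precisely this final combination --- upgrading almost K\"{a}hler to K\"{a}hler on the leaf via Sekigawa's theorem and then ruling out a finite kernel via the simple connectivity of positive K\"{a}hler--Einstein manifolds --- while the product-preserving bookkeeping of the deck transformations is the fiddly supporting step.
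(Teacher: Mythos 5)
Your proposal is sound, but note that the survey does not actually prove this theorem: it is stated with a bare citation to de Le\'on--Marrero, so there is no in-paper argument to compare against, and your write-up has to stand on its own. It does. The chain you use --- $\xi$ Killing $\Rightarrow$ $\nabla\xi=0$ (Theorem~\ref{killing}), $\mathrm{Ric}^{\sharp}\xi=0$ (Corollary~\ref{riccikilling}) forcing $\mathrm{Ric}=a(g-\eta\otimes\eta)$, the de Rham splitting $\tilde M=\tilde N\times\Rr$ of the universal cover along the parallel distributions $\Rr\tilde\xi$ and $\ker\tilde\eta$, Bonnet--Myers to get compactness of the Einstein factor $\tilde N$, Sekigawa's theorem to upgrade almost K\"ahler to K\"ahler, and Kobayashi's simple-connectivity of compact K\"ahler manifolds with positive Ricci to kill the finite kernel of $\gamma\mapsto c_\gamma$ --- is correct, and the deck-group bookkeeping (preservation of $\tilde\xi$ forcing the form $\gamma(x,t)=(\gamma_N(x),t+c_\gamma)$, compactness of $\tilde N$ converting a non-discrete image into a violation of proper discontinuity) is handled properly. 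Two points are worth making explicit. First, the survey's Theorem~\ref{einstein1} (the odd-dimensional Goldberg statement) cannot be invoked directly here, since $M$ is $\eta$-Einstein but never Einstein when $a>0$ (as $\mathrm{Ric}^{\sharp}\xi=0$); descending to the transverse factor before applying Sekigawa, as you do, is therefore genuinely necessary and is the crux of the argument. Second, your appeal to Schur-type constancy is hidden in the phrase ``Einstein constant $a$'': when $\dim M=3$ the factor $\tilde N$ is a surface and $a$ need not be constant a priori, but there almost K\"ahler is automatically K\"ahler and the hypothesis that $a$ is a positive constant is part of the statement, so nothing breaks. The only inputs external to the survey are Sekigawa's resolution of the Goldberg conjecture for non-negative scalar curvature and Kobayashi's theorem, both standard, so the proof is complete as written.
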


\medskip

We close the section by mentioning an important generalization of the condition $R(X,Y)\xi=0$ that characterizes almost coK\"{a}hler manifolds with Killing
Reeb vector field.

Namely, given an almost contact metric manifold $(M,\phi,\xi,\eta,g)$ and two real numbers $\kappa$ and $\mu$ one can define a distribution $N(\kappa,\mu)$ on $M$
by
\begin{align*}
N_{x}(\kappa,\mu):=\{Z\in T_{x}M | R(X,Y)Z=\kappa (g(Y,Z)X&-g(X,Z)Y)+ \mu (g(Y,Z)hX-g(X,Z)hY)\}.
\end{align*}
The distribution $N(\kappa,\mu)$ is called the \emph{$(\kappa,\mu)$-nullity
distribution}.  In \cite{blair95}, the case when the Reeb vector field of a
contact metric manifold belongs to the $(\kappa,\mu)$-nullity distribution was considered. A few years later, the almost coK\"{a}hler case was also considered (\cite{endo2002}).

Thus an \emph{almost coK\"{a}hler $(\kappa,\mu)$-manifold} is an almost coK\"{a}hler manifold satisfying the curvature condition
\begin{equation}\label{kmucondition}
R(X,Y)\xi=\kappa \left(\eta(Y)X-\eta(X)Y\right) + \mu \left(\eta(Y)hX-\eta(X)hY\right),
\end{equation}
for some $\kappa,\mu\in\mathbb{R}$. In particular, any coK\"{a}hler
manifold satisfies such a condition for $\kappa=0$ and any $\mu$.
However, if $\kappa\neq 0$ any almost coK\"{a}hler
$(\kappa,\mu)$-manifold cannot be coK\"{a}hler, because of the
relation (see \cite{endo2002})
\begin{equation}\label{kmu1}
h^{2}=\kappa\phi^{2}.
\end{equation}
Notice also that from \eqref{kmu1} it follows that $\kappa\leq 0$
and $\kappa=0$ if and only if $M$ is coK\"{a}hler. In the sequel we
shall consider the non-coK\"{a}hler case, that is $\kappa < 0$. In
this case one proves that the operator $h$ admits $3$ distinct
eigenvalues, $0$ and $\pm\lambda:=\pm\sqrt{-\kappa}$, and the
tangent bundle of the manifold splits up as the orthogonal sum of
the corresponding eigendistributions
 $TM={\mathcal D}_{h}(\lambda)\oplus{\mathcal D}_{h}(-\lambda)\oplus{\mathcal D}_{h}(0)$,
where ${\mathcal D}_{h}(0)={\mathbb R}\xi$. Moreover, ${\mathcal D}_{h}(\lambda)$ and ${\mathcal D}_{h}(-\lambda)$ are totally geodesic foliations and are
related by $\phi{\mathcal D}_{h}(\pm\lambda)={\mathcal D}_{h}(\mp\lambda)$. This
decomposition allows us to prove that the condition \eqref{kmucondition} determines the curvature completely (cf. \cite[Theorem 3.1]{endo2002}). In particular it follows that
if $X$ is a unit vector field in ${\mathcal D}_{h}(\lambda)$ one has
\begin{equation*}
K(X,\xi)=\kappa+\lambda\mu, \ \ \ K(\phi X,\xi)=\kappa-\lambda\mu, \ \ \ K(X,\phi X)=-\kappa .
\end{equation*}
Furthermore, an explicit formula for the Ricci operator is provided
\begin{equation}\label{ricci}
\textrm{Ric}^{\sharp} = \mu h + 2n \kappa \eta\otimes\xi.
\end{equation}

\begin{remark}
From \eqref{ricci} it follows that $\textrm{Ric}^{\sharp}\xi=2n\kappa\xi$. On
the other hand, as proved in \cite{pak-kim-2006} (see also page 214 of
\cite{dragomir-perrone}), Theorem \ref{harmonicity} implies that the Reeb vector field of an almost coK\"{a}hler manifold is a harmonic vector field if and
only if it is an eigenvector of the Ricci operator. In view of this remark and Corollary \ref{riccikilling}, it follows that in any almost coK\"{a}hler
manifold such that $\xi$ is Killing or belongs to the $(\kappa,\mu)$-nullity distribution, the Reeb vector field is harmonic.
\end{remark}

Thus, in view of \eqref{ricci}, almost coK\"{a}hler $(\kappa,\mu)$-manifolds are $\eta$-Einstein if and only if $\mu=0$, that is
\begin{equation}\label{kcondition}
R(X,Y)\xi=\kappa \left(\eta(Y)X-\eta(X)Y\right).
\end{equation}
This case was studied, among others, by Dacko (\cite{dacko2000}), who proved
that $M$ is necessarily an almost coK\"{a}hler manifold with K\"{a}hlerian leaves and, moreover, gave a full description of the local structure of this class. We briefly
recall his construction, referring to \cite{dacko2000} for more details. Let $\lambda$ be a real positive number and $\mathfrak{g}_{\lambda}$ be the solvable non-nilpotent Lie algebra with basis $\{\xi,X_{1},\ldots,X_{n},Y_{1},\ldots,Y_{n}\}$ and non-zero
Lie brackets
\begin{equation*}
[\xi,X_{i}]=-\lambda X_{i}, \ \ \ [\xi,Y_{i}]=\lambda Y_{i},
\end{equation*}
for each $i\in\left\{1,\ldots,n\right\}$. Let $G_\lambda$ be a Lie group whose Lie algebra is $\mathfrak{g}_\lambda$ and let $(\phi,\xi,\eta,g)$ be the left-invariant almost
coK\"{a}hler structure defined by putting at the identity
\begin{gather}\label{klocal1}
g(X_{i},X_{j})=g(Y_{i},Y_{j})=\delta_{ij}, \  \ g(X_{i},Y_{j})=0, \ \ g(\xi,X_{i})=g(\xi,Y_{i})=0, \nonumber\\
\phi \xi=0, \ \ \ \phi X_{i}=Y_{i}, \ \ \ \phi Y_{i}=-X_{i}, \  \ \ \ \eta=g(\cdot,\xi).\label{klocal1add}
\end{gather}
Then one can prove that the almost coK\"{a}hler Lie group $(G_{\lambda},\phi,\xi,\eta,g)$ satisfies \eqref{kcondition}, with $\kappa=-\lambda^2$. Dacko
proved that locally all almost coK\"{a}hler $(\kappa,0)$-manifolds are of this type:

\begin{theorem}[\cite{dacko2000}]
Let $(M,\phi,\xi,\eta,g)$ be an almost coK\"{a}hler manifold whose Reeb vector field $\xi$ satisfies \eqref{kcondition}, for some $\kappa<0$. Then $M$ is
locally isomorphic to the above Lie group $G_{\lambda}$ endowed with the almost coK\"{a}hler structure \eqref{klocal1add}, where $\lambda=\sqrt{-\kappa}$.
\end{theorem}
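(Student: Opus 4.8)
The plan is to produce, in a neighbourhood of an arbitrary point, a local orthonormal frame $\{\xi, X_1,\dots,X_n,Y_1,\dots,Y_n\}$ adapted to the almost coK\"ahler structure whose \emph{structure functions} $[e_a,e_b]=c_{ab}^{c}e_c$ are \emph{constant} and equal to those defining $\mathfrak{g}_\lambda$, namely $[\xi,X_i]=-\lambda X_i$, $[\xi,Y_i]=\lambda Y_i$ and all other brackets zero. Once such a frame is available, the components of $g$, $\phi$ and $\eta$ are constant in it, and the classical integration of a coframe with constant structure equations (Lie's theorems applied to the Maurer--Cartan system) produces a local diffeomorphism onto $G_\lambda$ carrying the frame to the left-invariant frame, hence identifying the two almost coK\"ahler structures. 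This is exactly the asserted local isomorphism, with $\kappa=-\lambda^2$.

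To build the frame I would first use \eqref{kmu1}, which on $\mathcal{D}=\ker\eta$ reads $h^2=\lambda^2 I$, together with the recalled splitting $TM=\mathcal{D}_{h}(\lambda)\oplus\mathcal{D}_{h}(-\lambda)\oplus\mathbb{R}\xi$ and the relation $\phi\mathcal{D}_{h}(\pm\lambda)=\mathcal{D}_{h}(\mp\lambda)$. A short algebraic computation shows that $\ker(h-\lambda\phi)\cap\mathcal{D}$ is an $n$-dimensional subbundle on which $\phi$ restricts to an isomorphism onto its orthogonal complement; choosing a local orthonormal section $X_1,\dots,X_n$ of it and setting $Y_i:=\phi X_i$ yields an orthonormal $\phi$-frame of $\mathcal{D}$ with $hX_i=\lambda Y_i$ and $hY_i=\lambda X_i$. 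From $\nabla\xi=-\phi h$ (property (iii) of the operator $h$) one reads off immediately $\nabla_\xi\xi=0$, $\nabla_{X_i}\xi=\lambda X_i$ and $\nabla_{Y_i}\xi=-\lambda Y_i$, so that $[\xi,X_i]=\nabla_\xi X_i-\lambda X_i$ and $[\xi,Y_i]=\nabla_\xi Y_i+\lambda Y_i$; the desired values of these two brackets follow as soon as the frame is chosen $\nabla_\xi$-parallel.

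The two facts that make the propagation possible are $\nabla_\xi h=0$ and the flatness of the leaves of $\mathcal{D}$. For the first, differentiating $\nabla\xi=-\phi h$ and using $\nabla_\xi\phi=0$ (see \eqref{nablaphixi}) gives the tensorial identity $R(X,\xi)\xi=\phi(\nabla_\xi h)X-\phi h\phi hX$; since $-\phi h\phi h=\kappa I$ on $\mathcal{D}$, comparison with the value $R(X,\xi)\xi=\kappa X$ forced by \eqref{kcondition} yields $\phi(\nabla_\xi h)X=0$, whence $\nabla_\xi h=0$ because $\nabla_\xi h$ is symmetric, annihilates $\xi$, and preserves $\mathcal{D}$. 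For the second, I would combine the K\"ahlerian-leaves identity $(\nabla_X\phi)Y=g(X,hY)\xi-\eta(Y)hX$ (available here since the $(\kappa,0)$-condition forces K\"ahlerian leaves) with the fact that the curvature is completely determined by \eqref{kcondition}: the second fundamental form of a leaf is $\mathrm{II}(A,B)=g(\phi hA,B)\xi$, and the Gauss equation together with the explicit horizontal sectional curvatures shows that each leaf is intrinsically flat K\"ahler. The same identity shows, after a computation, that $h|_{\mathcal{D}}$ (as a field of endomorphisms of $\mathcal{D}$) is parallel for the Levi-Civita connection $\nabla^{L}$ of a leaf, so $\ker(h-\lambda\phi)\cap\mathcal{D}$ is a $\nabla^{L}$-parallel subbundle; on the flat leaf one may then take $X_1,\dots,X_n$ (and hence $Y_i=\phi X_i$, as $\phi|_{\mathcal{D}}$ is leafwise parallel) to be $\nabla^{L}$-parallel, which forces all horizontal brackets to vanish. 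Propagating this frame off one leaf by the flow of $\xi$ keeps it adapted and $\nabla_\xi$-parallel precisely because $\nabla_\xi h=\nabla_\xi\phi=0$, and delivers the required constant structure functions.

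The main obstacle is the interlocking of these two parallelisations: one must check that the leafwise $\nabla^{L}$-parallel horizontal frame and the $\xi$-flow propagation are mutually compatible, i.e. that the mixed curvature terms do not obstruct the existence of a single frame that is simultaneously $\nabla^{L}$-parallel along the leaves and $\nabla_\xi$-parallel, so that \emph{all} structure functions are genuinely constant. This is exactly where the full strength of the curvature being completely determined by \eqref{kcondition} is used, and it is the step demanding the most care; the remaining verifications are the routine Koszul-formula bookkeeping already illustrated above.
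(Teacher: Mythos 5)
The survey does not actually prove this theorem; it only states it with a reference to Dacko and describes the model $(G_{\lambda},\phi,\xi,\eta,g)$, so your proposal has to be judged on its own. Your overall strategy --- build a local orthonormal frame $\{\xi,X_i,Y_i\}$ whose structure functions are constant and equal to those of $\mathfrak{g}_{\lambda}$, then integrate the resulting Maurer--Cartan system --- is the right one, and several of your intermediate steps check out: $(h-\lambda\phi)^2=h^2+\lambda^2\phi^2=0$ on $\mathcal{D}$ by \eqref{kmu1}, the bundle $\ker(h-\lambda\phi)\cap\mathcal{D}$ is the graph of $\phi$ over $\mathcal{D}_h(-\lambda)$ and hence has rank $n$ with $\phi$ carrying it onto its orthogonal complement; the resulting normalization $hX_i=\lambda Y_i$, $hY_i=\lambda X_i$ is exactly what $h=\frac12\mathcal{L}_\xi\phi$ gives on the model \eqref{klocal1add}; and the derivation of $\nabla_\xi h=0$ from $R(X,\xi)\xi=\phi(\nabla_\xi h)X-\phi h\phi hX$ together with \eqref{kcondition} and \eqref{nablaphixi} is correct.

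The genuine gap is that the proof stops exactly where the theorem lives. To conclude that \emph{all} structure functions are constant you need a single frame that is simultaneously $\nabla_\xi$-parallel and $\nabla^{L}$-parallel along each leaf; equivalently, the connection on $\mathcal{D}$ obtained by combining $\nabla_\xi$ in the Reeb direction with the leafwise Levi-Civita connection must be flat. Its curvature has two blocks: the purely horizontal one (leafwise flatness) and the mixed one, which is essentially the $\mathcal{D}$-component of $R(\xi,A)B$ corrected by Weingarten terms. You assert the first "by the Gauss equation" and the leafwise parallelism of $h|_{\mathcal{D}}$ "after a computation", and you explicitly defer the second as "the step demanding the most care" --- but none of these is carried out, and the second is not routine: without it, your $X_i,Y_i$ are only controlled on one leaf, and there is no guarantee that propagating them by the flow of $\xi$ preserves their leafwise parallelism, so the brackets $[X_i,X_j]$, $[X_i,Y_j]$, $[Y_i,Y_j]$ need not vanish off that leaf. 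This vanishing of the mixed curvature is precisely the nontrivial consequence of the curvature tensor being completely determined by \eqref{kcondition}, and it must be computed explicitly (together with the leafwise flatness and $\nabla^{L}h|_{\mathcal{D}}=0$) before the Lie-integration step can be invoked. As it stands the proposal is a credible plan with the hardest verification left open, not a proof.
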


It remains an open question whether it is possible to provide a full classification as in the contact metric case (cf. \cite{boeckx2000} and \cite{cappelletti2009}).


Finally, a couple of interesting generalizations of almost coK\"{a}hler
$(\kappa,\mu)$-manifolds are worth mentioning. The first is due to Dacko and Olszak (\cite{dacko-olszak2005-1}), who
defined an \emph{almost coK\"{a}hler $(\kappa,\mu,\nu)$-manifold} as an almost coK\"{a}hler manifold satisfying the following curvature condition
\begin{align}
R(X,Y)\xi=\kappa (\eta(Y)X-\eta(X)Y) + \mu (\eta(Y)hX&-\eta(X)hY) + \nu (\eta(Y)\phi hX-\eta(X)\phi hY) \label{kmunucondition}
\end{align}
where $\kappa,\mu,\nu$ belong to the subring of the ring of smooth functions $f$ on $M$ such that $df \wedge \eta=0$. Of course, taking $\nu=0$ and $\kappa$ and $\mu$ constant,
one reobtains the aforementioned notion of almost coK\"{a}hler $(\kappa,\mu)$-manifolds. Among other results, the authors proved that the leaves of the canonical foliations
$\mathcal D$ are locally flat K\"{a}hlerian manifolds, and then they found local models for the case $\kappa=1$ and $\nu=0$ (\cite{dacko-olszak2005-2}).

Very recently,  Carriazo and Mart\'{i}n-Molina provided an explicit description of the curvature tensor of an almost coK\"{a}hler $(\kappa,\mu,\nu)$-space and proposed a further generalization (see \cite{carriazo-martinmolina} for more details).

\subsection{Topology of compact coK\"{a}hler manifolds}\label{topology}
An interesting topic
is the study of topological properties of coK\"ahler and almost coK\"ahler
manifolds. In particular, it is important to investigate the topological
obstructions to their existence.


To begin with, recall the result of Goldberg and Yano (Theorem
\ref{harmonicity}) that for any almost coK\"{a}hler manifold  the $1$-form $\eta$ and the
$2$-form $\Phi$ are harmonic. In view of Theorem~\ref{harmonicity}, it follows that the first and the second Betti
number of any compact almost coK\"{a}hler manifold are different from zero. Actually,
as an application of the formula at page 109 of \cite{goldberg-book},  one can prove that in any almost contact metric manifold such that $\xi$ is Killing
and $\eta$ is harmonic, the wedge product by $\eta$ preserves harmonicity. It follows that in any compact almost coK\"{a}hler manifold that satisfies any
of the equivalent conditions of Theorem \ref{killing}, the $3$-form $\eta\wedge\Phi$ is  harmonic and hence also the third Betti number is non-zero.

In fact if the structure is also normal, a
stronger result holds.

\begin{theorem}[\cite{blair-goldberg67}]\label{betti1}
All the Betti numbers of a compact coK\"{a}hler manifold are non-zero.
\end{theorem}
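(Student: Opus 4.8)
The plan is to exploit the K\"{a}hler structure of the product manifold to transfer cohomological consequences of the Hodge decomposition back to $M$. On any compact coK\"{a}hler manifold $(M,\phi,\xi,\eta,g)$ the Reeb field is Killing (by Remark \ref{3D-Killing}, since $N^{(1)}=0$ forces $N^{(3)}=0$), so $M$ is locally a product of a K\"{a}hler manifold with $\Rr$, and the fundamental $2$-form $\Phi$ together with $\eta$ are harmonic by Theorem \ref{harmonicity}. I would first record the two basic facts needed: (i) $\eta$ is a closed harmonic $1$-form with $\eta(\xi)=1$, and (ii) $\Phi$ is a closed harmonic $2$-form. The heart of the matter is to produce, for each degree $p$ with $0\le p\le 2n+1$, an explicit harmonic $p$-form that is nowhere zero in cohomology.

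The key construction is to consider the forms
\[
\eta\wedge\Phi^{j}\qquad\text{and}\qquad \Phi^{j},
\]
which interpolate across all degrees. First I would check that $\Phi^{j}$ is harmonic for each $j$: this follows from the coK\"{a}hler condition $\nabla\Phi=0$ (Theorem \ref{cokahlercondition1}), which makes $\Phi$ parallel, hence every power $\Phi^{j}$ is parallel and therefore both closed and coclosed. Likewise $\eta$ is parallel since $\nabla\eta=0$ (condition (f) of Theorem \ref{killing}), so $\eta\wedge\Phi^{j}$ is parallel and thus harmonic as well. This gives a harmonic representative in each degree from $0$ up to $2n+1$, namely $\Phi^{j}$ in degree $2j$ and $\eta\wedge\Phi^{j}$ in degree $2j+1$.

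It then remains to argue that none of these harmonic forms vanishes identically, for then by Hodge theory their cohomology classes are nonzero and the corresponding Betti numbers are positive. The crucial nondegeneracy input is that $\eta\wedge\Phi^{n}$ is a volume form, which is the defining cosymplectic condition. From this single fact I would deduce pointwise nonvanishing of every intermediate form: since $\Phi$ is nondegenerate on the $2n$-dimensional distribution $\mathcal D=\ker\eta$, the power $\Phi^{j}$ restricted to $\mathcal D$ is nonzero for $0\le j\le n$, and pairing with $\eta$ recovers a nonzero $(2j+1)$-form. Concretely, evaluating on a $\phi$-basis $\{X_{1},\dots,X_{n},\phi X_{1},\dots,\phi X_{n},\xi\}$ shows $\Phi^{j}(X_{1},\phi X_{1},\dots,X_{j},\phi X_{j})\neq 0$ and $\eta\wedge\Phi^{j}$ evaluated on $\xi$ together with such vectors is likewise nonzero. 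Because a harmonic form on a compact manifold that is nonzero somewhere is automatically cohomologically nontrivial (a nonzero harmonic form cannot be exact), each $b_{k}(M)>0$.

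The main obstacle, and the place demanding care, is the nonvanishing step rather than the harmonicity step. The harmonicity of $\Phi^{j}$ and $\eta\wedge\Phi^{j}$ is essentially automatic from parallelism, which is where normality is genuinely used (an \emph{almost} coK\"{a}hler manifold need not have $\nabla\Phi=0$, so its powers need not be coclosed). The subtle point is to verify that taking wedge powers of a harmonic form keeps it harmonic; this is clean here precisely because parallel forms form a subalgebra under $\wedge$, so I would emphasize that it is the coK\"{a}hler (normal) hypothesis, via $\nabla\Phi=0$ and $\nabla\eta=0$, that makes the wedge-power trick work and distinguishes this stronger conclusion from the merely nonzero first, second, and third Betti numbers available in the almost coK\"{a}hler case.
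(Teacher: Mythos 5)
Your proof is correct, and it reaches the paper's harmonic representatives $\Phi^{j}$ and $\eta\wedge\Phi^{j}$ by a different mechanism. The survey follows Blair--Goldberg: one proves the operator identity $[L,\Delta]=0$ for $L\alpha=\alpha\wedge\Phi$ on a compact coK\"ahler manifold, so that $L$ preserves harmonicity; applying $L$ repeatedly to the harmonic forms $1$ and $\eta$ then yields harmonic forms in every degree, and the nonvanishing of $\eta\wedge\Phi^{n}$ finishes the argument exactly as you do. You instead get harmonicity from parallelism: $\nabla\Phi=0$ by Theorem \ref{cokahlercondition1}, $\nabla\eta=0$ by Remark \ref{3D-Killing} combined with Theorem \ref{killing}(f), and since $\nabla$ is a derivation of the wedge product, each $\Phi^{j}$ and $\eta\wedge\Phi^{j}$ is parallel, hence closed and coclosed. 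This is more elementary and entirely self-contained given the results already quoted in the survey, and your closing remark correctly identifies normality as the essential hypothesis (without it $\nabla\Phi=0$ fails and only $b_{1},b_{2}$, and in the Killing case $b_{3}$, are controlled). What the commutation $[L,\Delta]=0$ buys beyond your argument is that $L$ maps \emph{all} harmonic forms to harmonic forms, not just the parallel ones; that stronger statement is what drives the Lefschetz-type isomorphisms and the monotonicity of the Betti numbers (Theorem \ref{monotony}) later in the paper, whereas for the bare nonvanishing statement of Theorem \ref{betti1} your parallelism route suffices.
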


In order to prove Theorem \ref{betti1}, one considers  the operators $L$ and $\Lambda$, dual to each other, defined by $L\alpha := \alpha\wedge \Phi$ and $\Lambda \alpha :={\star} L {\star} \alpha$, for any
form $\alpha$, where $\star$ denotes the Hodge star isomorphism.
In the case when $M$ is
  coK\"{a}hler, Blair and Goldberg  (\cite{blair-goldberg67}) proved that $L$ commutes with the Laplace-Beltrami operator $\Delta:=d \delta + \delta d$. Using this, Theorem \ref{betti1} follows.

Several years later, Chinea, de Le\'{o}n and Marrero wrote a
fundamental paper on the topology of compact coK\"{a}hler manifolds
(\cite{chinea-deleon-marrero93}). An important role in their study
is played by the subspaces $\Omega^{p}_{H\xi}(M)$ and
$\Omega^{p}_{H\eta}(M)$ of the vector space $\Omega^{p}_{H}(M)$ of
harmonic $p$-forms, on a compact coK\"{a}hler manifold $M$ of
dimension $2n+1$, defined by
\begin{gather*}
\Omega^{p}_{H\xi}(M) = \left\{\alpha\in\Omega^{p}_{H}(M) | i_{\xi}\alpha=0 \right\}\\
\Omega^{p}_{H\eta}(M) = \left\{\alpha\in\Omega^{p}_{H}(M) | \eta\wedge\alpha=0 \right\}.
\end{gather*}
Then, the authors of \cite{chinea-deleon-marrero93} prove the
splitting
\begin{equation}\label{splitting}
\Omega^{p}_{H}(M)=\Omega^{p}_{H\xi}(M)\oplus\Omega^{p}_{H\eta}(M),
\end{equation}
and, moreover, that  the dimensions of the above subspaces are topological invariants of $M$.

Using this and the Poincar\'{e} duality, one obtains the following monotonicity property.

\begin{theorem}[\cite{chinea-deleon-marrero93}]\label{monotony}
On a compact coK\"{a}hler manifold one has
\begin{equation*}
b_{0}\leq b_{1}\leq \ldots \leq b_{n} = b_{n+1} \geq b_{n+2} \geq \ldots \geq b_{2n+1}.
\end{equation*}
\end{theorem}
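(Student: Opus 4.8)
The plan is to reduce everything to the ``basic'' harmonic forms $\Omega^p_{H\xi}(M)$ and to propagate the monotonicity through the splitting \eqref{splitting}. Write $a_p:=\dim\Omega^p_{H\xi}(M)$ (with the convention $a_{-1}=a_{2n+1}=0$). Since $M$ is coK\"ahler we have $N^{(1)}=0$, hence $N^{(3)}=0$, so $\xi$ is Killing by Remark~\ref{3D-Killing}, and $\eta$ is harmonic by Theorem~\ref{harmonicity}; thus the result quoted just before Theorem~\ref{betti1} applies and $L_\eta:=\eta\wedge\cdot$ preserves harmonicity. As $\star$ commutes with $\Delta$ and $i_\xi=\pm\star L_\eta\star$, contraction by $\xi$ also preserves harmonicity. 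I would then check that $L_\eta:\Omega^{p-1}_{H\xi}(M)\to\Omega^p_{H\eta}(M)$ is an isomorphism: it is well defined because $\eta\wedge\eta=0$; injective because $i_\xi(\eta\wedge\alpha)=(i_\xi\eta)\alpha-\eta\wedge i_\xi\alpha=\alpha$ when $i_\xi\alpha=0$ and $i_\xi\eta=1$; and surjective because for $\beta\in\Omega^p_{H\eta}(M)$ the form $\alpha:=i_\xi\beta$ lies in $\Omega^{p-1}_{H\xi}(M)$ and $\eta\wedge\alpha=\beta$ (from $0=i_\xi(\eta\wedge\beta)=\beta-\eta\wedge i_\xi\beta$). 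Consequently $\dim\Omega^p_{H\eta}(M)=a_{p-1}$, and \eqref{splitting} yields the crucial relation $b_p=a_p+a_{p-1}$ for all $p$.

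The second ingredient is a transverse Hard Lefschetz statement for the $a_p$. A harmonic $\alpha$ with $i_\xi\alpha=0$ is automatically basic, since $\mathcal{L}_\xi\alpha=\d i_\xi\alpha+i_\xi\d\alpha=0$, and $\Phi$ is a basic $2$-form by \eqref{sasakiform1}, \eqref{sasakiform2}, representing the transverse K\"ahler form. The Lefschetz operator $L=\Phi\wedge\cdot$ preserves $\Omega^\bullet_{H\xi}(M)$: it commutes with $\Delta$ (the commutation already invoked for Theorem~\ref{betti1}), and $i_\xi(\Phi\wedge\alpha)=(i_\xi\Phi)\wedge\alpha+\Phi\wedge i_\xi\alpha=0$ by \eqref{sasakiform1}. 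The key claim is then that $L^{\,n-p}:\Omega^p_{H\xi}(M)\to\Omega^{2n-p}_{H\xi}(M)$ is an isomorphism for $0\le p\le n$. Granting this, I read off two consequences: the symmetry $a_p=a_{2n-p}$, and the injectivity of $L$ below the middle degree, which gives $a_{q-2}\le a_q$ for all $q\le n$.

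The combination is then pure bookkeeping. From $b_p=a_p+a_{p-1}$ one has $b_{p+1}-b_p=a_{p+1}-a_{p-1}$. For $0\le p\le n-1$ the index $p+1\le n$, so $a_{p+1}\ge a_{p-1}$ and $b_p\le b_{p+1}$. At $p=n$ the symmetry gives $a_{n+1}=a_{n-1}$, whence $b_n=b_{n+1}$. For $n+1\le p\le 2n$, writing $q=2n-p-1\le n-2$, the symmetry turns $a_{p+1}-a_{p-1}$ into $a_q-a_{q+2}\le 0$, so $b_p\ge b_{p+1}$. Chaining these inequalities produces exactly $b_0\le b_1\le\cdots\le b_n=b_{n+1}\ge\cdots\ge b_{2n+1}$.

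The routine parts are the $L_\eta$-isomorphism and the final combinatorics; the real obstacle is the transverse Hard Lefschetz isomorphism for $\Omega^\bullet_{H\xi}(M)$. Commutation of $L$ with $\Delta$ alone is not enough — one needs the full K\"ahler $\mathfrak{sl}_2$-package transversally. I expect to secure it through the Hodge theory of the transversely K\"ahler Reeb foliation, or, more concretely, by realizing the compact coK\"ahler manifold as a finite quotient (mapping torus) of a compact K\"ahler manifold $K$ by a Hermitian isometry $f$, so that $\Omega^\bullet_{H\xi}(M)\cong H^\bullet(K)^{f^\ast}$; since $L$ commutes with $f^\ast$, ordinary Hard Lefschetz on $H^\bullet(K)$ descends to the invariant subspace. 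Establishing that the basic harmonic forms genuinely satisfy the transverse K\"ahler identities, rather than merely that $L$ is $\Delta$-commuting, is where the essential work lies.
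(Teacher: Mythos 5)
Your argument is correct, and it is essentially the proof from \cite{chinea-deleon-marrero93} that the survey compresses into the single sentence ``using this and the Poincar\'e duality''. Writing $a_p:=\dim\Omega^p_{H\xi}(M)$ as you do, the relation $b_p=a_p+a_{p-1}$ obtained from \eqref{splitting} together with the isomorphism $\eta\wedge\cdot\,\colon\Omega^{p-1}_{H\xi}(M)\to\Omega^{p}_{H\eta}(M)$, the symmetry $a_p=a_{2n-p}$, and your final bookkeeping all match the source. You are also right to insist that the splitting plus Poincar\'e duality alone cannot yield the monotonicity: one can write down sequences $(a_p)$ with $a_p=a_{2n-p}$ for which $b_p=a_p+a_{p-1}$ fails to increase up to the middle degree, so the indispensable extra input is the injectivity of $L=\Phi\wedge\cdot$ on $\Omega^{p}_{H\xi}(M)$ for $p\le n-1$ --- precisely the isomorphism $L^{n-p}\colon\Omega^{p}_{H\xi}(M)\to\Omega^{2n-p}_{H\xi}(M)$ that the survey only quotes afterwards, in its discussion of the Hard Lefschetz theorem. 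That isomorphism is the one genuinely non-routine point you defer. In \cite{chinea-deleon-marrero93} it is established directly from the transverse K\"ahler structure of the Reeb foliation, which is the cleaner of your two suggested routes; the mapping-torus alternative requires first replacing $\eta$ by a nearby integral $1$-form and then arguing that the spaces $\Omega^{p}_{H\xi}(M)$ (or at least their dimensions) are unchanged under this perturbation, so it needs more care than your sketch suggests.
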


Next, by studying complex harmonic forms,
Chinea, de Le\'on and Marrero
prove the coK\"{a}hler counterpart of the well-known property that the
odd-dimensional Betti numbers of
a compact K\"{a}hler manifold are even.

\begin{theorem}[\cite{chinea-deleon-marrero93}]\label{bettidiv2}
On a compact coK\"{a}hler manifold one has:
\begin{enumerate}
\item[(i)] For any odd integer $p$, the dimension of $\Omega^{p}_{H\xi}(M)$ is even.
\item[(ii)] For any $0\leq p \leq n$, the difference $b_{2p+1}-b_{2p}$ is even. In particular, the first Betti number of $M$ is odd.
\end{enumerate}
\end{theorem}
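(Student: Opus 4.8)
The plan is to reduce both assertions to a single sequence of numbers, $a_p:=\dim\Omega^p_{H\xi}(M)$, and to prove that $a_p$ is even whenever $p$ is odd. First I would record the bookkeeping identity $b_p=a_p+a_{p-1}$. By the splitting \eqref{splitting} we have $b_p=\dim\Omega^p_{H\xi}(M)+\dim\Omega^p_{H\eta}(M)$, so it suffices to produce an isomorphism $\Omega^p_{H\eta}(M)\cong\Omega^{p-1}_{H\xi}(M)$. Since $M$ is coK\"ahler, Theorem \ref{killing} gives $\nabla\xi=0$ and $\nabla\eta=0$, so $\eta$ is harmonic and, as already noted in this section, $e(\eta):=\eta\wedge(\cdot)$ preserves harmonicity; its Hodge adjoint $i_\xi=\pm\star\,e(\eta)\,\star$ then preserves harmonicity as well, because $\star$ commutes with $\Delta$. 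The mutually inverse maps $\beta\mapsto\eta\wedge\beta$ and $\alpha\mapsto i_\xi\alpha$ (for $\alpha\in\Omega^p_{H\eta}(M)$ one has $\alpha=\eta\wedge i_\xi\alpha$, using $\eta(\xi)=1$ and $\eta\wedge\alpha=0$) give the required isomorphism, whence $b_p=a_p+a_{p-1}$.

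For the evenness of $a_p$ with $p$ odd I would mimic the Hodge-theoretic proof of the Kähler case via complex harmonic forms. On $\mathcal D=\ker\eta$ the restriction $J:=\phi|_{\mathcal D}$ is a parallel almost complex structure (Theorem \ref{cokahlercondition1}) and $\Phi$ is the associated transverse Kähler form. Complexifying $\mathcal D$ into its $J$-eigenbundles induces a bigrading $\Lambda^p\mathcal D^\ast\otimes\mathbb C=\bigoplus_{r+s=p}\Lambda^{r,s}$ of the horizontal forms. The key point is that, because $\phi$ (equivalently $\Phi$) is parallel, the transverse Kähler identities hold and $\Delta$ preserves this bidegree, giving $\Omega^p_{H\xi}(M)\otimes\mathbb C=\bigoplus_{r+s=p}\mathcal H^{r,s}$, where $\mathcal H^{r,s}$ is the space of harmonic horizontal forms of type $(r,s)$. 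Complex conjugation is real, commutes with $\Delta$, and interchanges $\Lambda^{r,s}$ with $\Lambda^{s,r}$, so $\overline{\mathcal H^{r,s}}=\mathcal H^{s,r}$ and $\dim_{\mathbb C}\mathcal H^{r,s}=\dim_{\mathbb C}\mathcal H^{s,r}$. Hence $a_p=\dim_{\mathbb C}\bigl(\Omega^p_{H\xi}(M)\otimes\mathbb C\bigr)=\sum_{r+s=p}\dim_{\mathbb C}\mathcal H^{r,s}$. When $p$ is odd there is no diagonal term $(r,r)$, the summands pair off as $(r,s)$ with $(s,r)$ for $r\neq s$, and $a_p$ is even. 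This is assertion (i).

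Assertion (ii) then follows formally. From $b_p=a_p+a_{p-1}$ we get $b_{2p+1}-b_{2p}=a_{2p+1}-a_{2p-1}$, and both terms are even by (i), so the difference is even. For the first Betti number, $a_0=\dim\Omega^0_{H\xi}(M)=1$ since $M$ is compact connected (the harmonic $0$-forms are the constants, and $i_\xi$ is vacuous in degree $0$), while $a_1$ is even by (i); hence $b_1=a_1+a_0=a_1+1$ is odd.

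I expect the main obstacle to be the claim in the second paragraph that $\Delta$ is homogeneous of bidegree $(0,0)$ on horizontal forms, i.e.\ that the transverse Hodge theory of the Kähler foliation $\mathcal D$ behaves exactly as the Hodge theory of a compact Kähler manifold. This is precisely where the parallelism $\nabla\phi=0$ must be used in full strength, through the transverse Kähler identities relating $\Delta$ to the $\bar\partial$-Laplacian of the leaves; locally one can see it from the product structure $M\cong K\times\mathbb R$ with $K$ Kähler, but a clean global argument is the real work. Once that bidegree invariance is established, the conjugation-pairing argument giving (i) and the elementary arithmetic $b_p=a_p+a_{p-1}$ giving (ii) are both routine, as are the two isomorphisms built from the parallel fields $\xi$ and $\eta$.
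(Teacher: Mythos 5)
Your proposal is correct and follows the same route that the survey attributes to Chinea, de Le\'on and Marrero: the splitting \eqref{splitting} together with the isomorphism $\Omega^{p}_{H\eta}(M)\cong\Omega^{p-1}_{H\xi}(M)$ given by $\eta\wedge(\cdot)$ and $i_{\xi}$ (both legitimate since $\eta$ is parallel and harmonic), and then the study of complex harmonic forms of pure type with the conjugation pairing to get the evenness in (i). The one point you flag --- that $\Delta$ preserves the transverse bidegree --- does go through, since a harmonic horizontal form is automatically basic ($d\alpha=0$ and $i_{\xi}\alpha=0$ give $\mathcal{L}_{\xi}\alpha=0$) and the local splitting $M\cong K\times\Rr$ with $K$ K\"ahler reduces the claim to the usual K\"ahler identities for the local operator $\Delta$.
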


Moreover, in \cite{chinea-deleon-marrero93} an odd-dimensional version of the Hard Lefschetz Theorem is given. The authors first prove that the linear map
$L^{n-p}:\Omega^{p}_{H\xi}(M)\longrightarrow \Omega^{2n-p}_{H\xi}(M)$, $\alpha \longmapsto \alpha\wedge\Phi^{n-p}$, is an isomorphism. Using this and the decomposition \eqref{splitting}, they
proved the following coK\"{a}hler version of the Hard Lefschetz Theorem:

\begin{theorem}[\cite{chinea-deleon-marrero93}]
In any compact coK\"{a}hler manifold, for each $p\leq n$ the linear
map of the space $\Omega^{p}_{H}(M)$ into the space
$\Omega^{2n+1-p}_{H}(M)$ given by
\begin{equation*}
\alpha\in\Omega^{p}_{H}(M) \longmapsto i_{\xi}(\alpha\wedge\Phi)\wedge\Phi^{n-p} + \eta\wedge\alpha\wedge\Phi^{n-p} \in\Omega^{2n+1-p}_{H}(M)
\end{equation*}
is an isomorphism.
\end{theorem}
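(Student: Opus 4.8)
The plan is to compute the map, call it $T$, blockwise with respect to the decomposition \eqref{splitting}, used simultaneously in degree $p$ and in degree $2n+1-p$:
\[
\Omega^{p}_{H}(M)=\Omega^{p}_{H\xi}(M)\oplus\Omega^{p}_{H\eta}(M),\qquad
\Omega^{2n+1-p}_{H}(M)=\Omega^{2n+1-p}_{H\xi}(M)\oplus\Omega^{2n+1-p}_{H\eta}(M).
\]
I will show that $T$ is \emph{anti-diagonal} for these splittings, sending the $\xi$-summand in degree $p$ isomorphically onto the $\eta$-summand in degree $2n+1-p$, and the $\eta$-summand in degree $p$ isomorphically onto the $\xi$-summand in degree $2n+1-p$. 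Since a block map of the form $\left(\begin{smallmatrix}0&B\\C&0\end{smallmatrix}\right)$ between direct sums is a bijection precisely when both $B$ and $C$ are, this reduces the theorem to two isomorphism statements, each of which factors through the Lefschetz-type isomorphism $L^{n-p}\colon\Omega^{p}_{H\xi}(M)\to\Omega^{2n-p}_{H\xi}(M)$ established just above, together with the ``bridge'' maps $\eta\wedge(\cdot)$ and $i_{\xi}$ relating the two summands.

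Throughout I use the antiderivation rule $i_{\xi}(\gamma\wedge\delta)=(i_{\xi}\gamma)\wedge\delta+(-1)^{\deg\gamma}\gamma\wedge(i_{\xi}\delta)$ together with $i_{\xi}\eta=1$ and $i_{\xi}\Phi=0$ (cf. \eqref{sasakiform1}). First take $\alpha\in\Omega^{p}_{H\xi}(M)$, so $i_{\xi}\alpha=0$. Then $i_{\xi}(\alpha\wedge\Phi)=0$, the first term of $T\alpha$ vanishes, and
\[
T\alpha=\eta\wedge\alpha\wedge\Phi^{n-p}=\eta\wedge\bigl(L^{n-p}\alpha\bigr)
\]
(of degree $1+p+2(n-p)=2n+1-p$). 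As $\eta\wedge\eta=0$, this lies in $\Omega^{2n+1-p}_{H\eta}(M)$, and it is the composite of the given isomorphism $L^{n-p}$ with $\eta\wedge(\cdot)$. Now take $\alpha\in\Omega^{p}_{H\eta}(M)$, so $\eta\wedge\alpha=0$; putting $\beta:=i_{\xi}\alpha$ one gets $\alpha=\eta\wedge\beta$ with $\beta\in\Omega^{p-1}_{H\xi}(M)$. Here the second term of $T\alpha$ vanishes because $\eta\wedge\eta=0$, while from $i_{\xi}\beta=0$ and $i_{\xi}\Phi=0$ one computes $i_{\xi}(\alpha\wedge\Phi)=i_{\xi}(\eta\wedge\beta\wedge\Phi)=\beta\wedge\Phi$, whence
\[
T\alpha=\beta\wedge\Phi^{n-p+1}=L^{\,n-(p-1)}\beta ,
\]
which lies in $\Omega^{2n+1-p}_{H\xi}(M)$ and is the composite of $\alpha\mapsto\beta=i_{\xi}\alpha$ with $L^{\,n-(p-1)}$.

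It remains to see that both composites are isomorphisms. The maps $\eta\wedge(\cdot)\colon\Omega^{q}_{H\xi}(M)\to\Omega^{q+1}_{H\eta}(M)$ and $i_{\xi}\colon\Omega^{q+1}_{H\eta}(M)\to\Omega^{q}_{H\xi}(M)$ are mutually inverse: the universal identity $\gamma=i_{\xi}(\eta\wedge\gamma)+\eta\wedge i_{\xi}\gamma$ collapses to $\beta=i_{\xi}(\eta\wedge\beta)$ on $\Omega^{q}_{H\xi}(M)$ and dually to $\gamma=\eta\wedge i_{\xi}\gamma$ on $\Omega^{q+1}_{H\eta}(M)$, and both operations preserve harmonicity because $\xi$ is Killing and parallel in the coK\"{a}hler case (so wedging by the harmonic form $\eta$ preserves harmonicity, as recalled above, and $i_{\xi}$ commutes with $\Delta$). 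The first block of $T$ is $L^{n-p}$ followed by this bridge isomorphism; the second is the bridge isomorphism followed by $L^{\,n-(p-1)}$, again of the asserted Lefschetz type since $p-1\le n$. Hence each block is an isomorphism, and therefore so is $T$.

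The only genuinely deep ingredient is the isomorphism $L^{n-p}\colon\Omega^{p}_{H\xi}(M)\to\Omega^{2n-p}_{H\xi}(M)$, which encodes the transverse Hard Lefschetz property of the underlying K\"{a}hler structure and which I assume as proved; everything else is bookkeeping with the two bridge maps. The step requiring care is precisely the verification that $\eta\wedge(\cdot)$ and $i_{\xi}$ restrict to mutually inverse isomorphisms between the $\xi$- and $\eta$-harmonic subspaces, i.e. that they preserve harmonicity. This is where the coK\"{a}hler hypothesis (via $\nabla\xi=0$ and the harmonicity of $\eta$) enters, and it is exactly what makes the decomposition \eqref{splitting} compatible with the operators appearing in $T$.
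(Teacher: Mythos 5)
Your argument is correct and is essentially the proof the paper indicates: the survey states that the result follows from the Lefschetz-type isomorphism $L^{n-p}\colon\Omega^{p}_{H\xi}(M)\to\Omega^{2n-p}_{H\xi}(M)$ together with the decomposition \eqref{splitting}, which is exactly the reduction you carry out, with the anti-diagonal block computation and the bridge maps $\eta\wedge(\cdot)$ and $i_{\xi}$ filling in the details. The verifications (vanishing of the appropriate term in each case, $i_{\xi}(\eta\wedge\beta\wedge\Phi)=\beta\wedge\Phi$, and the mutual inverseness of the bridge maps) all check out.
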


A $p$-form is said to be \emph{effective} if it belongs to the kernel of
$\Lambda = \star L \star $. This definition can be given for any almost contact metric manifold.
Next, denoting by $\bar\Omega^{p}_{H}(M)$ the space of harmonic effective $p$-forms and putting
  $\bar\Omega^{p}_{H\xi}(M):=\bar\Omega^{p}_{H}(M)\cap\Omega^{p}_{H\xi}(M)$,
$\bar\Omega^{p}_{H\eta}(M):=\bar\Omega^{p}_{H}(M)\cap\Omega^{p}_{H\eta}(M)$, one has the following theorem, relating such subspaces with the Betti numbers of $M$.

\begin{theorem}[\cite{chinea-deleon-marrero93}]
On a compact coK\"{a}hler manifold one has
\begin{equation*}
\dim \bar\Omega^{p}_{H\xi}(M) = b_{p} - b_{p-1}, \ \ \ \dim \bar\Omega^{p}_{H\eta}(M) = b_{p-1} - b_{p-2},
\end{equation*}
for any $p\leq n+1$. Thus the dimensions of $\bar\Omega^{p}_{H\xi}(M)$ and of $\bar\Omega^{p}_{H\eta}(M)$ are topological invariants.
\end{theorem}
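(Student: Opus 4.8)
The plan is to reduce both equalities to two ingredients: the Lefschetz structure carried by the ``transverse'' harmonic forms $\Omega^\bullet_{H\xi}(M)$, and the splitting \eqref{splitting}. The starting observation is that on a coK\"ahler manifold $\xi$ is parallel and $\nabla\eta=0$ (Theorem~\ref{killing}); hence $\eta$ is a parallel $1$-form, so both $\eta\wedge(\cdot)$ and its metric adjoint $i_\xi$ commute with the Laplace--Beltrami operator and therefore preserve harmonicity. First I would use this to produce mutually inverse isomorphisms
\begin{equation*}
i_\xi\colon\Omega^p_{H\eta}(M)\longrightarrow\Omega^{p-1}_{H\xi}(M),\qquad\eta\wedge(\cdot)\colon\Omega^{p-1}_{H\xi}(M)\longrightarrow\Omega^p_{H\eta}(M).
\end{equation*}
These are well defined because $i_\xi i_\xi=0$ and $\eta\wedge\eta=0$, and they are inverse to each other by the pointwise identity $i_\xi(\eta\wedge\gamma)=\gamma-\eta\wedge i_\xi\gamma$ together with $\eta(\xi)=1$. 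In particular $\dim\Omega^p_{H\eta}(M)=\dim\Omega^{p-1}_{H\xi}(M)$, and inserting this into \eqref{splitting} yields the recursion $b_p=\dim\Omega^p_{H\xi}(M)+\dim\Omega^{p-1}_{H\xi}(M)$.

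Next I would install the Lefschetz decomposition on $\Omega^\bullet_{H\xi}(M)$. Since $i_\xi\Phi=0$ and $\Phi$ is parallel, both $L$ and its adjoint $\Lambda$ preserve harmonicity and commute with $i_\xi$, so they restrict to $\Omega^\bullet_{H\xi}(M)$; together with the isomorphism $L^{n-p}\colon\Omega^p_{H\xi}(M)\to\Omega^{2n-p}_{H\xi}(M)$ established above this makes $\Omega^\bullet_{H\xi}(M)$ behave exactly like the harmonic forms of a $2n$-dimensional K\"ahler manifold. The standard hard Lefschetz argument then gives, for $p\le n$, the decomposition $\Omega^p_{H\xi}(M)=L\,\Omega^{p-2}_{H\xi}(M)\oplus\bar\Omega^p_{H\xi}(M)$ with $L$ injective, so that $\dim\bar\Omega^p_{H\xi}(M)=\dim\Omega^p_{H\xi}(M)-\dim\Omega^{p-2}_{H\xi}(M)$. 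Subtracting the recursion in degrees $p$ and $p-1$ gives
\begin{equation*}
b_p-b_{p-1}=\dim\Omega^p_{H\xi}(M)-\dim\Omega^{p-2}_{H\xi}(M)=\dim\bar\Omega^p_{H\xi}(M),
\end{equation*}
which is the first formula; the degenerate degree $p=n+1$ is covered separately, since primitive forms vanish in degree $n+1$ while $b_{n+1}=b_n$ by Theorem~\ref{monotony}.

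For the second equality the aim is to show that $i_\xi$ carries \emph{effective} forms to effective forms, so that the isomorphism above restricts to $\bar\Omega^p_{H\eta}(M)\cong\bar\Omega^{p-1}_{H\xi}(M)$ and hence $\dim\bar\Omega^p_{H\eta}(M)=\dim\bar\Omega^{p-1}_{H\xi}(M)=b_{p-1}-b_{p-2}$ by the first formula. Writing every form as $\gamma=\gamma_0+\eta\wedge i_\xi\gamma$ with $\gamma_0$ and $i_\xi\gamma$ horizontal, I would compare the full Hodge star $\star$ with the transverse Hodge star $\star_\perp$ of the K\"ahler leaves. For horizontal $\beta$ this gives $\Lambda(\eta\wedge\beta)=\pm\,\eta\wedge\Lambda_\perp\beta$, where $\Lambda_\perp:=\star_\perp L\star_\perp$ is the transverse co-Lefschetz operator; consequently $\eta\wedge\beta$ is effective precisely when $\Lambda_\perp\beta=0$, i.e. when $\beta=i_\xi(\eta\wedge\beta)$ is effective, which is exactly what is needed.

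The main obstacle is this last compatibility: that $\Lambda$ respects the $\xi$--$\eta$ splitting and restricts on horizontal forms to the transverse co-Lefschetz operator. Establishing it requires tracking carefully how the $(2n+1)$-dimensional Hodge star factors through $\eta$ and $\star_\perp$ (equivalently, that $\Lambda$ is the adjoint of $L$ with respect to an inner product that splits orthogonally into a horizontal and an $\eta$-part). Once this is in place both dimension formulas follow, and since the right-hand sides $b_p-b_{p-1}$ and $b_{p-1}-b_{p-2}$ are manifestly topological, so are the dimensions on the left.
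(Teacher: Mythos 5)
Your proposal is correct, and it assembles exactly the ingredients the survey itself lays out around this statement (the splitting \eqref{splitting}, the fact that $L$ and hence $\Lambda=\star L\star$ preserve harmonicity, the isomorphism $L^{n-p}\colon\Omega^{p}_{H\xi}(M)\to\Omega^{2n-p}_{H\xi}(M)$, and the monotonicity $b_n=b_{n+1}$ for the endpoint case); note that the survey only quotes the theorem from \cite{chinea-deleon-marrero93} without reproducing a proof, so there is nothing to compare step by step. The one genuinely nontrivial verification is the one you flag yourself --- that $\Lambda$ respects the decomposition $\alpha=\alpha_0+\eta\wedge i_\xi\alpha$ and restricts on horizontal forms to the transverse co-Lefschetz operator --- and this does go through by the pointwise computation you indicate, using $\star\beta=\pm\,\eta\wedge\star_\perp\beta$ and $\star(\eta\wedge\beta)=\pm\star_\perp\beta$ for horizontal $\beta$, after which the $\mathfrak{sl}_2$ argument identifies $\bar\Omega^{p}_{H\xi}(M)$ with the primitive part and yields both dimension formulas.
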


Many of the results in \cite{chinea-deleon-marrero93} have been revisited in 2008 by Li (\cite{li08}) using different topological techniques. His work is
based on the notion of ``symplectic / K\"{a}hler mapping torus''. We recall the
precise definitions.  Let $f\in \textrm{Diff}(S)$ be a diffeomorphism on a
closed, connected manifold $S$. The \emph{mapping torus} $S_f$ is obtained
from $S \times [0, 1]$ by identifying the two ends via $f$, namely
\begin{equation*}
S_f = S \times [0,1] / \left\{(x,0)\sim (f(x),1) | x\in S\right\}.
\end{equation*}
It is known that $S_f$ is the total space of  a fiber bundle over $S^1$ with fiber $S$. Now, if $(S,\Omega)$ is a symplectic manifold and $f$ a
symplectomorphism, $S_f$ is called \emph{symplectic mapping torus}. If $(S,J,G)$ is a K\"{a}hler manifold and $f$ an Hermitian isometry, we say that $S_f$
is a \emph{K\"{a}hler mapping torus}.

The main result of Li was the following remarkable criteria for the
existence of cosymplectic and coK\"{a}hler structures in terms of
mapping tori.

\begin{theorem}[\cite{li08}]\label{mappingtorus}
Let $M$ be a closed manifold. Then we have:
\begin{enumerate}
\item[(i)] $M$ admits a cosymplectic structure if and only there exists a symplectic manifold $(S,\Omega)$ and a symplectomorphism $f$ of
$(S,\Omega)$ such that $M$ is diffeomorphic to $S_{f}$.
\item[(ii)]  $M$ admits a coK\"{a}hler structure if and only there exists a K\"{a}hler manifold $(S,J,G)$ and a Hermitian isometry $f$ of
$(S,J,G)$ such that $M$ is diffeomorphic to $S_{f}$.
\end{enumerate}
\end{theorem}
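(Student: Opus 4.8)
The plan is to prove both equivalences by exhibiting the closed $1$-form $\eta$ (or a small rational perturbation of it) as the pullback of the angular form of a fibration $M\to S^1$, and then reading off the fibre and the monodromy.

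For the two ``if'' directions I would argue by descent. Writing the mapping torus as $S_f=(S\times\Rr)/\Z$, where $\Z$ acts by the deck transformation $\psi(x,t)=(f(x),t+1)$, I put on $S\times\Rr$ the forms $\eta_0=\d t$ and $\omega_0=p^*\Omega$, with $p:S\times\Rr\to S$ the projection. Both are closed, and since $f$ is a symplectomorphism one has $\psi^*\eta_0=\eta_0$ and $\psi^*\omega_0=p^*f^*\Omega=p^*\Omega=\omega_0$; hence $\eta_0,\omega_0$ descend to closed forms $\eta,\omega$ on $S_f$. As $\eta_0\wedge\omega_0^n=\d t\wedge p^*(\Omega^n)$ is a volume form on $S\times\Rr$ and the quotient map is a covering, its image is a volume form on $S_f$, so $(S_f,\eta,\omega)$ is cosymplectic, proving the ``if'' part of (i). For (ii) I equip $S\times\Rr$ with the product coK\"ahler structure $\phi_0,\xi_0=\partial_t,\eta_0=\d t,g_0=G+\d t^2$ built from the K\"ahler data $(J,G)$ as in the standard example; because $f$ is a Hermitian isometry ($f^*G=G$ and $f_*\circ J=J\circ f_*$) the deck transformation $\psi$ preserves each of $\phi_0,\xi_0,\eta_0,g_0$, so the whole coK\"ahler structure descends to $S_f$.

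For the ``only if'' direction of (i) I would first note that $\eta\wedge\omega^n\neq0$ forces $\eta$ to be a nowhere-vanishing closed $1$-form. By Tischler's approximation, rational classes being dense in $H^1(M;\Rr)$, I can choose a closed $1$-form $\eta'$ that is $C^0$-close to $\eta$ and has rational, hence (after rescaling) integral, periods; rescaling does not affect cosymplecticity, and since nondegeneracy of $\eta'\wedge\omega^n$ is an open condition on the compact manifold $M$, the pair $(\eta',\omega)$ is again cosymplectic. Integrating $\eta'$ yields a submersion $\pi:M\to S^1$, which is a locally trivial fibration by Ehresmann's theorem. Each fibre $S=\pi^{-1}(\mathrm{pt})$ satisfies $TS=\ker\eta'$, on which $\omega$ is nondegenerate, so $(S,\omega|_S)$ is symplectic. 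Finally I take the Reeb vector field $\xi'$ of $(\eta',\omega)$: from $i_{\xi'}\omega=0$ and $\eta'(\xi')=1$ one gets $\mathcal{L}_{\xi'}\omega=0$ and $\mathcal{L}_{\xi'}\eta'=0$, so $\xi'$ is complete, transverse to the fibres, and its flow preserves $\omega$. Its time-one return map $f=\phi^{\xi'}_1|_S$ is therefore a symplectomorphism of $(S,\omega|_S)$, and the flow identifies $M$ with the mapping torus $S_f$.

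For the ``only if'' direction of (ii) I would run the same blueprint, the difficulty being to guarantee that the fibre is genuinely K\"ahler and the monodromy a Hermitian isometry rather than merely a symplectomorphism. Here the rigidity of the coK\"ahler structure is essential: by Theorem \ref{killing} the Reeb field is parallel and Killing, by Theorem \ref{cokahlercondition1} one has $\nabla\phi=0$, and by Theorem \ref{harmonicity} the (then parallel) form $\eta$ is harmonic. The main obstacle is that an arbitrary Tischler perturbation destroys metric compatibility: a nearby rational $\eta'$ need not have $\phi$-invariant kernel nor a Killing Reeb field, so the fibre need not be a complex submanifold. I would resolve this by choosing the rational approximation inside the space of parallel $1$-forms adapted to the transverse K\"ahler structure, so that $\phi$ preserves $\ker\eta'$ and the corresponding Reeb field $\xi'$ remains Killing; the fibre $S$ then inherits the K\"ahler structure $(\phi|_S,\omega|_S,g|_S)$, and the return map of the isometric Reeb flow is simultaneously holomorphic and isometric, i.e.\ a Hermitian isometry, exhibiting $M$ as a K\"ahler mapping torus. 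Making this adapted perturbation precise, using the parallelism of $\xi$ and the finite-dimensionality of the space of parallel $1$-forms, is the technical heart of the argument.
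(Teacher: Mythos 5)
Your two ``if'' directions and the whole of part (i) are correct, and they follow exactly the route the survey indicates as the key point of Li's proof: approximate $\eta$ by a closed $1$-form with integral periods, integrate it to a fibration over $S^1$, and use the Reeb flow to read off the fibre and the symplectomorphism. (Minor quibble: completeness of $\xi'$ comes from compactness of $M$, not from $\mathcal{L}_{\xi'}\omega=0$.)

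The ``only if'' direction of (ii) has a genuine gap, and the fix you propose cannot work as stated. You want a rational perturbation $\eta'$ whose kernel is $\phi$-invariant, but the only $1$-forms with $\eta'(\xi)\neq 0$ and $\phi(\ker\eta')\subset\ker\eta'$ are the constant multiples of $\eta$ itself: since $\ker\eta'$ is transverse to $\xi$, its projection to $\mathcal{D}=\ker\eta$ along $\xi$ is all of $\mathcal{D}$, so $\phi$-invariance forces $\eta'\circ\phi=0$ on $\mathcal{D}$, and invertibility of $\phi|_{\mathcal D}$ then gives $\eta'|_{\mathcal D}=0$, i.e.\ $\eta'\in\Rr\eta$. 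As $[\eta]$ need not be proportional to a rational class (take a flat torus $\Rr^{2n+1}/\Z^{2n+1}$ with $\xi$ a parallel unit field of irrational slope and $\phi$ a parallel rotation of $\xi^{\perp}$), no admissible perturbation in your sense exists, whether or not you restrict to parallel forms. The correct repair is to give up $\phi$-invariance of the fibre and instead transport the transverse K\"ahler structure of the Reeb foliation onto it: choose $\eta'$ to be the \emph{harmonic} representative of a nearby rational class; because the flow of the parallel field $\xi$ acts by isometries isotopic to the identity, it fixes harmonic forms, so $\mathcal{L}_{\xi}\eta'=0$ and $\eta'(\xi)=c$ is a positive constant. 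The fibre $K=\pi^{-1}(\mathrm{pt})$ satisfies $TK=\ker\eta'$, which is transverse to $\xi$, and one sets $\omega_K=\Phi|_K$ (nondegenerate since $\ker\Phi=\Rr\xi$) and $J_K=P\circ\phi|_{TK}$, where $P$ is the projection onto $TK$ along $\xi$; one checks $J_K^2=-I$, compatibility with $\omega_K$, and integrability from normality and $\nabla\phi=0$, so $(K,J_K,\omega_K)$ is K\"ahler. The time-one map of the flow of $\xi/c$ sends $K$ to $K$, preserves $g$, $\phi$, $\xi$ and $\eta'$, hence preserves $P$, $J_K$ and $\omega_K$, and is therefore a Hermitian isometry, exhibiting $M$ as a K\"ahler mapping torus.
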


Theorem \ref{mappingtorus} can be regarded as an extension to the cosymplectic and coK\"{a}hler settings of the well-known result due to Tischler that any
compact manifold admitting a non-vanishing closed $1$-form fibres over a circle (\cite{tischler70}). Moreover, by using the second part of Theorem
\ref{mappingtorus}, Li was able to give a simpler proof of the aforementioned
monotonicity of the Betti numbers of a compact coK\"{a}hler manifold (Theorem
\ref{monotony}).

A key point in the proof of Theorem \ref{mappingtorus} (as well as in
the proof of Tischler's theorem) is that one can approximate the
$1$-form $\eta$ with a $1$-form $\eta_\theta$ representing an
integral cohomology class. This aspect is used in a very recent
paper of Bazzoni and Oprea (\cite{bazzoni-oprea}), where the
authors, using Li's characterization, give another type of structure
theorem for coK\"{a}hler manifolds. Namely, they prove the following
theorem.

\begin{theorem}[\cite{bazzoni-oprea}]\label{mappingtorus2}
A compact coK\"{a}hler manifold $(M,\phi,\xi,\eta,g)$ with integral $1$-form $\eta$ and mapping torus bundle $S \hookrightarrow M \longrightarrow S^1$
splits as
\begin{equation*}
M\cong S^{1}\times_{\mathbb{Z}_m} S,
\end{equation*}
where $S^{1}\times S \longrightarrow M$ is a finite cover with structure group $\mathbb{Z}_m$ acting diagonally  and by translation on the first factor.
Moreover, $M$ fibres over the circle $S^{1}/(\Z_m)$ with finite structure group.
\end{theorem}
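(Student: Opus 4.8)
The plan is to combine Li's mapping torus characterization (Theorem \ref{mappingtorus}) with the compactness of the relevant isometry group, using that for a coK\"ahler manifold the Reeb field is parallel and Killing (Theorem \ref{killing}). First I would fix the fibration. Since $\eta$ is closed and, by Theorem \ref{killing}, parallel, and since $[\eta]$ is an \emph{integral} class, the periods of $\eta$ form a cyclic subgroup of $\Rr$; hence the leaves of the integrable distribution $\mathcal{D}=\ker\eta$ are compact and are exactly the fibres of a genuine submersion $\pi\colon M\to S^{1}$ with $\pi^{*}\d\theta=\eta$. This is the mapping torus bundle $S\hookrightarrow M\to S^{1}$ of the hypothesis, with $S$ a compact K\"ahler manifold (the transverse K\"ahler structure restricts to each compact leaf), and by Theorem \ref{mappingtorus}(ii) the monodromy $f$ may be taken to be a Hermitian isometry of $S$, so that $M\cong S_{f}$.

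The heart of the argument is to replace $f$ by a \emph{finite order} isometry without changing the diffeomorphism type of $S_f$. By Myers--Steenrod the full isometry group of the compact manifold $(S,g)$ is a compact Lie group, and the Hermitian isometries (those also commuting with $J$) form a closed, hence compact, subgroup $K$. Consequently the closure $H:=\overline{\langle f\rangle}\le K$ is a compact abelian monothetic Lie group; by the structure theory of such groups $H\cong\mathbb{T}^{a}\times\mathbb{Z}_{m}$, where $\mathbb{T}^{a}=H_{0}$ is the identity component and $\mathbb{Z}_{m}=\pi_{0}(H)$ is cyclic precisely because $H$ is topologically generated by the single element $f$. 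Writing $f=\theta\cdot\sigma$ accordingly, with $\theta\in H_{0}$ and $\sigma$ a generator of the $\mathbb{Z}_{m}$ factor, one has $\sigma^{m}=\id$, while $\theta$ lies in the connected group $H_{0}$ and is therefore joined to $\id$ by a path of isometries. That path furnishes an isotopy $f=\theta\sigma\simeq\sigma$ through diffeomorphisms of $S$.

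Next I would invoke the elementary fact that isotopic monodromies produce diffeomorphic mapping tori, so $M\cong S_{f}\cong S_{\sigma}$ with $\sigma^{m}=\id$. It then remains to unwind $S_{\sigma}$. Presenting $S_{\sigma}=(\Rr\times S)/\langle(t,x)\mapsto(t+1,\sigma x)\rangle$ and passing to the index-$m$ subgroup generated by $(t,x)\mapsto(t+m,\sigma^{m}x)=(t+m,x)$ identifies the $m$-fold cyclic cover of $M$, pulled back along $S^{1}\xrightarrow{\times m}S^{1}$, with $(\Rr/m\Z)\times S\cong S^{1}\times S$. The residual deck group is $\mathbb{Z}_{m}=\langle\bar c\rangle$ with $\bar c(t,x)=(t+1,\sigma x)$, so it acts diagonally: by the order-$m$ translation on the $S^{1}$ factor and by the order-$m$ isometry $\sigma$ on $S$. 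This yields $M\cong S^{1}\times_{\mathbb{Z}_{m}}S$ and displays $M$ as fibred over $S^{1}/\mathbb{Z}_{m}\cong S^{1}$ with finite structure group $\langle\sigma\rangle\cong\mathbb{Z}_{m}$, as claimed.

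I expect the main obstacle to be the second and third steps: the clean separation of $f$ into a ``rotational'' part $\theta$ that can be absorbed into an isotopy to the identity and a genuinely finite part $\sigma$, together with the verification that discarding $\theta$ reproduces exactly the diagonal $\mathbb{Z}_m$ action of the stated form. One must check that the isotopy $f\simeq\sigma$ may be taken through structure-preserving maps, so that the fibre and induced data remain K\"ahler, and that the cyclic-cover bookkeeping gives precisely the translation action on the first factor. The compactness of $K$ --- ultimately forced by the coK\"ahler condition making $\xi$ Killing --- is what makes the decomposition $H\cong\mathbb{T}^a\times\mathbb{Z}_m$ available, and is the crux of the whole proof.
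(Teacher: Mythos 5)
Your proposal is correct and follows essentially the same route as the Bazzoni--Oprea argument that the survey cites (the survey itself only states the theorem): realize $M$ as a mapping torus $S_f$ of a Hermitian isometry via Theorem~\ref{mappingtorus}, use Myers--Steenrod to see that the closure of $\langle f\rangle$ in the compact group of Hermitian isometries is a monothetic compact abelian Lie group $\mathbb{T}^a\times\mathbb{Z}_m$, isotope $f$ to its finite-order part $\sigma$ through isometries preserving $J$, and unwind $S_\sigma\cong S^1\times_{\mathbb{Z}_m}S$. No gaps of substance; the points you flag as obstacles (structure-preserving isotopy and the cyclic-cover bookkeeping) are exactly the ones the original proof handles in the same way.
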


Using Theorem \ref{mappingtorus2} Bazzoni and Oprea are able to recover, in a very simple way, some of the  Chinea, de Le\'{o}n, Marrero
results on the Betti numbers of  compact coK\"{a}hler manifolds. Moreover, they obtain a result regarding the fundamental group of a coK\"{a}hler manifold.
As we remarked, the  fundamental group of any coK\"{a}hler (actually, almost coK\"{a}hler) manifold is infinite. In fact we can say more:

\begin{theorem}[\cite{bazzoni-oprea}]\label{fundamentalgroup}
If $(M,\phi,\xi,\eta,g)$ is a compact coK\"{a}hler manifold with integral $1$-form $\eta$  and splitting $M\cong S^{1}\times_{\mathbb{Z}_m} S$, then
$\pi_{1}(M)$ has a subgroup of the form $H\times \mathbb{Z}$, where $H$ is the fundamental group of a compact K\"{a}hler manifold, such that the quotient
\begin{equation*}
\frac{\pi_{1}(M)}{H \times \mathbb{Z}}
\end{equation*}
is a finite cyclic group.
\end{theorem}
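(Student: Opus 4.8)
The plan is to deduce the statement almost entirely from the structural result of Theorem \ref{mappingtorus2}, using only elementary covering space theory. The essential observation is that Theorem \ref{mappingtorus2} provides, for $M$ compact coK\"ahler with integral $\eta$, a diffeomorphism $M\cong S^{1}\times_{\Z_m} S$ realised as the quotient of $S^{1}\times S$ by a free action of the finite cyclic group $\Z_m$ acting diagonally (by an order-$m$ rotation on the $S^{1}$ factor and by translation/some diffeomorphism on $S$). Since the $\Z_m$-action on the $S^{1}$ factor is already free, the diagonal action on $S^{1}\times S$ is free, so the natural projection $q\colon S^{1}\times S\to M$ is a regular (Galois) finite covering whose group of deck transformations is $\Z_m$.

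First I would identify the group $H$. By Li's criterion (Theorem \ref{mappingtorus}(ii)), the fact that $M$ is coK\"ahler forces the fibre $S$ of the mapping torus bundle $S\hookrightarrow M\to S^{1}$ to be a compact K\"ahler manifold. Hence $H:=\pi_1(S)$ is, by definition, the fundamental group of a compact K\"ahler manifold. Moreover, since $S^{1}\times S$ is a product of connected spaces, its fundamental group is $\pi_1(S^{1})\times\pi_1(S)=\Z\times H\cong H\times\Z$.

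The key step is then the short exact sequence attached to a regular covering. Because $q\colon S^{1}\times S\to M$ is a regular finite cover with deck group $\Z_m$, the induced homomorphism $q_*\colon\pi_1(S^{1}\times S)\to\pi_1(M)$ is injective with normal image, and the quotient of $\pi_1(M)$ by this image is isomorphic to the deck group $\Z_m$. This yields
\begin{equation*}
1\longrightarrow H\times\Z\longrightarrow\pi_1(M)\longrightarrow\Z_m\longrightarrow 1,
\end{equation*}
exhibiting $q_*(\pi_1(S^{1}\times S))\cong H\times\Z$ as a normal subgroup of $\pi_1(M)$ whose quotient is the finite cyclic group $\Z_m$, exactly as claimed.

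There is essentially no computational obstacle here: the result is a formal consequence of the splitting theorem together with the K\"ahlerness of the fibre, both of which are already available (Theorems \ref{mappingtorus2} and \ref{mappingtorus}). The only points requiring minor care are checking that the diagonal $\Z_m$-action is free---which is immediate from freeness of the rotation action on $S^{1}$---so that $q$ is genuinely a covering, and that this covering is regular, so that the relevant quotient equals the structure group $\Z_m$ and is therefore cyclic. The genuine mathematical content of the statement lies in Theorem \ref{mappingtorus2}, which I am taking as given.
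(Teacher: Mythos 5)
Your argument is correct: the survey states this theorem without proof (it is quoted from \cite{bazzoni-oprea}), and your derivation --- the free diagonal $\Z_m$-action making $S^{1}\times S\to M$ a regular finite cover as in Theorem~\ref{mappingtorus2}, the K\"ahlerness of the fibre $S$ from Theorem~\ref{mappingtorus}(ii), and the short exact sequence $1\to\pi_1(S^1\times S)\to\pi_1(M)\to\Z_m\to 1$ of a regular covering --- is precisely how Bazzoni and Oprea deduce the result from their splitting theorem. Nothing further is needed.
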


Theorem \ref{fundamentalgroup} is one of the few results concerning
the fundamental group of a coK\"{a}hler manifold. We mention another
one, due to de Le\'{o}n and Marrero. They studied in
\cite{deleon-marrero97} how some curvature conditions can affect the
topology of a compact coK\"{a}hler manifold. In particular they were
looking for an odd-dimensional analogue of the well-known fact that
a compact K\"{a}hler manifold with positive definite Ricci tensor is
simply connected. However, a compact simply connected manifold
cannot admit a coK\"{a}hler structure and, moreover, because of
\eqref{curvature3}, the Ricci tensor of a coK\"{a}hler manifold
cannot be positive definite. In particular, the Ricci tensor
$\textrm{Ric}$ of a coK\"{a}hler manifold of positive constant
$\phi$-sectional curvature is transversally positive definite, that
is for each point $x$ of the manifold $\textrm{Ric}_{x}$ is positive
definite on ${\mathcal{D}}_{x}$ (\cite{deleon-marrero97}). Now, we
have the following theorem.

\begin{theorem}[\cite{deleon-marrero97}]
    \label{pione}
Let $M$ be a compact coK\"{a}hler manifold with transversally positive definite Ricci tensor. Then the space $\Omega^{1}_{H\xi}(M)$ is trivial.
Moreover, $b_{1}=1$ and $\pi_{1}(M)$ is isomorphic to $\mathbb{Z}$. In particular, the first integral homology group of $M$ is also isomorphic to
$\mathbb{Z}$.
\end{theorem}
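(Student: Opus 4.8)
The statement has three conclusions: the vanishing of $\Omega^1_{H\xi}(M)$, the computation $b_1=1$ together with $\pi_1(M)\cong\mathbb{Z}$, and the identification of $H_1(M;\mathbb{Z})$. I would treat them in this order, since each feeds the next.

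\emph{Triviality of $\Omega^1_{H\xi}(M)$.} The plan is a Bochner argument. Since $M$ is coK\"ahler, by Theorem~\ref{killing} the Reeb field $\xi$ is parallel and $\nabla\eta=0$. Let $\alpha\in\Omega^1_{H\xi}(M)$, so $\alpha$ is harmonic and $i_\xi\alpha=0$; writing $\alpha^\sharp$ for its metric dual, the condition $i_\xi\alpha=g(\alpha^\sharp,\xi)=0$ says exactly that $\alpha^\sharp\in\mathcal{D}=\ker\eta$ at every point. The Weitzenb\"ock formula $\Delta\alpha=\nabla^*\nabla\alpha+\mathrm{Ric}(\alpha)$ together with $\Delta\alpha=0$ and integration over the compact $M$ gives
\begin{equation*}
0=\int_M|\nabla\alpha|^2\,\nu_g+\int_M\mathrm{Ric}(\alpha^\sharp,\alpha^\sharp)\,\nu_g.
\end{equation*}
Because $\alpha^\sharp$ is everywhere transversal and $\mathrm{Ric}$ is transversally positive definite, the second integrand is $\geq0$ and vanishes only where $\alpha^\sharp=0$; hence both terms vanish and $\alpha\equiv0$. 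Thus $\Omega^1_{H\xi}(M)=\{0\}$.

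\emph{The equality $b_1=1$.} By the splitting \eqref{splitting} one has $\Omega^1_H(M)=\Omega^1_{H\xi}(M)\oplus\Omega^1_{H\eta}(M)$, so the previous step reduces the computation of $b_1$ to $\dim\Omega^1_{H\eta}(M)$. If $\alpha\in\Omega^1_{H\eta}(M)$ then $\eta\wedge\alpha=0$, and since $\eta$ is nowhere zero we may write $\alpha=f\eta$ for a smooth $f$. Harmonicity is used twice: closedness gives $0=d\alpha=df\wedge\eta$ (as $d\eta=0$), so $df=\xi(f)\,\eta$; coclosedness, via $\delta(f\eta)=f\,\delta\eta-\langle df,\eta\rangle=-\xi(f)$ together with $\delta\eta=0$ (Theorem~\ref{harmonicity}), forces $\xi(f)=0$, whence $df=0$ and $f$ is constant. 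Therefore $\Omega^1_{H\eta}(M)=\mathbb{R}\eta$ and $b_1=1$. (Alternatively, since every $1$-form is effective, the Chinea--de Le\'on--Marrero formula $\dim\bar\Omega^1_{H\xi}(M)=b_1-b_0$ together with the first step gives $b_1=b_0=1$ at once.)

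\emph{The fundamental group.} The plan is to pass to the universal cover. As $M$ is compact it is complete, $\xi$ is a parallel unit field, and $\mathcal{D}=\xi^\perp$ is a parallel distribution (Theorem~\ref{killing}); the de Rham decomposition therefore splits the universal Riemannian cover isometrically as $\tilde M\cong\hat N\times\mathbb{R}$, the $\mathbb{R}$-factor integrating $\tilde\xi$ and $\hat N$ integrating $\widetilde{\mathcal{D}}$. Being a de Rham factor of the product $\tilde M$, the K\"ahler manifold $\hat N$ has Ricci tensor equal to the restriction of $\mathrm{Ric}^{\tilde M}$ to $\widetilde{\mathcal{D}}$, i.e. the lifted transversal Ricci of $M$ (that the flat $\mathbb{R}$-direction contributes nothing is consistent with $R(X,Y)\xi=0$, \eqref{cokahlercondition3}); hence $\hat N$ has positive definite Ricci. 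By Myers' theorem $\hat N$ is compact, and as a factor of the simply connected $\tilde M$ it is simply connected. Now $\Gamma=\pi_1(M)$ acts by deck transformations preserving $\tilde\xi$, hence preserving the splitting, so each $\gamma$ acts as $(\hat n,s)\mapsto(\psi_\gamma(\hat n),s+c_\gamma)$ with $\psi_\gamma$ a Hermitian isometry of $\hat N$. The translation part defines a homomorphism $\tau\colon\Gamma\to\mathbb{R}$, whose image is forced to be an infinite cyclic lattice $\cong\mathbb{Z}$ by compactness of $\hat N$ and cocompactness of the action. Its kernel $\Gamma_0$ acts freely and properly discontinuously on the compact $\hat N$ by Hermitian isometries, so $\hat N/\Gamma_0$ is a compact K\"ahler manifold with positive definite Ricci, hence simply connected by Kobayashi's theorem; since $\hat N$ is its universal cover, $\Gamma_0=1$. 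Thus $\tau$ is an isomorphism onto $\mathbb{Z}$, so $\pi_1(M)\cong\mathbb{Z}$, and abelianizing gives $H_1(M;\mathbb{Z})\cong\mathbb{Z}$. (One could instead route this through Li's mapping-torus description, Theorem~\ref{mappingtorus}, realizing $M\cong S_f$ and using the homotopy exact sequence of $S\to M\to S^1$ once $\pi_1(S)=1$ is established.)

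\emph{Main obstacle.} The first two parts are essentially formal once the Weitzenb\"ock formula and the splitting \eqref{splitting} are in hand. The real work is in the last part: identifying the intrinsic Ricci of the K\"ahler factor $\hat N$ with the transversal Ricci of $M$ (so that positivity transfers), and then controlling the deck action finely enough to conclude that $\Gamma_0$ is trivial. This is precisely where compact-K\"ahler positivity is indispensable---Myers gives compactness of $\hat N$ and Kobayashi gives simple connectivity of the quotient---since without it one would obtain only that $\pi_1(M)$ is virtually $\mathbb{Z}$.
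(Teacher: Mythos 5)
The survey does not actually prove Theorem~\ref{pione}; it only states it with a citation to \cite{deleon-marrero97} and remarks that the $\pi_1$ part was later reproved in \cite{bazzoni-oprea} via Li's mapping-torus description. So there is no in-paper proof to compare against, and your argument has to be judged on its own. It is correct, and it is the natural route using exactly the machinery the survey assembles: the Bochner--Weitzenb\"ock identity kills $\Omega^1_{H\xi}(M)$ because $i_\xi\alpha=0$ places $\alpha^\sharp$ in $\mathcal{D}$ where $\mathrm{Ric}$ is positive definite; the splitting \eqref{splitting} together with the computation $\Omega^1_{H\eta}(M)=\Rr\eta$ (or, more quickly, $\dim\bar\Omega^1_{H\xi}=b_1-b_0$ with every $1$-form effective) gives $b_1=1$; and the $\pi_1$ computation reduces to ``compact K\"ahler with positive Ricci is simply connected.'' Your last step, via the de Rham splitting $\tilde M\cong\hat N\times\Rr$ of the universal cover, is a mild repackaging of the original de Le\'on--Marrero route, which instead invokes their fibration theorem (quoted at the end of Section~\ref{topology}: $b_1=1$ and $\xi$ Killing give a fibration $M\to S^1$ with compact leaves of $\mathcal{D}$ as fibres and $\pi_1(M)/\pi_1(L)\cong\Z$, after which Kobayashi's theorem kills $\pi_1(L)$); the two are equivalent in substance. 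The only place you are terse is the discreteness of the translation homomorphism $\tau(\Gamma)\subset\Rr$: this does follow, but one should say explicitly that a dense image would produce deck transformations moving a point $(\hat n_0,0)$ into a compact set $\hat N\times[-1,1]$ infinitely often, contradicting proper discontinuity. With that sentence added, the proof is complete.
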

The $\pi_1$ part of Theorem~\ref{pione} was recently reproved
 in~\cite{bazzoni-oprea} using Li's approach.
Finally, concerning the almost coK\"{a}hler case, we mention the following result.

\begin{theorem}[\cite{deleon-marrero97}]
Let $M$ be a compact almost coK\"{a}hler manifold such that $b_{1} = 1$ and $\xi$ is Killing. Then the following statements hold:
\begin{enumerate}
\item[(i)]If $L$ is a leaf of the foliation $\mathcal{D}$, then the inclusion map induces a monomorphism
$i:\pi_{1}(L)\longrightarrow \pi_{1}(M)$ and the quotient group $\frac{\pi_{1}(M)}{\pi_{1}(L)}$ is isomorphic to $\mathbb{Z}$.
\item[(ii)] There exists a fibration $\pi:M\longrightarrow S^{1}$ such that the leaves of $\mathcal D$ are the fibres of $\pi$.
\item[(iii)] The leaves of $\mathcal D$ are compact.
\end{enumerate}
\end{theorem}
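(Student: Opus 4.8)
The plan is to realize $M$ as a fiber bundle over the circle whose fibers are precisely the leaves of $\mathcal{D}=\ker\eta$, in the spirit of Tischler's theorem \cite{tischler70}, and then to read off (i)--(iii) from the homotopy exact sequence of that bundle. First I would record the structural input from the hypotheses. Since $\xi$ is Killing, Theorem \ref{killing} gives $\nabla\eta=0$, so $\eta$ is a parallel $1$-form; it is nowhere vanishing because $\eta(\xi)=1$, and by Theorem \ref{harmonicity} it is harmonic. A harmonic form on a compact manifold that is exact must vanish, so the nonzero form $\eta$ has nonzero de~Rham class, and as $b_1=1$ the class $[\eta]$ spans the one-dimensional space $H^1(M;\Rr)$.

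The key step is to control the periods of $\eta$. Since $b_1=1$, the free part of $H_1(M;\Z)$ has rank one; writing $z_0$ for a generator and $c:=\int_{z_0}\eta$, every torsion class has vanishing period, so the period homomorphism $H_1(M;\Z)\to\Rr$, $[z]\mapsto\int_z\eta$, has image exactly the cyclic subgroup $c\Z$, with $c\neq 0$ because $[\eta]\neq 0$. I would then define $\pi\colon M\to\Rr/\Z\cong S^1$ by $\pi(x)=\frac1c\int_{x_0}^{x}\eta \pmod{\Z}$, which is well defined precisely because the periods lie in $c\Z$. Since $\d\pi=\frac1c\eta$ is nowhere zero, $\pi$ is a submersion; a submersion from a compact manifold onto a connected base is proper and surjective, hence (Ehresmann) a locally trivial fiber bundle. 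This gives the fibration in (ii), and because $\ker\d\pi=\ker\eta=\mathcal{D}$, each fiber is an integral manifold of $\mathcal{D}$, hence a union of leaves, while each connected leaf lies in a single fiber.

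What remains, and is the main point to get right, is that each fiber is \emph{connected}, so that it coincides with a single leaf $L$; this is exactly where the choice of $c$ as the generator of the period group matters. By construction $\pi^{*}[\d\theta]=\frac1c[\eta]$ takes the value $1$ on $z_0$, so $\frac1c[\eta]$ is a primitive integral class and the induced map $\pi_{*}\colon\pi_1(M)\to\pi_1(S^1)=\Z$ is surjective. For a fiber bundle with connected total space, $\pi_0(F)\cong\Z/\mathrm{im}(\pi_{*})$, whose cardinality is the index of $\mathrm{im}(\pi_{*})$ in $\Z$, namely $1$; thus the fiber $F$ is connected and equals one leaf $L$. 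Since $\pi$ is a bundle over a compact base with $M$ compact, its fibers are compact, giving (iii). Feeding the connected fiber $L$ into the homotopy exact sequence and using $\pi_2(S^1)=0$ and $\pi_0(L)=0$ yields the short exact sequence $0\to\pi_1(L)\xrightarrow{i_{*}}\pi_1(M)\xrightarrow{\pi_{*}}\Z\to 0$, which is assertion (i). The one delicate point throughout is the discreteness and primitivity of the period group, for which $b_1=1$ is essential; the Killing hypothesis enters mainly through Theorem \ref{killing} to make $\eta$ parallel, so that $\pi$ is even a Riemannian submersion and all the leaves are mutually isometric totally geodesic submanifolds.
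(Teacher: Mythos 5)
Your argument is correct and complete. Note, however, that the survey only states this theorem with a citation to \cite{deleon-marrero97} and gives no proof, so there is no in-paper argument to compare against; what you have written is a self-contained Tischler-type proof, very much in the spirit of the surrounding discussion of Theorem \ref{mappingtorus} and the mapping-torus picture of Li and of Bazzoni--Oprea. All the delicate points are handled: harmonicity of $\eta$ (Theorem \ref{harmonicity}) plus compactness gives $[\eta]\neq 0$; $b_1=1$ forces the period group to be the discrete subgroup $c\Z$ (torsion classes having zero periods), so $\tfrac1c\eta$ integrates to a genuine submersion $\pi\colon M\to S^1$ with $\ker\d\pi=\mathcal D$ rather than merely an approximating fibration; Ehresmann gives local triviality; surjectivity of $\pi_*$ onto $\pi_1(S^1)$ (from primitivity of $\tfrac1c[\eta]$) gives connectedness of the fibers, whence fiber $=$ leaf by maximality of leaves; and the homotopy exact sequence yields (i), including normality of $i_*\pi_1(L)$ as the kernel of $\pi_*$. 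One observation worth making explicit: in your proof the Killing hypothesis is never actually used for (i)--(iii), since harmonicity of $\eta$ holds on every compact almost coK\"ahler manifold by Theorem \ref{harmonicity}; you acknowledge this yourself, and it only buys the geometric refinement that $\pi$ is a Riemannian submersion with totally geodesic, mutually isometric leaves. That is not a flaw in your proof --- it simply shows the topological conclusions hold under weaker hypotheses than those stated, the Killing condition presumably being needed elsewhere in the original theorem of de Le\'on and Marrero.
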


\subsection{Coeffective cohomology}\label{coeffective}
We say that a form $\alpha \in \Omega^p\left( M \right)$ in a
cosymplectic manifold $(M,\eta,\omega)$ is \emph{coeffective} if
$\alpha\wedge \omega =0$. Denote by $\mathcal{A}^p\left( M \right)$
the space of coeffective forms of degree $p$. Then since $\omega$ is
closed $\left( \mathcal{A}^p\left( M \right), \d\right)$ is a
subcomplex of the de Rham complex. The cohomology $H^p\left(
\mathcal{A}\left( M \right) \right)$ of the complex $\left(
\mathcal{A}^p\left( M \right),\d \right)$ is called
\emph{coeffective cohomology} of $M$. It was introduced
in~\cite{chinea-deleon-marrero95}.

The coeffective cohomology for cosymplectic manifolds was studied
in~\cite{chinea-deleon-marrero95, ibanez-fernandez97,fernandez98}.
We write $\widetilde{H}^p\left( M \right)$ for the subspace of $H^p\left( M
\right)$ of cohomology classes $\beta$ such that $\beta \wedge \left[ \omega
\right] =0$.

Chinea, de Le\'on and Marrero proved in
\cite{chinea-deleon-marrero93,chinea-deleon-marrero95}, that for a compact
coK\"ahler manifold $M^{2n+1}$
\begin{align*}
    H^p\left( \mathcal{A}\left( M \right) \right)&\cong 0, & \mbox{for } p&\le n-1\\
    H^p\left( \mathcal{A}\left( M \right) \right)&\cong
    \widetilde{H}^p\left( M \right), & \mbox{for }p&\ge n+2.
\end{align*}
In~\cite{fernandez98}, the authors construct an example of a compact
cosymplectic manifold for which the above result fails, thereby providing an
example of a cosymplectic manifold $(M,\eta, \omega)$ that does not admit any
coK\"ahler metric.

Let $M$ be a (possibly non-compact) cosymplectic manifold. We can
consider an almost coK\"ahler structure on $M$. Denote by $c_p$ the dimension of $H^p\left( \mathcal{A}\left( M \right)
\right)$. It was shown in~\cite{fernandez98} that
for any $p\ge n+2$
one has
\begin{equation*}
    b_p -b_{p+2} \le c_p     \le b_p + b_{p+2}.
\end{equation*}
In particular, every $c_p$ is finite. In the case $M$ is
a  compact coK\"ahler manifold, $c_p$ always attains
the lower bound $b_p-b_{p+2}$, $p\ge
n+2$. An example of a non-compact almost  coK\"ahler manifold $M$ such
that $c_p = b_p + b_{p+2}$, $p\ge n+2$ was also presented in~\cite{fernandez98}.

It is difficult to compute the coeffective cohomology for a general cosymplectic
manifold. In~\cite{ibanez-fernandez97} Fern{\'a}ndez et al.  provided a way to reduce
this problem to a purely algebraic one in the following special case.
Let $G$ be a connected nilpotent Lie group, $\mathfrak{g}$ its Lie algebra. Suppose that
$G$ is equipped with an invariant cosymplectic structure $\left( \eta,\omega\right)$ and
$\Gamma$ is a discrete subgroup of $G$ such that the space
of right cosets
$\left.\raisebox{-0.3ex}{$\Gamma$}\middle\backslash \raisebox{0.2ex}{$G$}\right. $
is compact.
We can consider the
Chevalley-Eilenberg complex $\left( \Omega^p\left( \mathfrak{g^*} \right),\d
\right)$ for $\mathfrak{g^*}$ as a subcomplex of left invariant forms in $\left(
\Omega^p\left( G \right), \d\right)$.
Denote by $\mathcal{A}^p\left( \mathfrak{g^*} \right)$
the intersection $\Omega^p\left( \mathfrak{g^*} \right)\cap \mathcal{A}^p\left(
G \right)$. Then $\left( \mathcal{A}^p\left( \mathfrak{g^*} \right),\d \right)$ is a
subcomplex of $\left( \Omega^p\left( \mathfrak{g^*} \right),\d \right)$.
Then
$$
H^p\left( \mathcal{A}\left(\mathfrak{g}^*  \right) \right)  \cong H^p\left(
\mathcal{A}\left( \left.\raisebox{-0.3ex}{$\Gamma$}\middle\backslash \raisebox{0.2ex}{$G$}\right. \right)
\right).
$$

\subsection{Rational homotopy type}
For a general and detailed treatment of rational homotopy theory the reader is referred to~\cite{felix08}.

Let $\mathbb{K}$ be a field.
A \emph{commutative differential graded algebra } $\left( A,\d \right)$
(CDGA short) over
$\mathbb{K}$
is a graded algebra $A = \bigoplus_{k\ge 0} A_k$ over $\mathbb{K}$ such that for all $x\in A_k$
and $y\in A_l$ we have
$$
x y = \left( -1 \right)^{kl} yx
$$
and $\d\colon A_k \to A_{k+1}$ is a differential, i.e. $d^2 =0$.
An example of real commutative differential graded algebra is given by the de Rham
complex $\left( \Omega^*\left( M \right), \d \right)$ of differential forms on a
smooth manifold $M$, with the multiplication given by the wedge product.

 A CDGA $\left( A,\d \right)$ is
 \emph{directly quasi-isomorphic} to  a CDGA $\left( B,\d
\right)$ if there is a
homomorphism of graded algebras $f\colon A\to B$ such that
$$
H^k\left( f \right)\colon H^k\left( A \right) \to H^k\left( B \right)
$$
 are isomorphisms for all $k\ge 0$.
Two CDGAs $\left( A,\d \right)$ and $\left( B,\d
\right)$ are \emph{quasi-isomorphic} if there is a chain of CDGAs $A=A_0$, $A_1$, \dots, $A_r = B$, such that either $A_j$ is directly
quasi-isomorphic to $A_{j+1}$ or
$A_{j+1}$ is directly quasi-isomorphic to  $A_j$.

Note that from the theory of Sullivan of minimal models it follows that the
chain in the above definition can be always chosen to have $r\le 2$.

For every CDGA $\left( A,\d \right)$ the cohomology algebra $H\left( A
\right)$ can be considered as a CDGA with the zero differential. A CDGA
$\left( A,\d \right)$ is called \emph{formal} if $\left( A,\d \right)$ and
$\left( H\left( A \right), 0 \right)$ are quasi-isomorphic.
A manifold $M$ is called \emph{formal} if $\left( \Omega^*\left( M
\right),\d  \right)$ is a formal CDGA, that is if the cohomology ring of
$M$ is quasi-isomorphic to the de Rham algebra of $M$.


In~\cite{sullivan77}
Sullivan constructed for every connected topological space $X$
a CDGA $\left(A\left( X \right), \d\right)$  over $\mathbb{Q}$
such that if $X$ is a manifold then $\left(A\left( X
\right)\otimes_{\mathbb{Q}} \mathbb{R},\d \right)$ is quasi-isomorphic to $\left(
\Omega^*\left( X \right), \d
\right)$.
He also proved that $\left( A\left( X \right),\d \right)$
is formal if and only if $\left( \Omega^*\left( X \right), \d \right)$ is
formal.

We say that two topological spaces $X$ and $Y$ have the same \emph{rational
homotopy type} if there is a finite chain of maps
$$
X \to Y_1 \leftarrow Y_2 \to Y_3 \leftarrow \dots \to Y
$$
such that the induced maps in rational cohomology are isomorphisms.

Let $G$ be a group and $H_1$, $H_2$ two subgroups of $G$. Then $\left[H_1, H_2
\right]$ is the set of elements $h_1 h_2 h_1^{-1}h_2^{-1}$ with $h_1\in H_1$,
$h_2\in H_2$. The
\emph{lower central series} of $G$ is defined to be the chain of subgroups  in
$G$
$$
G \supset G_1 \supset G_2 \supset G_3\supset \dots
$$
where $G_1:=\left[ G,G \right]$ and $G_k := \left[ G_k, G \right]$.
The group $G$ is called \emph{nilpotent} if there is an integer $n$ such that
$G_n$ is trivial. For example, abelian groups are nilpotent since
$\left[ G,G \right]$ is trivial in this case. On the other hand,
non-commutative simple groups are never nilpotent.

Denote by $\varepsilon \colon \mathbb{Z}G\to \mathbb{Z}$ the \emph{augmentation}
homomorphism, that is $\varepsilon \left( \sum_{g\in G} c_g g \right) =
\sum_{g\in G} c_g$. Define the augmentation ideal $I(G)$ of $\mathbb{Z}G$ to be the
kernel of $\varepsilon$. A left $G$-module $N$ is called \emph{nilpotent} if
there is $n\in \mathbb{N}$ such that $I(G)^n N =0$.
This notion can also be explained differently. Define the submodules $N_k\subset
N$ by
\begin{align*}
    N_0 &:= N, & N_k := \left\langle x - g x \,\middle|\, x \in N_{k-1},\
    g\in G \right\rangle.
\end{align*}
Then $N$ is nilpotent if and only if there is $n\in \mathbb{N}$ such that
$N_n = 0$.

Recall, that for every topological space $X$ the fundamental group $\pi_1(X)$
acts on the higher homotopy groups $\pi_k\left( X \right)$, $k\ge 2$. Thus we
can consider $\pi_k\left( X \right)$ as $\pi_1\left( X \right)$-module for every
$k\ge 2$. The topological space $X$ is called a \emph{nilpotent topological
space} if $\pi_1\left(
X \right)$ is a nilpotent group and the modules $\pi_k\left( X \right)$ are
nilpotent $\pi_1\left( X \right)$-modules.


The following theorem is due to Sullivan.
\begin{theorem}
    Two nilpotent topological spaces $X$ and $Y$ with finite Betti numbers have the same rational homotopy type if and only if
    $\left( A\left( X \right), \d \right)$ and $\left( A\left( Y
    \right), \d \right)$ are quasi-isomorphic CDGAs over $\mathbb{Q}$.
\end{theorem}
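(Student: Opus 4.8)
The plan is to prove the two implications separately, the forward (``only if'') direction being essentially formal and the converse carrying all the real content. The crucial external input is Sullivan's construction of the functorial CDGA $\left( A\left( X \right),\d \right)$ of piecewise-polynomial rational forms, together with the rational de Rham theorem furnishing a natural isomorphism $H^*\left( A\left( X \right) \right)\cong H^*\left( X;\mathbb{Q} \right)$. First I would dispose of the forward direction. If $X$ and $Y$ have the same rational homotopy type, there is a finite zigzag $X\to Y_1\leftarrow Y_2\to\cdots\to Y$ of maps inducing isomorphisms on rational cohomology. Applying the contravariant functor $A\left( - \right)$ and invoking the de Rham isomorphism, each such map induces an isomorphism on the cohomology of the associated CDGAs, and is therefore a direct quasi-isomorphism in one direction or the other. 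Concatenating the resulting maps produces a chain of CDGAs exhibiting $\left( A\left( X \right),\d \right)$ and $\left( A\left( Y \right),\d \right)$ as quasi-isomorphic. Note that this half uses neither nilpotence nor the finiteness of the Betti numbers.

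For the converse I would pass to \emph{minimal Sullivan models}. The theory of minimal models provides, for each CDGA of suitable connectivity, a minimal CDGA $\mathcal{M}$ (free as a graded-commutative algebra, with decomposable differential) together with a quasi-isomorphism $\mathcal{M}\to A(X)$, and this $\mathcal{M}$ is unique up to isomorphism; two CDGAs are quasi-isomorphic precisely when their minimal models are isomorphic. Thus the hypothesis that $A(X)$ and $A(Y)$ are quasi-isomorphic yields a single minimal model $\mathcal{M}\cong\mathcal{M}_X\cong\mathcal{M}_Y$. The remaining task is to realize $\mathcal{M}$ geometrically: Sullivan's spatial realization functor $\langle - \rangle$ sends a minimal model back to a space, and for a nilpotent space $X$ with finite Betti numbers the natural map $X\to\langle\mathcal{M}_X\rangle$ is a \emph{rationalization} of $X$. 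Consequently $X_{\mathbb{Q}}\simeq\langle\mathcal{M}\rangle\simeq Y_{\mathbb{Q}}$, and since for nilpotent spaces the rational homotopy type is detected by the rationalization, $X$ and $Y$ have the same rational homotopy type.

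The hard part is precisely the claim that $X\to\langle\mathcal{M}_X\rangle$ is a rationalization, which is where the hypotheses become indispensable. One builds, by induction, both the Postnikov tower of the rationalization of $X$ and the minimal model $\mathcal{M}_X$, matching the stagewise adjunction of an Eilenberg--MacLane fibre $K(\pi_k(X)\otimes\mathbb{Q},k)$ on the topological side with the adjunction of free generators in degree $k$ on the algebraic side, the $k$-invariants corresponding to the decomposable part of the differential. Nilpotence of $\pi_1(X)$ and of its action on the higher $\pi_k(X)$ is exactly the condition guaranteeing that the Postnikov tower is a succession of \emph{principal} fibrations, so that each stage is governed by a single cohomology class and the induction can proceed; the finiteness of the Betti numbers ensures that each stage involves finitely generated data and that the cohomology computed from $\mathcal{M}_X$ agrees degreewise with $H^*(X;\mathbb{Q})$. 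Without nilpotence the fundamental group and its action on the higher homotopy cannot be recovered from a commutative model and the correspondence collapses. Verifying the compatibility of the two inductions --- obstruction theory on the homotopy side and adjunction of generators on the algebraic side --- is the technical core, and I would organize it as the statement that the realization--forms adjunction restricts to an equivalence between the homotopy category of rational nilpotent spaces of finite type and that of minimal Sullivan algebras of finite type.
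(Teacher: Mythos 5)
The paper does not prove this statement at all: it is quoted as a foundational theorem of Sullivan (localization/minimal model theory) and used as a black box, so there is no internal argument to compare against. Judged on its own terms, your outline is the standard and correct route. The forward direction is handled properly: functoriality of $A(-)$ plus the rational de Rham isomorphism turns a zigzag of rational equivalences into a chain of direct quasi-isomorphisms, and you are right that neither nilpotence nor finiteness is used there. For the converse you correctly reduce to the uniqueness of the minimal Sullivan model and to the one substantive claim, namely that for a nilpotent space of finite type the unit $X\to\langle\mathcal{M}_X\rangle$ of the realization--forms adjunction is a rationalization.

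Two caveats. First, the sentence asserting that nilpotence makes the Postnikov tower ``a succession of principal fibrations'' is slightly off: the individual stages $K(\pi_k,k)$ need not be principal when $\pi_1$ acts nontrivially; what nilpotence of the action buys is that each stage admits a finite \emph{principal refinement} (filter $\pi_k$ by the submodules $I(\pi_1)^j\pi_k$ so that each successive quotient has trivial action), and it is this refined tower that is matched, one elementary extension of the minimal model at a time, with the adjunction of free generators. Second, and more seriously as a matter of logical completeness, your ``technical core'' --- the equivalence between the homotopy category of rational nilpotent spaces of finite type and that of minimal Sullivan algebras of finite type --- \emph{is} the theorem; organizing the proof as that statement names the destination rather than travelling there. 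This is entirely reasonable for a result of this scale (it occupies a chapter of F\'elix--Halperin--Thomas), but one should be explicit that the proposal is a correct architectural outline with the load-bearing induction deferred, not a self-contained proof.
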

Thus in the case of a nilpotent topological space  $X$ the algebra
$\left( A\left( X \right), \d \right)$ contains full information about the  rational
homotopy type  of $X$. In particular, the groups $\pi_k\left( X \right)\otimes
\Qq$ can
be computed from any CDGA quasi-isomorphic $\left( A\left( X \right), \d \right)$.
If $M$ is a nilpotent manifold then the groups
$\pi_k\left( M \right)\otimes \Rr$ can be computed from any CDGA
quasi-isomorphic to $\left( A\left( M \right)\otimes \Rr, d \right)$ or to
$\left( \Omega^*(M),d \right)$.
In particular, if $M$ is a formal manifold, then the groups $\pi_k\left( M \right)\otimes \Rr$ can be
computed in purely formal way from $H^*\left( M \right)$.

In~\cite{deligne75}, Deligne et al. proved that every compact K\"ahler manifold is
formal. Using this, Chinea, de Le\'on and Marrero showed
in~\cite{chinea-deleon-marrero93} that
every compact coK\"ahler manifold is formal as well. This result is most
useful if the fundamental group of the manifold in question is nilpotent. Note
that in contrast to the case of K\"ahler manifolds the fundamental group of
a coK\"ahler manifold cannot be trivial.

In~\cite{bazzoni12} Bazzoni, Fern\'andez and Mu\~noz presented a way to
construct non-formal compact cosymplectic manifolds. The idea is as follows. Let
$S$ be a symplectic manifold and $f \colon S\to S$ a symplectomorphism
such that for some $p>0$ the eigenvalue $\lambda =1$ of
$$
H^p\left( f \right) \colon H^p\left( S \right) \to H^p\left( S \right)
$$
has multiplicity two. Then by \cite[Theorem~13]{bazzoni12} and by Theorem~\ref{mappingtorus}  the mapping torus $S_f$ is a  non-formal cosymplectic manifold. Using
this result Bazzoni, Fern\'andez and Mu\~noz show that for every pair $\left( m=2n+1, b \right)$ such that either $m=3$ and $b\ge 2$ or $m\ge 5$ and $b\ge
1$, there is a non-formal compact cosymplectic manifold $M$  of dimension $m$ such that
the first Betti number of $M$ is $b$.

\subsection{Cosymplectic $3$-structures}
When a smooth manifold $M$ is endowed with three distinct almost contact structures $(\phi_1,\xi_1,\eta_1)$, $(\phi_2,\xi_2,\eta_2)$,
$(\phi_3,\xi_3,\eta_3)$ related by the following identities
\begin{equation} \label{quaternionic}
\begin{split}
\phi_\gamma=\phi_{\alpha}\phi_{\beta}-\eta_{\beta}\otimes\xi_{\alpha}=-\phi_{\beta}\phi_{\alpha}+\eta_{\alpha}\otimes\xi_{\beta},\quad\\
\xi_{\gamma}=\phi_{\alpha}\xi_{\beta}=-\phi_{\beta}\xi_{\alpha}, \ \ \eta_{\gamma}= \eta_{\alpha}\circ\phi_{\beta}=-\eta_{\beta}\circ\phi_{\alpha},
\end{split}
\end{equation}
for any even permutation $(\alpha,\beta,\gamma)$ of the set
$\left\{1,2,3\right\}$, we say that $M$ is endowed with an
\emph{almost contact 3-structure}. In this case $M$ has dimension of
the form $4n+3$. This notion was introduced independently by Kuo
(\cite{kuo}) and Udriste (\cite{udriste}) at the end of the 60s and
then it was studied by several authors, especially from the Japanese
school of Riemannian geometry. In particular, Kuo proved that one
can always find a Riemannian metric $g$ which is compatible with
each almost contact structure. If we fix one, we speak of an
\emph{almost contact metric 3-structure}.

Any  smooth manifold endowed with an almost contact metric $3$-structure  carries two orthogonal distributions, not necessarily integrable: the
\emph{Reeb distribution} ${\mathcal V}:=\textrm{span}\{\xi_1,\xi_2,\xi_3\}$ and the \emph{horizontal distribution} ${\mathcal
H}:=\ker(\eta_1)\cap\ker(\eta_2)\cap\ker(\eta_3)={\mathcal V}^{\perp}$.

The most famous class of almost contact metric $3$-structures is given by  those for which each structure is Sasakian. They are called \emph{$3$-Sasakian
structures} (or  Sasakian $3$-structures). For more details on $3$-Sasakian manifolds see \cite{galickibook}.

When each structure $(\phi_{\alpha},\xi_{\alpha},\eta_{\alpha},g)$ is (almost) coK\"{a}hler, $M$ is called an \emph{(almost) $3$-cosymplectic manifold}.
However, it has been proved recently (see \cite[Theorem 4.13]{pastore})  that these two notions are the same, i.e. every almost $3$-cosymplectic manifold
is normal and thus $3$-cosymplectic.

\begin{remark}
Actually, the name \emph{$3$-cosymplectic} is not coherent with the
terminology that we used in the rest of this survey. A more
appropriate name should have been ``3-coK\"{a}hler''. However in
literature the term $3$-cosymplectic is well-established and, so
far, we did not find any other name for these structures (except
``hypercosymplectic $3$-structure'' in \cite{song-kim-tripathi2003}
and \cite{kim-choi-tripathi2006}). Thus in this section we shall
conform to the terminology currently used in literature.
\end{remark}

\begin{remark}
We notice that just as in the case of a single structure, the $3$-Sasakian and the $3$-cosymplectic manifolds represent the two extremal cases of the
larger class of 3-quasi-Sasakian manifolds (cf. \cite{cappellettidenicoladileo1}, \cite{cappellettidenicoladileo2}).
\end{remark}

In any $3$-cosymplectic manifold the Reeb vector fields $\xi_1$, $\xi_2$, $\xi_3$ are all $\nabla$-parallel. In particular, for any
$\alpha,\beta\in\left\{1,2,3\right\}$, we have $[\xi_\alpha,\xi_\beta]=\nabla_{\xi_\alpha}\xi_\beta-\nabla_{\xi_\alpha}\xi_\beta=0$. Thus the Reeb
distribution is integrable and defines a $3$-dimensional foliation ${\mathcal F}_{3}$ of $M$. As it was proved in \cite{cappellettidenicola}, ${\mathcal
F}_{3}$ is a Riemannian and transversely hyper-K\"{a}hler foliation with totally geodesic leaves. Using this, one proves that every 3-cosymplectic manifold
is Ricci-flat (\cite[Corollary 3.10]{cappellettidenicola}). Moreover, since $\d\eta_\alpha=0$, also the horizontal distribution $\mathcal H$ is integrable
and hence defines a Riemannian, totally geodesic foliation complementary to ${\mathcal F}_3$.

On the other hand, Ishihara proved that the Reeb distribution of a $3$-Sasakian manifold is a transversely quaternionic-K\"{a}hler foliation
(\cite{ishihara}).
\medskip
\begin{table}[ht]\centering\caption{\textsl{Transverse geometry}}
    \label{table}
\begin{tabular}{lcc|cc}
&\multicolumn{2}{c}{$\dim(M)=2n+1$}&\multicolumn{2}{c}{$\dim(M)=4n+3$}\\ \hline\hline
&Sasakian&coK\"{a}hler&$3$-Sasakian&$3$-cosymplectic\\
&$\downarrow$&$\downarrow$&$\downarrow$&$\downarrow$\\
&K\"{a}hler&K\"{a}hler&quaternionic-K\"{a}hler&hyper-K\"{a}hler\\
\end{tabular}
\end{table}
\medskip
The study of transverse geometry allows to understand the difference
between Sasakian and coK\"{a}hler structures, as illustrated in
Table~\ref{table}. In fact, while both
Sasakian and coK\"{a}hler manifolds are considered as a natural
odd-dimensional counterpart of K\"{a}hler manifolds, at the level of
$3$-structures their role differs: $3$-cosymplectic manifolds should
be considered as the most natural odd-dimensional analogue of
hyper-K\"{a}hler manifolds, whereas $3$-Sasakian structures
corresponds to quaternionic-K\"{a}hler structures.

\medskip

The standard example of a compact 3-cosymplectic manifold is given by the torus $\mathbb{T}^{4n+3}$ with the following structure (cf.
\cite[p.561]{martincabrera}). Let $\left\{\theta_{1},\ldots,\theta_{4n+3}\right\}$ be a basis of $1$-forms such that each $\theta_{i}$ is integral and
closed. Let us define a Riemannian metric $g$ on $\mathbb{T}^{4n+3}$ by
\begin{equation*}
g:=\sum_{i=1}^{4n+3}\theta_{i}\otimes\theta_{i}.
\end{equation*}
For each $\alpha\in\left\{1,2,3\right\}$  we define a tensor field $\phi_\alpha$ of type $(1,1)$ by
\begin{align*}
\phi_{\alpha}=\sum_{i=1}^{n}& \left( E_{\alpha n+i}\otimes\theta_{i}-E_{i}\otimes\theta_{\alpha+i}+E_{\gamma n+i}\otimes\theta_{\beta n +i}-E_{\beta n
+i}\otimes\theta_{\gamma n+i}\right) +E_{4n+\gamma}\otimes\theta_{4n+\beta}-E_{4n+\beta}\otimes\theta_{4n+\gamma}
\end{align*}
where $\left\{E_{1},\ldots,E_{4n+3}\right\}$ is the dual (orthonormal) basis of $\left\{\theta_{1},\ldots,\theta_{4n+3}\right\}$ and
$\left(\alpha,\beta,\gamma\right)$ is a cyclic permutation of $\left\{1,2,3\right\}$. Setting, for each $\alpha\in\left\{1,2,3\right\}$,
$\xi_{\alpha}:=E_{4n+\alpha}$ and $\eta_{\alpha}:=\theta_{4n+\alpha}$, one can easily check that the torus $\mathbb{T}^{4n+3}$ endowed with the structure
$(\phi_{\alpha},\xi_{\alpha},\eta_{\alpha},g)$ is 3-cosymplectic.

For the non-compact case, the standard example of 3-cosymplectic manifold is given by $\mathbb{R}^{4n+3}$ with the structure described in \cite[Theorem
4.4]{cappellettidenicola}.

Both the  examples  above are the global product of a
hyper-K\"{a}hler manifold with a $3$-dimensional abelian Lie group.
Such a property always holds locally (see \cite[Proposition
7.1]{cappellettidenicolayudin}). Thus it makes sense to ask whether there
are examples of $3$-cosymplectic manifolds which are not the global
product of a hyper-K\"{a}hler manifold with a $3$-dimensional
abelian Lie group. As shown in \cite{cappellettidenicolayudin}, the
answer to this question is affirmative and now we illustrate a
procedure for constructing such examples.  Let $(N,J_{\alpha},G)$ be
a compact hyper-K\"{a}hler manifold of dimension $4n$ and $f$ a
hyper-K\"{a}hler isometry on it. We define an action $\varphi$ of
$\mathbb{Z}^3$ on $N\times\mathbb{R}^{3}$ by
\begin{equation*}
\varphi((k_1,k_2,k_3),(x,t_1,t_2,t_3)) = (f^{k_1+k_2+k_3}(x), t_{1}+k_{1},t_{2}+k_{2},t_{3}+k_{3}).
\end{equation*}
We then define on the orbit space
$M_{f}:=(N\times\mathbb{R}^{3})/\mathbb{Z}^{3}$ a $3$-cosymplectic
structure in the following way. Let us consider the vector fields
$\xi_\alpha:=\frac{\partial}{\partial t_\alpha}$ and the $1$-forms
$\eta_\alpha:=d t_\alpha$ on $N\times \mathbb{R}^{3}$. Next we
define, for each $\alpha\in\left\{1,2,3\right\}$, a tensor field
$\phi_\alpha$ on $N\times\mathbb{R}^{3}$ by putting
$\phi_{\alpha}X:=J_{\alpha}X$ for any $X\in\Gamma(TN)$ and
$\phi_{\alpha}\xi_{\alpha}:=0$,
$\phi_{\alpha}\xi_{\beta}:=\epsilon_{\alpha\beta\gamma}\xi_{\gamma}$,
where $\epsilon_{\alpha\beta\gamma}$ denotes the sign of the
permutation $(\alpha,\beta,\gamma)$ of $\left\{1,2,3\right\}$. Then
$(\phi_\alpha,\xi_\alpha,\eta_\alpha,g)$,  $g$ denoting the product
metric, defines a $3$-cosymplectic structure on
$N\times\mathbb{R}^{3}$. Being invariant under the action $\varphi$,
$(\phi_\alpha,\xi_\alpha,\eta_\alpha,g)$ descends to a
$3$-cosymplectic structure on $M_{f}$, which we will denote by the
same symbol. By using this general procedure in
\cite{cappellettidenicolayudin} a   non-trivial example of compact
$3$-cosymplectic manifold is constructed. In fact we consider the
hyper-K\"{a}hler manifold $\mathbb{T}^{4}=\mathbb{H}/\mathbb{Z}^4$
and the hyper-K\"{a}hler isometry $f$ induced by the multiplication by
$\mathbf{i}$ (the same can be made by using the multiplication by
$\mathbf{j}$ and $\mathbf{k}$). Then the $7$-dimensional manifold
$M_{f}:=(\mathbb{T}^{4}\times\mathbb{R}^{3})/\mathbb{Z}^{3}$,
endowed with the geometric structure described above, is a compact
$3$-cosymplectic manifold which is not the global product of a
compact $4$-dimensional hyper-K\"{a}hler manifold $K$ with the flat
torus. Indeed one has only two possibilities for a compact
$4$-dimensional hyper-K\"{a}hler manifold: either
$K\cong\mathbb{T}^{4}$ or it is a complex K3-surface. In the first
case $b_{2}(K\times\mathbb{T}^{3})=21$, in the second
$b_{2}(K\times\mathbb{T}^{3})=25$. However, in
\cite{cappellettidenicolayudin} it was proved that
$b_{2}(M^{7}_{f})<21$.

\medskip

We close the section by collecting some results on the topology of compact $3$-cosymplectic manifolds. Every $3$-cosymplectic manifold is, in particular,
coK\"{a}hler, so that all the results in $\S$ \ref{topology} hold. However the rigidity given by the relations \eqref{quaternionic} connecting the almost
contact structures forces the topology of a $3$-cosymplectic manifold to satisfy the following more restrictive properties.

Let $b_{k}^{h}$ denote the $k$-th basic Betti number with respect to the foliation ${\mathcal F}_3$, that is the dimension of the vector space
\begin{equation*}
\Omega^{k}_{HB}(M)=\left\{\omega\in\Omega^{k}_{H}(M) | i_{\xi_\alpha}\omega=0 \ \hbox{ for any }\alpha=1,2,3 \right\}.
\end{equation*}

\begin{theorem}[\cite{cappellettidenicolayudin}]
Let $M$ be a compact $3$-cosymplectic manifold of dimension $4n+3$. Then
the $k$-th Betti numbers of
$M$ are given by
\begin{equation}\label{bettiformula}
        b_{k} =b_{k}^{h} + 3b_{k-1}^{h} + 3b_{k-2}^{h} + b_{k-3}^{h},
    \end{equation}
where we assume that $b_p^h=0$ for $p<0$.
\end{theorem}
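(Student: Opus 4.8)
The plan is to decompose the space of harmonic $k$-forms according to the three parallel $1$-forms $\eta_1,\eta_2,\eta_3$ and then simply count dimensions. Recall from the discussion preceding the statement that on a $3$-cosymplectic manifold the Reeb vector fields $\xi_1,\xi_2,\xi_3$ are all $\nabla$-parallel, and that $\eta_1,\eta_2,\eta_3$ form a pointwise orthonormal coframe of $\mathcal V^\ast$ with $\eta_\alpha(\xi_\beta)=\delta_{\alpha\beta}$. Being parallel, each $\eta_\alpha$ is closed and coclosed, hence harmonic, and each $\xi_\alpha$ is Killing. I will use two consequences. First, wedging by $\eta_\alpha$ preserves harmonicity: this is exactly the property recalled in Section~\ref{topology}, valid whenever $\xi_\alpha$ is Killing and $\eta_\alpha$ is harmonic. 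Second, since the Hodge star $\star$ preserves harmonicity and the contraction satisfies $i_{\xi_\alpha}=\pm\,\star\,(\eta_\alpha\wedge\star\,\cdot\,)$, contraction by $\xi_\alpha$ also preserves harmonicity.

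First I would set up the pointwise algebraic splitting. For $I=\{i_1<\cdots<i_s\}\subseteq\{1,2,3\}$ write $\eta_I:=\eta_{i_1}\wedge\cdots\wedge\eta_{i_s}$. Since $\eta_1,\eta_2,\eta_3$ span $\mathcal V^\ast$ orthonormally, every form decomposes uniquely as $\omega=\sum_{I\subseteq\{1,2,3\}}\eta_I\wedge\omega_I$, where each $\omega_I$ is horizontal, i.e. $i_{\xi_\alpha}\omega_I=0$ for all $\alpha$. The horizontal projector onto the $I=\emptyset$ component is $P:=\prod_{\alpha}\left(1-(\eta_\alpha\wedge\,\cdot\,)\circ i_{\xi_\alpha}\right)$, and each $\omega_I$ is recovered, up to sign, as $\omega_I=\pm\,P\,(i_{\xi_{i_s}}\cdots i_{\xi_{i_1}}\omega)$: the iterated contractions isolate $\pm\omega_I$ together with terms still carrying $\eta$-factors indexed by $J\supsetneq I$, and these residual terms are annihilated by $P$. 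Since every operator occurring in this formula preserves harmonicity, if $\omega$ is harmonic then each $\omega_I$ is a harmonic horizontal form, that is $\omega_I\in\Omega^{k-s}_{HB}(M)$.

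Next I would assemble this into a direct sum of spaces of harmonic forms. Conversely, given $\omega_I\in\Omega^{k-|I|}_{HB}(M)$, the form $\eta_I\wedge\omega_I$ is harmonic because $\eta_I$ is parallel and wedging by parallel forms preserves harmonicity, and the assignment $\omega_I\mapsto\eta_I\wedge\omega_I$ is injective since $\eta_I$ is nowhere vanishing on horizontal forms. As the images attached to distinct $I$ carry distinct ``$\eta$-types'', the sum is direct, yielding
\begin{equation*}
\Omega^{k}_{H}(M)=\bigoplus_{I\subseteq\{1,2,3\}}\eta_I\wedge\Omega^{k-|I|}_{HB}(M).
\end{equation*}
Taking dimensions and grouping the subsets by their size $s=|I|$, of which there are $\binom{3}{s}$, gives $b_k=\sum_{s=0}^{3}\binom{3}{s}\,b^h_{k-s}=b^h_k+3b^h_{k-1}+3b^h_{k-2}+b^h_{k-3}$, with the convention $b^h_p=0$ for $p<0$, which is precisely \eqref{bettiformula}.

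The main obstacle is the harmonicity bookkeeping in the second paragraph: one must check carefully that contraction by $\xi_\alpha$ and the horizontal projector $P$ send harmonic forms to harmonic forms. This is exactly where the parallelism of the $\xi_\alpha$ (special to the $3$-cosymplectic setting) is indispensable, since for a general foliation the horizontal components $\omega_I$ of a harmonic form need not be harmonic and the clean dimension count would fail. Everything else is a formal consequence of Hodge theory together with the fact that $\eta_1,\eta_2,\eta_3$ constitute a parallel orthonormal coframe of $\mathcal V^\ast$.
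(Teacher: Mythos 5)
Your argument is correct and is essentially the proof of the cited source: since the survey only states this theorem with a reference to \cite{cappellettidenicolayudin}, the intended proof is precisely the decomposition $\Omega^{k}_{H}(M)=\bigoplus_{I\subseteq\{1,2,3\}}\eta_I\wedge\Omega^{k-|I|}_{HB}(M)$ obtained from the fact that the parallel (hence harmonic, with Killing duals) forms $\eta_\alpha$ make $e_{\eta_\alpha}$ and $i_{\xi_\alpha}=\pm\star e_{\eta_\alpha}\star$ commute with the Laplacian. Your harmonicity bookkeeping via the horizontal projector and iterated contractions is sound, and the binomial count then gives \eqref{bettiformula}.
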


By using \eqref{bettiformula} one proves the following theorem.

\begin{theorem}[\cite{cappellettidenicolayudin}]
Let $M$ be a compact $3$-cosymplectic manifold of dimension $4n+3$. Then the following statements hold.
\begin{enumerate}
\item[(i)] Any odd-dimensional basic Betti number $b_{2k+1}^{h}$ is divisible by $4$.
\item[(ii)] For any integer $k$, $b_{k-1} + b_{k}$ is divisible by $4$.
\end{enumerate}
\end{theorem}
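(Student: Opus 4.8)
The plan is to derive both assertions from the transverse hyper-K\"ahler geometry of the foliation $\mathcal{F}_3$ together with the Betti-number formula \eqref{bettiformula}. For (i) I would exhibit a quaternionic vector-space structure on the space of basic harmonic forms of odd degree; for (ii) I would feed the divisibility coming from (i) into \eqref{bettiformula} after simplifying the relevant binomial coefficients modulo $4$.

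First I would treat (i). Since $M$ is $3$-cosymplectic, each structure $(\phi_\alpha,\xi_\alpha,\eta_\alpha,g)$ is coK\"ahler, so by Theorem~\ref{cokahlercondition1} the tensors $\phi_\alpha$ are parallel, and the $\xi_\alpha$ are parallel as recalled above. Hence each $\phi_\alpha$ commutes with the Laplace--Beltrami operator and, using the relations \eqref{quaternionic}, preserves the horizontal distribution $\mathcal H$ and the conditions $i_{\xi_\beta}\alpha=0$; therefore $\phi_\alpha$ acts on the space $\Omega^{p}_{HB}(M)$ of basic harmonic $p$-forms. On horizontal forms the restriction $\phi_\alpha|_{\mathcal H}$ is a complex structure, and I let $J_\alpha$ be its action on $\Lambda^{p}\mathcal H^{*}$ via the contragredient representation $A\mapsto (A^{-1})^{*}$, which is an algebra homomorphism. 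From \eqref{quaternionic} restricted to $\mathcal H$ (where $\phi_\gamma=\phi_\alpha\phi_\beta$ cyclically) one gets $J_1J_2=J_3$, $J_2J_3=J_1$, $J_3J_1=J_2$, while $J_\alpha^{2}$ acts as $(-1)^{p}$ on $\Lambda^{p}$. For odd $p=2k+1$ this gives $J_\alpha^{2}=-\mathrm{id}$, so $J_1,J_2,J_3$ turn the real vector space $\Omega^{2k+1}_{HB}(M)$ into a module over the quaternions, whence its real dimension $b_{2k+1}^{h}$ is divisible by $4$.

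For (ii) I would pass to generating functions. Writing $P_B(t)=\sum_{j}b_j^{h}t^{j}$, formula \eqref{bettiformula} reads $\sum_k b_k t^{k}=(1+t)^{3}P_B(t)$, and therefore
\begin{equation*}
\sum_k\left(b_{k-1}+b_k\right)t^{k}=(1+t)\,(1+t)^{3}P_B(t)=(1+t)^{4}P_B(t).
\end{equation*}
Since $(1+t)^{4}=1+4t+6t^{2}+4t^{3}+t^{4}\equiv 1+2t^{2}+t^{4}=(1+t^{2})^{2}\pmod 4$, comparing coefficients yields
\begin{equation*}
b_{k-1}+b_k\equiv b_k^{h}+2\,b_{k-2}^{h}+b_{k-4}^{h}\pmod 4 .
\end{equation*}
When $k$ is odd, the indices $k,k-2,k-4$ are all odd, so by part (i) each of $b_k^{h},b_{k-2}^{h},b_{k-4}^{h}$ is divisible by $4$ and the right-hand side vanishes modulo $4$; this settles the sums over consecutive even--odd pairs of degrees.

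The main obstacle is the remaining case of even $k$, where $b_k^{h}+2b_{k-2}^{h}+b_{k-4}^{h}$ involves only \emph{even} basic Betti numbers, which the quaternionic argument of (i) does not control: for even $p$ one has $J_\alpha^{2}=+\mathrm{id}$, so the $J_\alpha$ only generate an action of $SO(3)$ and impose no divisibility. To handle this I would bring in the remaining transverse structure, namely transverse Poincar\'e duality $b_j^{h}=b_{4n-j}^{h}$ together with the full $Sp(1)$-representation (and the transverse Lefschetz operators) on the even basic cohomology, in order to refine the mod-$4$ bookkeeping of $b_k^{h}+2b_{k-2}^{h}+b_{k-4}^{h}$. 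This even-degree analysis, and not the quaternionic structure, is the delicate step of the argument.
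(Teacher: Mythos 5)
Your treatment of (i) — the quaternionic module structure on $\Omega^{2k+1}_{HB}(M)$ induced by the parallel tensors $\phi_\alpha$ and the relations \eqref{quaternionic} — is exactly the intended argument: it is the transverse analogue of Wakakuwa's proof that odd Betti numbers of compact hyper-K\"{a}hler manifolds are divisible by $4$. Likewise, your generating-function reduction of (ii) to the congruence $b_{k-1}+b_k\equiv b_k^h+2b_{k-2}^h+b_{k-4}^h\pmod 4$ is precisely the computation hiding behind the survey's one-line indication ``by using \eqref{bettiformula}'', and for odd $k$ you close the argument correctly with (i). (One small caveat in (i): the commutation of the induced action of $\phi_\alpha$ with the Laplacian is not a formal consequence of parallelism alone — for a single K\"{a}hler structure the operator $\rho(J)$ fails to commute with $d$; what one actually uses is that $\rho(J_\alpha)$ acts by scalars on the $(p,q)$-components of the transverse Hodge decomposition and hence preserves basic harmonic forms. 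This is standard and does not affect your conclusion.)

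The ``delicate step'' you isolate for even $k$ is not a step you failed to find: the statement is false for even $k$ as literally written, so no refinement via transverse Poincar\'{e} duality or the $Sp(1)$-action can succeed. Take the standard example $M=\mathbb{T}^{7}$ (the case $n=1$ of the torus construction given earlier in this section): its basic Betti numbers are $b^h_j=\binom{4}{j}$, so \eqref{bettiformula} gives $b_j=\binom{7}{j}$, and for $k=4$ one finds $b_3+b_4=35+35=70\equiv 2\pmod 4$; equivalently, $b_4^h+2b_2^h+b_0^h=1+12+1=14\equiv 2\pmod 4$. The correct assertion — and the one actually proved in the cited reference — is the one your argument establishes: $b_{k-1}+b_k\equiv 0\pmod 4$ for \emph{odd} $k$, i.e.\ for the consecutive pairs $b_{2j}+b_{2j+1}$. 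The quantifier ``for any integer $k$'' in the survey's statement is an overstatement; with that correction your proof is complete and coincides with the paper's.
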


In \cite{cappellettidenicolayudin}, we also proved certain inequalities that the Betti numbers have to satisfy. First notice that, as the $1$-forms $\eta_{1}$, $\eta_{2}$, $\eta_{3}$, the $2$-forms $\Phi_{1}$, $\Phi_{2}$, $\Phi_{3}$, $\eta_{i}\wedge\eta_{j}$,   and the $3$-forms
$\eta_{1}\wedge\eta_{2}\wedge\eta_{3}$, $\eta_{i}\wedge\Phi_{j}$,  are harmonic and linearly independent, we have $b_{1}\geq 3$,
$b_{2}\geq 6$, $b_{3}\geq 10$.  More generally, we have the following formula.

\begin{theorem}[\cite{cappellettidenicolayudin}]\label{binomiale}
Let $M$ be a compact $3$-cosymplectic manifold of dimension $4n+3$. Then, for any integer $k\in\left\{0,\ldots,2n+1\right\}$,
\begin{equation}\label{bin1}
b_{k}\geq \binom{k+2}{2}.
\end{equation}
\end{theorem}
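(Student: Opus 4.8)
The plan is to exhibit, for each $k$ with $0\le k\le 2n+1$, exactly $\binom{k+2}{2}$ harmonic $k$-forms built from the structure tensors and to prove that they are linearly independent; since $M$ is compact, Hodge theory then forces $b_k=\dim\Omega^k_H(M)\ge\binom{k+2}{2}$.

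First I set up the building blocks. In a $3$-cosymplectic manifold each $\xi_\alpha$ is parallel and each $\phi_\alpha$ is parallel, hence the $1$-forms $\eta_\alpha=g(\cdot,\xi_\alpha)$, the fundamental forms $\Phi_\alpha$, and their horizontal parts $\bar\Phi_\alpha$ (the $\mathcal H$-component of $\Phi_\alpha$, which is again parallel because it differs from $\Phi_\alpha$ by a wedge of two of the $\eta$'s) are all parallel. A wedge of parallel forms is parallel, and a parallel form on a Riemannian manifold is both closed and coclosed, hence harmonic. Thus every form
\[
\Psi_{q,r}:=\bar\Phi_1^{\,q_1}\wedge\bar\Phi_2^{\,q_2}\wedge\bar\Phi_3^{\,q_3}\wedge\eta_1^{\,r_1}\wedge\eta_2^{\,r_2}\wedge\eta_3^{\,r_3},
\]
with $q\in\mathbb Z_{\ge0}^3$, $r\in\{0,1\}^3$ and total degree $2|q|+|r|=k$, is harmonic, and I take the family $\{\Psi_{q,r}\}$ as my candidates.

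Next I count them. Grouping the $\Psi_{q,r}$ by their vertical type $r$ reproduces the weights $1,3,3,1$ attached to $|r|=0,1,2,3$; summing $\binom{|q|+2}{2}$ over the admissible exponents $q$ is exactly the substitution of the (provisional) bound $b^h_{2p}\ge\binom{p+2}{2}$ into the Betti-number formula \eqref{bettiformula}. A short binomial computation, resting on the two identities $\binom{m+2}{2}+3\binom{m+1}{2}=\binom{2m+2}{2}$ and $3\binom{m+2}{2}+\binom{m+1}{2}=\binom{2m+3}{2}$, shows that the total number of admissible pairs $(q,r)$ is precisely $\binom{k+2}{2}$, which is the right-hand side of \eqref{bin1}. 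Here the hypothesis $k\le 2n+1$ enters exactly as the constraint $|q|\le n$.

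Finally comes linear independence, which is the heart of the matter. Because the $\Psi_{q,r}$ are parallel, they are linearly independent in $H^k(M)$ as soon as they are linearly independent in $\Lambda^k T_x^*M$ at a single point $x$. Using the splitting $\Lambda^*T_x^*M=\Lambda^*\mathcal H_x^*\otimes\Lambda^*\mathcal V_x^*$ together with the fact that the eight monomials $\eta^{\,r}$ form a basis of $\Lambda^*\mathcal V_x^*$, the problem separates over $r$ and reduces to the purely linear-algebraic claim: \emph{on the $4n$-dimensional horizontal space with its hyper-K\"ahler structure, the monomials $\bar\Phi_1^{\,q_1}\bar\Phi_2^{\,q_2}\bar\Phi_3^{\,q_3}$ with $|q|=p$ are linearly independent whenever $p\le n$.} I expect this to be the main obstacle. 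I would prove it by fixing one transverse complex structure, say $J_1$, for which $\bar\Phi_1$ has type $(1,1)$ and $\sigma:=\bar\Phi_2+i\bar\Phi_3$ is a holomorphic symplectic $(2,0)$-form with $\sigma^p\neq0$ for $p\le n$, then rewriting each monomial in terms of $\bar\Phi_1,\sigma,\bar\sigma$, separating by bidegree, and invoking a Lefschetz-type nondegeneracy argument (equivalently, the explicit block decomposition $\mathbb H^n=\bigoplus\mathbb H$, where the cross-block terms $\bar\Phi_\alpha^{(i)}\wedge\bar\Phi_\alpha^{(j)}$ distinguish the three indices). The delicate feature is that independence holds exactly up to $p=n$ and genuinely fails for $p>n$ — which is precisely why the stated estimate is restricted to $k\le 2n+1$ — so the argument must use $p\le n$ in an essential way.
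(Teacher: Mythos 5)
Your strategy is sound and is, in substance, the same mechanism the survey points to for this theorem: the explicit harmonic forms that give $b_1\ge 3$, $b_2\ge 6$, $b_3\ge 10$, generalized via the weights $1,3,3,1$ of formula \eqref{bettiformula} and a Wakakuwa-type bound $\binom{p+2}{2}$ for the transverse hyper-K\"ahler part, glued by exactly the two binomial identities you state. The individual steps you do carry out are correct: in a $3$-cosymplectic manifold $\nabla\xi_\alpha=0$ and $\nabla\phi_\alpha=0$, so $\eta_\alpha$, $\Phi_\alpha$ and the horizontal parts $\bar\Phi_\alpha$ (which differ from $\Phi_\alpha$ by a multiple of $\eta_\beta\wedge\eta_\gamma$) are parallel; wedges of parallel forms are parallel, hence harmonic, and by Hodge theory on the compact $M$ a parallel form is nonzero in cohomology if and only if it is nonzero at one point; the splitting $\Lambda^*T^*_xM\cong\Lambda^*\mathcal H^*_x\otimes\Lambda^*\mathcal V^*_x$ legitimately separates the independence question over the eight vertical monomials; and your count of admissible pairs $(q,r)$, including the way $k\le 2n+1$ translates into $|q|\le n$, is right. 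A merit of your pointwise formulation is that it avoids transplanting Wakakuwa's theorem from compact hyper-K\"ahler manifolds to the basic cohomology of a transversely hyper-K\"ahler foliation.

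The one genuine issue is that the step you yourself call ``the main obstacle'' --- the linear independence in $\Lambda^{2p}\mathcal H^*_x$ of the $\binom{p+2}{2}$ monomials $\bar\Phi_1^{q_1}\wedge\bar\Phi_2^{q_2}\wedge\bar\Phi_3^{q_3}$ with $|q|=p\le n$ --- is precisely where all the mathematical content sits, and you only sketch it; as written the proposal is not yet a proof. The claim is true, and of your two suggested routes the block decomposition is the one that closes cleanly: writing $\mathcal H_x\cong\bigoplus_{i=1}^n\H$ and $\bar\Phi_\alpha=\sum_i\omega_\alpha^{(i)}$, project a vanishing combination $\sum_{|q|=p}c_q\bar\Phi^{\,q}=0$ onto the summand of $\Lambda^{2p}\mathcal H^*_x$ supported on $p$ \emph{distinct} blocks (this is exactly where $p\le n$ is used); since $\omega_1^{(i)},\omega_2^{(i)},\omega_3^{(i)}$ are linearly independent within each block, the resulting symmetrized products of $\omega_{\alpha_1}^{(1)},\dots,\omega_{\alpha_p}^{(p)}$ are independent for distinct multisets $\{\alpha_1,\dots,\alpha_p\}$, forcing all $c_q=0$. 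The bidegree route through $\sigma=\bar\Phi_2+i\bar\Phi_3$ is more delicate than you suggest: $\bar\Phi_2\wedge\bar\Phi_3\neq 0$ for $n\ge 2$, and all monomials $\bar\Phi_1^{a}\sigma^{b}\bar\sigma^{c}$ with the same $b-c$ share the same $J_1$-bidegree, so the bidegree splitting alone does not separate them and a further Lefschetz-type nondegeneracy argument would still be needed.
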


Theorem \ref{binomiale} should be compared with the corresponding
even-dimensional result. Indeed, due to Wakakuwa (\cite{wakakuwa}), in any compact hyper-K\"{a}hler manifold
the even Betti number $b_{2k}$ ($0\leq k \leq 2n$) satisfies
\begin{equation}\label{bin2}
b_{2k}\geq \binom{k+2}{2}.
\end{equation}
Comparing \eqref{bin1} with \eqref{bin2}, we clearly note that in the $3$-cosymplectic case we have a much stronger condition.

We conclude the section by mentioning that the space
$\Omega^{k}_{HB}(M)$ of harmonic basic $k$-forms  admits an action
of the Lie algebra $\mathfrak{so}(4,1)$. For each
$\alpha\in\left\{1,2,3\right\}$, let us define the $2$-form
\begin{equation*}
\Xi_\alpha:=\frac{1}{2}(\Phi_{\alpha}+2\eta_{\beta}\wedge\eta_{\gamma})
\end{equation*}
where $(\alpha,\beta,\gamma)$ is an  even permutation  of $\{1,2,3\}$. Next we define the operators
\begin{gather*}
L_{\alpha}:\Omega^{k}(M)\longrightarrow\Omega^{k+2}(M), \ \ \  L_{\alpha}\omega:=\Xi_{\alpha}\wedge\omega\\
\Lambda_{\alpha}:\Omega^{k+2}(M)\longrightarrow\Omega^{k}(M), \ \ \ \Lambda_{\alpha}:=\star L_{\alpha} \star
\end{gather*}
One can prove that $L_\alpha$ and $\Lambda_\alpha$  induce
endomorphisms  of $\Omega^{\ast}_{HB}(M)$ (which we shall denote by the same symbol). Then, by \cite[Proposition 4.3]{cappellettidenicolayudin} one has that, on
$\Omega^{\ast}_{HB}$, $[L_{\alpha},\Lambda_{\alpha}]=-H$, where $H:\Omega^{k}_{HB}(M)\longrightarrow\Omega^{k}_{HB}(M)$ is the operator defined by
$H\omega=(2n-k)\omega$. Moreover for each $\alpha\in\{1,2,3\}$ we define another operator $K_\alpha$ on $\Omega^{k}_{HB}(M)$ by
$K_{\alpha}:=[L_\beta,\Lambda_\gamma]$, where $(\alpha,\beta,\gamma)$ is an even permutation of $\{1,2,3\}$. Then we have the following result.

\begin{theorem}[\cite{cappellettidenicolayudin}]
The linear span $\mathfrak{g}$ of the operators $H$, $L_\alpha$, $\Lambda_\alpha$, $K_\alpha$, $\alpha\in\left\{1,2,3\right\}$, is a Lie algebra isomorphic
to $\mathfrak{so}(4,1)$. Consequently $\Omega^{\ast}_{HB}(M)$ is an $\mathfrak{so}(4,1)$-module.
\end{theorem}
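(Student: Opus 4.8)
The plan is to compute the full table of brackets among the ten operators $H,L_1,L_2,L_3,\Lambda_1,\Lambda_2,\Lambda_3,K_1,K_2,K_3$, check that their span is closed under the bracket, and match the resulting structure constants with those of $\mathfrak{so}(4,1)$; the count $1+3+3+3=10=\dim\mathfrak{so}(4,1)$ is reassuring. Conceptually this is the transverse, odd-dimensional shadow of the classical fact (due to Verbitsky and Fujiki) that the three Lefschetz $\mathfrak{sl}_2$-triples of a compact hyper-K\"ahler manifold generate $\mathfrak{so}(4,1)$ on its harmonic forms. Since $\mathcal F_3$ is transversely hyper-K\"ahler, an alternative route would be to identify $\Omega^{\ast}_{HB}(M)$, together with the operators $L_\alpha,\Lambda_\alpha$, with the harmonic forms of a hyper-K\"ahler manifold carrying its Lefschetz operators, and then invoke that theorem; the delicate point in this route is the presence of the Reeb terms $\eta_\beta\wedge\eta_\gamma$ in $\Xi_\alpha=\tfrac12(\Phi_\alpha+2\eta_\beta\wedge\eta_\gamma)$ and the comparison of the Hodge star of $M$ with the transverse star. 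I would therefore prefer the direct computation.

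First I would dispose of the easy relations. Because $H$ acts as $(2n-k)$ on $\Omega^{k}_{HB}(M)$ while $L_\alpha,\Lambda_\alpha$ raise and lower the degree by $2$, degree bookkeeping gives $[H,L_\alpha]=-2L_\alpha$ and $[H,\Lambda_\alpha]=2\Lambda_\alpha$; combined with the relation $[L_\alpha,\Lambda_\alpha]=-H$ recalled just above, each $(\Lambda_\alpha,L_\alpha,H)$ is a genuine $\mathfrak{sl}_2$-triple, the three sharing the common grading element $H$. Since wedging by $2$-forms commutes and $\Lambda_\alpha=\star L_\alpha\star$, one also gets $[L_\alpha,L_\beta]=0$ and $[\Lambda_\alpha,\Lambda_\beta]=0$. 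Crucially, once the remaining brackets $[L_\alpha,\Lambda_\beta]$ are known, every bracket involving a $K_\alpha$ follows mechanically from the Jacobi identity, because $K_\alpha=[L_\beta,\Lambda_\gamma]$ is itself a bracket of the $L$'s and $\Lambda$'s; in particular $[H,K_\alpha]=0$.

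The heart of the proof is thus the computation of the mixed brackets $[L_\alpha,\Lambda_\beta]$ for $\alpha\neq\beta$. This is a pointwise identity --- the commutator of wedging by $\Xi_\alpha$ with the adjoint of wedging by $\Xi_\beta$ --- and it is exactly here that the quaternionic relations \eqref{quaternionic} among the three almost contact structures, together with the precise shape of $\Xi_\alpha$, must be used. I expect to obtain $[L_\alpha,\Lambda_\beta]=K_\gamma$ for $(\alpha,\beta,\gamma)$ an even permutation, with the antisymmetry $[L_\beta,\Lambda_\alpha]=-K_\gamma$; this is the analogue, taken across two distinct transverse complex structures, of the K\"ahler identity $[L,\Lambda]=H$, and it is the step I expect to be the main obstacle, since it requires the careful quaternionic linear algebra of the forms $\Xi_\alpha$ acting on basic forms. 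Granting it, repeated use of the Jacobi identity then expresses $[K_\alpha,L_\beta]$, $[K_\alpha,\Lambda_\beta]$ and $[K_\alpha,K_\beta]$ in terms of the same ten generators, so the span closes, the $K_\alpha$ rotating the triples $(L_1,L_2,L_3)$ and $(\Lambda_1,\Lambda_2,\Lambda_3)$ and obeying $\mathfrak{so}(3)$ relations among themselves.

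Finally I would pin down the real form through its Cartan decomposition. Defining $\theta$ by $\theta(L_\alpha)=-\Lambda_\alpha$, $\theta(H)=-H$, $\theta(K_\alpha)=K_\alpha$ and verifying on the structure constants that $\theta$ is an involutive automorphism, the fixed subalgebra $\mathfrak{k}=\mathrm{span}\{K_\alpha,\,L_\alpha-\Lambda_\alpha\}$ is $6$-dimensional and the complement $\mathfrak{p}=\mathrm{span}\{H,\,L_\alpha+\Lambda_\alpha\}$ is $4$-dimensional, with $[\mathfrak{k},\mathfrak{k}]\subseteq\mathfrak{k}$, $[\mathfrak{k},\mathfrak{p}]\subseteq\mathfrak{p}$ and $[\mathfrak{p},\mathfrak{p}]\subseteq\mathfrak{k}$. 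Recognizing $\mathfrak{k}\cong\mathfrak{so}(4)=\mathfrak{su}(2)\oplus\mathfrak{su}(2)$ through the combinations $\tfrac12\bigl(K_\alpha\pm c(L_\alpha-\Lambda_\alpha)\bigr)$ shows the maximal compact subalgebra has dimension $6$, which singles out $\mathfrak{so}(4,1)$ among the $10$-dimensional real forms $\mathfrak{so}(5)$, $\mathfrak{so}(3,2)$ and $\mathfrak{so}(4,1)$. The concluding module statement is then immediate, as all ten operators preserve $\Omega^{\ast}_{HB}(M)$.
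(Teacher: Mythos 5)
The survey itself contains no proof of this theorem: it is quoted from \cite{cappellettidenicolayudin}, and the only ingredients recorded here are the definitions of $\Xi_\alpha$, $L_\alpha$, $\Lambda_\alpha$, $K_\alpha$ and the single relation $[L_\alpha,\Lambda_\alpha]=-H$ on $\Omega^{\ast}_{HB}(M)$. So there is nothing in the paper to compare your argument against step by step; I can only assess the plan. Its formal skeleton is correct and is the natural one (it is indeed the transverse analogue of Verbitsky's theorem for hyper-K\"ahler manifolds): the degree bookkeeping giving $[H,L_\alpha]=-2L_\alpha$ and $[H,\Lambda_\alpha]=2\Lambda_\alpha$, the vanishing of $[L_\alpha,L_\beta]$ and $[\Lambda_\alpha,\Lambda_\beta]$, the Jacobi-identity bootstrap for all brackets involving $K_\alpha$ (note this bootstrap genuinely needs \emph{both} expressions $K_\gamma=[L_\alpha,\Lambda_\beta]=-[L_\beta,\Lambda_\alpha]$; using only the defining one produces circular identities), and the Cartan-decomposition count $\dim\mathfrak{k}=6$ singling out $\mathfrak{so}(4,1)$ among the real forms of $\mathfrak{so}(5,\mathbb{C})$ are all sound, and I have checked that the resulting structure constants are mutually consistent.

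The gap is that the entire analytic and quaternionic content of the theorem is concentrated in exactly the two statements you defer. First, that $L_\alpha$ and $\Lambda_\alpha$ actually induce well-defined endomorphisms of $\Omega^{\ast}_{HB}(M)$: this is not automatic, because $\Xi_\alpha\wedge\cdot$ contains the term $\eta_\beta\wedge\eta_\gamma\wedge\cdot$, which destroys the condition $i_{\xi_\beta}\omega=0$, so the operators on $\Omega^{\ast}_{HB}(M)$ only arise after a projection or decomposition argument --- and then even the ``trivial'' relations $[L_\alpha,L_\beta]=0$ and $[\Lambda_\alpha,\Lambda_\beta]=0$ must be re-verified for the induced operators rather than read off from commutativity of wedging. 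Second, the antisymmetry $[L_\beta,\Lambda_\alpha]=-[L_\alpha,\Lambda_\beta]$, equivalently the skew-adjointness of $K_\gamma$ with respect to the Hodge inner product, which you correctly identify as the main obstacle but do not carry out; without it neither closure of the span nor the $\mathfrak{so}(3)$ relations among the $K_\alpha$ are available. Your framing of this step as ``a pointwise identity'' is also too optimistic: the survey asserts even $[L_\alpha,\Lambda_\alpha]=-H$ only on $\Omega^{\ast}_{HB}(M)$, with $H=(2n-k)$ reflecting the \emph{transverse} middle degree rather than the ambient one, so these commutation relations presumably hold only after restriction to harmonic basic forms and cannot be obtained by pointwise linear algebra on $\Lambda^{\bullet}T_x^{\ast}M$ alone. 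Until these two points are supplied, the bracket table, and hence the identification with $\mathfrak{so}(4,1)$, remains unestablished.
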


\section{Further topics}

There are many other topics related to cosymplectic / coK\"{a}hler geometry. Unfortunately, due  to the lack of time and space,  we can only give a very
brief outline.

\subsection{Submanifolds of coK\"{a}hler manifolds}

In almost contact Riemannian geometry there have always been a great interest  toward the theory of submanifolds. The most important class of submanifolds
of an almost contact metric manifold $(M,\phi,\xi,\eta,g)$ is given by \emph{invariant submanifolds}.

A submanifold $M'$ of $M$ is said to be invariant if $\xi$ is tangent to the submanifold and, for each $x\in M'$, $\phi (T_{x}M') \subset T_{x}M'$. Then
$M'$ inherits an almost contact metric structure $(\phi',\xi',\eta',g')$ from the ambient space by restriction and $(\phi',\xi',\eta',g')$ is (almost)
coK\"{a}hler provided that $(\phi,\xi,\eta,g)$ is (almost) coK\"{a}hler (\cite{ludden70}). An important result that invariant submanifolds of almost
coK\"{a}hler manifolds share with contact Riemannian geometry (\cite{blairbook2010}, page 152) is the following, due to Endo.

\begin{theorem}[\cite{endo1985}]\label{invsubm1}
Any invariant submanifold of an almost coK\"{a}hler manifold is minimal.
\end{theorem}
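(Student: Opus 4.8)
The plan is to show that the second fundamental form $B$ of the inclusion $M'\hookrightarrow M$ has vanishing trace, by working in a $\phi$-adapted frame and exploiting the structure relations of almost coK\"ahler geometry. Since $M'$ is invariant, $\xi$ is tangent and $\phi$ preserves $TM'$; because $\phi$ is $g$-skew-symmetric it then also preserves the normal bundle $\nu M'$, and since $\xi$ is tangent every normal vector is $g$-orthogonal to $\xi$ and hence lies in $\mathcal{D}=\ker\eta$. Consequently $\phi$ restricted to $\nu M'$ is injective, its only kernel direction $\mathbb{R}\xi$ being tangent. Writing $\dim M'=2m+1$, I would fix a local orthonormal frame $\{\xi,X_1,\dots,X_m,\phi X_1,\dots,\phi X_m\}$ of $TM'$ with $X_i\in\mathcal{D}$, so that the mean curvature vector is $\operatorname{tr}B=B(\xi,\xi)+\sum_{i=1}^m\bigl(B(X_i,X_i)+B(\phi X_i,\phi X_i)\bigr)$.

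The main computational input comes from comparing two expressions for the ambient derivative of $\phi Y$. For tangent fields $X,Y$, the Gauss formula together with $\nabla_X(\phi Y)=(\nabla_X\phi)Y+\phi\nabla_XY$ gives, after taking normal components (the terms coming from the induced connection being tangent),
\begin{equation*}
B(X,\phi Y)=\bigl((\nabla_X\phi)Y\bigr)^{\perp}+\phi\,B(X,Y).
\end{equation*}
Setting $Y=X$ yields $B(X,\phi X)=\bigl((\nabla_X\phi)X\bigr)^{\perp}+\phi B(X,X)$. Taking instead $X\mapsto\phi X$ and $Y\mapsto\phi X$ and using $\phi^2X=-X$ for $X\in\mathcal{D}$ gives $-B(X,\phi X)=\bigl((\nabla_{\phi X}\phi)\phi X\bigr)^{\perp}+\phi B(\phi X,\phi X)$.

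Here the argument diverges from the coK\"ahler case, where $\nabla\phi=0$ would kill the correction terms immediately. For the merely almost coK\"ahler case I would invoke Olszak's identity \eqref{identity1}, which with $Y=X\in\mathcal{D}$ reduces to $(\nabla_{\phi X}\phi)\phi X=-(\nabla_X\phi)X$. Substituting this into the two displayed relations and adding them, the terms $B(X,\phi X)$ and $\bigl((\nabla_X\phi)X\bigr)^{\perp}$ cancel completely, leaving $0=\phi\bigl(B(X,X)+B(\phi X,\phi X)\bigr)$. Injectivity of $\phi$ on $\nu M'$ then forces $B(X_i,X_i)+B(\phi X_i,\phi X_i)=0$ for each $i$. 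Finally $B(\xi,\xi)=0$ follows from the earlier corollary that the integral curves of $\xi$ are geodesics, i.e. $\nabla_\xi\xi=0$, so its normal part vanishes; summing gives $\operatorname{tr}B=0$, which is minimality.

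The step I expect to be the main obstacle is precisely the handling of the $(\nabla\phi)$-correction terms: unlike the K\"ahler/coK\"ahler situation these do not vanish, and the whole argument hinges on the exact form of \eqref{identity1} producing the clean cancellation $(\nabla_{\phi X}\phi)\phi X=-(\nabla_X\phi)X$ on $\mathcal{D}$. One must also verify carefully that $\phi$ is injective on the normal bundle — which genuinely uses that $\xi$ is tangent to the invariant submanifold — since this is what promotes the identity $\phi\bigl(B(X,X)+B(\phi X,\phi X)\bigr)=0$ to the pointwise vanishing of the summands.
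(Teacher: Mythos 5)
Your argument is correct. Note that the survey itself offers no proof of this statement --- it is quoted from Endo's paper --- so there is nothing internal to compare against; what you have produced is a self-contained derivation built entirely from identities that the survey does state, namely Olszak's identity \eqref{identity1} and the fact that the integral curves of $\xi$ are geodesics. The key steps all check out: normal vectors lie in $\mathcal{D}=\ker\eta$ because $\xi$ is tangent, so $\phi^2=-I$ there and $\phi$ is injective on the normal bundle; the Gauss formula combined with $\nabla_X(\phi Y)=(\nabla_X\phi)Y+\phi\nabla_X Y$ gives $B(X,\phi Y)=\bigl((\nabla_X\phi)Y\bigr)^{\perp}+\phi B(X,Y)$ after discarding the tangential term $\phi\nabla'_XY$; and Olszak's identity with $\eta(X)=0$ yields exactly $(\nabla_{\phi X}\phi)\phi X=-(\nabla_X\phi)X$, which makes the $(\nabla\phi)$-corrections cancel when you add the two relations, leaving $\phi\bigl(B(X,X)+B(\phi X,\phi X)\bigr)=0$ and hence $B(X_i,X_i)+B(\phi X_i,\phi X_i)=0$ by injectivity. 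Together with $B(\xi,\xi)=0$ from $\nabla_\xi\xi=0$, the trace vanishes. This is a genuinely clean way to extend the classical K\"ahler-invariant-submanifold argument (where $\nabla\phi=0$ kills the correction terms outright) to the merely almost coK\"ahler setting, and you correctly identified the two points that actually require the hypotheses: the tangency of $\xi$ (for injectivity of $\phi$ on the normal bundle) and the precise form of \eqref{identity1} (for the cancellation).
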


Theorem \ref{invsubm1} suggests to investigate for conditions ensuring that an invariant submanifold of an (almost) coK\"{a}hler manifold is totally
geodesic. In \cite{ludden70} and \cite{goldberg71} Ludden and Goldberg, respectively, studied this problem for codimension $2$ submanifolds.

In \cite{endo1985, endo1989}, Endo obtained some partial results on invariant submanifolds that we collect in the following theorem.

\begin{theorem}\label{invsubm2}
Let $M'$ be an invariant submanifold of an almost coK\"{a}hler manifold $M$ with pointwise constant $\phi$-sectional curvature $c$. Let $2m+1$ be the
dimension of $M'$.
\begin{enumerate}
  \item[(i)] If $M'$ has constant $\phi'$-sectional curvature $c'$ then   $c' \leq c$, with equality holding if and only if $M'$ is totally geodesic.
  \item[(ii)]  The scalar curvature $s'$ of $M'$ satisfies the inequality $s'\leq m(m+1)c$. Moreover, the equality holds if and only if $M'$ is totally geodesic and coK\"{a}hler.
\end{enumerate}
\end{theorem}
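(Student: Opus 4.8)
The natural tool is the Gauss equation relating the curvature $R'$ of the induced metric on $M'$ to the ambient curvature and the second fundamental form $\sigma$ of the embedding. Since $M'$ is invariant, $\phi$ preserves both $TM'$ and its normal bundle, the induced structure $(\phi',\xi',\eta',g')$ is again almost coK\"ahler with $\phi'=\phi|_{TM'}$ and $\xi'=\xi$, and — crucially — every $\phi'$-section $\{X,\phi X\}$ of $M'$, with $X$ a unit vector orthogonal to $\xi$, is also a $\phi$-section of $M$, so the ambient $\phi$-sectional curvature along it is $H(X)=c$. The preliminary step is therefore to isolate the structural properties of $\sigma$ for invariant submanifolds: that $\sigma(X,\xi)=0$ for all tangent $X$, and that $\sigma$ is $\phi$-anti-invariant on $\ker\eta$, i.e. $\sigma(\phi X,\phi Y)=-\sigma(X,Y)$ for $X,Y\perp\xi$. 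These are exactly the facts behind the minimality statement of Theorem~\ref{invsubm1}, and I would extract them from the almost coK\"ahler structure equations \eqref{nablaphi1} and \eqref{identity1} by comparing $\nabla\phi$ with the induced covariant derivative through the Gauss--Weingarten formulas; the delicate point is that $\nabla\phi$ does not vanish unless $M$ is coK\"ahler (Theorem~\ref{cokahlercondition1}), so one must check that the normal component of $(\nabla_X\phi)Y$ does not spoil the anti-invariance.

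For part (i), I would plug $X,\phi X,X,\phi X$ into the Gauss equation. With the paper's convention $R(X,Y,Z,W)=-g(R(X,Y)Z,W)$ this yields
\begin{equation*}
H'(X)=R'(X,\phi X,X,\phi X)=c-\|\sigma(X,\phi X)\|^2+g(\sigma(\phi X,\phi X),\sigma(X,X)).
\end{equation*}
Substituting the anti-invariance $\sigma(\phi X,\phi X)=-\sigma(X,X)$ collapses the last two terms to $-\|\sigma(X,X)\|^2-\|\sigma(X,\phi X)\|^2$, so that $c'=H'(X)=c-\|\sigma(X,X)\|^2-\|\sigma(X,\phi X)\|^2\le c$. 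Equality for every unit $X\perp\xi$ forces $\sigma(X,X)=0$; by symmetry and polarization $\sigma$ vanishes on $\ker\eta$, and combined with $\sigma(\cdot,\xi)=0$ this gives $\sigma\equiv 0$, i.e. $M'$ is totally geodesic. The converse is immediate, since a totally geodesic submanifold has $R'=R|_{M'}$ and hence $c'=c$.

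For part (ii), I would sum the Gauss equation over an orthonormal $\phi$-adapted basis $\{e_1,\dots,e_m,\phi e_1,\dots,\phi e_m,\xi\}$ of $T_xM'$ to obtain the intrinsic scalar curvature $s'$. The quadratic-in-$\sigma$ terms split into $\|\sigma\|^2$ and the squared norm of the mean curvature vector $\mathrm{tr}\,\sigma$; the latter vanishes because $M'$ is minimal (Theorem~\ref{invsubm1}), leaving $s'=\tilde S-\|\sigma\|^2$, where $\tilde S$ is the partial trace of the ambient curvature over the directions tangent to $M'$. It remains to bound $\tilde S$ by $m(m+1)c$: when $M$ is coK\"ahler, summing Eum's curvature formula \eqref{phisectional1} over the $\phi$-basis gives exactly the scalar curvature $m(m+1)c$ of a $(2m+1)$-dimensional coK\"ahler space form, and in the merely almost coK\"ahler case one expects a corresponding identity carrying additional terms. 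Granting $\tilde S\le m(m+1)c$, we obtain $s'\le m(m+1)c$, with equality forcing $\|\sigma\|^2=0$ (totally geodesic) together with the vanishing of those extra terms, which amounts to the normality ($N^{(1)}=0$) of the induced structure, i.e. $M'$ coK\"ahler.

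The main obstacle is precisely the control of $\tilde S$ in part (ii). Eum's formula \eqref{phisectional1} is available only for coK\"ahler ambients, whereas here $M$ is only almost coK\"ahler with pointwise constant $\phi$-sectional curvature; the scalar curvature involves the mixed sectional curvatures $K(e_i,e_j)$ and $K(e_i,\phi e_j)$, which are not pinned down by $c$ alone. The crux is therefore to establish a generalized curvature identity for almost coK\"ahler manifolds of pointwise constant $\phi$-sectional curvature whose extra terms (built from the operator $h=\tfrac12\mathcal{L}_\xi\phi$ and $\nabla\phi$) contribute non-positively to $\tilde S$, and to recognize that their vanishing is equivalent to the induced structure being coK\"ahler. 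This is what makes the equality case in (ii) demand both total geodesy and normality, the bound $m(m+1)c$ being the space-form value attained only by a totally geodesic coK\"ahler submanifold.
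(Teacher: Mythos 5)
The survey itself gives no proof of Theorem~\ref{invsubm2} (it only cites Endo's papers), so your proposal has to be judged on its own terms. Part (i) is essentially sound, and the ``delicate point'' you flag can in fact be resolved. The two structural facts do hold for an invariant submanifold of an almost coK\"ahler ambient: $\sigma(X,\xi)=0$ because $\nabla_X\xi=-\phi hX$ is tangent to $M'$ (both $\phi$ and $h=\tfrac12\mathcal{L}_\xi\phi$ preserve $TM'$), and for the anti-invariance one compares normal components in the Gauss formula to get $\sigma(A,\phi B)=\phi\sigma(A,B)+\mathrm{nor}\bigl((\nabla_A\phi)B\bigr)$, whence $\sigma(\phi X,\phi X)+\sigma(X,X)=\mathrm{nor}\bigl(\phi(\nabla_X\phi)X+(\nabla_{\phi X}\phi)X\bigr)$. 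Olszak's identity \eqref{identity1} turns $(\nabla_{\phi X}\phi)X$ into $(\nabla_X\phi)\phi X$, and expanding $\nabla_X(\phi^2)$ shows that $\phi(\nabla_X\phi)X+(\nabla_X\phi)\phi X$ is proportional to $\xi$ for $X\in\ker\eta$, hence tangent, so the normal part vanishes. This gives $\sigma(\phi X,\phi X)=-\sigma(X,X)$ on $\ker\eta$, and polarization of the two symmetric bilinear maps yields the full anti-invariance. With that, your Gauss-equation computation of $H'(X)$ and the equality analysis in (i) are correct.

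The genuine gap is in part (ii), and you name it yourself: everything rests on the unproved inequality $\tilde S\le m(m+1)c$ for the partial trace of the ambient curvature over $T_xM'$. When the ambient is only almost coK\"ahler, pointwise constancy of the $\phi$-sectional curvature does not determine the mixed sectional curvatures $K(e_i,e_j)$, $K(e_i,\phi e_j)$, $K(e_i,\xi)$ that enter $\tilde S$ --- Eum's formula \eqref{phisectional1} is not available --- so ``granting $\tilde S\le m(m+1)c$'' is assuming precisely the hard estimate. What is needed is a curvature identity for almost coK\"ahler manifolds of pointwise constant $\phi$-sectional curvature expressing $\tilde S$ as $m(m+1)c$ minus manifestly non-negative terms built from $h$ and $\nabla\phi$ restricted to $M'$, together with the identification of the vanishing of those terms with normality of the \emph{induced} structure; without it neither the inequality nor the ``coK\"ahler'' half of the equality statement in (ii) is established. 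As written, part (ii) is a plan rather than a proof.
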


Another remarkable class of submanifolds of an almost contact metric manifold is given \emph{anti-invariant submanifolds}. A submanifold of an almost
contact metric manifold is said to be anti-invariant if $\phi (T_{x}M') \subset (T_{x}M')^\perp$. A systematic study of anti-invariant submanifolds of
coK\"{a}hler manifolds was developed by Kim  \cite{kim83}. Among other result, the author found conditions ensuring that an anti-invariant submanifold of a
coK\"{a}hler manifold is conformally flat, or locally symmetric, or a totally geodesic submanifold.

We just mention also that the notion of invariant and anti-invariant submanifold were generalized in the class of \emph{CR-submanifold}, which was proposed
by Bejancu in the context of almost Hermitian geometry and then extended to the almost contact setting. Namely, a  submanifold $M'$ of an almost
coK\"{a}hler manifold $M$ is called CR-submanifold if $\xi$ is tangent to $M'$ and there exists an invariant distribution $D$ whose orthogonal
complement $D^\perp$ is anti-invariant, i.e. $TM'=D\oplus D^\perp \oplus \mathbb{R}\xi$ with $\phi D_x \subset D_x$ and $\phi D^\perp_x
\subset (T_x M')^\perp$. Further details can be found in \cite{bejancu-book86} and \cite{yano-kon-book83}.

\medskip

The study of the geometry of submanifolds of coK\"{a}hler manifolds is a current topic of research. There are several papers dealing with the above and
other classes of submanifolds of a coK\"{a}hler manifold (especially  coK\"{a}hler space forms). Here we mention the very recent paper
\cite{fetcu-rosenberg} of Fectu and Rosenberg. In that paper, they study the two-dimensional submanifolds with parallel mean curvature in a coK\"{a}hler
space form.

\subsection{Harmonic maps and coK\"{a}hler geometry}
Another interesting topic is the study of the interplay between coK\"{a}hler geometry with the theory of harmonic maps. Recall that given a smooth map $f:(M^m,g)\longrightarrow ({M'},g')$ between
Riemannian manifolds, one defines  the energy density of $f$ as the smooth function $e(f):M \longrightarrow [0,+\infty)$ given by
\begin{equation*}
e(f)(x)=\frac{1}{2}\left\|f_{{\ast} x}\right\|^2=\frac{1}{2}\sum_{i=1}^{m}g'\left(f_{{\ast} x}(e_i),f_{{\ast} x}(e_i)\right),
\end{equation*}
for any $x\in M$, where $\left\{e_{1},\ldots,e_{m}\right\}$ is any local orthonormal basis of $T_{x}M$. If $M$ is compact, the \emph{energy} of $f$ is defined by
\begin{equation*}
E(f)=\int_{M}e(f)\nu_{g}
\end{equation*}
where $\nu_{g}$ is the volume measure associated with the metric $g$ on $M$. Then $f$ will be said to be a \emph{harmonic map} if it is a critical point
of the energy functional $E$ on the set of all maps between $(M,g)$ and $(M',g')$.

In particular it is of interest to study the harmonicity of the maps to, from and between (almost) coK\"{a}hler manifolds, especially in the case when one of the two spaces is K\"{a}hler. We refer the reader
to the papers of Boeckx and Gherghe (\cite{boeckx2004}), Chinea (\cite{chinea2010}), Gherghe (\cite{gherghe1999}, \cite{gherghe2010}) and Fectu (\cite{fectu2004}).

\subsection{Generalizations of cosymplectic and coK\"{a}hler manifolds}
Several generalizations of cosymplectic and coK\"{a}hler manifolds have been considered. We mention just a few of them, recalling the definitions and
referring the interested reader to the references below.

The first generalization that we recall is given by \emph{locally conformal cosymplectic manifolds}, introduced by Chinea,  de Le\'{o}n,  Marrero in
\cite{chinea-deleon-marrero91} and \emph{locally conformal coK\"{a}hler manifolds}, defined by Olszak  in \cite{olszak89}. An almost cosymplectic manifold
$(M,\eta, \omega)$ is said to be \emph{locally conformal cosymplectic} (l.c.c.), if there exist an open covering $\{U_i\}_{i\in\ I}$, and a family of
functions $\sigma : U_i \longrightarrow \mathbb{R}$ such that  $\eta_i=e^{\sigma_{i}}\eta|_{U_i}$ and $\omega_i=e^{2\sigma_{i}}\omega|_{U_i}$  are closed,
i.e. $(\eta_i,\omega_i)$ is a cosymplectic structure on $U_i$. Then one proves that the local $1$-forms $d\sigma_i$ glue up to a closed $1$-form $\theta$,
called the \emph{Lee form}, which satisfies
\begin{equation}\label{lcc}
\d\eta=\eta\wedge\theta, \ \ \  \d\omega = - 2 \omega \wedge \theta.
\end{equation}
Clearly, if $(M,\omega,\eta)$ is cosymplectic, then it is l.c.c. with zero Lee form. Analogously one gives the definition of locally conformal  coK\"{a}hler manifolds. Let $(M,\phi,\xi,\eta,g)$ be an almost contact metric  manifold. Then
$M$ is said to be \emph{locally conformal coK\"{a}hler} (l.c.cK.) if there exists an open covering $\{U_i\}_{i\in\ I}$ of $M$ and a family $\{\sigma_i\}_{i\in
I}$  of real-valued smooth functions on each $U_i$ such that $(U_i , \phi_i , \xi_i , \eta_i , g_i)$ is a coK\"{a}hler manifold, where
\begin{equation*}
\phi_i = \phi|_{U_i}, \ \ \ \xi_i=e^{-\sigma_i}\xi|_{U_i}, \ \ \ \eta_i=e^{\sigma_i}\eta|_{U_i}, \ \ \ g_i=e^{2\sigma_i}g|_{U_i}
\end{equation*}
From the definition it follows that any locally conformal coK\"{a}hler manifold
is normal. Also, one proves that for any $i, j \in I$, $i \neq j$, with $U_i
\cap U_j \neq \emptyset$, one has $d\sigma_i = d\sigma_j$ on $U_i \cap U_j$, so that the  $1$-forms $d\sigma_i$ glue up to a global closed $1$-form
$\theta$. Moreover, the Levi-Civita connections $\nabla^i$ of $(U_i , g_i)$ glue up to a globally defined torsion-free linear connection $D$ on $M$ given
by
\begin{equation*}
D_{X}Y = \nabla_{X}Y - \frac{1}{2}\left(\theta(X)Y + \theta(Y)X - g(X,Y)B\right),
\end{equation*}
where $B = \theta^\sharp$  and $\nabla$ is the Levi-Civita
connection of $(M, g)$. It follows that $\d\eta=\eta\wedge\theta$
and $\d\Phi=-2\Phi\wedge\theta$, $\Phi$ being the fundamental
$2$-form of the almost contact metric manifold
$(M,\phi,\xi,\eta,g)$. For more details we refer the reader to the
aforementioned papers or also the monograph
\cite{dragomir-ornea-book}.

\medskip

We mention also the \emph{para-coK\"{a}hler manifolds}, studied, among others, by Dacko and Olszak (cf. \cite{dacko04}, \cite{dacko-olszak07}). An
\emph{almost paracontact structure} (cf. \cite{kaneyuki85}) on a $(2n+1)$-dimensional smooth manifold $M$ is given by a $(1,1)$-tensor field $\tilde\phi$, a vector
field $\xi$ and a $1$-form $\eta$ satisfying the following conditions
\begin{enumerate}
  \item[(i)] $\eta(\xi)=1$, \ $\tilde\phi^2=I-\eta\otimes\xi$,
  \item[(ii)] the eigendistributions ${\mathcal D}^+$ and ${\mathcal D}^-$ of $\tilde\phi$ corresponding to the eigenvalues $1$ and $-1$, respectively, have equal dimension $n$.
\end{enumerate}
An almost paracontact structure $(\tilde\phi,\xi,\eta)$ is said to be \emph{normal} if the tensor field
$N_{\tilde\phi}:=[\tilde\phi,\tilde\phi]-2\d\eta\otimes\xi$ vanishes identically.  Equivalently, an almost paracontact manifold  is normal if and only if
the distributions ${\mathcal D}^{+}$ and ${\mathcal D}^{-}$  are integrable and  $\xi$ is a foliated vector field with respect to both foliations
(\cite{cappelletti09}). If an almost paracontact manifold is endowed with  a semi-Riemannian metric $\tilde{g}$ such that
\begin{equation}\label{compatibile}
\tilde{g}(\tilde\phi X,\tilde\phi Y)=-\tilde{g}(X,Y)+\eta(X)\eta(Y)
\end{equation}
for all  $X,Y\in\Gamma(TM)$, then $(M,\tilde{\phi},\xi,\eta,\tilde{g})$ is called an \emph{almost paracontact metric manifold}. Notice that any such
semi-Riemannian metric is necessarily of signature $(n,n+1)$, where the first
index denotes the number of negative signs, and the above condition (ii) of the definition of almost paracontact structures is
automatically satisfied. Moreover, as in the almost contact case, from \eqref{compatibile} it follows easily that $\eta=\tilde{g}(\cdot,\xi)$ and
$\tilde{g}(\cdot,\tilde\phi\cdot)=-\tilde{g}(\tilde\phi\cdot,\cdot)$. Hence one defines the \emph{fundamental $2$-form} of the almost paracontact metric
manifold by $\tilde\Phi(X,Y)=\tilde{g}(X,\tilde\phi Y)$. Now, if $\eta$ and $\tilde\Phi$ are closed the structure is said to be \emph{almost
para-coK\"{a}hler}, and we point out that  $(\eta,\tilde\Phi)$ is then a cosymplectic structure. Finally, a normal almost para-coK\"{a}hler manifold is said to be  \emph{para-coK\"{a}hler}. Para-coK\"{a}hler geometry has a long history, having its origins in the three papers \cite{buzon62-I}, \cite{buzon62-II}, \cite{buzon64} of Bouzon in the early 60's (even before than the Blair's definition of coK\"{a}hler manifolds).
The theory has some aspects similar to coK\"{a}hler geometry even though there are also some relevant differences, for
instance due to the non-positive definiteness of the metric.

\medskip

Furthermore, another  generalization of cosymplectic manifolds  was proposed recently by de Le\'{o}n,  Merino, Oubi\~{n}a, Rodrigues and Salgado in
\cite{deleon-merino-oubina-rodrigues-salgado98}. A \emph{$k$-cosymplectic manifold} is  a smooth manifold $M$ of dimension $(k + 1)n + k$ endowed with  family $(\eta_{\alpha}, \omega_{\alpha}, {\mathcal F})_{\alpha\in\left\{1,\ldots,k\right\}}$, where each $\eta_\alpha$ is a closed $1$-form, each $\omega_\alpha$ is a closed $2$-form and $\mathcal F$ is a $nk$-dimensional foliation on $M$, related by the following conditions
\begin{enumerate}
\item[(i)] $\eta_{1}\wedge\cdots\wedge\eta_{k}\neq 0$,
\item[(ii)] $\eta_\alpha(X)=0$ and $\omega_\alpha(X,X')=0$ \ for any $X,X'\in\Gamma(T{\mathcal F})$ and for each $\alpha\in\left\{1,\ldots,k\right\}$,
\item[(iii)] $\left(\bigcap\limits_{\alpha=1}^{k}\ker(\eta_\alpha)\right) \cap \left(\bigcap\limits_{\alpha=1}^{k}\ker(\omega_\alpha)\right) = \left\{0\right\}$ and
$\dim\left(\bigcap\limits_{\alpha=1}^{k}\ker(\omega_\alpha)\right)=k$.
\end{enumerate}
Then on $M$ there are defined   $k$ vector fields, $\xi_1,\ldots,\xi_k$, called \emph{Reeb vector fields}, such that
\begin{equation*}
i_{\xi_\alpha}\eta_{\beta}=\delta_{\alpha\beta}, \ \ \ i_{\xi_\alpha}\omega_{\beta}=0
\end{equation*}
for any $\alpha,\beta\in\left\{1,\ldots,k\right\}$. The $k$-cosymplectic manifolds may be described locally in terms of Darboux coordinates (\cite{deleon-merino-oubina-rodrigues-salgado98}). In fact about any
point of the manifold there are coordinates $(x_{\alpha},y_{i},z_{\alpha i})$, $1\leq\alpha\leq k$, $1\leq i\leq n$, such that
\begin{equation*}
\eta_{\alpha}=dx_{\alpha}, \ \  \omega_{\alpha}=\sum_{i=1}^{n}dy_{i}\wedge dz_{\alpha i}, \  \ T{\mathcal F}=\textrm{span}\left\{\frac{\partial}{\partial
z_{1 1}},\ldots,\frac{\partial}{\partial z_{k 1}},\ldots,
\frac{\partial}{\partial z_{1 n}},\ldots,\frac{\partial}{\partial z_{k
n}}\right\}.
\end{equation*}
The $k$-cosymplectic structures were introduced in order to provide a suitable geometric setting for the description of classical field theories. The reader can find more
information on the state of the art of the theory for instance in  \cite{munoz-salgado-vilarino2010} or   \cite{rey-romanroy-salgado-vilarino2012}.

\medskip

We conclude the subsection by mentioning the very recent concepts of ``contact-symplectic pairs'' and  ``generalized almost contact structure''. A
\emph{contact-symplectic pair of type $(h,k)$ }on a smooth manifold $M$ of dimension $2h+2k+1$ is given by a $1$-form $\eta$ and a closed $2$-form $\omega$
such that ${d\eta}^{h+1}=0$, $\omega^{k+1}=0$ and $\eta\wedge d\eta^{h}\wedge \omega^{k}$ is a volume form, for some integers $h$ and $k$. For $k=0$ one
retrieves the definition of contact structures and for $h=0$ that one of cosymplectic structures. This notion was introduced by Bande in \cite{bande03},
where he studies the main properties of such structures and finds examples on nilpotent Lie groups, nilmanifolds and principal torus bundles.

Next, the theory of generalized almost contact manifolds is the attempt to extend to odd dimensions the very important theory of generalized complex
structures, founded by Hitchin and Gualtieri (cf. \cite{hitchin}, \cite{gualtieri}). With this regard we mention the works of Iglesias-Ponte and Wade
(\cite{iglesiasiasponte-wade-2005}) , Vaisman (\cite{vaisman2008}), Poon and Wade (\cite{poon-wade2010}, \cite{poon-wade2011}). In particular we recall the
approach proposed by Vaisman and by Poon and Wade. According to their definition, a \emph{generalized almost contact structure} on a smooth manifold $M$
consists of a bundle endomorphism $\phi:TM\oplus T^{\ast}M \longrightarrow TM\oplus T^{\ast}M$ and section $\xi+\eta$ of $TM\oplus T^{\ast}M$ such that
\begin{gather*}
\phi + \phi^\ast = 0, \ \ \  \phi^2 = -I + \xi\odot\eta,  \ \ \ \eta(\xi)=1, \ \ \ \phi(\xi)=\phi(\eta)=0.
\end{gather*}
Here the adjoint $\phi^\ast$ is with respect to the natural pairing of $TM\oplus T^{\ast}M$ defined by
\begin{equation*}
\langle X+\alpha, Y+\beta\rangle = \frac{1}{2}\left(i_{X}\beta+i_{Y}\alpha\right),
\end{equation*}
and $\xi\odot\eta$ is the endomorphism defined by
$\xi\odot\eta(X+\alpha)=\eta(X)\xi  + \alpha(\xi)\eta $. This notion includes as particular cases both contact and almost cosymplectic structures. Moreover, Poon and Wade discuss possible  notions of normality / integrability.

\subsection{Almost coK\"{a}hler manifolds satisfying further conditions}
Another important topic in co-K\"{a}hler geometry is given by the study of some classes of almost coK\"{a}hler manifolds satisfying certain additional
conditions (for instance concerning the curvature tensor). More generally, one considers some sub-classes of almost contact metric manifolds which satisfy
conditions closely related to the coK\"{a}hler manifolds. Probably the most important example is given by the \emph{nearly coK\"{a}hler manifolds}. An
almost contact metric manifold $(M,\phi,\xi,\eta,g)$ is called nearly coK\"{a}hler  if the structure tensor $\phi$ and the $1$-form $\eta$ are Killing,
that is
\begin{equation}\label{nearly}
(\nabla_{X}\phi)Y + (\nabla_{Y}\phi)X=0, \ \ \ (\nabla_{X}\eta)(Y) + (\nabla_{Y}\eta)(X)=0
\end{equation}
for any $X,Y\in\Gamma(TM)$. The second condition in \eqref{nearly}, which is
equivalent to requiring that $\xi$ is Killing, is usually included in the
definition, but however it is a consequence of the first one (cf. \cite[Proposition 6.1]{blairbook2010}). The name ``nearly coK\"{a}hler'' is due to the
property that a normal nearly coK\"{a}hler manifold is necessarily coK\"{a}hler (\cite{blair71}). Moreover, it is known that any $3$-dimensional nearly
coK\"{a}hler manifold is coK\"{a}hler (\cite{jun-kim-kim-1994}). A well-known  nearly coK\"{a}hler manifold is $S^5$ endowed with the almost contact metric
structure induced by the almost Hermitian structure of $S^6$ defined by means of the cross product of the imaginary part of the octonions (see, for
more details, \cite{blairbook2010} page 63 and 102). This gives an example of non-normal almost contact metric structure on an odd-dimensional sphere. \
Nearly coK\"{a}hler manifolds, in particular their curvature and topological properties, have been studied by several authors (see e.g.
\cite{kiritchenko82}, \cite{banaru2002},  \cite{endo2005}, \cite{fueki-endo-2005}, \cite{endo2012}).

\medskip

Another interesting topic concerns the study of almost contact metric manifolds admitting a Weyl structure.  Recall that  a \emph{Weyl structure} on a
manifold $M$ of dimension $m \geq 3$ is defined by a pair $W = ([g],D^{[g]})$, where $[g]$ is a conformal class of Riemannian metrics and $D^{[g]}$ is the unique
torsion-free connection, called \emph{Weyl connection}, satisfying
\begin{equation}\label{weil2}
D^{[g]}g = -2\theta\otimes g.
\end{equation}
for some $1$-form $\theta$. The Weyl structure is said to be \emph{closed} if $\d\theta=0$  and  \emph{Einstein-Weyl} if there exists a smooth function
$\Lambda$ on $M$ such that
\begin{equation}\label{weil1}
\textrm{Ric}^{D^{[g]}}(X, Y) + \textrm{Ric}^{D^{[g]}}(Y,X) = \Lambda g(X, Y),
\end{equation}
where $\textrm{Ric}^{D^{[g]}}$ denotes the Ricci tensor with respect
to the connection $D^{[g]}$. Since the condition \eqref{weil2} is
invariant under Weyl transformations $g'=e^{2f}g$,
$\theta'=\theta+\d f$, with $f\in C^{\infty}(M)$, one sometimes abuses
the terminology by choosing a Riemannian metric in $[g]$ and
referring to the pair $W = (g, \theta)$ as a Weyl structure. \ The
existence of Einstein-Weyl structures on almost contact metric
manifolds has been recently investigated  by several authors.
Concerning coK\"{a}hler geometry, we mention the result that locally
conformal coK\"{a}hler manifolds admit a naturally defined
conformally invariant Weyl structure (\cite{matzeu2002}). Later on,
Matzeu proved that every $(2n + 1)$-dimensional coK\"{a}hler
manifold $(M,\phi,\xi,\eta, g)$ of constant $\phi$-sectional
curvature $c> 0$ admits two Ricci-flat Weyl structures where the
$1$-forms associated to the metric $g\in[g]$ are $\pm \theta = \pm
\lambda\eta$, where $\lambda=\frac{2c}{2n-1}$. More recently, she
generalized such a result by proving that if a compact coK\"{a}hler
manifold $(M,\phi,\xi,\eta, g)$ admits a closed Einstein-Weyl
structure $D^{[g]}$, then $M$ is necessarily $\eta$-Einstein
(\cite{matzeu2011}).

\medskip

A further well-known class of almost coK\"{a}hler manifolds satisfying some remarkable curvature condition is that of \emph{conformally flat almost
coK\"{a}hler manifolds}. In dimension greater than or equal to $5$ those manifolds are of non-positive scalar curvature and are coK\"{a}hler if and only if
they are locally flat (cf. \cite{olszak81}, \cite{olszak87}). Moreover, again under the assumption  $\dim(M)\geq 5$, any conformally flat almost
coK\"{a}hler manifold with K\"{a}hlerian leaves is necessarily coK\"{a}hler (\cite{dacko-olszak1998}). Contrary to that, in dimension $3$, there are
examples of conformally fiat almost coK\"{a}hler manifolds (which necessarily have K\"{a}hlerian leaves) that are not locally fiat and not coK\"{a}hler
(see again \cite{dacko-olszak1998} and \cite{dacko07}).

\small
\bibliography{survey}

\providecommand{\bysame}{\leavevmode\hbox to3em{\hrulefill}\thinspace}
\providecommand{\MR}{\relax\ifhmode\unskip\space\fi MR }
\providecommand{\MRhref}[2]{%
  \href{http://www.ams.org/mathscinet-getitem?mr=#1}{#2}
}
\providecommand{\href}[2]{#2}
\begin{thebibliography}{100}

\bibitem{albert89}
C.~Albert, \emph{Le th\'eor\`eme de r\'eduction de {M}arsden-{W}einstein en
  g\'eom\'etrie cosymplectique et de contact}, J. Geom. Phys. \textbf{6}
  (1989), no.~4, 627--649.

\bibitem{alegre-blair-carriazo2004}
P.~Alegre, D.~E. Blair, and A.~Carriazo, \emph{Generalized
  {S}asakian-space-forms}, Israel J. Math. \textbf{141} (2004), 157--183.

\bibitem{arms81}
J.~M. Arms, J.~E. Marsden, and V.~Moncrief, \emph{Symmetry and bifurcations of
  momentum mappings}, Comm. Math. Phys. \textbf{78} (1980/81), no.~4, 455--478.

\bibitem{auslander63}
L.~Auslander, L.~Green, and F.~Hahn, \emph{Flows on homogeneous spaces}, With
  the assistance of L. Markus and W. Massey, and an appendix by L. Greenberg.
  Annals of Mathematics Studies, No. 53, Princeton University Press, Princeton,
  N.J., 1963.

\bibitem{banaru2002}
M.~Banaru, \emph{On nearly-cosymplectic hypersurfaces in nearly-{K}\"ahlerian
  manifolds}, Studia Univ. Babe\c s-Bolyai Math. \textbf{47} (2002), no.~3,
  3--11.

\bibitem{bande03}
Gianluca Bande, \emph{Couples contacto-symplectiques}, Trans. Amer. Math. Soc.
  \textbf{355} (2003), no.~4, 1699--1711 (electronic).

\bibitem{barbero08}
M.~Barbero-Li{\~n}{\'a}n, A.~Echeverr{\'{\i}}a-Enr{\'{\i}}quez,
  D.~Mart{\'{\i}}n~de Diego, M.~C. Mu{\~n}oz-Lecanda, and N.~Rom{\'a}n-Roy,
  \emph{Unified formalism for nonautonomous mechanical systems}, J. Math. Phys.
  \textbf{49} (2008), no.~6, 062902, 14.

\bibitem{bazzoni12}
G.~{Bazzoni}, M.~{Fern{\'a}ndez}, and V.~{Mu{\~n}oz}, \emph{{Non-formal
  co-symplectic manifolds}}, ArXiv e-prints (2012), arXiv:1203.6422.

\bibitem{bazzoni-oprea}
G.~{Bazzoni} and J.~Oprea, \emph{On the structure of cok\"{a}hler manifolds},
  to appear in Geom. Dedicata, DOI:10.1007/s10711-013-9869-7 (2013).

\bibitem{bejancu-book86}
A.~Bejancu, \emph{Geometry of {CR}-submanifolds}, Mathematics and its
  Applications (East European Series), vol.~23, D. Reidel Publishing Co.,
  Dordrecht, 1986.

\bibitem{bejancu83}
A.~Bejancu and D.~Smaranda, \emph{Semi-invariant submanifolds of a
  co-{K}\"ahler manifold}, An. \c Stiin\c t. Univ. Al. I. Cuza Ia\c si Sec\c t.
  I a Mat. \textbf{29} (1983), no.~2, suppl., 27--32.

\bibitem{blair67}
D.~E. Blair, \emph{The theory of quasi-{S}asakian structures}, J. Differential
  Geometry \textbf{1} (1967), 331--345.

\bibitem{blair71}
\bysame, \emph{Almost contact manifolds with {K}illing structure tensors},
  Pacific J. Math. \textbf{39} (1971), 285--292.

\bibitem{blair76}
\bysame, \emph{Contact manifolds in {R}iemannian geometry}, Lecture Notes in
  Mathematics, Vol. 509, Springer-Verlag, Berlin, 1976.

\bibitem{blairbook2010}
\bysame, \emph{Riemannian geometry of contact and symplectic manifolds}, second
  ed., Progress in Mathematics, vol. 203, Birkh\"auser Boston Inc., Boston, MA,
  2010.

\bibitem{blair-goldberg67}
D.~E. Blair and S.~I. Goldberg, \emph{Topology of almost contact manifolds}, J.
  Differential Geometry \textbf{1} (1967), 347--354.

\bibitem{blair95}
D.~E. Blair, T.~Koufogiorgos, and Basil~J. Papantoniou, \emph{Contact metric
  manifolds satisfying a nullity condition}, Israel J. Math. \textbf{91}
  (1995), no.~1-3, 189--214.

\bibitem{boeckx2000}
E.~Boeckx, \emph{A full classification of contact metric {$(k,\mu)$}-spaces},
  Illinois J. Math. \textbf{44} (2000), no.~1, 212--219.

\bibitem{boeckx2004}
E.~Boeckx and C.~Gherghe, \emph{Harmonic maps and cosymplectic manifolds}, J.
  Aust. Math. Soc. \textbf{76} (2004), no.~1, 75--92.

\bibitem{buzon62-I}
J.~Bouzon, \emph{Structures presque cocomplexes. {I}}, C. R. Acad. Sci. Paris
  \textbf{255} (1962), 822--824.

\bibitem{buzon62-II}
\bysame, \emph{Structures presque cocomplexes. {II}}, C. R. Acad. Sci. Paris
  \textbf{255} (1962), 1061--1063.

\bibitem{buzon64}
\bysame, \emph{Structures presque-cohermitiennes}, C. R. Acad. Sci. Paris
  \textbf{258} (1964), 412--415.

\bibitem{galickibook}
C.~P. Boyer and K.~Galicki, \emph{Sasakian geometry}, Oxford Mathematical
  Monographs, Oxford University Press, Oxford, 2008.

\bibitem{bruschi63}
M.~L. Bruschi, \emph{Un th\'eor\`eme de d\'ecomposition pour les vari\'et\'es
  cosymplectiques homog\`enes}, C. R. Acad. Sci. Paris \textbf{257} (1963),
  4120--4121.

\bibitem{brylinski88}
J.~Brylinski, \emph{A differential complex for {P}oisson manifolds}, J.
  Differential Geom. \textbf{28} (1988), no.~1, 93--114.

\bibitem{cantrijn02}
F.~Cantrijn and J.~Cort{\'e}s, \emph{Cosymplectic reduction of constrained
  systems with symmetry}, Rep. Math. Phys. \textbf{49} (2002), no.~2-3,
  167--182, XXXIII Symposium on Mathematical Physics (Tor{\'u}n, 2001).

\bibitem{cantrijn92}
F.~Cantrijn, M.~de~Le{\'o}n, and E.~A. Lacomba, \emph{Gradient vector fields on
  cosymplectic manifolds}, J. Phys. A \textbf{25} (1992), no.~1, 175--188.

\bibitem{cappelletti09}
B.~Cappelletti-Montano, \emph{Bi-{L}egendrian structures and paracontact
  geometry}, Int. J. Geom. Methods Mod. Phys. \textbf{6} (2009), no.~3,
  487--504.

\bibitem{cappelletti2009}
\bysame, \emph{The foliated structure of contact metric
  {$(\kappa,\mu)$}-spaces}, Illinois J. Math. \textbf{53} (2009), no.~4,
  1157--1172.

\bibitem{cappellettidenicola}
B.~Cappelletti-Montano and A.~De~Nicola, \emph{3-{S}asakian manifolds,
  3-cosymplectic manifolds and {D}arboux theorem}, J. Geom. Phys. \textbf{57}
  (2007), no.~12, 2509--2520.

\bibitem{cappellettidenicoladileo1}
B.~Cappelletti-Montano, A.~De~Nicola, and G.~Dileo, \emph{3-quasi-{S}asakian
  manifolds}, Ann. Global Anal. Geom. \textbf{33} (2008), no.~4, 397--409.

\bibitem{cappellettidenicoladileo2}
\bysame, \emph{The geometry of 3-quasi-{S}asakian manifolds}, Internat. J.
  Math. \textbf{20} (2009), no.~9, 1081--1105.

\bibitem{cappellettidenicolayudin}
B.~Cappelletti-Montano, A.~De~Nicola, and I.~Yudin, \emph{Topology of
  3-cosymplectic manifolds}, Q. J. Math. \textbf{64} (2013), no.~1, 59–--82.

\bibitem{cappelletti-pastore10}
B.~Cappelletti-Montano and A.~M. Pastore, \emph{Einstein-like conditions and
  cosymplectic geometry}, J. Adv. Math. Stud. \textbf{3} (2010), no.~2, 27--40.

\bibitem{capursi84}
M.~Capursi, \emph{Some remarks on the product of two almost contact manifolds},
  An. \c Stiin\c t. Univ. Al. I. Cuza Ia\c si Sec\c t. I a Mat. \textbf{30}
  (1984), no.~1, 75--79.

\bibitem{carriazo-martinmolina}
Alfonso Carriazo and Ver\'{o}nica Mart\'{i}n-Molina, \emph{Almost cosymplectic
  and almost kenmotsu $(\kappa,\mu,\nu)$-spaces}, Mediterranean Journal of
  Mathematics \textbf{10} (2013), no.~3, 1551--1571 (English).

\bibitem{chinea2010}
D.~Chinea, \emph{Harmonicity on maps between almost contact metric manifolds},
  Acta Math. Hungar. \textbf{126} (2010), no.~4, 352--365.

\bibitem{chinea-deleon-marrero91}
D.~Chinea, M.~de~Le{\'o}n, and J.~C. Marrero, \emph{Locally conformal
  cosymplectic manifolds and time-dependent {H}amiltonian systems}, Comment.
  Math. Univ. Carolin. \textbf{32} (1991), no.~2, 383--387.

\bibitem{chinea-deleon-marrero-93tris}
\bysame, \emph{Stability of invariant foliations on almost contact manifolds},
  Publ. Math. Debrecen \textbf{43} (1993), no.~1-2, 41--52.

\bibitem{chinea-deleon-marrero93}
\bysame, \emph{Topology of cosymplectic manifolds}, J. Math. Pures Appl. (9)
  \textbf{72} (1993), no.~6, 567--591.

\bibitem{chinea-deleon-marrero95}
\bysame, \emph{Coeffective cohomology on almost cosymplectic manifolds}, Bull.
  Sci. Math. \textbf{119} (1995), no.~1, 3--20.

\bibitem{chinea-gonzalez86}
D.~Chinea and C.~Gonz{\'a}lez, \emph{An example of an almost cosymplectic
  homogeneous manifold}, Differential geometry, {P}e\~n\'\i scola 1985, Lecture
  Notes in Math., vol. 1209, Springer, Berlin, 1986, pp.~133--142.

\bibitem{chinea-gonzalez90}
\bysame, \emph{A classification of almost contact metric manifolds}, Ann. Mat.
  Pura Appl. (4) \textbf{156} (1990), 15--36.

\bibitem{cordero85}
L.~A. Cordero, M.~Fern{\'a}ndez, and M.~de~Le{\'o}n, \emph{Examples of compact
  almost contact manifolds admitting neither {S}asakian nor cosymplectic
  structures}, Atti Sem. Mat. Fis. Univ. Modena \textbf{34} (1985/86), no.~1,
  43--54.

\bibitem{dacko99}
P.~Dacko, \emph{Note on left invariant almost cosymplectic structures}, Period.
  Math. Hungar. \textbf{38} (1999), no.~3, 167--171.

\bibitem{dacko2000}
\bysame, \emph{On almost cosymplectic manifolds with the structure vector field
  {$\xi$} belonging to the {$k$}-nullity distribution}, Balkan J. Geom. Appl.
  \textbf{5} (2000), no.~2, 47--60.

\bibitem{dacko04}
\bysame, \emph{On almost para-cosymplectic manifolds}, Tsukuba J. Math.
  \textbf{28} (2004), no.~1, 193--213.

\bibitem{dacko07}
\bysame, \emph{On three dimensional conformally flat almost cosymplectic
  manifolds}, ArXiv e-prints (2007), arXiv:0710.1042.

\bibitem{dacko2012-2}
\bysame, \emph{On {CR} ({C}auchy-{R}iemann) almost cosymplectic manifolds},
  ArXiv e-prints (2012), arXiv:1209.1848.

\bibitem{dacko-olszak1998}
P.~Dacko and Z.~Olszak, \emph{On conformally flat almost cosymplectic manifolds
  with {K}\"ahlerian leaves}, Rend. Sem. Mat. Univ. Politec. Torino \textbf{56}
  (1998), no.~1, 89--103.

\bibitem{dacko-olszak2005-2}
\bysame, \emph{On almost cosymplectic {$(-1,\mu,0)$}-spaces}, Cent. Eur. J.
  Math. \textbf{3} (2005), no.~2, 318--330 (electronic).

\bibitem{dacko-olszak2005-1}
\bysame, \emph{On almost cosymplectic {$(\kappa,\mu,\nu)$}-spaces}, P{DE}s,
  submanifolds and affine differential geometry, Banach Center Publ., vol.~69,
  Polish Acad. Sci., Warsaw, 2005, pp.~211--220.

\bibitem{dacko-olszak07}
\bysame, \emph{On weakly para-cosymplectic manifolds of dimension 3}, J. Geom.
  Phys. \textbf{57} (2007), no.~2, 561--570.

\bibitem{deleon-m96}
M.~de~Le{\'o}n, J.~Mar{\'{\i}}n-Solano, and J.~C. Marrero, \emph{The constraint
  algorithm in the jet formalism}, Differential Geom. Appl. \textbf{6} (1996),
  no.~3, 275--300.

\bibitem{deleon-m93}
M.~de~Le{\'o}n and J.~C. Marrero, \emph{Constrained time-dependent {L}agrangian
  systems and {L}agrangian submanifolds}, J. Math. Phys. \textbf{34} (1993),
  no.~2, 622--644.

\bibitem{deleon-marrero97}
\bysame, \emph{Compact cosymplectic manifolds with transversally positive
  definite {R}icci tensor}, Rend. Mat. Appl. (7) \textbf{17} (1997), no.~4,
  607--624.

\bibitem{deleon-merino-oubina-rodrigues-salgado98}
M.~de~Le{\'o}n, E.~Merino, J.~A. Oubi{\~n}a, P.~R. Rodrigues, and M.~Salgado,
  \emph{Hamiltonian systems on {$k$}-cosymplectic manifolds}, J. Math. Phys.
  \textbf{39} (1998), no.~2, 876--893.

\bibitem{deleon89}
M.~de~Le{\'o}n and P.~R. Rodrigues, \emph{Methods of differential geometry in
  analytical mechanics}, North-Holland Mathematics Studies, vol. 158,
  North-Holland Publishing Co., Amsterdam, 1989.

\bibitem{deleon93}
M.~de~Le{\'o}n and M.~Saralegi, \emph{Cosymplectic reduction for singular
  momentum maps}, J. Phys. A \textbf{26} (1993), no.~19, 5033--5043.

\bibitem{deleon96}
M.~de~Le{\'o}n and G.~M. Tuynman, \emph{A universal model for cosymplectic
  manifolds}, J. Geom. Phys. \textbf{20} (1996), no.~1, 77--86.

\bibitem{deligne75}
P.~Deligne, P.~Griffiths, J.~Morgan, and D.~Sullivan, \emph{Real homotopy
  theory of {K}\"ahler manifolds}, Invent. Math. \textbf{29} (1975), no.~3,
  245--274.

\bibitem{dragomir-ornea-book}
S.~Dragomir and L.~Ornea, \emph{Locally conformal {K}\"ahler geometry},
  Progress in Mathematics, vol. 155, Birkh\"auser Boston Inc., Boston, MA,
  1998.

\bibitem{dragomir-perrone}
S.~Dragomir and D.~Perrone, \emph{Harmonic vector fields : variational
  principles and differential geometry}, Elsevier, 2012.

\bibitem{echeverria91}
A.~Echeverr{\'{\i}}a~Enr{\'{\i}}quez, M.~C. Mu{\~n}oz~Lecanda, and
  N.~Rom{\'a}n-Roy, \emph{Geometrical setting of time-dependent regular
  systems. {A}lternative models}, Rev. Math. Phys. \textbf{3} (1991), no.~3,
  301--330.

\bibitem{endo1985}
H.~Endo, \emph{Remarks on invariant submanifolds in an almost cosymplectic
  manifold}, Tensor (N.S.) \textbf{42} (1985), no.~2, 165--167.

\bibitem{endo1989}
\bysame, \emph{On invariant submanifolds in almost cosymplectic manifolds},
  Colloq. Math. \textbf{57} (1989), no.~1, 69--72.

\bibitem{endo1994-1}
\bysame, \emph{On {R}icci curvatures of almost cosymplectic manifolds}, An. \c
  Stiin\c t. Univ. Al. I. Cuza Ia\c si Sec\c t. I a Mat. \textbf{40} (1994),
  no.~1, 75--83.

\bibitem{endo1994-2}
\bysame, \emph{On the {AC}-contact {B}ochner curvature tensor field on almost
  cosymplectic manifolds}, Publ. Inst. Math. (Beograd) (N.S.) \textbf{56(70)}
  (1994), 102--110.

\bibitem{endo2002}
\bysame, \emph{Non-existence of almost cosymplectic manifolds satisfying a
  certain condition}, Tensor (N.S.) \textbf{63} (2002), no.~3, 272--284.

\bibitem{endo2005}
\bysame, \emph{On the curvature tensor of nearly cosymplectic manifolds of
  constant {$\Phi$}-sectional curvature}, An. \c Stiin\c t. Univ. Al. I. Cuza
  Ia\c si. Mat. (N.S.) \textbf{51} (2005), no.~2, 439--454.

\bibitem{endo2012}
\bysame, \emph{On the first {B}etti number of certain compact nearly
  cosymplectic manifolds}, J. Geom. \textbf{103} (2012), no.~2, 231--236.

\bibitem{eum72}
S.~Eum, \emph{Cosymplectic manifolds of constant $\phi$-holomorphic curvature},
  Bull. Korean Math. Soc. \textbf{9} (1972), 1--7.

\bibitem{pastore}
M.~Falcitelli, S.~Ianus, and A.~M. Pastore, \emph{Riemannian submersions and
  related topics}, World Scientific Publishing Co. Inc., River Edge, NJ, 2004.

\bibitem{felix08}
Y.~F{\'e}lix, J.~Oprea, and D.~Tanr{\'e}, \emph{Algebraic models in geometry},
  Oxford Graduate Texts in Mathematics, vol.~17, Oxford University Press,
  Oxford, 2008.

\bibitem{fernandez-gray90}
M.~Fern{\'a}ndez and A.~Gray, \emph{Compact symplectic solvmanifolds not
  admitting complex structures}, Geom. Dedicata \textbf{34} (1990), no.~3,
  295--299.

\bibitem{ibanez-fernandez97}
M.~Fern{\'a}ndez, R.~Ib{\'a}{\~n}ez, and M.~de~Le{\'o}n, \emph{A {N}omizu's
  theorem for the coeffective cohomology}, Math. Z. \textbf{226} (1997), no.~1,
  11--23.

\bibitem{fernandez-ibanez98}
\bysame, \emph{The canonical spectral sequences for {P}oisson manifolds},
  Israel J. Math. \textbf{106} (1998), 133--155.

\bibitem{fernandez98}
\bysame, \emph{Coeffective and de {R}ham cohomologies on almost contact
  manifolds}, Differential Geom. Appl. \textbf{8} (1998), no.~3, 285--303.

\bibitem{fectu2004}
D.~Fetcu, \emph{Maps between almost {K}\"ahler manifolds and framed
  {$\phi$}-manifolds}, Balkan J. Geom. Appl. \textbf{9} (2004), no.~2, 13--24.

\bibitem{fetcu-rosenberg}
D.~Fetcu and H.~Rosenberg, \emph{Surfaces with parallel mean curvature in
  $\mathbb{C} {P^n}\times\mathbb{R}$ and $\mathbb{C} {H^n}\times\mathbb{R}$},
  Trans. Amer. Math. Soc. \textbf{366} (2014), 75--94.

\bibitem{fino11}
A.~Fino and L.~Vezzoni, \emph{Some results on cosymplectic manifolds}, Geom.
  Dedicata \textbf{151} (2011), 41--58.

\bibitem{fueki-endo-2005}
S.~Fueki and H.~Endo, \emph{On conformally flat nearly cosymplectic manifolds},
  Tensor (N.S.) \textbf{66} (2005), no.~3, 305--316.

\bibitem{fujimoto74}
A.~Fujimoto and H.~Mut{\=o}, \emph{On cosymplectic manifolds}, Tensor (N.S.)
  \textbf{28} (1974), 43--52.

\bibitem{gherghe1999}
C.~Gherghe, \emph{Harmonic morphisms on some almost contact metric manifolds},
  Tensor (N.S.) \textbf{61} (1999), no.~3, 276--281.

\bibitem{gherghe2010}
\bysame, \emph{Harmonicity on cosymplectic manifolds}, Rocky Mountain J. Math.
  \textbf{40} (2010), no.~6, 1875--1885.

\bibitem{ghys-sergiescu80}
E.~Ghys and V.~Sergiescu, \emph{Stabilit\'e et conjugaison diff\'erentiable
  pour certains feuilletages}, Topology \textbf{19} (1980), no.~2, 179--197.

\bibitem{goldberg-book}
S.~I. Goldberg, \emph{Curvature and homology}, Pure and Applied Mathematics,
  Vol. XI, Academic Press, New York, 1962.

\bibitem{goldberg68}
\bysame, \emph{Totally geodesic hypersurfaces of {K}aehler manifolds}, Pacific
  J. Math. \textbf{27} (1968), 275--281.

\bibitem{goldberg71}
\bysame, \emph{Invariant submanifolds of codimension {$2$} of almost contact
  manifolds}, Ann. Scuola Norm. Sup. Pisa (3) \textbf{25} (1971), 377--388.

\bibitem{goldberg-yano69}
S.~I. Goldberg and K.~Yano, \emph{Integrability of almost cosymplectic
  structures}, Pacific J. Math. \textbf{31} (1969), 373--382.

\bibitem{gotay89}
M.~J. Gotay and G.~M. Tuynman, \emph{{${\bf R}^{2n}$} is a universal symplectic
  manifold for reduction}, Lett. Math. Phys. \textbf{18} (1989), no.~1, 55--59.

\bibitem{gualtieri}
M.~Gualtieri, \emph{Generalized complex geometry}, Ann. of Math. (2)
  \textbf{174} (2011), no.~1, 75--123.

\bibitem{guzman10}
E.~Guzm{\'a}n and J.~C. Marrero, \emph{Time-dependent mechanics and
  {L}agrangian submanifolds of presymplectic and {P}oisson manifolds}, J. Phys.
  A \textbf{43} (2010), no.~50, 505201, 23.

\bibitem{hitchin}
N.~Hitchin, \emph{Generalized {C}alabi-{Y}au manifolds}, Q. J. Math.
  \textbf{54} (2003), no.~3, 281--308.

\bibitem{hummel97}
C.~Hummel, \emph{Gromov's compactness theorem for pseudo-holomorphic curves},
  Progress in Mathematics, vol. 151, Birkh\"auser Verlag, Basel, 1997.

\bibitem{iglesiasiasponte-wade-2005}
D.~Iglesias-Ponte and A.~Wade, \emph{Contact manifolds and generalized complex
  structures}, J. Geom. Phys. \textbf{53} (2005), no.~3, 249--258.

\bibitem{ishihara}
S.~Ishihara, \emph{Quaternion {K}\"ahlerian manifolds and fibred {R}iemannian
  spaces with {S}asakian {$3$}-structure}, K\=odai Math. Sem. Rep. \textbf{25}
  (1973), 321--329.

\bibitem{janssens-vanhecke81}
D.~Janssens and L.~Vanhecke, \emph{Almost contact structures and curvature
  tensors}, Kodai Math. J. \textbf{4} (1981), no.~1, 1--27.

\bibitem{jun-kim-kim-1994}
J.~B. Jun, I.~B. Kim, and U.~K. Kim, \emph{On {$3$}-dimensional almost contact
  metric manifolds}, Kyungpook Math. J. \textbf{34} (1994), no.~2, 293--301.

\bibitem{kaneyuki85}
S.~Kaneyuki and F.~L. Williams, \emph{Almost paracontact and parahodge
  structures on manifolds}, Nagoya Math. J. \textbf{99} (1985), 173--187.

\bibitem{kim-choi-tripathi2006}
J.~Kim, J.~Choi, and M.~M. Tripathi, \emph{On generic submanifolds of manifolds
  equipped with a hypercosymplectic 3-structure}, Commun. Korean Math. Soc.
  \textbf{21} (2006), no.~2, 321--335.

\bibitem{kim83}
U.~K. Kim, \emph{On anti-invariant submanifolds of cosymplectic manifolds}, J.
  Korean Math. Soc. \textbf{20} (1983), no.~1, 9--29.

\bibitem{kiritchenko82}
V.~F. Kiritchenko, \emph{Sur la g\'eom\'etrie des vari\'et\'es
  approximativement cosymplectiques}, C. R. Acad. Sci. Paris S\'er. I Math.
  \textbf{295} (1982), no.~12, 673--676.

\bibitem{koszul85}
J.~Koszul, \emph{Crochet de {S}chouten-{N}ijenhuis et cohomologie},
  Ast\'erisque (1985), no.~Numero Hors Serie, 257--271, The mathematical
  heritage of {\'E}lie Cartan (Lyon, 1984).

\bibitem{kuo}
Y.~Kuo, \emph{On almost contact {$3$}-structure}, T\^ohoku Math. J. (2)
  \textbf{22} (1970), 325--332.

\bibitem{lacirasella12}
I.~Lacirasella, J.~C. Marrero, and E.~Padr{\'o}n, \emph{Reduction of symplectic
  principal {${\mathbb R}$}-bundles}, J. Phys. A \textbf{45} (2012), no.~32,
  325202, 29.

\bibitem{li08}
H.~Li, \emph{Topology of co-symplectic/co-{K}\"ahler manifolds}, Asian J. Math.
  \textbf{12} (2008), no.~4, 527--543.

\bibitem{libermann59}
P.~Libermann, \emph{Sur les automorphismes infinit\'esimaux des structures
  symplectiques et des structures de contact}, Colloque {G}\'eom. {D}iff.
  {G}lobale ({B}ruxelles, 1958), Centre Belge Rech. Math., Louvain, 1959,
  pp.~37--59.

\bibitem{libermann62}
\bysame, \emph{Sur quelques exemples de structures pfaffiennes et presque
  cosymplectiques}, Ann. Mat. Pura Appl. (4) \textbf{60} (1962), 153--172.

\bibitem{libermann87}
P.~Libermann and C.~M. Marle, \emph{Symplectic geometry and analytical
  mechanics}, Mathematics and its Applications, vol.~35, D. Reidel Publishing
  Co., Dordrecht, 1987, Translated from the French by Bertram Eugene
  Schwarzbach.

\bibitem{lichnerowicz61}
A.~Lichnerowicz, \emph{Sur la r\'eductivit\'e de certaines alg\`ebres
  d'automorphismes}, C. R. Acad. Sci. Paris \textbf{253} (1961), 1302--1304.

\bibitem{lichnerowicz63}
\bysame, \emph{Th\'eor\`emes de r\'eductivit\'e sur des alg\`ebres
  d'automorphismes}, Rend. Mat. e Appl. (5) \textbf{22} (1963), 197--244.

\bibitem{ludden70}
G.~D. Ludden, \emph{Submanifolds of cosymplectic manifolds}, J. Differential
  Geometry \textbf{4} (1970), 237--244.

\bibitem{marrero90}
J.~C. Marrero, \emph{Some curvature properties of locally conformal
  co-{K}\"ahler manifolds}, Rev. Acad. Canaria Cienc. \textbf{1} (1990),
  261--270.

\bibitem{marrero-padron98}
J.~C. Marrero and E.~Padr{\'o}n, \emph{New examples of compact cosymplectic
  solvmanifolds}, Arch. Math. (Brno) \textbf{34} (1998), no.~3, 337--345.

\bibitem{marsden74}
J.~Marsden and A.~Weinstein, \emph{Reduction of symplectic manifolds with
  symmetry}, Rep. Mathematical Phys. \textbf{5} (1974), no.~1, 121--130.

\bibitem{marsden86}
J.~E. Marsden and T.~Ratiu, \emph{Reduction of {P}oisson manifolds}, Lett.
  Math. Phys. \textbf{11} (1986), no.~2, 161--169.

\bibitem{martincabrera}
F.~Mart{\'{\i}}n~Cabrera, \emph{Almost hyper-{H}ermitian structures in bundle
  spaces over manifolds with almost contact {$3$}-structure}, Czechoslovak
  Math. J. \textbf{48(123)} (1998), no.~3, 545--563.

\bibitem{matzeu2002}
P.~Matzeu, \emph{Almost contact {E}instein-{W}eyl structures}, Manuscripta
  Math. \textbf{108} (2002), no.~3, 275--288.

\bibitem{matzeu2011}
\bysame, \emph{Closed {E}instein-{W}eyl structures on compact {S}asakian and
  cosymplectic manifolds}, Proc. Edinb. Math. Soc. (2) \textbf{54} (2011),
  no.~1, 149--160.

\bibitem{morimoto63}
A.~Morimoto, \emph{On normal almost contact structures}, J. Math. Soc. Japan
  \textbf{15} (1963), 420--436.

\bibitem{munoz-salgado-vilarino2010}
M.~C. Mu{\~n}oz-Lecanda, M.~Salgado, and S.~Vilari{\~n}o,
  \emph{{$k$}-symplectic and {$k$}-cosymplectic {L}agrangian field theories:
  some interesting examples and applications}, Int. J. Geom. Methods Mod. Phys.
  \textbf{7} (2010), no.~4, 669--692.

\bibitem{ogiue68}
K.~Ogiue, \emph{{$G$}-structures definied by tensor fields}, K\=odai Math. Sem.
  Rep. \textbf{20} (1968), 54--75.

\bibitem{okumura65}
M.~Okumura, \emph{Cosymplectic hypersurfaces in {K}aehlerian manifold of
  constant holomorphic sectional curvature}, K\=odai Math. Sem. Rep.
  \textbf{17} (1965), 63--73.

\bibitem{olszak81}
Z.~Olszak, \emph{On almost cosymplectic manifolds}, Kodai Math. J. \textbf{4}
  (1981), no.~2, 239--250.

\bibitem{olszak82}
\bysame, \emph{Almost cosymplectic real hypersurfaces in {K}\"ahler manifolds},
  Arch. Math. (Brno) \textbf{18} (1982), no.~4, 187--192.

\bibitem{olszak87}
\bysame, \emph{Almost cosymplectic manifolds with k\"{a}hlerian leaves}, Tensor
  N. S. \textbf{46} (1987), 117--124.

\bibitem{olszak89}
\bysame, \emph{Locally conformal almost cosymplectic manifolds}, Colloq. Math.
  \textbf{57} (1989), no.~1, 73--87.

\bibitem{pak-kim-2006}
H.~K. Pak and T.~W. Kim, \emph{Almost cosymplectic manifolds admitting harmonic
  characteristic vector fields}, Adv. Stud. Contemp. Math. (Kyungshang)
  \textbf{13} (2006), no.~1, 75--80.

\bibitem{perrone12}
D.~Perrone, \emph{Classification of homogeneous almost cosymplectic
  three-manifolds}, Differential Geom. Appl. \textbf{30} (2012), no.~1, 49--58.

\bibitem{poon-wade2010}
Y.~S. Poon and A.~Wade, \emph{Approaches to generalize contact structures},
  Pure Appl. Math. Q. \textbf{6} (2010), no.~2, Special Issue: In honor of
  Michael Atiyah and Isadore Singer, 603--622.

\bibitem{poon-wade2011}
\bysame, \emph{Generalized contact structures}, J. Lond. Math. Soc. (2)
  \textbf{83} (2011), no.~2, 333--352.

\bibitem{rey-romanroy-salgado-vilarino2012}
A.~M. Rey, N.~Rom{\'a}n-Roy, M.~Salgado, and S.~Vilari{\~n}o,
  \emph{{$k$}-cosymplectic classical field theories: {T}ulczyjew and
  {S}kinner-{R}usk formulations}, Math. Phys. Anal. Geom. \textbf{15} (2012),
  no.~2, 85--119.

\bibitem{sasaki61}
S.~Sasaki, \emph{On differentiable manifolds with certain structures which are
  closely related to almost contact structure. {I}}, T\^ohoku Math. J. (2)
  \textbf{12} (1960), 459--476.

\bibitem{sasaki-hatakeyama61}
S.~Sasaki and Y.~Hatakeyama, \emph{On differentiable manifolds with certain
  structures which are closely related to almost contact structure. {II}},
  T\^ohoku Math. J. (2) \textbf{13} (1961), 281--294.

\bibitem{saunders89}
D.~J. Saunders, \emph{The geometry of jet bundles}, London Mathematical Society
  Lecture Note Series, vol. 142, Cambridge University Press, Cambridge, 1989.

\bibitem{sjamaar91}
R.~Sjamaar and E.~Lerman, \emph{Stratified symplectic spaces and reduction},
  Ann. of Math. (2) \textbf{134} (1991), no.~2, 375--422.

\bibitem{song-kim-tripathi2003}
Y.~Song, J.~Kim, and M.~M. Tripathi, \emph{On hypersurfaces of manifolds
  equipped with a hypercosymplectic 3-structure}, Commun. Korean Math. Soc.
  \textbf{18} (2003), no.~2, 297--308.

\bibitem{sullivan77}
D.~Sullivan, \emph{Infinitesimal computations in topology}, Inst. Hautes
  \'Etudes Sci. Publ. Math. (1977), no.~47, 269--331.

\bibitem{takizawa63}
S.~Takizawa, \emph{On contact structures of real and complex manifolds},
  T\^ohoku Math. J. (2) \textbf{15} (1963), 227--252.

\bibitem{tashiro1963-I}
Y.~Tashiro, \emph{On contact structure of hypersurfaces in complex manifolds.
  {I}}, T\^ohoku Math. J. (2) \textbf{15} (1963), 62--78.

\bibitem{tashiro1963-II}
\bysame, \emph{On contact structure of hypersurfaces in complex manifolds.
  {II}}, T\^ohoku Math. J. (2) \textbf{15} (1963), 167--175.

\bibitem{tischler70}
D.~Tischler, \emph{On fibering certain foliated manifolds over {$S^{1}$}},
  Topology \textbf{9} (1970), 153--154.

\bibitem{tricerri83}
F.~Tricerri and L.~Vanhecke, \emph{Homogeneous structures on {R}iemannian
  manifolds}, London Mathematical Society Lecture Note Series, vol.~83,
  Cambridge University Press, Cambridge, 1983.

\bibitem{tulczyjew76}
W.~M. Tulczyjew, \emph{Les sous-vari\'et\'es lagrangiennes et la dynamique
  hamiltonienne}, C. R. Acad. Sci. Paris S\'er. A-B \textbf{283} (1976), no.~1,
  Ai, A15--A18.

\bibitem{tulczyjew09}
W.~M. Tulczyjew and P.~Urba{\'n}ski, \emph{Liouville structures}, Univ. Iagel.
  Acta Math. (2009), no.~47, 187--226.

\bibitem{udriste}
C.~Udri{\c{s}}te, \emph{Structures presque coquaternioniennes}, Bull. Math.
  Soc. Sci. Math. R. S. Roumanie (N.S.) \textbf{13(61)} (1969), no.~4,
  487--507.

\bibitem{vaisman2008}
I.~Vaisman, \emph{Generalized {CRF}-structures}, Geom. Dedicata \textbf{133}
  (2008), 129--154.

\bibitem{wakakuwa}
H.~Wakakuwa, \emph{On {R}iemannian manifolds with homogeneous holonomy group
  {${\rm Sp}(n)$}}, T\^ohoku Math. J. (2) \textbf{10} (1958), 274--303.

\bibitem{watson83}
B.~Watson, \emph{New examples of strictly almost {K}\"ahler manifolds}, Proc.
  Amer. Math. Soc. \textbf{88} (1983), no.~3, 541--544.

\bibitem{yano-kon-book83}
K.~Yano and M.~Kon, \emph{C{R} submanifolds of {K}aehlerian and {S}asakian
  manifolds}, Progress in Mathematics, vol.~30, Birkh\"auser Boston, Mass.,
  1983.

\end{thebibliography}
\bibliographystyle{amsplain}
\end{document}